\newcommand{\ignore}[1]{}
\newcommand{\tr}{\triangle}
\DeclareMathOperator{\dfc}{def}
\DeclareMathOperator{\wt}{\text{wt}}
\DeclareMathOperator{\disc}{\text{disc}}
\DeclareMathOperator{\dc}{\text{dc}}
\DeclareMathOperator{\eqt}{\text{eqt}}
\DeclareMathOperator{\pl}{\text{pl}}
\newcommand{\1}{\mathbf{1}}
\newcommand{\0}{\mathbf{0}}
\newtheorem{theorem}{Theorem}[section]
\newtheorem{CO}[theorem]{Corollary}
\newtheorem{LE}[theorem]{Lemma}
\newtheorem{CN}[theorem]{Conjecture}
\newtheorem{RE}[theorem]{Remark}
\newtheorem{QU}[theorem]{Question}
\newtheorem{DE}[theorem]{Definition}
\newtheorem{EG}[theorem]{Example}
\newcounter{claim_nb}[theorem]
\newtheorem{claim}[claim_nb]{Claim}
\newtheorem*{claim*}{Claim}
\newtheorem*{subclaim*}{Subclaim}
\newcounter{claim_nbs}[section]
\newcounter{subclaim_nb}[claim_nbs]
\newenvironment{cproof}
{\begin{proof}
 [Proof of Claim.]
 \vspace{-1.2\parsep}}
{\renewcommand{\qed}{\hfill $\Diamond$} \end{proof}}
\title{On packing dijoins in digraphs and weighted digraphs}
\author{Ahmad Abdi \and G\'{e}rard Cornu\'{e}jols \and Michael Zlatin
}
\begin{document}

\maketitle

\begin{abstract}
Let $D=(V,A)$ be a digraph. A \emph{dicut} is a cut $\delta^+(U)\subseteq A$ for some nonempty proper vertex subset $U$ such that $\delta^-(U)=\emptyset$, a \emph{dijoin} is an arc subset that intersects every dicut at least once, and more generally a \emph{$k$-dijoin} is an arc subset that intersects every dicut at least $k$ times. Our first result is that $A$ can be partitioned into a dijoin and a $(\tau-1)$-dijoin where $\tau$
denotes the smallest size of a dicut. Woodall conjectured the stronger statement that $A$ can be
partitioned into $\tau$ dijoins.

Let $w\in \mathbb{Z}^A_{\geq 0}$ and suppose every dicut has weight at least $\tau$, for some integer $\tau\geq 2$.
Let $\rho(\tau,D,w):=\frac{1}{\tau}\sum_{v\in V} m_v$, where each $m_v$ is the integer in $\{0,1,\ldots,\tau-1\}$ equal to $w(\delta^+(v))-w(\delta^-(v))$ mod $\tau$. We prove the following results: \begin{itemize}
\item[(i)] If $\rho(\tau,D,w)\in \{0,1\}$, then there is an equitable $w$-weighted packing of dijoins of size $\tau$.
\item[(ii)] If $\rho(\tau,D,w)= 2$, then there is a $w$-weighted packing of dijoins of size $\tau$.
\item[(iii)] If $\rho(\tau,D,w)=3$,  $\tau=3$, and $w=\1$, then $A$ can be partitioned into three dijoins.
\end{itemize}

Each result is best possible: (i) does not hold for $\rho(\tau,D,w)=2$ even if $w=\1$, (ii) does not hold for $\rho(\tau,D,w)=3$, and (iii) do not hold for general $w$.
\end{abstract}

\bigskip
\noindent {\bf Keywords:} min-max theorem, dijoins, strongly base orderable matroid, packing common bases, submodular function, integer decomposition property

\section{Introduction}\label{sec:intro}

A \emph{weighted digraph} is a pair $(D=(V,A),w)$ where $D$ is a digraph, and $w\in \mathbb{Z}_{\geq 0}^A$. We allow parallel or opposite arcs but not loops.\footnote{Two arcs are \emph{parallel} if they have the same head and the same tail; they are \emph{opposite} if one's head/tail is the other's tail/head.} A \emph{dicut} is a cut $\delta^+(U)\subseteq A$ for some nonempty proper subset $U\subseteq V$ such that $\delta^-(U)=\emptyset$. The \emph{$w$-weight of $F\subseteq A$}, or simply the \emph{weight of $F$}, is $w(F)=\sum_{a\in F}w_a$. Denote by $\tau(D,w)$ the minimum weight of a dicut. Observe that if $\tau(D,w)\geq 1$, then $D$ is weakly connected. For this reason, we may focus only on instances where $D$ is weakly connected.
A \emph{dijoin} is a subset $J\subseteq A$ such that $D/J$ is strongly connected; equivalently, a dijoin is an arc subset that intersects every dicut at least once. A \emph{$w$-weighted packing of dijoins of size $\nu$} is a collection of $\nu$ (not necessarily distinct) dijoins such that every arc $a$ belongs to at most $w_a$ of the dijoins. Denote by $\nu(D,w)$ the maximum size of a $w$-weighted packing of dijoins. It follows from Weak LP Duality that $\tau(D,w)\geq \nu(D,w)$.

\begin{QU}\label{main-question}
When does equality hold in $\tau(D,w)\geq \nu(D,w)$?
\end{QU}

Consider replacing an arc $a$ of nonzero weight $w_a\geq 1$ with $w_a$ arcs of weight $1$ with the same head and tail as $a$. This operation preserves both the $\tau$ and $\nu$ parameters in \Cref{main-question}. For this reason, we may restrict our attention to $0,1$ weights. Observe that deleting an arc of weight $0$ may create new dicuts, and thus potentially decrease the covering parameter.

Woodall~\cite{Woodall78} conjectured that $\tau(D,w)=\nu(D,w)$ when $w=\1$, while Edmonds and Giles~\cite{Edmonds77} conjectured equality holds in general.\footnote{$\0,\1$ denote the all-zeros and all-ones vectors of appropriate dimensions.}  Schrijver~\cite{Schrijver80} refuted the Edmonds-Giles Conjecture. However, Woodall's Conjecture remains one of the most appealing and challenging unsolved problems in Combinatorial Optimization~\cite{Cornuejols01,Schrijver03,Frank11}.
In this paper we introduce a new approach to Question~\ref{main-question}, and we propose
a fix to the refuted Edmonds-Giles Conjecture.

\subsection{Highlights of this paper}

We need to define a few notions and notations. Let $(D=(V,A),w)$ be a weighted digraph.

\begin{DE}
Given an integer $\tau\geq 2$, let
\begin{align*}
\rho(\tau,D,w) &:= \frac{1}{\tau}\sum_{v\in V}\big(w(\delta^+(v))-w(\delta^-(v)) \mod{\tau}\big)\\
\rho(\tau,D)&:=\rho(\tau,D,\1)\\
\bar{\rho}(\tau,D,w) &:= \frac{1}{\tau}\sum_{v\in V}\big(w(\delta^-(v))-w(\delta^+(v)) \mod{\tau}\big).
\end{align*} 
Here, for an integer $n$, $n$ mod $\tau$ is the integer in $\{0,1,\ldots,\tau-1\}$ that is equal to $n$ mod $\tau$.
\end{DE}

Observe that $\rho(\tau,D,w)$ is a nonnegative integer since $\sum_{v\in V}(w(\delta^+(v))-w(\delta^-(v)))=0$. The significance of this parameter becomes clear as we explain our approach to Woodall's Conjecture in  \S\ref{sec:intro/approach}, and discuss the two matroids $M_0,M_1$. Vaguely speaking, the smaller this parameter, the `closer' is $(D,w)$ to being `$\tau$-regular and bipartite'. Moving on, observe further that $\bar{\rho}(\tau,D,w)=\rho(\tau,D',w')$, where $(D',w')$ is obtained from $(D,w)$ by replacing every arc by the reverse arc of the same weight. For this reason, we will only work with $\rho$. 

\begin{DE} A \emph{$k$-dijoin} of $D$ is an arc subset that intersects every dicut at least $k$ times. \end{DE}

\begin{DE} A $w$-weighted packing $J_1,\ldots,J_\nu$ of dijoins of $D$ is \emph{equitable} if $|J_i\cap \delta^+(U)|-|J_j\cap \delta^+(U)|\in \{-1,0,1\}$ for all $i,j\in [\nu]$ and for every (inclusionwise) minimal dicut $\delta^+(U)$. \end{DE}

\paragraph{Four principal results.} Let $\tau\geq 2$ be an integer, and $(D=(V,A),w)$ a weighted digraph where every dicut has weight at least~$\tau$. We prove the following \hypertarget{primary-results}{statements}: \begin{enumerate}
\item[{\bf P1}] If $w=\1$, then there exist a dijoin and a $(\tau-1)$-dijoin that are disjoint. 
\item[{\bf P2}] If $\rho(\tau,D,w)\in \{0,1\}$, then $(D,w)$ has an equitable $w$-weighted packing of dijoins of size $\tau$.
\item[{\bf P3}] If $\rho(\tau,D,w)= 2$, then $(D,w)$ has a $w$-weighted packing of dijoins of size $\tau$.
\item[{\bf P4}] If $\rho(\tau,D,w)=3$,  $\tau=3$, and $w=\1$, then there exist three (pairwise) disjoint dijoins.
\end{enumerate}
It should be pointed out that {\bf P1} and {\bf P4} hold more generally for $w>\0$, by simply applying the two results to the weighted digraphs obtained by replacing every arc $a$ of weight $w_a\geq 1$ with $w_a$ arcs of weight $1$ with the same head and tail as $a$.

\paragraph{The edge.} {\bf P1} and {\bf P4} do not hold for general $w$, and ${\bf P3}$ does not extend to $\rho(\tau,D,w)\geq 3$, as we note later in \S\ref{subsec:background}. Moreover, {\bf P2} does not extend to $\rho(\tau,D,w)\geq 2$, even if $w=\1$; we see a demonstration of this through an example in \S\ref{subsec:examples}.

\subsection{Our approach and two secondary results}\label{sec:intro/approach}

\paragraph{Weighted $(\tau,\tau+1)$-bipartite digraphs.} All of our results are made possible through a reduction to a special class of weighted digraphs which we define now. Given a digraph, a \emph{source} is a vertex with only outgoing arcs, and a \emph{sink} is a vertex with only incoming arcs.
A \emph{bipartite digraph} is a digraph where every vertex is either a source or a sink, in other words, it is obtained from a bipartite graph where all the edges are oriented from one part of a bipartition to the other part. A \emph{weighted bipartite digraph} is a pair $(D,w)$ where $D$ is a bipartite digraph. The weighted degree of a vertex $v$ is $w(\delta(v))$.

\begin{DE}
Given an integer $\tau\geq 1$, a \emph{weighted $(\tau,\tau+1)$-bipartite digraph} is a weighted bipartite digraph where every vertex has weighted degree $\tau$ or $\tau+1$, the vertices of weighted degree $\tau+1$ form a stable set, and every dicut has weight at least $\tau$.
A weighted $(\tau,\tau+1)$-bipartite digraph is \emph{sink-regular} if every sink has weighted degree $\tau$, and it is \emph{balanced} if it has an equal number of sources and sinks.
\end{DE}

Observe that in a weighted $(\tau,\tau+1)$-bipartite digraph, the minimum weight of a dicut is $\tau$, and every arc belongs to a minimum weight dicut as it is incident with a source or a sink of weighted degree~$\tau$. 

\begin{DE}
Given an integer $\tau\geq 2$, a \emph{$(\tau,\tau+1)$-bipartite digraph} is a bipartite digraph $D$ such that $(D,\1)$ is a weighted $(\tau,\tau+1)$-bipartite digraph.
\end{DE}

\paragraph{Decompose, Lift, and Reduce.} We \emph{reduce} the problem of finding a weighted packing of dijoins, and more generally $k$-dijoins, of size $\tau$ in a weighted digraph $(D,w)$ to the same problem in a set of weighted $(\tau,\tau+1)$-bipartite digraphs. This is done via \emph{Decompose-and-Lift}, a flexible and versatile operation applied to $(D,w)$ which can preserve planarity, adhere to equitability, and can also be done for unweighted digraphs if $\tau\geq 3$. The weighted $(\tau,\tau+1)$-bipartite digraphs encountered can be picked to be sink-regular or balanced, though it is the former that is most useful in this paper. We prove the four principal results for sink-regular weighted $(\tau,\tau+1)$-bipartite digraphs, and then use the Decompose, Lift, and Reduce procedure to deduce the results for all weighted digraphs. The procedure is explained in \S\ref{sec:DLR}, and the technical proofs appear in the appendix \S\ref{sec:DLRproof}.

\paragraph{The matroids $M_0,M_1$.} Let $\tau\geq 2$ be an integer, and $(D,w)$ a sink-regular weighted $(\tau,\tau+1)$-bipartite digraph. Note that $(D,w)$ has $\tau\cdot \rho(\tau,D,w)$ sources of weighted degree $\tau+1$. For each $i\in \{0,1\}$, we define a matroid $M_i$ whose ground set is the set of sources of weighted degree $\tau+1$, and whose rank is $\rho(\tau,D,w)$. (The matroids are introduced in \S\ref{sec:two-dijoins} and \S\ref{sec:rho=2}.) These two matroids are relevant in that if $(D,w)$ has a $w$-weighted packing of dijoins of size $\tau$, then the ground set of the matroids can be partitioned into $\tau$ common bases of $M_0,M_1$. (We do not know if the converse holds.) We shall see that $M_0$ is a strongly base orderable matroid, while $M_1$ may not be. 

\paragraph{Two secondary results.} We prove two secondary \hypertarget{secondary-results}{results}: \begin{enumerate}
\item[{\bf S1}] If $M_1$ is a strongly base orderable matroid, then $(D,w)$ has a $w$-weighted packing of dijoins of size $\tau$.
\item[{\bf S2}] If the ground set of $M_1$ can be partitioned into a $1$-admissible set $Q$ and a $(\tau-1)$-admissible set $Q'$ such that $M_1|Q'$ is strongly base orderable, then $(D,w)$ has a $w$-weighted packing of dijoins of size $\tau$.
\end{enumerate} Here, for an integer $k\in \{1,\ldots,\tau\}=:[\tau]$, a set is \emph{$k$-admissible} if it is the union of $k$ disjoint bases of $M_0$, and also the union of $k$ disjoint bases of $M_1$. We shall see that {\bf S2} is strictly stronger than {\bf S1}.

\subsection{Contextual background}\label{subsec:background}

It will be convenient to symbolize statements.

\begin{DE}
For integers $\tau\geq 2$ and $\rho\geq 0$, symbolize the following statement: \begin{itemize}
\item $[[\wt,\tau,\rho;\pl,\eqt]]$: Given a weighted digraph $(D,w)$ that is planar such that every dicut has weight at least $\tau$, and satisfies $\rho(\tau,D,w)\leq\rho$, there exists an equitable $w$-weighted packing of dijoins of size~$\tau$.
\end{itemize}
If the key $\wt$ is missing then we have the unweighted analogue of the statement, if $\pl$ is missing the planarity condition is removed, if $\eqt$ is missing then the adjective ``equitable" is removed from the conclusion, and if the parameter $\rho$ is missing then the upper bound on the function $\rho$ is removed. (Any combination of the keys and parameter can be missing.)
\end{DE}

$[[\wt,\tau]]$ is known to be true for instances $(D,w)$ where the underlying undirected graph of $D$ is series-parallel~\cite{Lee01}, or more generally has no $K_5\setminus e$ minor~\cite{Lee06}. Edmonds and Giles~\cite{Edmonds77} conjectured that $[[\wt,\tau]]$ is true, but Schrijver refuted the conjecture~\cite{Schrijver80} by exhibiting a counterexample to $[[\wt,2]]$ (see Figure~\ref{fig:dijoins-mnp}); others were also found later~\cite{CG2002,Williams04}.
An extension of Schrijver's example disproves $[[\wt,\tau]]$ for any $\tau\geq 2$: for each of the three paths of solid arcs, change the weight of the middle arc from $1$ to $\tau-1$~\cite{Harvey11}; this extension shows that $[[\wt,\tau,3;\pl]]$ is false for any integer $\tau\geq 2$.

\begin{figure}[h]
\centering\includegraphics[scale=.3]{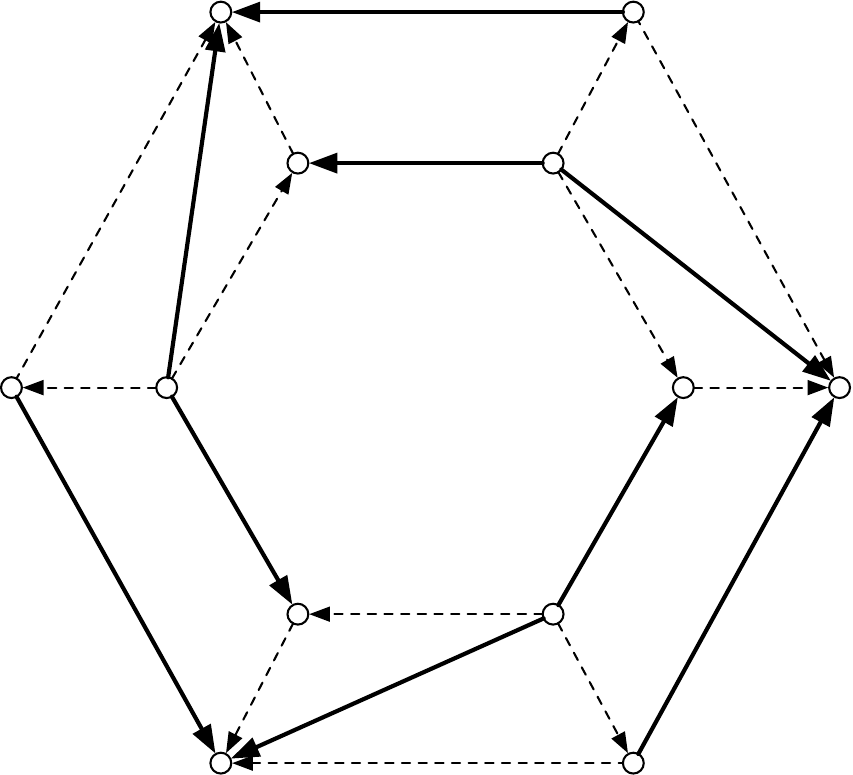}\\
\caption{Solid arcs have weight $1$, and dashed arcs have weight $0$.}
\label{fig:dijoins-mnp}
\end{figure}

That $[[\wt,2]]$ is false implies that {\bf P1} does not hold for general weights.
That $[[\wt,\tau,3;\pl]]$ is false for any $\tau\geq 2$ implies that {\bf P3} does not hold for $\rho(w,\tau,D)\geq 3$, and {\bf P4} does not hold for general weights even for $\tau=3$.

A \emph{super source} in a digraph is a source that has a directed path to every sink; a \emph{super sink} is defined similarly. It has been conjectured that $[[\wt,\tau]]$ is true for weighted digraphs $(D,w)$ where $D$ has a super source and a super sink~\cite{Guenin05}. Schrijver~\cite{Schrijver82}, and Feofiloff and Younger~\cite{FY1987}, proved this conjecture for \emph{source-sink connected} instances, i.e. digraphs in which every source has a directed path to every sink.

It has been conjectured in a recent paper~\cite{Chudnovsky16} that $[[\wt,\tau]]$ is true for weighted digraphs $(D,w)$ where $D[\{a\in A:w_a\neq 0\}]$ is a spanning subdigraph of $D$ that is connected as an undirected graph. In the same paper, this conjecture was proved in two special cases: $\tau=2$ and $D$ is planar (we revisit this result in \S\ref{sec:future}), or $\tau=2$ and $D[\{a\in A:w_a\neq 0\}]$ is a caterpillar subdivision.

Woodall's Conjecture predicts that $[[\tau]]$ is true~\cite{Woodall78}. The correctness of $[[\tau]]$ for $\tau=2$ is folklore (see \cite{Schrijver03}, Theorem 56.3), which is convenient as the Decompose, Lift, and Reduce procedure in the unweighted setting only works for $\tau\geq 3$. The correctness of $[[\tau]]$, or even $[[\tau;\pl]]$, remains unknown for any $\tau\geq 3$. Recently, M\'{e}sz\'{a}ros~\cite{Meszaros18} proved the statement $[[\tau,0]]$ by using a general result on totally unimodular matrices; see Lemma 9 of that paper, the argument essentially proves $[[\wt,\tau,0;\eqt]]$. Then, in an elegant fashion, he combines $[[\tau,0]]$ with \emph{Olson's Lemma} from Number Theory to prove $[[q]]$ where $q$ is a prime power and the underlying undirected graph of $D$ is $(q-1,1)$-partition-connected.

\subsection{Some examples}\label{subsec:examples}

\paragraph{About {\bf P2}.} {\bf P2} does not extend to $\rho(\tau,D,w)\geq 2$, even if $w=\1$. To see this, consider the digraph $D$ displayed in Figure~\ref{fig:D3}. It can be readily checked that every dicut has size at least $\tau:=2$, every arc belongs to a minimum dicut (of size two), and $\rho(\tau,D)=2$. \begin{figure}[h]
\centering\includegraphics[scale=.5]{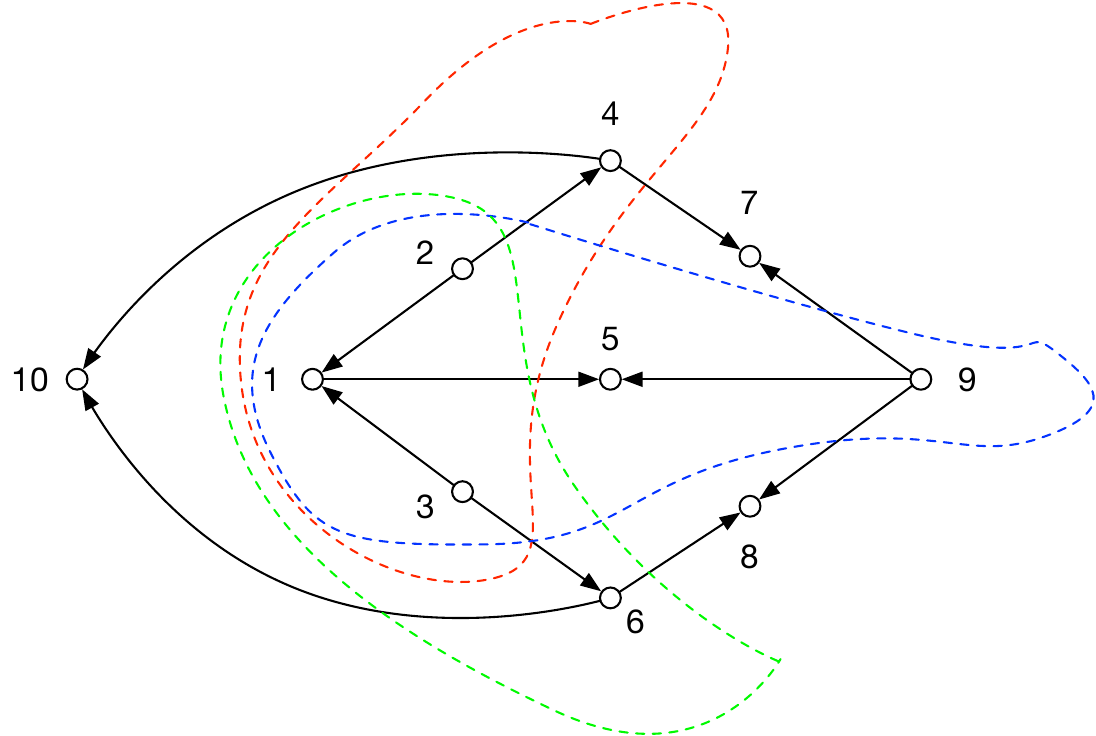}
\caption{A counterexample to $[[2,2;\eqt]]$.}
\label{fig:D3}
\end{figure} 
We claim that the digraph has no equitable $\1$-weighted packing of dijoins of size $\tau$. Suppose otherwise, and let $J_1,J_2$ be such a packing. Since every arc belongs to a dicut of size two, $J_1,J_2$ must form a partition of the arc set. We may assume that $J_1$ picks two of the arcs in the dicut $\delta^+(\{1,2,3\})$, while $J_2$ picks the other arc of the dicut. By symmetry, we may assume that $(2,4)\in J_1$. 
Assume in the first case that $(3,6)\in J_1$ and $(1,5)\in J_2$. By equitability, $J_1$ must pick exactly two arcs from the displayed dicut $\delta^+(\{1,2,3,5,9\})$. Thus, $(9,7),(9,8)\in J_2$, so $(4,7),(6,8)\in J_1$, so equitability along the displayed dicut $\delta^+(\{1,2,3,4\})$ tells us $(4,10)\in J_2$, and equitability along the displayed dicut $\delta^+(\{1,2,3,6\})$ tells us $(6,10)\in J_2$, a contradiction as the sink $10$ is not incident with an arc from $J_1$. 
Assume in the remaining case that $(1,5)\in J_1$ and $(3,6)\in J_2$. By equitability along $\delta^+(\{1,2,3,6\})$, $(6,8),(6,10)\in J_2$, so $(9,8),(4,10)\in J_1$. By equitability along $\delta^+(\{1,2,3,5,9\})$, $(9,7)\in J_2$, so $(4,7)\in J_1$, a contradiction to the equitability of $\delta^+(\{1,2,3,4\})$.

\paragraph{About {\bf P1}.} Let $D=(V,A)$ be a digraph where every dicut has size at least $\tau\geq 2$. We mentioned in \S\ref{subsec:background} that $D$ has two disjoint dijoins. In fact, let $J$ be any minimal dijoin. Then reversing the arcs of $J$ makes the digraph strongly connected (see \cite{Schrijver03}, Theorem 55.1), implying that $J$ does not contain a dicut, implying in turn that $A-J$ is a dijoin. Given this observation, and {\bf P1}, a natural question is whether $A-J$ is necessarily a $(\tau-1)$-dijoin? Unfortunately, the answer is no, if $\tau\geq 3$. \begin{quote}For example, suppose $D$ is the bipartite digraph with sources $\{1,2,3\}$ and sinks $\{4,5,6\}$ and an arc from every source to every sink, and $\tau=3$. Then $J=\{(1,4),(1,5),(2,6),(3,6)\}$ is a minimal dijoin, but $A-J$ is not a $(\tau-1)$-dijoin because $|\delta^+(\{1\})-J|=\tau-2$.\end{quote}

Despite the negative answer, {\bf P1} guarantees the existence of some minimal dijoin $J^\star$ such that $A-J^\star$ is a $(\tau-1)$-dijoin.

\begin{figure}[h]
\centering\includegraphics[scale=.3]{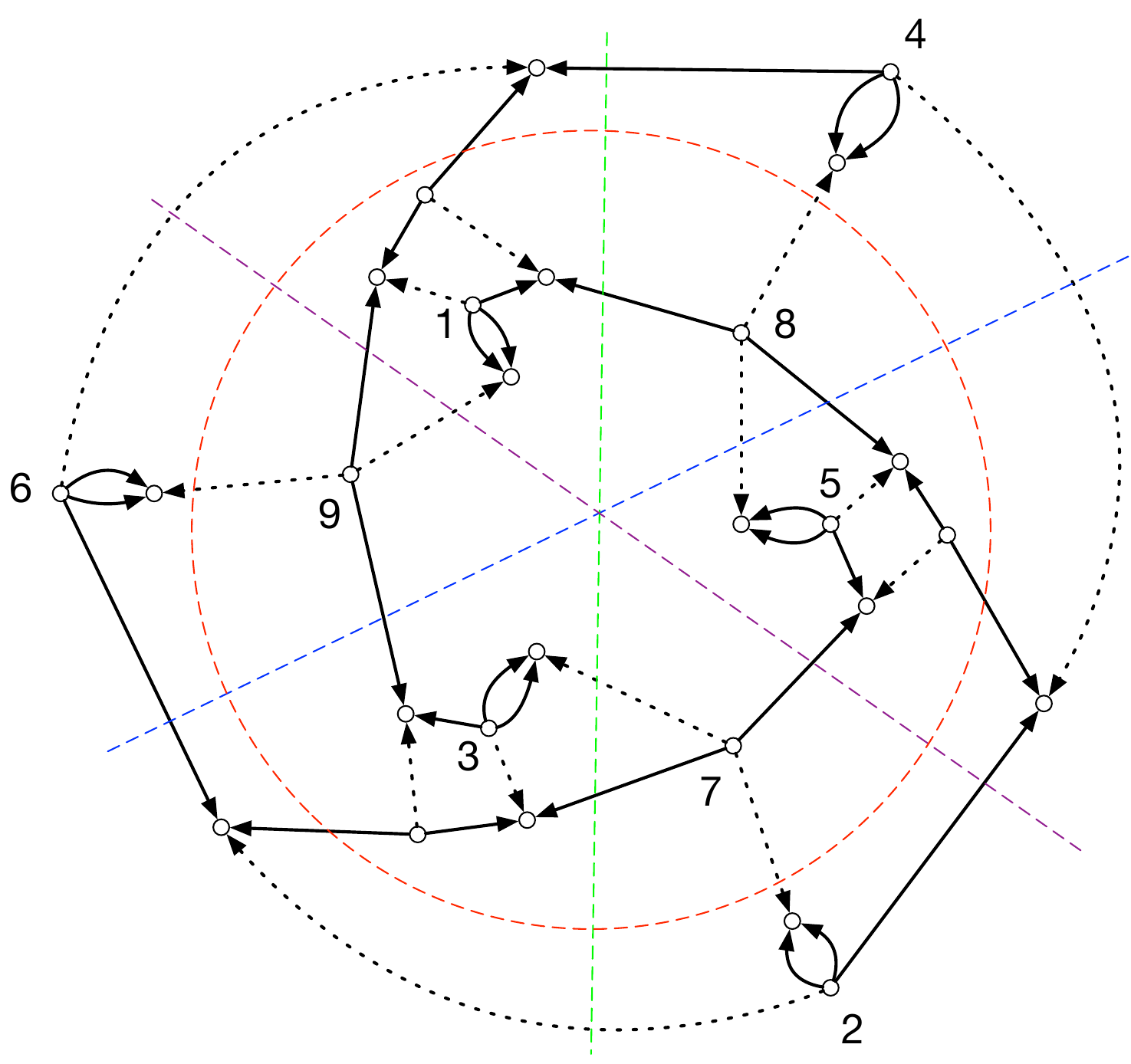}
\caption{The sink-regular $(3,4)$-bipartite digraph $D_{27}=(V,A)$.
The dashed and solid arcs partition the arc set into a minimal dijoin $J^\star$ and a $2$-dijoin $A-J^\star$, respectively. The four dicuts depicted show that $A-J^\star$ cannot be partitioned into two dijoins.}
\label{fig:D27-1}
\end{figure}

\paragraph{About {\bf P4}.} A natural question is whether $A-J^\star$ can necessarily be partitioned into $\tau-1$ dijoins? Unfortunately, the answer is no, if $\tau\geq 3$. Let us demonstrate this through an example, displayed in Figure~\ref{fig:D27-1}, which is analyzed further in \S\ref{sec:D27}.\begin{quote}
Figure~\ref{fig:D27-1} displays a sink-regular $(3,4)$-bipartite digraph $D_{27}=(V,A)$ on $27$ vertices, where four distinguished dicuts are highlighted.
Denote by $J^\star$ the set of dashed arcs. It can be checked that $J^\star$ is a minimal dijoin, and $A-J^\star$ is a $2$-dijoin. However, $A-J^\star$ cannot be partitioned into two dijoins.

Suppose for a contradiction that $A-J^\star$ is partitioned into dijoins $R,B$. Call the arcs in $R$ \emph{red}, and the arcs in $B$ \emph{blue}. Denote by $P_{12}$ the $12$-path in $A-J^\star$, by $P_{34}$ the $34$-path in $A-J^\star$, and by $P_{56}$ the $56$-path in $A-J^\star$. Since every internal vertex of each path is a source or a sink incident with exactly two arcs from $A-J^\star$, it follows that the arcs of each path are alternately colored red and blue. A simple argument now tells us that one of the four dicuts displayed is \emph{monochromatic}, in that all the arcs of $A-J^\star$ in the dicut have the same color, implying in turn that one of $R,B$ is disjoint from one of the four dicuts, a contradiction.
\end{quote}

It can be readily checked that $\rho(3,D_{27})=3$. Thus {\bf P4} guarantees the existence of three disjoint dijoins.

\subsection{Outline of the paper}

The four principal results {\bf P1}-{\bf P4}, as well as the two secondary results {\bf S1}-{\bf S2}, are proved in five stages. Let $(D,w)$ be a weighted digraph where every dicut has weight at least $\tau$, for some integer $\tau\geq 2$.
\begin{enumerate}
\item[Stage 0] In \S\ref{sec:DLR} we introduce the Decompose, Lift, and Reduce procedure which reduces the problem of finding a weighted packing of dijoins, or $k$-dijoins, of size~$\tau$ in weighted digraphs to the same problem for a set of weighted $(\tau,\tau+1)$-bipartite digraphs obtained via the Decompose-and-Lift operation. The operation applies to $(D,w)$ and turns it into a set of weighted $(\tau,\tau+1)$-bipartite digraphs that can be chosen sink-regular or balanced.
\end{enumerate}

After this stage, we assume that $(D,w)$ is a sink-regular weighted $(\tau,\tau+1)$-bipartite digraph.
\begin{enumerate}
\item[Stage 1] In \S\ref{sec:rho=1} we interpret $\rho(\tau,D,w)$ as the \emph{discrepancy} between the number of sources and the number of sinks of $D$. We also see that every dijoin used in a weighted packing of size $\tau$, if any, is a \emph{rounded $1$-factor}. We then apply an alternating path technique to prove {\bf P2}.

\item[Stage 2] In \S\ref{sec:two-dijoins} we discuss crossing families and crossing submodular functions. We then review two results of Frank and Tardos~\cite{Frank84} and Fujishige~\cite{Fujishige84} on matroids and box-TDI systems from crossing submodular functions, as well as the integer decomposition property of matroid base polytopes. By applying these results, we introduce the matroid $M_1$, and prove that its ground set can be partitioned into $\tau$ bases. We then leverage rounded $1$-factors, and more generally the notion of \emph{perfect $b$-matchings}, to prove {\bf P1}.

\item[Stage 3] In \S\ref{sec:rho=2} we introduce the matroid $M_0$. By using results from Stage 1, we see that $M_0$ is a strongly base orderable matroid whose ground set can be partitioned into $\tau$ bases. We introduce and study \emph{admissible sets}, which are common bases of $M_0,M_1$. We use a result of Davies and McDiarmid~\cite{Davies76} to prove {\bf S1}. Finally, by using a result of Brualdi~\cite{Brualdi69} on symmetric basis exchange in matroids, we prove {\bf P3}.

\item[Stage 4] In \S\ref{sec:rho=3} we unravel and extend some of the notions and results from Stages 2 and 3. More specifically, we introduce and study the notion of $k$-admissibility, extend the second principal result to the weighted setting under a certain common base packing assumption, and then prove {\bf S2}. A result of Brualdi~\cite{Brualdi71} guides us to study $M(K_4)$, the cycle matroid of $K_4$. We then study $M(K_4)$-restrictions in matroids of rank $3$ with at most $9$ elements, and then prove {\bf P4}.
\end{enumerate}

In \S\ref{sec:D27} we revisit the example of Figure~\ref{fig:D27-1} addressing questions raised in \S\ref{sec:rho=2} and \S\ref{sec:rho=3}. Finally, in \S\ref{sec:future}, we present several directions for future research towards tackling Woodall's Conjecture, we propose a fix to the refuted Edmonds-Giles Conjecture, and we also make connections to \emph{Barnette's Conjecture} and to  the \emph{$\tau=2$ Conjecture}.

\subsection{Notation and terminology} \label{notation}

We fix some notation and terminology used throughout the paper.

\paragraph{Graphs.} Given a graph $G=(V,E)$, a \emph{cycle} is a subset $C\subseteq E$ where every vertex $v\in V$ is incident with an even number of edges from $C$. In particular, $\emptyset$ is a cycle. A \emph{circuit} is a nonempty cycle that does not contain another nonempty cycle. For $U\subseteq V$, denote by $G[U]$ the induced subgraph on vertex set $U$. For $F\subseteq E$, denote by $G[F]$ the subgraph with edge set $F$.

\paragraph{Digraphs.} Sometimes, when there is no risk of ambiguity, we treat a digraph $D$ as an undirected graph $G$ obtained by dropping the orientation of the arcs, which we call the \emph{underlying undirected graph}. For example, we denote $\delta_D(X):=\delta_D^+(X)\cup \delta_D^-(X)$ and $\deg_D(x):=\deg_D^+(x)+\deg_D^-(x)$. We say $D$ is \emph{connected as an undirected graph} if $G$ is connected, a \emph{connected component of $D$} is simply a connected component of $G$, $D$ is \emph{planar} if $G$ is planar, $D$ is a \emph{plane digraph} if $G$ is a plane graph, i.e. $G$ is a planar graph embedded in the plane. Minor operations in $D$ are defined similarly as for the undirected graph $G$; loops created after contraction are then deleted of course. For $U\subseteq V$, denote by $D[U]$ the induced subdigraph on vertex set $U$. For $F\subseteq A$, denote by $D[F]$ the subdigraph with arc set $F$.

\paragraph{Matroids.} Let $M$ be a matroid over ground set $E$. $M$ is \emph{strongly base orderable} if for every two bases $B_1,B_2$, there exists a bijection $\pi:B_1-B_2\to B_2-B_1$ such that $B_1\tr (X\cup \pi(X)),B_2\tr (X\cup \pi(X))$ are bases for all $X\subseteq B_1-B_2$~\cite{Brualdi70}. Given $X\subseteq E$, the \emph{restriction $M|X$} is the deletion $M\setminus (E-X)$. Given a graph $G=(V,E)$, the \emph{cycle matroid of $G$}, denoted $M(G)$, is the matroid over ground set $E$ whose circuits correspond to the circuits of $G$.

\paragraph{Clutters.} Some knowledge of Clutter Theory will be useful and insightful. Let $A$ be a finite set of \emph{elements}, and $\mathcal{C}$ a family of subsets of $A$ called \emph{members}. $\mathcal{C}$ is a \emph{clutter} over \emph{ground set} $A$ if no member contains another~\cite{Edmonds70}. A \emph{cover} of $\mathcal{C}$ is a subset of $A$ that intersects every member of $\mathcal{C}$.
A cover of $\mathcal{C}$ is \emph{minimal} if it does not contain another cover. The family of minimal covers of $\mathcal{C}$ forms another clutter over the same ground set, called the \emph{blocker of $\mathcal{C}$}, and denoted $b(\mathcal{C})$. It is well-known that $b(b(\mathcal{C}))=\mathcal{C}$~\cite{Edmonds70,Isbell58}. We call $(\mathcal{C},b(\mathcal{C}))$ a \emph{blocking pair}.

\begin{RE}\label{dicut-dijoin}
Let $D=(V,A)$ be a digraph. Then the clutter of minimal dijoins and the clutter of minimal dicuts form a blocking pair.
\end{RE}

Let $w\in \mathbb{Z}^A_{\geq 0}$. The \emph{$w$-weight}, or simply \emph{weight}, of a cover $B$ is $w(B)=\sum_{a\in B}w_a$. The minimum weight of a cover is denoted $\tau(\mathcal{C},w)$. A \emph{$w$-weighted packing of $(\mathcal{C},w)$ of size $\nu$} is a collection of $\nu$ members of $\mathcal{C}$ such that every element $a\in A$ is contained in at most $w_a$ of the members. A $\1$-weighted packing is simply called a \emph{packing}. Denote by $\nu(\mathcal{C},w)$ the maximum size of a $w$-weighted packing. It can be readily checked from Weak LP Duality that $\tau(\mathcal{C},w)\geq \nu(\mathcal{C},w)$. We say that a $w$-weighted packing $C_1,\ldots,C_\nu$ is \emph{equitable} if for every $B\in b(\mathcal{C})$, the difference $|C_i\cap B|-|C_j\cap B|$ is in $\{-1,0,1\}$ for all $i,j\in [\nu]$.

Let $a\in A$. The \emph{deletion $\mathcal{C}\setminus a$} is the clutter over ground set $A-\{a\}$ whose members are $C\in \mathcal{C},a\notin C$, while the \emph{contraction $\mathcal{C}/a$} is the clutter over ground set $A-\{a\}$ whose members are the minimal sets in $\{C-\{a\}:C\in \mathcal{C}\}$~\cite{Fulkerson71}.
Deletion in $\mathcal{C}$ corresponds to contraction in $b(\mathcal{C})$, and vice versa~\cite{Seymour76}. A clutter obtained from $\mathcal{C}$ after a series of contractions is called a \emph{contraction minor} of $\mathcal{C}$; \emph{deletion minor} and more generally \emph{minor} are defined similarly.

The clutter obtained from $\mathcal{C}$ after \emph{replicating $a$} is the clutter over ground set $A\cup \{a'\}$ for a new element $a'$ whose members are those in $\mathcal{C}\cup \{C\tr \{a,a'\}:a\in C\in \mathcal{C}\}$. The clutter obtained from $\mathcal{C}$ after \emph{duplicating $a$} is the clutter over ground set $A\cup \{a'\}$ for a new element $a'$ whose members are those in $\{C\in \mathcal{C}:a\notin C\}\cup \{C\cup \{a'\}:a\in C\in \mathcal{C}\}$. It can be readily checked that replication in $\mathcal{C}$ corresponds to duplication in $b(\mathcal{C})$, and vice versa.

\begin{RE}
Let $\mathcal{C}'$ be obtained from $\mathcal{C}$ after deleting each $a\in A$ with $w_a=0$, and replicating each $a\in A$ with $w_a\geq 1$ exactly $w_a-1$ times. Then $(\mathcal{C},w)$ has an (equitable) $w$-weighted packing of size $\nu$ if, and only if, $\mathcal{C}'$ has an (equitable) packing of size $\nu$.
\end{RE}

Let $\mathcal{C}_1,\ldots,\mathcal{C}_k$ be clutters over disjoint ground sets $A_1,\ldots,A_k$, respectively. The \emph{product} of $\mathcal{C}_1,\ldots,\mathcal{C}_k$, denoted $\prod_{i\in [k]}\mathcal{C}_i$, is the clutter over ground set $\cup_{i\in [k]} A_i$ whose members are the sets in $\{\cup_{i\in [k]}C_i:C_i\in\mathcal{C}_i,i\in [k]\}$. The \emph{coproduct} of $\mathcal{C}_1,\ldots,\mathcal{C}_k$, denoted $\otimes_{i\in [k]}\mathcal{C}_i$, is the clutter over ground set $\cup_{i\in [k]} A_i$ whose members are the minimal sets in $\cup_{i\in[k]}\mathcal{C}_i$~\cite{Abdi-cuboids}. Observe that if $\mathcal{C}_i\neq \{\emptyset\}$ for each $i\in [k]$, then $\otimes_{i\in [k]}\mathcal{C}_i=\cup_{i\in[k]}\mathcal{C}_i$.

\begin{RE}\label{product-coproduct}
$b\big(\prod_{i\in [k]}\mathcal{C}_i\big) = \otimes_{i\in[k]}b(\mathcal{C}_i)$ and $b\big(\otimes_{i\in [k]}\mathcal{C}_i\big) = \prod_{i\in[k]}b(\mathcal{C}_i)$.
\end{RE}

\section{Decompose, Lift, and Reduce Procedure}\label{sec:DLR}

In this section we introduce the \emph{Decompose, Lift, and Reduce procedure}.

\begin{DE}
Given a digraph $D$, denote by $\mathcal{C}(D)$ the clutter of minimal dijoins of $D$. Given a weighted digraph $(D,w)$, denote by $\mathcal{C}(D,w)$ the clutter obtained from $\mathcal{C}(D)$ after deleting each element $a$ with $w_a=0$ and replicating each element $a$ with $w_a\geq 1$ exactly $w_a-1$ times.
\end{DE}

\subsection{Decompose-and-Lift Operation}\label{sec:DnL}

The proofs of the following two results, which are technical, are given the appendix (\S\ref{sec:DLRproof}).

\begin{theorem}[Decompose-and-Lift]\label{weighted-DnL}
Let $(D,w)$ be a weighted digraph, where every dicut has weight at least $\tau$, and $\tau\geq 2$. Then there exist weighted $(\tau,\tau+1)$-bipartite digraphs $\{(D'_i,w'_i):i\in I\}$ for a finite index set $I$, such that the following statements hold: \begin{enumerate}[(1)]
\item if $w>\0$ and $\tau\geq 3$, then $w'_i>\0$ for each $i\in I$,
\item if $D$ is planar, then so is each $D'_i,i\in I$,
\item $\mathcal{C}(D,w)=\prod_{i\in I}\mathcal{C}_i$ where $\mathcal{C}_i$ is a contraction minor of $\mathcal{C}(D'_i,w'_i)$, for each $i\in I$.
\end{enumerate} Moreover, we can choose each $(D'_i,w'_i),i\in I$ to satisfy any one of the following statements: \begin{enumerate}
\item[i.] $(D'_i,w'_i)$ is balanced,
\item[ii.] $(D'_i,w'_i)$ is sink-regular and $\rho(\tau,D'_i,w'_i)\leq \rho(\tau,D,w)$.
\end{enumerate}
\end{theorem}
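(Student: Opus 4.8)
<br>

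The plan is to combine the two main building blocks already established, namely the Decomposing theorem (\Cref{decomposition-irreducible}) and the Lifting theorem (\Cref{weighted-lifting}), and then perform bookkeeping on the auxiliary vertices introduced by the gadget of \Cref{gadget} in order to achieve either balance or sink-regularity. First I would apply \Cref{decomposition-irreducible} to $(D,w)$ — which is legitimate since every dicut has weight at least $\tau \geq 2 > 0$ — to obtain irreducible weighted digraphs $(D_j,w_j)$, $j\in J$, with $\mathcal{C}(D,w) = \prod_{j\in J}\mathcal{C}(D_j,w_j)$, each $D_j$ planar if $D$ is, and each satisfying $\rho(\tau,D_j,w_j)\leq \rho(\tau,D,w)$. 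Every $(D_j,w_j)$ still has all dicuts of weight at least $\tau$ (a dicut of $(D_j,w_j)$ is a dicut of $(D,w)$). Then I would apply \Cref{weighted-lifting} to each $(D_j,w_j)$ with a judicious choice of the nonnegative integers $\ell^+(v),\ell^-(v)$ for $v\in V(D_j)$, subject only to the congruence $\ell^+(v)-\ell^-(v)\equiv w_j(\delta^+(v))-w_j(\delta^-(v))\pmod\tau$. This yields weighted $(\tau,\tau+1)$-bipartite digraphs $(D'_j,w'_j)$ with $A(D_j)\subseteq A(D'_j)$, $D_j = D'_j/(A(D'_j)-A(D_j))$, arcs outside $A(D_j)$ of nonzero weight when $\tau\geq 3$, planarity preserved, and $\mathcal{C}(D_j,w_j)$ a contraction minor of $\mathcal{C}(D'_j,w'_j)$. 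Taking $I=J$, $\mathcal{C}_j$ the appropriate contraction minor, items (1)--(3) follow immediately, and the product decomposition $\mathcal{C}(D,w)=\prod_{j\in J}\mathcal{C}_j$ is exactly part (4) of \Cref{decomposition-irreducible} composed with part (3) of \Cref{weighted-lifting}.

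The remaining content is the "Moreover" clause, which is entirely about the freedom in choosing $\ell^+(v),\ell^-(v)$. By part (2) of \Cref{weighted-lifting}, $(D'_j,w'_j)$ has exactly $\sum_{v}\ell^+(v)$ sources of weighted degree $\tau+1$ and $\sum_{v}\ell^-(v)$ sinks of weighted degree $\tau+1$, while all other vertices have weighted degree exactly $\tau$. For option (ii), sink-regularity means no sink has weighted degree $\tau+1$, i.e.\ $\sum_{v\in V(D_j)}\ell^-(v)=0$, which forces $\ell^-(v)=0$ for all $v$, and hence $\ell^+(v)$ must be the residue of $w_j(\delta^+(v))-w_j(\delta^-(v))\bmod\tau$ in $\{0,1,\ldots,\tau-1\}$. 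With this choice, $\sum_v\ell^+(v)=\tau\cdot\rho(\tau,D_j,w_j)$ by the very definition of $\rho$, so in particular the construction is consistent; and since $\rho(\tau,D_j,w_j)\leq\rho(\tau,D,w)$ from decomposing, the conclusion $\rho(\tau,D'_j,w'_j)\leq\rho(\tau,D,w)$ follows once I check $\rho(\tau,D'_j,w'_j)=\rho(\tau,D_j,w_j)$. That last equality holds because in a sink-regular weighted $(\tau,\tau+1)$-bipartite digraph the discrepancy parameter $\rho$ simply counts (after dividing by $\tau$) the total weighted-degree excess of the sources, which equals the number of $(\tau+1)$-degree sources; more carefully, the contraction $D_j=D'_j/(A(D'_j)-A(D_j))$ together with a short computation using \Cref{triangle-inequality} (as in the proof of \Cref{pseudo-cut-vertex}, Claim~3) shows $\rho(\tau,D'_j,w'_j)\leq\rho(\tau,D_j,w_j)$, and conversely one checks it cannot drop. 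I would include this as a brief sub-claim.

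For option (i), balance means $|V'| $ has equally many sources and sinks in $D'_j$; here there is genuine slack, because each gadget of \Cref{gadget} introduces many auxiliary sources ($s_i$, $s'_i$, $t_j$) and the counts of each depend on the large parameter $k$ and on the chosen $\ell^+(v),\ell^-(v)$. Concretely, from the weighted-degree bookkeeping in the proof of \Cref{gadget}, the number of sources minus the number of sinks of each gadget $D_v$ is an explicit integer-valued function of $\tau$, $\mathfrak{s}^v$, $\ell^+(v)$, $\ell^-(v)$ and $k$; summing over $v$ and accounting for the fact that the original arcs of $D_j$ merely reconnect gadget boundary vertices (not changing the source/sink partition), I get that the source-minus-sink count of $D'_j$ is an explicit expression in these parameters. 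Since $k$ can be taken arbitrarily large subject to the divisibility conditions i--iii in \Cref{gadget}, and since increasing $k$ in a controlled way (e.g.\ adding one more block of size $\tau$ on each long side) changes this count by a fixed nonzero amount, I can steer it to zero; alternatively, and more cleanly, one may first make $(D'_j,w'_j)$ sink-regular as in option (ii) and then append a disjoint gadget or pad with isolated-source-free adjustments to equalize — but the direct route of choosing $k$ and $\ell^\pm$ is cleanest. I expect the \textbf{main obstacle} to be precisely this last point: showing that the parity/divisibility constraints on $k$ and the $\ell^\pm$'s do not obstruct hitting the balance target exactly. The resolution is that the gadget has enough independent tunable parameters ($\ell^+(v)$, $\ell^-(v)$ and $k$, subject only to congruences mod $\tau$) that the reachable set of source-minus-sink values is all of $\mathbb{Z}$ (or at least contains $0$), so one simply solves the resulting linear congruence; I would phrase this as a short lemma-free computation at the end of the proof. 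Everything else is routine assembly of \Cref{decomposition-irreducible} and \Cref{weighted-lifting}.
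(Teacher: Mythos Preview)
Your two-stage plan (apply \Cref{decomposition-irreducible}, then \Cref{weighted-lifting} to each piece) is exactly what the paper does, and your treatment of items (1)--(3) and of option~(ii) is correct. One simplification for~(ii): you do not need the contraction argument or \Cref{triangle-inequality} to see $\rho(\tau,D'_j,w'_j)=\rho(\tau,D_j,w_j)$. In a sink-regular weighted $(\tau,\tau+1)$-bipartite digraph, every vertex contributes $0$ to the sum defining $\rho$ except the sources of weighted degree $\tau+1$, each of which contributes~$1$; by \Cref{weighted-lifting}~(2) there are exactly $\sum_v\ell^+(v)=\tau\,\rho(\tau,D_j,w_j)$ of them, so $\rho(\tau,D'_j,w'_j)=\rho(\tau,D_j,w_j)$ on the nose.

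Where you go astray is option~(i). You propose opening up the gadget of \Cref{gadget}, tuning the internal parameter $k$, and solving congruences to hit the balance target --- and you flag this as the main obstacle. It is not an obstacle at all, because balance is controlled directly by the $\ell^\pm$'s, not by~$k$. Indeed, in any weighted $(\tau,\tau+1)$-bipartite digraph a one-line degree count gives
\[
\tau\,(\#\text{sources})+(\#\text{sources of degree }\tau+1)=w'(A')=\tau\,(\#\text{sinks})+(\#\text{sinks of degree }\tau+1),
\]
so ``balanced'' is equivalent to having equally many sources and sinks of weighted degree $\tau+1$. By \Cref{weighted-lifting}~(2), those two counts are precisely $\sum_v\ell^+(v)$ and $\sum_v\ell^-(v)$. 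Hence you only need $\sum_v\ell^+(v)=\sum_v\ell^-(v)$ subject to the congruences, and the choice $\ell^+(v)=w_j(\delta^+(v))$, $\ell^-(v)=w_j(\delta^-(v))$ achieves this trivially (both sums equal $w_j(A(D_j))$). This is exactly the paper's argument; your proposed detour through the gadget internals is unnecessary, and in any case $k$ is not exposed in the interface of \Cref{weighted-lifting}, so you could not tune it without reproving that theorem.
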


\begin{theorem}[Unweighted Decompose-and-Lift]\label{DnL}
Let $D$ be a digraph where every dicut has size at least $\tau$, and $\tau\geq 3$. Then there exist $(\tau,\tau+1)$-bipartite digraphs $\{D'_i:i\in I\}$ for a finite index set $I$, such that the following statements hold: \begin{enumerate}[(1)]
\item if $D$ is planar, then so is each $D'_i,i\in I$,
\item $\mathcal{C}(D)=\prod_{i\in I}\mathcal{C}_i$ where $\mathcal{C}_i$ is a contraction minor of $\mathcal{C}(D'_i)$, for each $i\in I$.
\end{enumerate} Moreover, we can choose each $D'_i,i\in I$ to satisfy any one of the following statements: \begin{enumerate}
\item[i.] $D'_i$ is balanced,
\item[ii.] $D'_i$ is sink-regular and $\rho(\tau,D'_i)\leq \rho(\tau,D)$.
\end{enumerate}
\end{theorem}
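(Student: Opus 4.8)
The plan is to deduce this from the weighted Decompose-and-Lift theorem, \Cref{weighted-DnL}, applied to $(D,\1)$, in exactly the way \Cref{lifting} was deduced from \Cref{weighted-lifting}. Since every dicut of $D$ has size at least $\tau$ and its size equals its $\1$-weight, $(D,\1)$ satisfies the hypothesis of \Cref{weighted-DnL}. Invoking that theorem, and fixing in its ``Moreover'' clause whichever of (i), (ii) we wish to conclude, we obtain weighted $(\tau,\tau+1)$-bipartite digraphs $(D'_i,w'_i)$, $i\in I$, satisfying conclusions (1)--(3) there, with $\mathcal{C}(D,\1)=\prod_{i\in I}\mathcal{C}_i$ and each $\mathcal{C}_i$ a contraction minor of $\mathcal{C}(D'_i,w'_i)$. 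The decisive point is conclusion (1): because $\1>\0$ and $\tau\geq 3$, every $w'_i$ is strictly positive. This is the sole place the hypothesis $\tau\geq 3$ enters, just as in \Cref{lifting}.

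Next I would pass from each $(D'_i,w'_i)$ to an unweighted digraph, which I again denote $D'_i$, by replacing every arc $a$ with $w'_i(a)\geq 1$ parallel arcs of the same head and tail. This operation creates no new vertices and preserves the weighted degree of every vertex and the weight of every cut; hence $(D'_i,\1)$ is again a weighted $(\tau,\tau+1)$-bipartite digraph, i.e.\ $D'_i$ is a $(\tau,\tau+1)$-bipartite digraph, and it is balanced (respectively sink-regular) precisely when $(D'_i,w'_i)$ is. Since the operation also preserves $w(\delta^+(v))-w(\delta^-(v))$ at every vertex, $\rho(\tau,D'_i)=\rho(\tau,D'_i,w'_i)$, so the inequality $\rho(\tau,D'_i)\leq\rho(\tau,D)$ required by option (ii) is inherited. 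Planarity survives, since drawing parallel arcs alongside an existing one keeps a plane graph plane; thus conclusion (1) of the target holds.

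For the clutter statement, the key elementary fact is that a minimal dijoin never contains two parallel arcs — deleting one of the pair from any dijoin leaves a dijoin — so replacing an arc of a digraph by $k$ parallel copies has exactly the effect of replicating that element of the dijoin clutter $k-1$ times. Iterating over all arcs of $(D'_i,w'_i)$, which has no zero-weight arc, yields $\mathcal{C}(D'_i,w'_i)=\mathcal{C}(D'_i)$ as clutters over the same ground set, by the very definition of $\mathcal{C}(\cdot,\cdot)$. Similarly $\mathcal{C}(D,\1)=\mathcal{C}(D)$. Hence $\mathcal{C}(D)=\prod_{i\in I}\mathcal{C}_i$ with each $\mathcal{C}_i$ a contraction minor of $\mathcal{C}(D'_i,w'_i)=\mathcal{C}(D'_i)$, which is conclusion (2).

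I do not foresee a genuine obstacle; this is a routine ``unweighting'' reduction. The two points to handle with care are: (a) confirming that $\tau\geq 3$ is exactly what forces the lifted weights to be positive, so that splitting arcs produces a bona fide loop-free digraph with the prescribed degree pattern rather than requiring the deletion of zero-weight arcs (which could create new dicuts); and (b) the bookkeeping that arc-splitting on a digraph matches element-replication on its dijoin clutter, which is what lets ``contraction minor of $\mathcal{C}(D'_i)$'' survive the translation from the weighted to the unweighted setting.
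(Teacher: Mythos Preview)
Your proposal is correct and follows precisely the paper's own proof: apply \Cref{weighted-DnL} to $(D,\1)$, use $\tau\geq 3$ to guarantee $w'_i>\0$ via conclusion~(1), and then split each arc into parallel copies to obtain the unweighted $(\tau,\tau+1)$-bipartite digraphs. Your additional remarks on why planarity, the $\rho$-bound, and the clutter identity survive arc-splitting are accurate elaborations of details the paper leaves implicit.
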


\subsection{Reducing}\label{sec:reduce}

Having described \emph{Decompose-and-Lift}, we are now ready to explain how the problem of packing dijoins, or more generally $k$-dijoins, of size $\tau$ in arbitrary weighted digraphs can be \emph{reduced} to the same problem in weighted $(\tau,\tau+1)$-bipartite digraphs. We need the following two remarks.

\begin{RE}\label{contraction-packings}
If $\mathcal{C}$ has a packing of size $\tau$, then so does every contraction minor of $\mathcal{C}$. 
\end{RE}

\begin{RE}\label{contraction-packings-equitable}
If $D$ has an equitable packing of $\tau$ dijoins, then so does every contraction minor of $D$. 
\end{RE}

\begin{RE}\label{product-packings}
If $\mathcal{C}_i,i\in [k]$ have (equitable) packings of size $\tau$, then so does $\prod_{i\in[k]}\mathcal{C}_i$.
\end{RE}

\begin{theorem}[Reducing]\label{weighted-reduction}
Take an integer $\tau\geq 2$. The following statements hold: \begin{enumerate}[(1)]
\item Given any statement $[[\wt,\tau;\cdot,\cdot]]$ that includes $\wt,\tau$, excludes $\rho$, and may include $\pl,\eqt$, the statement is true if, and only if, it is true for all weighted $(\tau,\tau+1)$-bipartite digraphs that are balanced.
\item Given any statement $[[\wt,\tau,\rho;\cdot,\cdot]]$ that includes $\wt,\tau,\rho$, and may include $\pl,\eqt$, the statement is true if, and only if, it is true for all weighted $(\tau,\tau+1)$-bipartite digraphs that are sink-regular.
\end{enumerate}
\end{theorem}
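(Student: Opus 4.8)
The plan is to prove both equivalences by the same two–step reduction. The ``only if'' directions are immediate: a balanced, respectively sink-regular, weighted $(\tau,\tau+1)$-bipartite digraph is in particular a weighted digraph all of whose dicuts have weight at least~$\tau$, and it is planar exactly when the $\pl$ key forces us to restrict to planar instances, and it satisfies $\rho(\tau,D,w)\le\rho$ exactly when the parameter~$\rho$ is present (which in part~2 it is); so truth of the statement for all weighted digraphs meeting its hypotheses gives truth for these special ones. The content lies in the ``if'' directions, which I would obtain by feeding the Decompose-and-Lift operation (\Cref{weighted-DnL}) into the stability of (equitable) packings of size~$\tau$ under contraction minors (\Cref{contraction-packings}) and products (\Cref{product-packings}).

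Before that I would record the dictionary, needed when the $\eqt$ key is present, between ``$(D,w)$ has an equitable $w$-weighted packing of dijoins of size~$\tau$'' and ``$\mathcal{C}(D,w)$ has an equitable packing of size~$\tau$''. One direction is routine: by the replication/deletion remark an equitable packing of $\mathcal{C}(D,w)$ of size~$\tau$ amounts to an equitable $w$-weighted packing of $\mathcal{C}(D)$ of size~$\tau$, whose members, being minimal dijoins, are in particular dijoins. The converse I only use when $(D,w)$ is a weighted $(\tau,\tau+1)$-bipartite digraph, where it is easy: every arc $a$ is incident with a source or sink of weighted degree exactly~$\tau$, whose star is a minimal dicut $B_a$ of weight exactly~$\tau$, so any $w$-weighted packing $J_1,\dots,J_\tau$ of dijoins of size~$\tau$ satisfies $\tau\le\sum_i|J_i\cap B_a|\le w(B_a)=\tau$, forcing $|J_i\cap B_a|=1$ for all $i$; hence no arc can be deleted from any $J_i$ without some dicut being missed, so each $J_i$ is a minimal dijoin, and it avoids the weight-zero arcs since each such arc lies in at most $w_a=0$ of the $J_i$. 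Thus $J_1,\dots,J_\tau$ is already an (equitable) $w$-weighted packing of $\mathcal{C}(D)$ of size~$\tau$, equivalently an (equitable) packing of $\mathcal{C}(D,w)$ of size~$\tau$.

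For the ``if'' direction of part~1: assume $[\wt,\tau;?,?]$ holds for all balanced weighted $(\tau,\tau+1)$-bipartite digraphs, and let $(D,w)$ be a weighted digraph with every dicut of weight at least~$\tau$ (planar, if $\pl$ is present). Apply \Cref{weighted-DnL} with choice~(i): it produces balanced weighted $(\tau,\tau+1)$-bipartite digraphs $(D'_i,w'_i)$, $i\in I$, each planar when $D$ is, with $\mathcal{C}(D,w)=\prod_{i\in I}\mathcal{C}_i$ and each $\mathcal{C}_i$ a contraction minor of $\mathcal{C}(D'_i,w'_i)$. Each $(D'_i,w'_i)$ satisfies the hypotheses of $[\wt,\tau;?,?]$, hence by assumption has an (equitable) $w'_i$-weighted packing of dijoins of size~$\tau$, i.e.\ (by the dictionary) $\mathcal{C}(D'_i,w'_i)$ has an (equitable) packing of size~$\tau$; by \Cref{contraction-packings} so has $\mathcal{C}_i$; by \Cref{product-packings} so has $\prod_{i\in I}\mathcal{C}_i=\mathcal{C}(D,w)$; and by the dictionary once more $(D,w)$ has an (equitable) $w$-weighted packing of dijoins of size~$\tau$, which is the conclusion of $[\wt,\tau;?,?]$ for $(D,w)$. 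Part~2 is identical with choice~(ii) of \Cref{weighted-DnL} replacing~(i): the $(D'_i,w'_i)$ are now sink-regular with $\rho(\tau,D'_i,w'_i)\le\rho(\tau,D,w)\le\rho$, hence satisfy the hypotheses of $[\wt,\tau,\rho;?,?]$, and the same chain of implications goes through.

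The combination of \Cref{weighted-DnL} with \Cref{contraction-packings} and \Cref{product-packings} does the real work, so the only point requiring genuine thought is the $\eqt$ dictionary sketched above; it rests on the fact that in a weighted $(\tau,\tau+1)$-bipartite digraph every arc lies in a minimal dicut of weight exactly~$\tau$, which is precisely why the reduction target is this class of digraphs. Everything else is bookkeeping over the optional keys $\pl,\eqt$ and the presence or absence of the parameter~$\rho$.
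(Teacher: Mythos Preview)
Your proof is correct and follows essentially the same route as the paper: the $(\Rightarrow)$ direction is trivial, and the $(\Leftarrow)$ direction chains together \Cref{weighted-DnL} (with choice (i) for part~1 and (ii) for part~2), \Cref{contraction-packings}, and \Cref{product-packings}. The paper states this in one sentence; you spell out the same chain and add the translation between ``(equitable) $w$-weighted packing of dijoins'' and ``(equitable) packing of $\mathcal{C}(D,w)$'', which the paper leaves implicit.

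One small over-claim: in your dictionary argument you call the star $B_a$ of a weighted-degree-$\tau$ vertex a \emph{minimal} dicut. That need not be true (removing such a vertex can disconnect the digraph), but you never actually use minimality---your inequalities $\tau\le\sum_i|J_i\cap B_a|\le w(B_a)=\tau$ only need $B_a$ to be a dicut, and then $a\in J_i\cap B_a$ with $|J_i\cap B_a|=1$ already shows $J_i-\{a\}$ misses $B_a$. So the argument stands; just drop the word ``minimal'' there. Your observation that in a weighted $(\tau,\tau+1)$-bipartite digraph any size-$\tau$ packing of dijoins automatically consists of minimal dijoins avoiding weight-$0$ arcs is exactly what makes the equitable dictionary work in the direction the paper needs, and is a detail worth recording.
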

\begin{proof}
$(\Rightarrow)$ holds clearly for both (1) and (2).
$(\Leftarrow)$ is a straightforward consequence of \Cref{contraction-packings}, \Cref{contraction-packings-equitable}, \Cref{product-packings}, and \Cref{weighted-DnL}, where for (1) we pick each $(D'_i,w'_i),i\in I$ to satisfy (i), and for (2) we pick each to satisfy (ii) of \Cref{weighted-DnL}.
\end{proof}

\begin{theorem}[Unweighted Reducing]\label{reduction}
Take an integer $\tau\geq 3$. The following statements hold: \begin{enumerate}[(1)]
\item Given any statement $[[\tau;\cdot,\cdot]]$ that includes $\tau$, excludes $\wt,\rho$, and may include $\pl,\eqt$, the statement is true if, and only if, it is true for all $(\tau,\tau+1)$-bipartite digraphs that are balanced.
\item Given any statement $[[\tau,\rho;\cdot,\cdot]]$ that includes $\tau,\rho$, excludes $\wt$, and may include $\pl,\eqt$, the statement is true if, and only if, it is true for all $(\tau,\tau+1)$-bipartite digraphs that are sink-regular.
\item Let $k\in [\tau-1]$. Consider the following statement: \begin{quote}
Let $D=(V,A)$ be a digraph where every dicut has size at least $\tau$. Then $A$ can be partitioned into a $k$-dijoin and a $(\tau-k)$-dijoin. \end{quote} If this statement is true for every sink-regular $(\tau,\tau+1)$-bipartite digraph, then it is true for every digraph.
\end{enumerate}
\end{theorem}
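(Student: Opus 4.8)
The plan is to imitate the proofs of parts 1 and 2 above, invoking \Cref{DnL} with option (ii), but replacing the two auxiliary facts \Cref{contraction-packings} and \Cref{product-packings} by analogues tailored to the splitting property. Observe first that, since every dicut of a digraph contains a minimal dicut, an arc set is a $k$-dijoin of $D$ precisely when it meets every member of $b(\mathcal{C}(D))$, the clutter of minimal dicuts, at least $k$ times. Hence the statement to be proved reads, in clutter language: the ground set of $\mathcal{C}(D)$ can be partitioned into a $k$-cover and a $(\tau-k)$-cover of $b(\mathcal{C}(D))$. Call a clutter $\mathcal{C}$ \emph{$(k,\tau-k)$-splittable} if its ground set admits such a partition; the goal is to show that if every sink-regular $(\tau,\tau+1)$-bipartite digraph $D'$ has $\mathcal{C}(D')$ $(k,\tau-k)$-splittable, then so does every digraph with all dicuts of size at least $\tau$.

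First I would establish two closure properties, the exact counterparts of \Cref{contraction-packings} and \Cref{product-packings}. (i) If $\mathcal{C}$ is $(k,\tau-k)$-splittable, then so is every contraction minor of $\mathcal{C}$: since contraction in $\mathcal{C}$ corresponds to deletion in $b(\mathcal{C})$, the members of the contracted clutter's blocker are exactly those members of $b(\mathcal{C})$ avoiding the contracted elements, so restricting a witnessing partition to the smaller ground set still meets each of these members at least $k$, resp.\ $\tau-k$, times; iterate for arbitrary contraction minors. (ii) If clutters $\mathcal{C}_1,\dots,\mathcal{C}_m$ over pairwise disjoint ground sets $A_1,\dots,A_m$ are each $(k,\tau-k)$-splittable, then so is $\prod_i \mathcal{C}_i$: by \Cref{product-coproduct} its blocker is $\otimes_i b(\mathcal{C}_i)$, each of whose members lies inside a single $A_i$, so the union over $i$ of the two parts of a witnessing partition of $A_i$ is a witnessing partition for $\prod_i \mathcal{C}_i$. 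Both verifications are routine.

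With (i) and (ii) in hand the reduction is short. Let $D$ be any digraph with every dicut of size at least $\tau$ (recall $\tau\ge 3$ here). Apply \Cref{DnL} with option (ii): there are sink-regular $(\tau,\tau+1)$-bipartite digraphs $D'_i$, $i\in I$, and clutters $\mathcal{C}_i$ with $\mathcal{C}(D)=\prod_{i\in I}\mathcal{C}_i$ and each $\mathcal{C}_i$ a contraction minor of $\mathcal{C}(D'_i)$. Each $D'_i$, being $(\tau,\tau+1)$-bipartite, has all dicuts of size at least $\tau$, so the assumed statement applies to it, making $\mathcal{C}(D'_i)$ $(k,\tau-k)$-splittable; by (i) each $\mathcal{C}_i$ is $(k,\tau-k)$-splittable; by (ii) so is $\mathcal{C}(D)=\prod_i\mathcal{C}_i$. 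That is, $A$ partitions into a $k$-dijoin and a $(\tau-k)$-dijoin of $D$. I do not anticipate a genuine obstacle, as all the substance is in \Cref{DnL}; the only points needing care are the translation between ``$k$-dijoin'' and ``$k$-cover of the minimal-dicut clutter'' via the fact that every dicut contains a minimal one, and the bookkeeping in (ii)—checking that a minimal dicut of a product of dijoin clutters sits inside a single block, so that a $k$-cover of the product is a disjoint union of $k$-covers of the blocks.
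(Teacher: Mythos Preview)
Your proposal is correct and follows essentially the same approach as the paper's proof. The paper also invokes \Cref{DnL} with option (ii), passes to the blocker via $b(\mathcal{C}(D))=\otimes_{i\in I}b(\mathcal{C}_i)$ with each $b(\mathcal{C}_i)$ a deletion minor of $b(\mathcal{C}(D'_i))$, and then verifies directly that the union of the witnessing partitions of the $A(D'_i)$ meets every minimal dicut of $D$ the required number of times; your version simply packages the same two steps as the closure properties (i) and (ii), which makes the restriction to the original arc set $A$ a bit more explicit.
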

\begin{proof}
$(\Rightarrow)$ holds clearly for both (1) and (2).
$(\Leftarrow)$ is a straightforward consequence of \Cref{contraction-packings}, \Cref{contraction-packings-equitable}, \Cref{product-packings}, and \Cref{DnL} (which requires $\tau\geq 3$), where for (1) we pick each $D'_i,i\in I$ to satisfy (i), and for (2) we pick each to satisfy (ii).

{\bf (3)}
Let us prove the statement for an arbitrary instance $D=(V,A)$.
By \Cref{DnL}, there exist sink-regular $(\tau,\tau+1)$-bipartite digraphs $D'_i,i\in I$ for a finite index set $I$, such that $\mathcal{C}(D)=\prod_{i\in I}\mathcal{C}_i$, where $\mathcal{C}_i$ is a contraction minor of $\mathcal{C}(D'_i)$ for each $i\in I$. In other words, $b(\mathcal{C}(D))=\otimes_{i\in I}b(\mathcal{C}_i)$, where $b(\mathcal{C}_i)$ is a deletion minor of $b(\mathcal{C}(D'_i))$ for each $i\in I$. By our hypothesis, $A(D'_i)$ can be partitioned into a $k$-dijoin $J^1_i$ and a $(\tau-k)$-dijoin $J^2_i$ of $D'_i$, for each $i\in I$. Let $J^1:=\bigcup_{i\in I} J^1_i$ and $J^2:=\bigcup_{i\in I} J^2_i$. Clearly, $J^1$ and $J^2$ are disjoint. We claim that $J^1$ is a $k$-dijoin and $J^2$ is a $(\tau-k)$-dijoin of $D$, thereby finishing the proof. To this end, let $\delta_D^+(U)$ be a minimal dicut of $D$, that is, $\delta_D^+(U)\in b(\mathcal{C}(D))$. Then $\delta_D^+(U)\in b(\mathcal{C}_j)$, so $\delta_D^+(U)$ is also a minimal dicut of some $D'_j,j\in I$, implying in turn that \begin{align*}
|\delta_{D}^+(U)\cap J^1|&=|\delta_{D'_j}^+(U)\cap J^1_j|\geq k\\
|\delta_{D}^+(U)\cap J^2|&=|\delta_{D'_j}^+(U)\cap J^2_j|\geq \tau-k.
\end{align*} Since these inequalities hold for every minimal dicut of $D$, we get that $J^1$ is a $k$-dijoin and $J^2$ is a $(\tau-k)$-dijoin of $D$, as required.
\end{proof}

We will not use \Cref{weighted-reduction}~(1) and \Cref{reduction}~(1) in the rest of this paper.

\section{$[[\wt,\tau,1;\eqt]]$ is true.}\label{sec:rho=1}

In this section we prove result \hyperlink{primary-results}{{\bf P2}}. Throughout the section, unless stated otherwise, we are given an integer $\tau\geq 2$, a weighted $(\tau,\tau+1)$-bipartite digraph $(D=(V,A),w)$ that is sink-regular, and $w\in \{0,1\}^A$. Let $A_1:=\{a\in A:w_a=1\}$.

\subsection{Rounded $1$-factors}

\begin{DE}
A vertex $v$ of $(D,w)$ is \emph{active} if $w(\delta(v))=\tau+1$, and is \emph{inactive} if $w(\delta(v))=\tau$. Given $U\subseteq V$, denote by $a(U)$ the set of active vertices of $(D,w)$ in $U$.
\end{DE}

\begin{RE}\label{R1F-birth}
Suppose there are $\tau$ disjoint dijoins $J_1,\ldots,J_\tau$ contained in $A_1$. Then the following statements hold: \begin{enumerate}[(1)]
\item for each inactive vertex $v$, $|J_i\cap \delta(v)|=1$ for each $i\in [\tau]$,
\item $J_1,\ldots,J_\tau$ partition $A_1$, and
\item for each active vertex $v$, $|J_i\cap \delta(v)|\in \{1,2\}$ for each $i\in [\tau]$, and $|J_j\cap \delta(v)|= 2$ for exactly one~$j$.
\end{enumerate}
\end{RE}
\begin{proof}
{\bf (1)} For every inactive vertex $v$, $\delta(v)$ is a dicut of weight $\tau$, so (1) follows.
{\bf (2)} follows from (1) combined with the fact that every arc is incident to an inactive vertex, because the active vertices form a stable set of $D$.
{\bf (3)} follows from (2) and the fact that $\delta(v)$ is a dicut of weight $\tau+1$.
\end{proof}

\begin{DE}
Let $J\subseteq A$. We say that $J$ is a \emph{rounded $1$-factor of $(D,w)$} if for each vertex $v$, $|J\cap \delta(v)|$ is $\frac{w(\delta(v))}{\tau}$ rounded up or down. For a rounded $1$-factor $J$, a \emph{dyad center} is a vertex incident with two arcs from~$J$; such a pair of arcs is called a \emph{dyad}; denote by $\dc(J)$ the set of dyad centers of $J$.
\end{DE}

Observe that a rounded $1$-factor is the vertex disjoint union of arcs and dyads saturating every vertex of $D$. Observe further that a dyad center is necessarily active. Using the following general result for bipartite graphs, we get a partition of $A_1$ into $\tau$ rounded $1$-factors.

\begin{theorem}[de Werra~\cite{deWerra71}, see  \cite{LP09}, Corollary 1.4.21]\label{deWerra}
Let $G=(V,E)$ be a bipartite graph, and $k\geq 1$ an integer. Then $E$ can be partitioned into $k$ sets $J_1,\ldots,J_k$ such that $|J_i\cap \delta(v)|$ is $\frac{|\delta(v)|}{k}$ rounded up or down, for each $i\in [k]$ and $v\in V$.
\end{theorem}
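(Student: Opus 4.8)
The plan is to prove the statement by induction on $k$, peeling off one part at a time; the only substantive step is to extract a single ``$\tfrac1k$-balanced'' subgraph, which I would obtain from total unimodularity of bipartite incidence matrices. (Here $G$ may be a multigraph, which causes no difficulty below, and ``$\frac{|\delta(v)|}{k}$ rounded up or down'' means an element of $\{\lfloor |\delta(v)|/k\rfloor,\lceil |\delta(v)|/k\rceil\}$, a single value when $k$ divides $|\delta(v)|$.)

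The base case $k=1$ is trivial. For $k\ge 2$ the first step is to find a subgraph $H\subseteq E$ with $\deg_H(v)\in\{\lfloor \deg_G(v)/k\rfloor,\lceil \deg_G(v)/k\rceil\}$ for every $v\in V$. For this, consider
\[
P:=\Big\{x\in[0,1]^E:\ \big\lfloor\tfrac{\deg_G(v)}{k}\big\rfloor\le \textstyle\sum_{e\in\delta(v)}x_e\le \big\lceil\tfrac{\deg_G(v)}{k}\big\rceil \ \text{ for all } v\in V\Big\}.
\]
Since $G$ is bipartite, its vertex--edge incidence matrix is totally unimodular, so the polytope $P$, being cut out by the rows of that matrix together with integral lower and upper bounds and the box $[0,1]^E$, is an integral polytope; and $P$ is nonempty since $\tfrac1k\1\in P$. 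Hence $P$ has a vertex, which is integral, and its support is the desired subgraph $H$; set $J_k:=H$.

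The second step is the induction. Let $G':=G-J_k$, a bipartite graph. Write $\deg_G(v)=q_vk+r_v$ with $0\le r_v<k$. Since $\deg_{J_k}(v)\in\{q_v,q_v+1\}$, a short check shows $\deg_{G'}(v)=\deg_G(v)-\deg_{J_k}(v)$ can be written as $q_v(k-1)+s_v$ with $0\le s_v\le k-1$, so both $\lfloor \deg_{G'}(v)/(k-1)\rfloor$ and $\lceil \deg_{G'}(v)/(k-1)\rceil$ lie in $\{q_v,q_v+1\}=\{\lfloor \deg_G(v)/k\rfloor,\lceil \deg_G(v)/k\rceil\}$. Applying the induction hypothesis to $G'$ and $k-1$ gives a partition $J_1,\dots,J_{k-1}$ of $E(G')$ with each $|J_i\cap\delta(v)|$ equal to $\deg_{G'}(v)/(k-1)$ rounded up or down; by the previous sentence all of these values, together with $|J_k\cap\delta(v)|=\deg_{J_k}(v)$, lie in $\{\lfloor \deg_G(v)/k\rfloor,\lceil \deg_G(v)/k\rceil\}$. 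Hence $J_1,\dots,J_k$ is the required partition, completing the induction.

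\paragraph{Main obstacle.}
The one place with real content is the existence of the balanced subgraph $H$, i.e.\ the base case of the peeling; I expect this to be the crux, and the total-unimodularity argument above dispatches it cleanly. An alternative, more elementary but more bookkeeping-heavy route avoids both the induction and linear programming: start from an arbitrary $k$-edge-colouring, and while some vertex $v$ is unbalanced pick colours $i,j$ at $v$ whose multiplicities at $v$ differ by at least $2$, swap the two colours along a maximal $i,j$-alternating trail beginning at $v$, and argue via a potential function that this terminates and never creates imbalance elsewhere. Verifying that last claim — the effect of the swap at the far endpoint of the trail — is the delicate part, which the polytope argument sidesteps.
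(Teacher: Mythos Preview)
The paper does not prove this theorem; it is cited as a known result of de Werra (with a pointer to Lov\'asz--Plummer, Corollary~1.4.21), so there is no in-paper proof to compare against.

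Your argument is correct. Total unimodularity of the bipartite incidence matrix makes the polytope $P$ integral, and $\tfrac1k\1\in P$ gives nonemptiness, so an integral vertex yields the desired subgraph $J_k$. The inductive arithmetic is also right: with $\deg_G(v)=q_vk+r_v$ and $0\le r_v<k$, either $\deg_{J_k}(v)=q_v$ (giving $s_v=r_v\in\{0,\dots,k-1\}$) or $\deg_{J_k}(v)=q_v+1$ (only possible when $r_v\ge 1$, giving $s_v=r_v-1\in\{0,\dots,k-2\}$), so $\lfloor\deg_{G'}(v)/(k-1)\rfloor$ and $\lceil\deg_{G'}(v)/(k-1)\rceil$ both lie in $\{q_v,q_v+1\}$; and when $r_v=0$ one is forced into the first case with $s_v=0$, so both equal $q_v$, matching $\lfloor\deg_G(v)/k\rfloor=\lceil\deg_G(v)/k\rceil=q_v$.

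For context, de Werra's original proof and the treatment in Lov\'asz--Plummer proceed via the Kempe-chain recolouring you sketch as the alternative route; your polytope argument is a clean substitute that trades the termination analysis for an appeal to total unimodularity.
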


Subsequently,

\begin{LE}\label{R1F-decomposition}
$A_1$ can be partitioned into $\tau$ rounded $1$-factors.
\end{LE}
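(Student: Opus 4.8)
The plan is to deduce this directly from de Werra's theorem (\Cref{deWerra}). Let $G$ be the underlying undirected graph of the subdigraph $D[A_1]$, i.e.\ the graph with vertex set $V$ and edge set $A_1$, where an arc of $A_1$ becomes the corresponding undirected edge. Since $D$ is a bipartite digraph, every vertex of $D$ is a source or a sink, so $G$ is bipartite with the sources on one side and the sinks on the other. Moreover, because $w\in\{0,1\}^A$, the $G$-degree of any vertex $v$ equals $|\delta(v)\cap A_1|=w(\delta(v))$, which is $\tau$ or $\tau+1$ as $(D,w)$ is a weighted $(\tau,\tau+1)$-bipartite digraph. Write $\delta_G(v)$ for the edge set of $G$ incident with $v$, so $|\delta_G(v)|=w(\delta(v))$.

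First I would apply \Cref{deWerra} to the bipartite graph $G$ with $k:=\tau$. This partitions $A_1=E(G)$ into $\tau$ sets $J_1,\ldots,J_\tau$ such that for every $i\in[\tau]$ and every $v\in V$, the quantity $|J_i\cap\delta_G(v)|$ equals $\frac{|\delta_G(v)|}{\tau}$ rounded up or down. Then I would translate this back to $(D,w)$: fix $i$ and $v$, and note that since $J_i\subseteq A_1$ we have $J_i\cap\delta(v)=J_i\cap\delta_G(v)$, hence $|J_i\cap\delta(v)|$ is $\frac{|\delta_G(v)|}{\tau}=\frac{w(\delta(v))}{\tau}$ rounded up or down. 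By definition this says each $J_i$ is a rounded $1$-factor of $(D,w)$, and $J_1,\ldots,J_\tau$ partition $A_1$, which is exactly the claim.

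There is essentially no obstacle here; the lemma is an immediate corollary of \Cref{deWerra} once the dictionary is set up. The only two points needing a word of justification are that $G$ is bipartite (which is why we may invoke de Werra's theorem, and which holds because $D$ is a bipartite digraph) and that the $G$-degree of a vertex coincides with its $w$-weighted degree (which uses $w\in\{0,1\}^A$), so that de Werra's rounding condition for $|\delta_G(v)|/\tau$ is literally the defining condition of a rounded $1$-factor with respect to $w(\delta(v))/\tau$.
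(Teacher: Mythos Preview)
Your proof is correct and is exactly the paper's approach: the paper's proof is the single sentence that the lemma is an immediate consequence of \Cref{deWerra} applied to $G=D[A_1]$ and $k=\tau$, and you have simply spelled out the translation.
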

\begin{proof}
This is an immediate consequence of \Cref{deWerra} applied to $G=D[A_1]$ and $k=\tau$.
\end{proof}

\subsection{Discrepancy}

\begin{DE}
For each sink $u$ of $(D,w)$, denote $\disc(u):=1$, and for each source $u$, denote $\disc(u):=-1$. For every $U\subseteq V$, the \emph{discrepancy of $U$} in $(D,w)$, denoted $\disc(U)$, is the number of sinks in $U$ minus the number of sources in $U$, that is, $\disc(U) = \sum_{u\in U}\disc(u)$.
\end{DE}

Observe that $\disc:2^V\to \mathbb{Z}$ is a \emph{modular function}, that is, $\disc(U\cap W)+\disc(U\cup W)=\disc(U)+\disc(W)$ for all $U,W\subseteq V$.

\begin{LE}\label{disc}
The following statements hold: \begin{enumerate}[(1)]
\item $|a(V)|=\tau\cdot \disc(V)$, and $\rho(\tau,D,w) = \disc(V)$,
\item for every dicut $\delta^+(U)$ of $D$, $w(\delta^+(U))=|a(U)|-\tau\cdot \disc(U)$,
\item for every dicut $\delta^+(U)$ of $D$, $\disc(U)\leq \disc(V)-1$, and if equality holds, then $w(\delta^+(U))=\tau$ and $a(U)=a(V)$.
\end{enumerate}
\end{LE}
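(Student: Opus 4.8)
The plan is to compute weighted degrees and use the fact that, in a sink-regular weighted $(\tau,\tau+1)$-bipartite digraph, active vertices are exactly the sources of weighted degree $\tau+1$ (since sinks all have weighted degree $\tau$), and these form a stable set. For \textbf{(1)}, I would sum $w(\delta^+(v))-w(\delta^-(v)) \pmod \tau$ over all vertices: a sink contributes $w(\delta^-(v))\equiv 0$, hence $0$; an inactive source contributes $w(\delta^+(v))=\tau\equiv 0$, hence $0$; an active source contributes $w(\delta^+(v))=\tau+1\equiv 1$, hence $1$. Thus $\tau\cdot\rho(\tau,D,w)=|a(V)|$, i.e. $\rho(\tau,D,w)=\frac{1}{\tau}|a(V)|$. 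Separately, to get $\rho(\tau,D,w)=\disc(V)$ I would evaluate $\sum_{v\in V}\big(w(\delta^+(v))-w(\delta^-(v))\big)=0$ by splitting vertices into sinks (contributing $-\tau$ each) and sources (contributing $\tau$ if inactive, $\tau+1$ if active). Writing $n^-=\#\text{sinks}$, $n^+_{\mathrm{in}}, n^+_{\mathrm{act}}$ for inactive/active sources, the identity $0 = \tau n^+_{\mathrm{in}} + (\tau+1)n^+_{\mathrm{act}} - \tau n^-$ gives $|a(V)| = n^+_{\mathrm{act}} = \tau(n^- - n^+_{\mathrm{in}} - n^+_{\mathrm{act}}) = \tau\cdot\disc(V)$, where the last equality holds because $\disc(V)=n^- - (n^+_{\mathrm{in}}+n^+_{\mathrm{act}})$. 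Combining, $\rho(\tau,D,w)=\disc(V)=\frac{1}{\tau}|a(V)|$.

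For \textbf{(2)}, fix a dicut $\delta^+(U)$ of $D$, so $\delta^-(U)=\emptyset$. I would apply the same degree-sum computation but restricted to $U$: $\sum_{v\in U}\big(w(\delta^+(v))-w(\delta^-(v))\big) = w(\delta^+(U)) - w(\delta^-(U)) = w(\delta^+(U))$ since no arc enters $U$. On the other hand this sum equals $\tau\cdot(\text{number of sources in }U) + |a(U)| - \tau\cdot(\text{number of sinks in }U)$, using that each source of $U$ contributes $w(\delta^+(v))\in\{\tau,\tau+1\}$ (the extra $+1$ precisely when active, and active vertices in $U$ lie in $a(U)$) and each sink contributes $-w(\delta^-(v)) = -\tau$. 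Since $\text{number of sources in }U - \text{number of sinks in }U = -\disc(U)$, this yields $w(\delta^+(U)) = |a(U)| - \tau\cdot\disc(U)$.

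For \textbf{(3)}, I would use that every dicut has weight at least $\tau$ (part of the definition of a weighted $(\tau,\tau+1)$-bipartite digraph) together with (1) and (2). From (2), $w(\delta^+(U)) = |a(U)| - \tau\cdot\disc(U) \geq \tau$. Since $|a(U)| \leq |a(V)| = \tau\cdot\disc(V)$ by (1), we get $\tau\cdot\disc(V) - \tau\cdot\disc(U) \geq w(\delta^+(U)) \geq \tau$, hence $\disc(U) \leq \disc(V) - 1$. If equality $\disc(U) = \disc(V)-1$ holds, then every inequality in the chain $\tau \leq w(\delta^+(U)) = |a(U)| - \tau\disc(U) = |a(U)| - \tau\disc(V) + \tau \leq \tau$ is tight, forcing $w(\delta^+(U)) = \tau$ and $|a(U)| = |a(V)|$; since $a(U)\subseteq a(V)$, the latter gives $a(U) = a(V)$. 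The proof is entirely bookkeeping; the only point requiring a little care is keeping straight which vertices are active and confirming the mod-$\tau$ reductions, so I expect no genuine obstacle — the main thing is to state the degree contributions cleanly and invoke $w(\delta^+(U))\ge\tau$ at the right moment.
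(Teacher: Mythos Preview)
Your proposal is correct and follows essentially the same approach as the paper: both arguments are straightforward degree-sum/double-counting computations exploiting sink-regularity (so active vertices are exactly the sources of weighted degree $\tau+1$), followed by the chain $\tau \le w(\delta^+(U)) = |a(U)|-\tau\,\disc(U) \le |a(V)|-\tau\,\disc(U)=\tau\,\disc(V)-\tau\,\disc(U)$ for part~(3). The only cosmetic difference is that the paper phrases (1) as double-counting $w(A)$ over sinks versus sources, whereas you sum $w(\delta^+(v))-w(\delta^-(v))$ to zero; these are the same identity.
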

\begin{proof}
{\bf (1)} Let us double-count $w(A)$. On one hand, $w(A)=\sum_{v \text{ a sink}}w(\delta^-(v))=\tau\cdot |\{v:v \text{ a sink}\}|$, where the last equality holds because $(D,w)$ is sink-regular. On the other hand, $$w(A)=\sum_{v \text{ a source}}w(\delta^+(v))=\tau\cdot |\{v:v \text{ a source}\}|+|a(V)|.$$ Thus, $|a(V)|=\tau(|\{v:v \text{ a sink}\}|-|\{v:v \text{ a source}\}|)=\tau\cdot \disc(V)$. Moreover, $$\rho(\tau,D,w)=\frac{1}{\tau}\sum_{v\in V} (w(\delta^+(v))-w(\delta^-(v)) \mod{\tau})=\frac{1}{\tau}|a(V)|=\disc(V),$$ where the second to last equality holds because $(D,w)$ is sink-regular. Thus, (1) holds.
{\bf (2)} We have \begin{align*}
w(\delta^+(U))&=w(\delta^+(U))-w(\delta^-(U))\\
&=\sum_{v\in U} (w(\delta^+(v))-w(\delta^-(v)))\\
&=|a(U)|+\tau(|\{v:v \text{ a source in $U$}\}|-|\{v:v \text{ a sink in $U$}\}|)\\
&=|a(U)| - \tau\cdot \disc(U).
\end{align*}
{\bf (3)} Since $(D,w)$ is a weighted $(\tau,\tau+1)$-bipartite digraph, $w(\delta^+(U))\geq \tau$, so $|a(U)|-\tau\cdot\disc(U)\geq \tau$ by (2), implying in turn that $|a(U)|\geq \tau(1+\disc(U))$. Moreover, $|a(V)|\geq |a(U)|$, and $|a(V)|=\tau\cdot\disc(V)$ by (1). Combining these we get $\disc(V)\geq 1+\disc(U)$. If equality holds here, then it must hold throughout, so $w(\delta^+(U))=\tau$ and $a(V)=a(U)$, as claimed.
\end{proof}

\begin{LE}\label{R1F}
Let $J\subseteq A$ be a rounded $1$-factor. Then the following statements hold: \begin{enumerate}[(1)]
\item $|\dc(J)| = \disc(V)$,
\item
$|J\cap \delta^+(U)|= |\dc(J)\cap U|- \disc(U)$
for every dicut $\delta^+(U)$ of $D$,
\item $J$ is a dijoin of $D$ if, and only if,
$|\dc(J)\cap U| \geq 1+ \disc(U)$ for every dicut $\delta^+(U)$ of $D$,
\item Let $J_1,\ldots,J_\tau$ be a partition of $A_1$ into rounded $1$-factors. Pick $i,j\in [\tau]$.
Then for every dicut $\delta^+(U)$ of $D$,
$$
|J_i\cap \delta^+(U)| - |J_j\cap \delta^+(U)| =
|\dc(J_i)\cap U| - |\dc(J_j)\cap U|,
$$ and so $$
-\disc(V)\leq
|J_i\cap \delta^+(U)| - |J_j\cap \delta^+(U)|\leq \disc(V).
$$
\end{enumerate}
\end{LE}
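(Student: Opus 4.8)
The plan is to base everything on one structural observation plus two counting identities. The observation is that since $(D,w)$ is \emph{sink}-regular, every sink is inactive, so every active vertex is a source; as a dyad center is necessarily active, we get $\dc(J)\subseteq a(V)\subseteq\{$sources$\}$. Write $S$ for the set of sources and $T$ for the set of sinks, so $D$ is bipartite with parts $S,T$ and $\disc(U)=|U\cap T|-|U\cap S|$.

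First I would prove (1) by counting $|J|$ along the bipartition. Every sink $v$ is inactive, hence $|J\cap\delta(v)|=1$, so $|J|=\sum_{v\in T}|J\cap\delta(v)|=|T|$. On the other side, every source $v$ has $|J\cap\delta(v)|=1$ unless $v\in\dc(J)$, in which case it is $2$ (this is the definition of a rounded $1$-factor together with the fact that dyad centers are active sources), so $|J|=\sum_{v\in S}|J\cap\delta(v)|=|S|+|\dc(J)|$. Subtracting the two expressions gives $|\dc(J)|=|T|-|S|=\disc(V)$. For (2), fix a dicut $\delta^+(U)$, so $\delta^-(U)=\emptyset$. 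Every arc leaving $U$ has its tail a source (a sink has no outgoing arc), so $\delta^+(U)$ is exactly the set of arcs from $U\cap S$ to $V\setminus U$. Counting as above, the number of $J$-arcs with tail in $U\cap S$ equals $|U\cap S|+|\dc(J)\cap U|$ (using $\dc(J)\subseteq S$); each such arc lands either at a sink in $U\cap T$ or outside $U$, and the latter are precisely $J\cap\delta^+(U)$. Conversely, every $J$-arc with head in $U\cap T$ originates inside $U$ (since $\delta^-(U)=\emptyset$), and there are $\sum_{v\in U\cap T}|J\cap\delta(v)|=|U\cap T|$ of them because sinks are inactive. Equating the two counts of $J$-arcs from $U\cap S$ to $U\cap T$ yields $|U\cap S|+|\dc(J)\cap U|=|U\cap T|+|J\cap\delta^+(U)|$, i.e.\ $|J\cap\delta^+(U)|=|\dc(J)\cap U|-\disc(U)$.

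Parts (3) and (4) then follow formally. For (3), $J$ is a dijoin iff $|J\cap\delta^+(U)|\geq 1$ for every dicut, which by (2) is equivalent to $|\dc(J)\cap U|\geq 1+\disc(U)$. For (4), applying (2) to $J_i$ and to $J_j$ and subtracting makes the $\disc(U)$ terms cancel, giving $|J_i\cap\delta^+(U)|-|J_j\cap\delta^+(U)|=|\dc(J_i)\cap U|-|\dc(J_j)\cap U|$; then $0\leq|\dc(J_i)\cap U|\leq|\dc(J_i)|=\disc(V)$ by (1), and symmetrically for $j$, forces $-\disc(V)\leq|J_i\cap\delta^+(U)|-|J_j\cap\delta^+(U)|\leq\disc(V)$. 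The only step that needs care is (2): one must consistently use $\delta^-(U)=\emptyset$ to see that every crossing arc leaves $U$ from a source and that every $J$-arc reaching a sink of $U$ comes from within $U$; past that bookkeeping, I expect no genuine obstacle. (Note $|\dc(J)|=\disc(V)=\rho(\tau,D,w)\geq 0$ by \Cref{disc}, consistent with the count.)
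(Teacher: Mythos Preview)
Your proof is correct and follows essentially the same approach as the paper. The paper proves (1) by the same double count of $|J|$ over sources versus sinks, and proves (2) via the telescoping identity $|J\cap\delta^+(U)|-|J\cap\delta^-(U)|=\sum_{v\in U}(|J\cap\delta^+(v)|-|J\cap\delta^-(v)|)$, which is exactly your arc-count between $U\cap S$ and $U\cap T$ repackaged; (3) and (4) are then derived from (2) and (1) just as you do.
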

\begin{proof}
{\bf (1)} Since $(D,w)$ is sink-regular, every active vertex is a source, so every dyad center is a source. We know that $J$ is the vertex-disjoint union of arcs and dyads saturating every vertex. A simple double-counting tells us that the number of dyads of $J$ is $\disc(V)$, so $|\dc(J)|=\disc(V)$.
{\bf (2)} We have \begin{align*}
|J\cap \delta^+(U)| &= |J\cap \delta^+(U)|-|J\cap \delta^-(U)|\\
&=\sum_{v\in U} (|J\cap \delta^+(v)|-|J\cap \delta^-(v)|)\\
&=|\dc(J)\cap U|+|\{v:v\text{ a source in $U$}\}|-|\{v:v\text{ a sink in $U$}\}|\\
&=|\dc(J)\cap U|-\disc(U)
\end{align*} where the third equality uses the fact that every vertex in $U$ has exactly one arc in $J$ incident to it, except for the dyad centers of $J$, which are active and therefore sources, and have exactly two arcs in $J$. {\bf (3)} and {\bf (4)} are immediate consequences of (1) and (2).
\end{proof}

\begin{RE}\label{R1F-remark}
Observe that the equality in \Cref{disc}~(2) holds more generally for every dicut of $D[A_1]$. Also, the (in)equalities of \Cref{R1F}~(2) and~(4) hold more generally for every dicut of $D[A_1]$ if $J\subseteq A_1$, which will be the case in most applications.

Note the significance of \Cref{R1F}~(2) and~(3): a rounded $1$-factor being a dijoin is solely a function of its dyad centers, and not of the arcs.
\end{RE}

\subsection{$[[\wt,\tau,1;\eqt]]$ is true.}

\begin{theorem}\label{rho=1-sink-reg}
Let $(D=(V,A),w)$ be a sink-regular weighted $(\tau,\tau+1)$-bipartite digraph such that $\rho(\tau,D,w)\leq 1$. Then there exists an equitable $w$-weighted packing of dijoins of size $\tau$.
\end{theorem}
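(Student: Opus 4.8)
The plan is to reduce the statement to a statement about dyad centers, using \Cref{R1F}, and then to use the alternating path technique of \Cref{alternating} to equalize the dyad center sets of the $\tau$ rounded $1$-factors.

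First I would handle the case $\rho(\tau,D,w)=0$ separately, since it is easy: by \Cref{disc}~(1), $\disc(V)=0$, so $|a(V)|=0$, meaning $(D,w)$ has no active vertices and every dicut has weight exactly $\tau$. By \Cref{R1F-decomposition}, $A_1$ partitions into $\tau$ rounded $1$-factors $J_1,\dots,J_\tau$, and since there are no active vertices there are no dyad centers, so each $J_i$ is simply a perfect matching saturating every vertex. By \Cref{R1F}~(2), $|J_i\cap\delta^+(U)|=-\disc(U)$ for every dicut $\delta^+(U)$, which is independent of $i$; moreover this common value is $w(\delta^+(U))\ge\tau$ combined with \Cref{disc}~(2) giving $w(\delta^+(U))=-\tau\disc(U)$, so $|J_i\cap\delta^+(U)|=\tfrac1\tau w(\delta^+(U))$. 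Hence (after re-expanding weight-$1$ arcs back to their original multiplicities, or rather: since $w\in\{0,1\}^A$ here, the $J_i$ directly give a $w$-weighted packing of dijoins of size $\tau$) this is an equitable — in fact perfectly balanced — packing.

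The substantive case is $\rho(\tau,D,w)=1$. By \Cref{disc}~(1), $\disc(V)=1$, so exactly one dyad center exists per rounded $1$-factor: by \Cref{R1F}~(1), $|\dc(J)|=1$ for every rounded $1$-factor $J$. Start from a partition $A_1=J_1\cup\dots\cup J_\tau$ into rounded $1$-factors (\Cref{R1F-decomposition}), and write $\dc(J_i)=\{q_i\}$. By \Cref{R1F}~(3), $J_i$ is a dijoin if and only if $|\dc(J_i)\cap U|\ge 1+\disc(U)$ for every dicut $\delta^+(U)$; since $\disc(U)\le\disc(V)-1=0$ by \Cref{disc}~(3), the condition $|\dc(J_i)\cap U|\ge 1+\disc(U)$ is automatic when $\disc(U)\le -1$, so the only constraint is on dicuts $\delta^+(U)$ with $\disc(U)=0$: for such $U$, \Cref{disc}~(3) gives $a(U)=a(V)$, i.e. $U$ contains every active vertex — in particular $q_i\in U$. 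So in fact $J_i$ is \emph{always} a dijoin. Thus each $J_i$ is already a dijoin and we have a $w$-weighted packing of size $\tau$; it remains to make it equitable. By \Cref{R1F}~(4), for any two indices $i,j$ and any minimal dicut $\delta^+(U)$, $|J_i\cap\delta^+(U)|-|J_j\cap\delta^+(U)| = |\dc(J_i)\cap U|-|\dc(J_j)\cap U| = \mathbf{1}[q_i\in U]-\mathbf{1}[q_j\in U]\in\{-1,0,1\}$. This is already equitable! So when $\rho=1$ the trivial partition into rounded $1$-factors works without any alternating-path surgery.

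Putting the two cases together proves the theorem, and I should double-check that the equitability in \Cref{R1F}~(4) is stated for all dicuts (it is) so a fortiori for minimal ones. The main thing to be careful about is the reduction that lets us use \Cref{R1F}~(2)–(4) on $A_1$: \Cref{R1F-remark} notes these hold for every dicut of $D[A_1]$ when $J\subseteq A_1$, and since every minimal dicut of $D$ restricts to a dicut of $D[A_1]$ (weight-$0$ arcs of $D$ are not in $A_1$, but a minimal dicut of $(D,w)$ consists of weight-$1$ arcs only, as $(D,w)$ is a weighted $(\tau,\tau+1)$-bipartite digraph where... — actually one must note that weight-$0$ arcs can lie in dicuts, but $w(\delta^+(U))\ge\tau$ forces enough weight-$1$ arcs, and the counted quantity $|J_i\cap\delta^+(U)|$ only sees $A_1$), the argument goes through; I would spell this compatibility out carefully as the one potentially fiddly point. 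I do not expect a serious obstacle — the real content of this section is presumably the more delicate cases handled later, whereas $\rho\le 1$ falls out of the structural lemmas \Cref{disc} and \Cref{R1F} almost immediately.
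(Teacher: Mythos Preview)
Your proof is correct and takes essentially the same approach as the paper: partition $A_1$ into $\tau$ rounded $1$-factors via \Cref{R1F-decomposition}, then use \Cref{disc} and \Cref{R1F} to conclude each is a dijoin and the packing is equitable. The only cosmetic difference is that the paper handles $\rho\in\{0,1\}$ uniformly---it invokes \Cref{R1F}~(4) to get $|J_i\cap\delta^+(U)|-|J_j\cap\delta^+(U)|\in[-\disc(V),\disc(V)]\subseteq\{-1,0,1\}$, then uses a pigeonhole argument ($\sum_i|J_i\cap\delta^+(U)|=w(\delta^+(U))\geq\tau$ forces each term to be positive)---whereas you split into cases and, for $\rho=1$, appeal to \Cref{disc}~(3) to show $a(U)=a(V)$ on the tight dicuts; both routes are equally short, and your worry in the final paragraph about dicuts of $D$ versus $D[A_1]$ is unnecessary since \Cref{R1F}~(2)--(4) are already stated for dicuts of $D$.
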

\begin{proof}
By \Cref{R1F-decomposition}, $A_1$ can be partitioned into $\tau$ rounded $1$-factors $J_1,\ldots,J_\tau$.
We claim that each $J_i$ is a dijoin of $D$. By \Cref{disc}~(1), $\rho(\tau,D,w) = \disc(V)$, so $\disc(V)\in \{0,1\}$. Let $\delta^+(U)$ be a dicut of~$D$. It suffices to prove that $J_i\cap \delta^+(U)\neq \emptyset$ for each $i\in [\tau]$. By \Cref{R1F}~(4), for all $i,j\in [\tau]$, \begin{equation}\tag{$\star$}-1\leq |J_i\cap \delta^+(U)| - |J_j\cap \delta^+(U)| \leq 1.\end{equation}
 Since the dicut $\delta^+(U)$ has weight at least $\tau$, and $(J_i\cap \delta^+(U):i\in [\tau])$ partition the arcs in $A_1\cap \delta^+(U)$, $(\star)$ implies that $|J_i\cap \delta^+(U)|>0$ for each $i\in [\tau]$. Thus, each $J_i,i\in [\tau]$ is a dijoin of $D$. In fact, $(\star)$ implies that $J_1,\ldots,J_\tau$ is an equitable packing of dijoins, as required.
\end{proof}

\begin{theorem}\label{rho=1}
Let $(D=(V,A),w)$ be a weighted digraph where every dicut has weight at least $\tau$, and $\tau\geq 2$. Suppose $\rho(\tau,D,w)\leq 1$. Then there exists an equitable $w$-weighted packing of dijoins of size $\tau$. That is, $[[\wt,\tau,1;\eqt]]$ is true.
\end{theorem}
\begin{proof}
By \Cref{rho=1-sink-reg}, $[[\wt,\tau,1;\eqt]]$ holds for all sink-regular weighted $(\tau,\tau+1)$-bipartite digraphs. In particular, by \Cref{weighted-reduction}~(2), $[[\wt,\tau,1;\eqt]]$ is true.
\end{proof}

\begin{CO}\label{weighted-GCD}
Let $(D=(V,A),w)$ be a weighted digraph, and let $g:=\gcd\{w(\delta^+(v))-w(\delta^-(v)):v\in V\}$. Then there exists an equitable $w$-weighted packing of dijoins of size $g$.
\end{CO}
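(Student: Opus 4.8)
The plan is to read \Cref{weighted-GCD} directly off \Cref{rho=1} by taking $\tau:=g$; as usual we assume $(D,w)$ has no dicut of weight $0$ (equivalently $\tau(D,w)\ge 1$), which is implicit here since otherwise no $w$-weighted packing of dijoins of positive size exists. Two elementary facts do the work. First, for every dicut $\delta^+(U)$ we have $\delta^-(U)=\emptyset$, so
$$w(\delta^+(U))=\sum_{v\in U}\big(w(\delta^+(v))-w(\delta^-(v))\big),$$
and since $g$ divides every summand, $g\mid w(\delta^+(U))$; with the normalization above, every dicut thus has weight a positive multiple of $g$. Second, again because $g\mid\big(w(\delta^+(v))-w(\delta^-(v))\big)$ for each $v\in V$, the integer $\big(w(\delta^+(v))-w(\delta^-(v))\big)\bmod g$ is $0$ for every $v$, so $\rho(g,D,w)=\frac{1}{g}\sum_{v\in V}0=0$ whenever $g\ge 2$.

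I would then split on $g$. If $g=0$, the empty family is (vacuously) an equitable $w$-weighted packing of dijoins of size $0$. If $g=1$, every dicut has positive weight, so the set $\{a\in A:w_a\ge 1\}$ meets every dicut and is therefore a dijoin, and the one-member family containing it is an equitable $w$-weighted packing of dijoins of size $1$. If $g\ge 2$, the first fact gives that every dicut has weight at least $g$, and the second gives $\rho(g,D,w)=0\le 1$, so \Cref{rho=1} applied with $\tau=g$ produces an equitable $w$-weighted packing of dijoins of size $g$. That is all.

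There is no substantive obstacle here: the corollary is a repackaging of \Cref{rho=1}, the gcd hypothesis being exactly what forces $\rho(g,D,w)=0$ and forces every dicut weight to be a multiple of $g$. The only care needed is bookkeeping --- the degenerate values $g\in\{0,1\}$ fall outside the range $\tau\ge 2$ of \Cref{rho=1} and must be handled by hand as above, and one should make explicit the (implicit) normalization that $(D,w)$ has no dicut of weight $0$.
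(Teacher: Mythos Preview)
Your proof is correct and follows essentially the same approach as the paper's: both observe that $g$ divides every dicut weight (so every dicut has weight at least $g$), that $\rho(g,D,w)=0$ when $g\ge 2$, and then invoke \Cref{rho=1}; you are simply more explicit about the degenerate cases $g\in\{0,1\}$ and the implicit assumption that no dicut has weight $0$.
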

\begin{proof}
Observe that $g=
\gcd\{w(\delta^+(U))-w(\delta^-(U)):\emptyset\neq U\subsetneq V\}$. In particular, every dicut has weight at least $g$. If $g=1$, then we are done. Otherwise, $g\geq 2$. Then $\rho(g,D,w)=0$, so the result follows from $[[\wt,g,0;\eqt]]$, which holds by \Cref{rho=1}.
\end{proof}

\section{Packing a dijoin and a $(\tau-1)$-dijoin in digraphs}\label{sec:two-dijoins}

In this section we prove \hyperlink{primary-results}{{\bf P1}}. To this end, we need to recall some notions from Combinatorial Optimization~\cite{Schrijver03,Frank11}.

\subsection{Crossing families, box-TDI systems, and the integer decomposition property}

Let $\mathcal{U}$ be a family of subsets of a finite ground set $V$. A pair of sets $U,W\in \mathcal{U}$ is \emph{crossing} if $U\cap W\neq \emptyset$ and $U\cup W\neq V$. $\mathcal{U}$ is a \emph{crossing family} if $U\cap W,U\cup W\in \mathcal{U}$ for all crossing pairs $U,W\in \mathcal{U}$.

\begin{RE}\label{dicut-crossing}
Let $D=(V,A)$ be a digraph. Then $\{U\subseteq V: \text{ $\delta^+(U)$ is a dicut of $D$}\}$ is a crossing family.
\end{RE}

Given a crossing family $\mathcal{U}$, a function $g:\mathcal{U}\to \mathbb{Z}$ is \emph{crossing submodular} if $g(U)+g(W)\geq g(U\cap W)+g(U\cup W)$ for all crossing pairs $U,W\in \mathcal{U}$.

\begin{theorem}[Frank and Tardos~\cite{Frank84}, see \cite{Schrijver03}, Theorem 49.7a]\label{crossing-matroid}
Let $\mathcal{U}$ be a crossing family over ground set $V$, $g:\mathcal{U}\to \mathbb{Z}$ a crossing submodular function, and $k\geq 1$ an integer. Then $\{B\subseteq V:|B|=k; |B\cap U|\leq g(U) ~\forall U\in \mathcal{U}\}$, if nonempty, is the set of bases of a matroid.
\end{theorem}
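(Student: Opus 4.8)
The plan is to reduce the crossing-submodular setup to a classical submodular/intersecting-submodular setup for which the matroid statement is standard, rather than verifying the matroid axioms by hand. First I would recall the well-known fact (Schrijver, \emph{Combinatorial Optimization}, \S49) that a crossing submodular function $g$ on a crossing family $\mathcal{U}$ can, after replacing $g$ by a suitable ``extension,'' be assumed to be defined on an \emph{intersecting} family, or even on all of $2^V$, with the associated polyhedron unchanged on the relevant face. Concretely, consider the polytope $P := \{x \in \mathbb{R}^V : x \geq \mathbf{0},\ x(V) = k,\ x(U) \leq g(U)\ \forall U \in \mathcal{U}\}$. The content of Frank–Tardos is that $P$ is a box-TDI polyhedron, and in particular $P$ is an integral polytope all of whose vertices are $0/1$ vectors summing to $k$. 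Granting that, I would identify $P$ with the base polytope of a set system on $V$ and invoke the characterization of matroid base polytopes.

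The key steps, in order, are: (1) State that $P$ is nonempty by hypothesis (the set $\{B : |B| = k,\ |B\cap U| \leq g(U)\ \forall U\}$ is nonempty, and its incidence vectors lie in $P$). (2) Invoke the Frank–Tardos box-TDI theorem to conclude $P$ is integral; combined with $x \geq \mathbf 0$ and $x(V) = k$ and the fact that each vertex is integral and (being a vertex of a $0/1$-bounded-ish polytope — here one uses that $x(v) \le x(U) \le g(U)$ forces small values, or more cleanly that any integral point with $x(V)=k$ and $x\ge 0$ which is a vertex must be $0/1$ because fractional-free extreme points of such a polytope are $0/1$) each vertex is the incidence vector of a $k$-subset. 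Actually the cleanest route: every integral point of $P$ has nonnegative coordinates summing to $k$; a vertex of $P$ is integral, and I claim it is $0/1$ — if some coordinate were $\geq 2$, the corresponding unit-box face argument or a direct perturbation shows it is not extreme unless... — the truly clean statement is simply that the vertices of $P$ are exactly the incidence vectors of the sets in the family $\mathcal{B} := \{B \subseteq V : |B| = k,\ |B \cap U| \leq g(U)\ \forall U \in \mathcal{U}\}$, which is Theorem 49.7a as cited. (3) Apply the theorem of Edmonds (matroid base polytope characterization, or equivalently the fact that an integral polytope contained in $\{x \geq 0 : x(V) = k\}$ with $0/1$ vertices whose set of vertices $\mathcal{B}$ is nonempty forms the bases of a matroid precisely when it satisfies the exchange axiom — and here the TDI/integrality of the \emph{intersection} with boxes gives exactly this) to conclude that $\mathcal{B}$ is the base family of a matroid on $V$.

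Honestly, the most economical proof is essentially a citation: the statement is Theorem 49.7a in Schrijver, and the excerpt is entitled to cite it. So the ``proof'' I would write is: \emph{This is Theorem 49.7a in \cite{Schrijver03}; it follows from the box-total dual integrality of the system $\{x \geq \mathbf 0,\ x(U) \leq g(U)\ (U \in \mathcal{U})\}$ established by Frank and Tardos, which guarantees that the face $x(V) = k$ of the associated polyhedron is an integral polytope whose vertices are precisely the incidence vectors of the sets $B$ with $|B| = k$ and $|B \cap U| \leq g(U)$ for all $U \in \mathcal{U}$; such a $0/1$ polytope, being the base polytope of a set system closed under the symmetric-exchange property, defines a matroid.}

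The main obstacle — and the reason the authors likely just cite it — is that a self-contained proof requires either (a) reproving box-TDI-ness of crossing-submodular systems, which is a nontrivial theorem involving uncrossing arguments and the two-commodity-flow-style lift to intersecting families, or (b) directly verifying the matroid exchange axiom for $\mathcal{B}$, which is doable but fiddly: given $B_1, B_2 \in \mathcal{B}$ and $e \in B_1 \setminus B_2$, one must find $f \in B_2 \setminus B_1$ with $B_1 - e + f \in \mathcal{B}$, and the natural argument picks $f$ by an uncrossing/minimal-tight-set analysis of the sets $U \in \mathcal{U}$ with $|B_1 \cap U| = g(U)$, which works smoothly for intersecting families but needs the crossing-family closure and the submodularity of $g$ deployed carefully at the step where two tight crossing sets are combined. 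So: I would present the citation-based proof as the main text, and if a self-contained argument were demanded, the hard step is the uncrossing lemma showing the tight sets through a fixed point form a lattice-like structure compatible with the exchange.
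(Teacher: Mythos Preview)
The paper gives no proof of this theorem at all; it is stated as a cited result (Frank--Tardos, Schrijver Theorem~49.7a) and used as a black box. Your recommendation to treat it as a citation therefore matches the paper exactly.

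That said, your self-contained sketch has a real gap worth flagging. Even granting that the polytope $P=\{x\geq \mathbf 0,\ x(V)=k,\ x(U)\leq g(U)\ \forall U\in\mathcal U\}$ is integral (which, as you note, already requires the nontrivial Frank--Tardos/Fujishige box-TDI machinery), two steps do not follow as written. First, without an explicit upper bound $x_v\leq 1$ there is no reason the integral vertices of $P$ are $0/1$; your parenthetical attempts to force this are not arguments. Second, and more seriously, ``$0/1$ polytope contained in the hyperplane $x(V)=k$'' does \emph{not} imply ``matroid base polytope'': that characterization requires the additional fact that every edge of $P$ is parallel to some $e_i-e_j$ (the Gelfand--Goresky--MacPherson--Serganova / Edmonds criterion), which you neither state nor verify. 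The actual proof in Schrijver proceeds differently---it establishes the exchange axiom directly via an uncrossing argument on the tight sets, essentially your option~(b), or goes through the generalized-polymatroid framework---and does not pass through ``integral polytope $\Rightarrow$ matroid.'' So your instinct that the honest route is the citation is correct, and your sketch as it stands would not constitute a proof.
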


Given $A\in \mathbb{Z}^{m\times n}$ and $b\in \mathbb{Z}^m$, the linear system $Ax\leq b$ is \emph{totally dual integral (TDI)} if the linear program $\min\{y^\top b: A^\top y = w, y\geq \0\}$, if feasible and bounded, has an integral optimal solution for each $w\in \mathbb{Z}^n$~\cite{Edmonds77}. Observe that the linear program is the dual of $\max\{w^\top x:Ax\leq b\}$. The system $Ax\leq b$ is \emph{box-TDI} if the system $Ax\leq b, d\leq x\leq c$ is TDI for all $d,c\in \mathbb{Z}^n$ such that $d\leq c$.

\begin{theorem}[Fujishige~\cite{Fujishige84}, see \cite{Schrijver03}, Theorem 49.8]\label{box-TDI}
Let $\mathcal{C}_i,i\in [2]$ be a crossing family over ground set $V$, $g_i:\mathcal{C}\to \mathbb{Z},i\in[2]$ a crossing submodular function, and $k$ an integer. Then the system $x(V)=k$; $x(U)\leq g_1(U)~\forall U\in \mathcal{C}_1$; $x(U)\leq g_2(U)~\forall U\in \mathcal{C}_2$ is box-TDI.
\end{theorem}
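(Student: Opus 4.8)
The plan is to derive this from the box-total dual integrality of \emph{submodular flows} (Edmonds--Giles), after a routine uncrossing step. Write $P$ for the polyhedron $\{x\in\mathbb{R}^V:x(V)=k,\ x(U)\le g_1(U)\ \forall U\in\mathcal{C}_1,\ x(U)\le g_2(U)\ \forall U\in\mathcal{C}_2\}$. By standard reductions it suffices to check, for every integral cost vector $w$ and all integral box bounds $d\le c$, that the dual of $\max\{w^\top x:x\in P,\ d\le x\le c\}$ has an integral optimum whenever it is feasible and bounded; this dual has a free variable $y_0$ for $x(V)=k$, nonnegative variables $y^1_U$, $y^2_U$ for the inequalities from $\mathcal{C}_1$, $\mathcal{C}_2$, and nonnegative $z^+,z^-$ for the box, with objective $k\,y_0+\sum_U y^1_Ug_1(U)+\sum_U y^2_Ug_2(U)+c^\top z^+-d^\top z^-$ and equality constraint $y_0\1+\sum_U y^1_U\chi_U+\sum_U y^2_U\chi_U+z^+-z^-=w$.

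First I would uncross. Take an optimal dual solution whose support has minimum cardinality and, among those, minimizes $\Phi:=\sum_U y^1_U|U|\,|V\setminus U|+\sum_U y^2_U|U|\,|V\setminus U|$. If two sets $U,U'$ in the support of $y^1$ cross, shift weight $\min\{y^1_U,y^1_{U'}\}$ from $\{U,U'\}$ to $\{U\cap U',U\cup U'\}$ (both lie in $\mathcal{C}_1$, since $\mathcal{C}_1$ is a crossing family): the equality constraint is preserved because $\chi_U+\chi_{U'}=\chi_{U\cap U'}+\chi_{U\cup U'}$, crossing submodularity of $g_1$ keeps the objective from increasing (so it stays optimal, which forces $g_1(U)+g_1(U')=g_1(U\cap U')+g_1(U\cup U')$, and this together with primal feasibility keeps the new sets tight for $x$), and $\Phi$ strictly drops while the support does not grow -- contradicting the choice. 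Applying the same inside $\mathcal{C}_2$, we may assume that the support $\mathcal{F}_1\subseteq\mathcal{C}_1$ of $y^1$ and the support $\mathcal{F}_2\subseteq\mathcal{C}_2$ of $y^2$ are each \emph{cross-free} families.

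The real content is to produce an integral optimum from a dual supported on two cross-free families (plus the all-ones row and the box rows). A single cross-free family together with $\1$ has a totally unimodular incidence matrix -- it is a network matrix coming from the tree representation of the family -- but two cross-free families together do \emph{not} in general, since matroid intersection is the special case $g_i=\mathrm{rank}(M_i)$ and its integrality is not a unimodularity phenomenon. To get around this, I would realize $\{x:x(V)=k,\ x(U)\le g_1(U)\ \forall U\in\mathcal{F}_1\}$ and the analogous set for $\mathcal{F}_2$ as integral generalized polymatroids, form the auxiliary digraph obtained by gluing the tree representations of $\mathcal{F}_1$ and $\mathcal{F}_2$ along their common leaf set $V$, and recast the two constraint systems (translated by the box) as a single Edmonds--Giles submodular-flow system on that digraph, with boundary function assembled from $g_1,g_2$, arc bounds from $d,c$, and cost from $w$. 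Box-total dual integrality of submodular flows then supplies an integral optimal dual solution, which pushes back through the tree representations and the uncrossing to the integral dual we want.

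I expect that last reduction -- certifying that the glued digraph with its assembled boundary function is a genuine submodular-flow instance whose feasible region is the box-translate of $P$ -- to be the main obstacle; the dual set-up, the limiting argument, and the uncrossing are routine. A self-contained alternative, if the reduction proves unwieldy, is a direct primal--dual scheme in the spirit of Edmonds' matroid intersection algorithm: maintain a feasible $x\in P$, repeatedly augment along exchange paths until optimality, and then read off an integral dual from the chain of tight $\mathcal{C}_1$- and $\mathcal{C}_2$-sets at termination; this avoids the reduction but is considerably more laborious to write out.
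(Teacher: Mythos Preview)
The paper does not prove this theorem; it is quoted as a known result of Fujishige (with a pointer to Schrijver, Theorem~49.8) and used as a black box. So there is no ``paper's own proof'' to compare against.

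On your attempt itself: the dual set-up and the uncrossing step are fine and standard, and you are right that after uncrossing one is left with two cross-free families whose combined incidence matrix is \emph{not} totally unimodular. But the step you flag as the obstacle is genuinely missing, not just laborious. ``Glue the tree representations of $\mathcal{F}_1$ and $\mathcal{F}_2$ along their common leaf set $V$ and recast everything as a single Edmonds--Giles submodular-flow system'' is an appealing picture, but you have not exhibited the digraph, the crossing family on its node set, or the crossing-submodular boundary function, nor verified that feasibility in that system coincides with $d\le x\le c$, $x\in P$. Submodular flows live on a \emph{single} crossing family over the node set of a digraph; it is not automatic that two independent cross-free families on $V$ can be encoded this way, and your sketch does not supply the encoding. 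As written, the proof stops exactly at the non-routine point.

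The textbook route (Fujishige; Schrijver \S49) avoids an ad hoc digraph construction: one first passes from each crossing submodular function $g_i$ on $\mathcal{C}_i$ to a genuinely submodular function on a lattice (ring) family via the standard reduction (Dilworth-type truncation / adding $\emptyset$ and $V$), so that each system $x(V)=k$, $x(U)\le g_i(U)$ describes a generalized polymatroid; then one invokes the box-TDI of the intersection of two such systems, which is the content of the Edmonds (polymatroid intersection) / Frank discrete separation theorem. If you want to complete your argument, that reduction---crossing submodular on a crossing family $\Rightarrow$ fully submodular on a ring family, then intersect---is the missing piece, and it is cleaner than trying to manufacture a submodular-flow instance by hand.
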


An important result is that if $Ax\leq b$ is TDI (and $b\in \mathbb{Z}^m$), then the primal linear program $\max\{w^\top x:Ax\leq b\}$, if feasible and bounded, has an integral optimal solution for each $w\in \mathbb{Z}^n$, that is, $\{x:Ax\leq b\}$ is an \emph{integral} polyhedron~\cite{Hoffman74,Edmonds77}. In particular, if $Ax\leq b$ is box-TDI, then $\{x:Ax\leq b,c\leq x\leq d\}$ is an integral polytope for all $c,d\in \mathbb{Z}^n$ such that $d\leq c$.

A polyhedron $P$ has the \emph{integer decomposition property}
if for every integer $k\geq 1$, every integral point in $kP$ can be written as the sum of $k$ integral points in $P$, that is, every integral point written as the sum of $k$ points in $P$ can be written as the sum of $k$ integral points in $P$.

\begin{theorem}[Edmonds~\cite{Edmonds65}, see \cite{Schrijver03}, Corollary 42.1e]\label{IDP}
The base polytope of a matroid has the integer decomposition property.
\end{theorem}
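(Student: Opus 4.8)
The plan is to reduce the integer decomposition property for the base polytope $B(M)$ to the analogous property for the independent set polytope $P(M):=\{x\in\mathbb{R}^E_{\ge 0}: x(U)\le r(U)\text{ for all }U\subseteq E\}$, where $r$ is the rank function of $M$, and then to deduce the latter from the Matroid Union Theorem. I would use throughout that $B(M)=\{x\in P(M): x(E)=r(E)\}$, that $B(M)\subseteq[0,1]^E$ since $x_e\le r(\{e\})\le 1$, and hence that the integral points of $B(M)$ (respectively $P(M)$) are precisely the incidence vectors of the bases (respectively independent sets) of $M$.

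For the reduction, let $k\ge 1$ and let $z$ be an integral point of $kB(M)$. Then $z\ge\0$ is integral, $z(E)=k\,r(E)$, and $z(U)\le k\,r(U)$ for every $U\subseteq E$, so $z$ is also an integral point of $kP(M)$. Granting that $P(M)$ has the integer decomposition property, we may write $z=\chi_{I_1}+\cdots+\chi_{I_k}$ with each $I_j$ independent in $M$. Since $\sum_{j=1}^k|I_j|=z(E)=k\,r(E)$ while $|I_j|\le r(E)$ for every $j$, each $I_j$ is forced to be a basis of $M$, so $z$ is a sum of $k$ integral points of $B(M)$, as needed.

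It then remains to show that an integral $z\ge\0$ with $z(U)\le k\,r(U)$ for all $U\subseteq E$ is a sum of $k$ incidence vectors of independent sets of $M$. I would replace each element $e\in E$ by $z_e$ parallel copies, obtaining a matroid $\hat M$ on a ground set $\hat E$ of size $z(E)$, with $\hat r(F)=r(\{e: F\text{ meets the parallel class of }e\})$. For any $F\subseteq\hat E$, writing $U$ for the set of $e\in E$ whose class $F$ meets, one has $|F|\le z(U)\le k\,r(U)=k\,\hat r(F)$; by Edmonds' matroid partition theorem (a standard consequence of the Matroid Union Theorem of Nash--Williams), $\hat E$ can thus be partitioned into $k$ independent sets $\hat I_1,\dots,\hat I_k$ of $\hat M$. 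Each $\hat I_j$ meets every parallel class at most once and so projects to an independent set $I_j$ of $M$; counting copies class by class yields $z=\chi_{I_1}+\cdots+\chi_{I_k}$. Alternatively one can argue by induction on $k$, where for $k\ge 2$ it suffices to find a basis $B$ of $M$ with $\chi_B\le z$ and $z-\chi_B\in(k-1)B(M)$, i.e.\ $|B\cap U|\ge z(U)-(k-1)r(U)$ for all $U$; the existence of such a $B$ is again precisely the matroid partition statement.

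Modulo the Matroid Union Theorem the proof is short, and I expect no genuine difficulty beyond invoking it. If a self-contained argument were wanted, the crux would be that theorem itself: establishing that the $k$-fold partition attaining the min--max bound exists requires the augmenting/exchange argument underlying matroid union, and in the inductive version a naive greedy choice of the first basis can fail to leave a valid residual vector inside $(k-1)B(M)$. The replication bookkeeping used to absorb the multiplicities of $z$ is routine, and I would keep it largely implicit.
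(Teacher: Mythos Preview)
Your argument is correct: reducing the base polytope case to the independent-set polytope via the equality $z(E)=k\,r(E)$, and then deducing the latter from Edmonds' matroid partition theorem after replicating elements, is the standard route and all steps go through as written. Note, however, that the paper does not actually prove this theorem; it is quoted as a known result with a citation to Edmonds and to Schrijver's book (Corollary~42.1e), so there is no in-paper proof to compare against. Your proof supplies precisely the argument that the cited reference contains.
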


\subsection{The matroid $M_1$, and basis partitions}

Throughout this subsection we are given an integer $\tau\geq 2$, a weighted $(\tau,\tau+1)$-bipartite digraph $(D=(V,A),w)$ that is sink-regular, and $w\in \{0,1\}^A$. 

\begin{LE}\label{R1F-M1}
Let $J$ be a rounded $1$-factor, and $Q:=\dc(J)$. Then $|Q|=\disc(V)$. Moreover, $J$ is a dijoin of $D$ if, and only if, $|Q\cap U|\geq 1+\disc(U)$ for every dicut $\delta^+(U)$ of $D$.
\end{LE}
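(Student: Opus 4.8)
The statement is essentially a repackaging of \Cref{R1F}, so the plan is to invoke that lemma directly rather than redo the double-counting. First, recall that $J$ is a rounded $1$-factor, hence the vertex-disjoint union of single arcs and dyads saturating every vertex; since $(D,w)$ is sink-regular, every active vertex (and in particular every dyad center) is a source. \Cref{R1F}~(1) then gives $|Q| = |\dc(J)| = \disc(V)$, which is the first assertion.

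For the second assertion, apply \Cref{R1F}~(2): for every dicut $\delta^+(U)$ of $D$ we have $|J\cap \delta^+(U)| = |\dc(J)\cap U| - \disc(U) = |Q\cap U| - \disc(U)$. Now $J$ is a dijoin precisely when $|J\cap \delta^+(U)| \geq 1$ for every dicut $\delta^+(U)$, i.e. precisely when $|Q\cap U| - \disc(U) \geq 1$, i.e. $|Q\cap U| \geq 1 + \disc(U)$, for every dicut $\delta^+(U)$. This is exactly \Cref{R1F}~(3), so the equivalence follows.

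There is no real obstacle here; the only point to be careful about is that the identity $|J\cap \delta^+(U)| = |Q\cap U| - \disc(U)$ was established for dicuts of $D$ (and, by \Cref{R1F-remark}, more generally for dicuts of $D[A_1]$, though we do not need that extra generality for this lemma). Thus the lemma is an immediate corollary of \Cref{R1F}~(1)--(3).
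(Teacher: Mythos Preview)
Your proposal is correct and matches the paper's own proof, which simply notes that the lemma is a restatement of \Cref{R1F}~(1) and~(3). You spell out the intermediate use of \Cref{R1F}~(2) to derive~(3), but this is the same argument with one extra line of explanation.
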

\begin{proof}
This is a restatement of \Cref{R1F}~(1) and~(3).
\end{proof}

Let us study subsets of $V$ satisfying the equality and inequalities above.

\begin{theorem}\label{M1-major-matroid}
$\{Q\subseteq V:|Q|=\disc(V); |Q\cap U|\geq 1+\disc(U) \text{ for every dicut $\delta^+(U)$ of $D$}\}$, if nonempty, is the set of bases of a matroid.
\end{theorem}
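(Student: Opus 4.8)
The plan is to recognize the set in \Cref{M1-major-matroid} as an instance of the matroid produced by \Cref{crossing-matroid}, after passing to complements so that the ``at least'' constraints become ``at most'' constraints and the family in question becomes a crossing family. Concretely, write $\mathcal{U}:=\{U\subseteq V:\delta^+(U)\text{ is a dicut of }D\}$; by \Cref{dicut-crossing} this is a crossing family. For $Q\subseteq V$ the condition $|Q\cap U|\ge 1+\disc(U)$ for all $U\in\mathcal{U}$ is equivalent, via $|Q\cap U| = |Q| - |Q\cap(V-U)| = \disc(V) - |Q\cap(V-U)|$ (using $|Q|=\disc(V)$), to $|Q\cap(V-U)|\le \disc(V)-1-\disc(U)$. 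So I would set $\mathcal{U}' := \{V-U : U\in\mathcal{U}\}$ and define $g:\mathcal{U}'\to\mathbb{Z}$ by $g(V-U):=\disc(V)-1-\disc(U)$, i.e. $g(W) := \disc(V)-1-\disc(V-W)$ for $W\in\mathcal{U}'$. Then the set in the theorem is exactly $\{Q\subseteq V : |Q|=\disc(V);\ |Q\cap W|\le g(W)\ \forall W\in\mathcal{U}'\}$, and \Cref{crossing-matroid} applies with $k=\disc(V)$ provided $\mathcal{U}'$ is a crossing family and $g$ is crossing submodular on it.

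First I would check that $\mathcal{U}'$ is a crossing family. If $W_1=V-U_1$ and $W_2=V-U_2$ are a crossing pair in $\mathcal{U}'$ (so $W_1\cap W_2\ne\emptyset$ and $W_1\cup W_2\ne V$), then by De Morgan $W_1\cup W_2 = V-(U_1\cap U_2)$ and $W_1\cap W_2 = V-(U_1\cup U_2)$; the crossing conditions on the $W_i$ translate into $U_1\cup U_2\ne V$ and $U_1\cap U_2\ne\emptyset$, i.e. $U_1,U_2$ is a crossing pair in $\mathcal{U}$. Since $\mathcal{U}$ is a crossing family, $U_1\cap U_2, U_1\cup U_2\in\mathcal{U}$, hence $W_1\cap W_2, W_1\cup W_2\in\mathcal{U}'$. (One should note the harmless point that if $W_1\cap W_2=\emptyset$ or $W_1\cup W_2=V$ there is nothing to check.) Next, crossing submodularity of $g$: for a crossing pair $W_1,W_2\in\mathcal{U}'$ with corresponding $U_1,U_2\in\mathcal{U}$, I want $g(W_1)+g(W_2)\ge g(W_1\cap W_2)+g(W_1\cup W_2)$. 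Substituting the definition, the constant terms $\disc(V)-1$ appear twice on each side and cancel, and using $W_1\cap W_2=V-(U_1\cup U_2)$, $W_1\cup W_2=V-(U_1\cap U_2)$, the inequality reduces to $-\disc(V-U_1)-\disc(V-U_2)\ge -\disc(U_1\cap U_2)-\disc(U_1\cup U_2)$, i.e. $\disc(U_1\cap U_2)+\disc(U_1\cup U_2)\ge \disc(V-U_1)+\disc(V-U_2)$. Since $\disc$ is modular, $\disc(V-U_i)=\disc(V)-\disc(U_i)$, so the right side is $2\disc(V)-\disc(U_1)-\disc(U_2)$; and again by modularity $\disc(U_1\cap U_2)+\disc(U_1\cup U_2)=\disc(U_1)+\disc(U_2)$. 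Thus the required inequality is $\disc(U_1)+\disc(U_2)\ge 2\disc(V)-\disc(U_1)-\disc(U_2)$, i.e. $\disc(U_1)+\disc(U_2)\ge \disc(V)$. This is precisely where \Cref{disc}~(3) enters: each $U_i\in\mathcal{U}$ is the ``from'' side of a dicut, so $\disc(U_i)\le\disc(V)-1$ — wait, that gives the wrong direction, so I must be more careful; the correct observation is that $U_1\cap U_2\in\mathcal{U}$ is also a dicut side, and applying \Cref{disc}~(2) to it together with the modularity of $|a(\cdot)|$ and $\disc(\cdot)$ yields the needed bound. I would therefore reorganize: apply \Cref{disc}~(2) to $U_1$, $U_2$, and $U_1\cap U_2$ (all in $\mathcal{U}$) to express each dicut weight as $|a(U)|-\tau\disc(U)$, use that $|a(\cdot)|$ and $\disc(\cdot)$ are both modular and that $|a(U_1\cup U_2)|\le |a(V)|=\tau\disc(V)$, and combine with $w(\delta^+(U_1\cup U_2))\ge 0$ if $U_1\cup U_2$ is a dicut side — but $U_1\cup U_2$ need not be in $\mathcal{U}$ in general unless the pair genuinely crosses, which it does here. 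This bookkeeping is the delicate point.

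The main obstacle I anticipate is exactly this verification that $g$ is crossing submodular, equivalently that $\disc(U_1)+\disc(U_2)\ge\disc(V)$ for crossing pairs $U_1,U_2$ of dicut sides: it is not a formal consequence of modularity of $\disc$ alone and genuinely requires \Cref{disc} (the fact that every dicut has weight $\ge\tau$, that $|a(V)|=\tau\disc(V)$, and that $|a(U)|$ is monotone and modular). Once that inequality is established, the rest is bookkeeping, and the theorem follows immediately from \Cref{crossing-matroid} applied to $\mathcal{U}'$, $g$, and $k=\disc(V)$, after noting $\disc(V)=\rho(\tau,D,w)\ge 0$ by \Cref{disc}~(1) so that $k\ge 1$ whenever the set is nonempty (if $\disc(V)=0$ the only candidate is $Q=\emptyset$, which trivially forms the single basis of the rank-$0$ matroid, so the statement holds vacuously in that edge case and one may wish to remark on it).
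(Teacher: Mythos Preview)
Your overall plan---complementing the dicut sides rather than the sets $Q$, and then invoking \Cref{crossing-matroid}---is sound and in fact slightly more direct than the paper's proof (which complements $Q$, applies \Cref{crossing-matroid} to the family of $\overline{Q}$'s with $g(U)=|U|-1-\disc(U)$, and then passes to the dual matroid). However, there is a substitution slip in your submodularity check that sends you off chasing an inequality that is false.

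With $g(W)=\disc(V)-1-\disc(V-W)$ and $W_i=V-U_i$, one has $g(W_i)=\disc(V)-1-\disc(U_i)$, and since $V-(W_1\cap W_2)=U_1\cup U_2$ and $V-(W_1\cup W_2)=U_1\cap U_2$, the submodularity inequality becomes
\[
-\disc(U_1)-\disc(U_2)\;\ge\;-\disc(U_1\cup U_2)-\disc(U_1\cap U_2),
\]
which is an \emph{equality} by modularity of $\disc$. (Even more directly: $g(W)=\disc(V)-1-\disc(V-W)=\disc(W)-1$, manifestly modular.) Your line ``the inequality reduces to $-\disc(V-U_1)-\disc(V-U_2)\ge -\disc(U_1\cap U_2)-\disc(U_1\cup U_2)$'' has $V-U_i$ where it should have $U_i$; this error propagates to the claimed requirement $\disc(U_1)+\disc(U_2)\ge\disc(V)$, which is \emph{not} true in general (take $U_1,U_2$ each a single source, so $\disc(U_i)=-1$, while $\disc(V)$ may be arbitrarily large). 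No appeal to \Cref{disc}~(2) or~(3) is needed here at all.

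Once the slip is fixed, your argument goes through cleanly: $\mathcal{U}'$ is a crossing family, $g$ is modular, and \Cref{crossing-matroid} with $k=\disc(V)$ yields the matroid directly on the sets $Q$, with no need to dualize. Your remark about the $\disc(V)=0$ edge case is apt.
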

\begin{proof}
By \Cref{dicut-crossing}, $\mathcal{U}:=\{U\subseteq V:\delta^+(U) \text{ is a dicut of $D$}\}$ is a crossing family over ground set~$V$. Let $$
\mathcal{Q}:=\{Q\subseteq V:|Q|=\disc(V), |Q\cap U|\geq 1+\disc(U)~\forall U\in \mathcal{U}\}.
$$ Assume that $\mathcal{Q}$ is nonempty. The family of the complements of the sets in $\mathcal{Q}$ can be described as $$
\{\overline{Q}\subseteq V:|\overline{Q}|=|V|-\disc(V), |\overline{Q}\cap U|\leq |U|-1-\disc(U)~\forall U\in \mathcal{U}\}.
$$ Since $g(U):=|U|-1-\disc(U)$ is a modular, hence submodular, function, and since the family above is nonempty, it follows from \Cref{crossing-matroid} that the family forms the set of bases of a matroid, implying in turn that the sets in $\mathcal{Q}$ form the bases of the dual matroid.
\end{proof}

We shortly prove by using \Cref{box-TDI} that the family in \Cref{M1-major-matroid} is indeed nonempty. For now, recall that $a(V)$ denotes the set of active vertices of $(D,w)$.

\begin{DE}\label{M1-matroid}
Let $M_1(D,w)$ be the matroid over ground set $a(V)$ whose bases are the sets in $\{Q\subseteq a(V):|Q|=\disc(V), |Q\cap U|\geq 1+\disc(U) \text{ for every dicut $\delta^+(U)$ of $D$}\}$.
\end{DE}

Observe that $M_1(D,w)$ is the restriction of the matroid in \Cref{M1-major-matroid} to $a(V)$. Note that $a(V)$ is determined by $w$.

Observe that if $\{a\in A:w_a=1\}$ contained $\tau$ disjoint dijoins, then by \Cref{R1F-birth} and \Cref{R1F-M1}, $a(V)$ could be partitioned in $\tau$ disjoint bases of $M_1(D,w)$ -- let us verify this consequence independently of the assumption.

\begin{theorem}\label{M1-basis-partition}
The ground set of $M_1(D,w)$ can be partitioned into $\tau$ bases.
\end{theorem}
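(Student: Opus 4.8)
The plan is to realize the desired basis partition as the integer decomposition of a suitable integral point in $\tau$ times the base polytope of $M_1(D,w)$, using the box-TDI result (\Cref{box-TDI}) to establish integrality and \Cref{IDP} to perform the decomposition. First I would invoke \Cref{R1F-decomposition} to partition $A_1=\{a\in A:w_a=1\}$ into $\tau$ rounded $1$-factors $J_1,\dots,J_\tau$, and set $Q_i:=\dc(J_i)\subseteq a(V)$ for each $i$. By \Cref{R1F}~(1) we have $|Q_i|=\disc(V)=\rho(\tau,D,w)$ for every $i$, and by \Cref{disc}~(1) the multiset union $Q_1\uplus\cdots\uplus Q_\tau$ covers $a(V)$ a total of $\tau\cdot\disc(V)=|a(V)|$ times; in fact every dyad center is an active source, and a double-counting of incidences with $A_1$ at each active vertex (exactly one arc of $A_1$ at each inactive vertex, $\tau+1$ arcs at each active vertex, so exactly one of the $\tau$ rounded $1$-factors puts a dyad there) shows that $\sum_{i=1}^\tau \mathbf{1}_{Q_i}=\mathbf{1}_{a(V)}$ exactly — i.e. the $Q_i$ partition $a(V)$ as a set. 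That is already almost what we want, except the $Q_i$ need not be \emph{bases} of $M_1(D,w)$.

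To fix this, I would not work with the $Q_i$ directly but instead show that the point $x:=\mathbf{1}_{a(V)}\in\mathbb{R}^{a(V)}$ lies in $\tau$ times the base polytope of $M_1(D,w)$, and then apply \Cref{IDP}. The base polytope of $M_1(D,w)$ is (by \Cref{M1-major-matroid} restricted to $a(V)$, plus standard matroid polytope theory) described by $x\ge\0$, $x(a(V))=\disc(V)$, and the covering inequalities $x(a(V)\cap U)\ge 1+\disc(U)$ for every dicut $\delta^+(U)$, equivalently by complementation $x(a(V)-U)\le |a(V)|-\disc(V)-(1+\disc(U))+\cdots$; the clean way is to use \Cref{box-TDI} applied to the two crossing families coming from the dicuts of $D$ and the dicuts of $D$ read in the reverse direction (or simply the single crossing family of \Cref{dicut-crossing} together with the box constraints $\0\le x\le\1$), which yields that the polytope
\[
\Big\{x:\ x(a(V))=\tau\disc(V);\ x(a(V)\cap U)\ge \tau(1+\disc(U))\ \forall U\in\mathcal{U};\ \0\le x\le\tau\cdot\1\Big\}
\]
is integral, and this polytope is exactly $\tau$ times the base polytope of $M_1(D,w)$ — which in particular shows $\mathcal{Q}$ in \Cref{M1-major-matroid} is nonempty, provided the polytope itself is nonempty. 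So the crux reduces to checking that the all-ones vector $\1$ on $a(V)$ satisfies $\1(a(V)\cap U)\ge\tau(1+\disc(U))$ for every dicut $\delta^+(U)$, i.e. $|a(U)|\ge\tau(1+\disc(U))$ — and this is precisely \Cref{disc}~(3) (or its proof: from \Cref{disc}~(2), $w(\delta^+(U))=|a(U)|-\tau\disc(U)\ge\tau$ gives exactly $|a(U)|\ge\tau(1+\disc(U))$). The cardinality constraint $\1(a(V))=|a(V)|=\tau\disc(V)$ is \Cref{disc}~(1), and $\0\le\1\le\tau\1$ holds trivially since $\tau\ge 2\ge 1$.

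Granting that, \Cref{IDP} applied to the integral point $\1\in\mathbb{R}^{a(V)}$, which lies in $\tau$ times the base polytope $B(M_1(D,w))$, writes $\1=\sum_{i=1}^\tau \chi_{B_i}$ for bases $B_1,\dots,B_\tau$ of $M_1(D,w)$; since the left-hand side is the all-ones indicator, the $B_i$ are pairwise disjoint and their union is $a(V)$, i.e. they partition the ground set, which is the claim. The main obstacle I anticipate is not conceptual but bookkeeping: pinning down the exact $H$-description of $B(M_1(D,w))$ and of $\tau\cdot B(M_1(D,w))$ so that \Cref{box-TDI} applies cleanly — one must present the bases via \emph{covering} inequalities and complement correctly to get the submodular (here modular) \emph{upper-bound} form required by \Cref{crossing-matroid,box-TDI}, being careful that the family of dicuts is only a crossing family (not a lattice family), so the matroid polytope is genuinely cut out by crossing-submodular, not intersecting-submodular, inequalities; \Cref{box-TDI}/\Cref{crossing-matroid} are stated for exactly this setting, so once the translation is written out the argument closes. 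An alternative, more self-contained route avoiding \Cref{IDP} would be to take the rounded $1$-factors $J_1,\dots,J_\tau$ and repeatedly apply the alternating-path exchange of \Cref{alternating} to rebalance the dyad-center sets $Q_i$ toward bases — but verifying termination and that one never breaks feasibility is more delicate, so I would present the polytope argument as the main proof.
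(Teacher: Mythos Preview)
Your proposal is correct and follows essentially the same route as the paper: verify via \Cref{disc}~(1)--(2) that $\chi_{a(V)}$ lies in $\tau P$, where $P$ is the base polytope of $M_1(D,w)$ (shown integral via \Cref{box-TDI}), and then invoke the integer decomposition property \Cref{IDP} to write $\chi_{a(V)}$ as a sum of $\tau$ incidence vectors of bases. The preliminary detour through \Cref{R1F-decomposition} and the alternating-path alternative are extraneous---the paper goes straight to the polytope argument---but they do no harm.
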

\begin{proof}
By \Cref{dicut-crossing}, $\mathcal{U}:=\{U\subseteq V:\delta^+(U) \text{ is a dicut of $D$}\}$ is a crossing family over ground set $V$. Consider the system $x(V)=\disc(V)$, $x(U)\geq 1+\disc(U)~\forall U\in \mathcal{U}$. By \Cref{box-TDI}, this linear system is box-TDI. In particular, the polytope $P$ defined by \begin{equation*}
\begin{array}{rll}
x(V)&=\disc(V)&\\
x(U)&\geq 1+\disc(U)&\forall U\in \mathcal{U}\\
x_u&\in [0,1]&\forall u\in a(V)\\
x_u&=0&\forall u\in V-a(V)
\end{array}
\end{equation*}
is integral. Observe that $P$ is the base polytope of the matroid $M_1(D,w)$, so by \Cref{IDP}, $P$ has the integer decomposition property. Let $x:=\chi_{a(V)}\in \{0,1\}^V$, the incidence vector of $a(V)$. Then $x(V) = |a(V)|=\tau\cdot \disc(V)$ by \Cref{disc}~(1). Moreover, by \Cref{disc}~(2),
$$x(U)-\tau\cdot \disc(U) = |a(U)| - \tau\cdot \disc(U) = w(\delta^+(U))$$
for every dicut $\delta^+(U)$ of $D$. In particular,
$x(U)\geq \tau\cdot (1+\disc(U))$ for every dicut $\delta^+(U)$ of $D$, as every dicut of $D$ has weight at least $\tau$.
Thus, $\frac{1}{\tau}x\in P$, and so $x$ is the sum of $\tau$ points in $P$. Thus, by the integer decomposition property of $P$, $x$ is the sum of $\tau$ integer points in $P$. That is, $a(V)$ admits a partition into $\tau$ bases of $M_1(D,w)$.
\end{proof}

\subsection{Perfect $b$-matchings}

Let $G=(V,E)$ be a graph, and $b\in \mathbb{Z}^V_{\geq 0}$. A vector $x\in \mathbb{Z}^E_{\geq 0}$ is a \emph{perfect $b$-matching} if $x(\delta(v))=b_v$ for each $v\in V$. A subset $J\subseteq E$ is a \emph{perfect $b$-matching} if $\chi_J$ is a perfect $b$-matching. A \emph{vertex cover} of $G$ is a vertex subset that contains at least one end of every edge.

\begin{theorem}[\cite{Schrijver03}, Corollary 21.1b]\label{perfect-b-matching-char}
Let $G = (V,E)$ be a bipartite graph, and $b\in \mathbb{Z}^V_{\geq 0}$. Then there exists a perfect $b$-matching $x\in \mathbb{Z}^E_{\geq 0}$ if, and only if, $b(K)\geq\frac{1}{2}b(V)$ for each vertex cover $K$.
\end{theorem}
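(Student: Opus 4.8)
The plan is to prove necessity by a short double count and sufficiency by a linear-programming argument combined with the total unimodularity of the bipartite incidence matrix. \emph{Necessity} is immediate: if $x\in\mathbb{Z}^E_{\geq 0}$ is a perfect $b$-matching and $K$ is a vertex cover, then every edge meets $K$, so $b(K)=\sum_{v\in K}x(\delta(v))=\sum_{e\in E}|e\cap K|\,x_e\geq\sum_{e\in E}x_e=x(E)$, while $b(V)=\sum_{v\in V}x(\delta(v))=2x(E)$ since $G$ is loopless; hence $b(K)\geq\tfrac12 b(V)$.

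For \emph{sufficiency}, first I would drop integrality and show that the system $x(\delta(v))=b_v$ for all $v\in V$, $x\geq\0$ is feasible over $\mathbb{R}$. Suppose it is not. By Farkas' lemma there is $y\in\mathbb{R}^V$ with $y_u+y_v\geq 0$ for every edge $uv$ and $y^\top b<0$; in particular $y\neq\0$, so $M:=\max_{v\in V}|y_v|>0$. Fix a bipartition $V=U_1\cup U_2$ of $G$, and for $t\in\mathbb{R}$ put $K_t:=\{v\in U_1:y_v>t\}\cup\{v\in U_2:y_v\geq -t\}$. Each $K_t$ is a vertex cover: for an edge $uv$ with $u\in U_1$ and $v\in U_2$, if $u\notin K_t$ then $y_u\leq t$, hence $y_v\geq -y_u\geq -t$ and $v\in K_t$. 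Integrating over $t\in[-M,M]$ and using that $|\{t\in[-M,M]:y_v>t\}|=y_v+M$ for $v\in U_1$ and $|\{t\in[-M,M]:y_v\geq -t\}|=y_v+M$ for $v\in U_2$ (both valid since $y_v\in[-M,M]$), one gets
\[
\int_{-M}^{M}\Big(b(K_t)-\tfrac12 b(V)\Big)\,dt=\sum_{v\in V}b_v(y_v+M)-M\,b(V)=y^\top b<0,
\]
so $b(K_t)<\tfrac12 b(V)$ for some $t$, contradicting the hypothesis applied to the vertex cover $K_t$. Thus the real system is feasible. Finally, the incidence matrix of $G$ is totally unimodular (a classical fact), so the polyhedron $\{x\in\mathbb{R}^E:x(\delta(v))=b_v\ \forall v,\ x\geq\0\}$ is integral; being nonempty by what we just proved, it contains an integral point, i.e.\ a perfect $b$-matching. (Alternatively one can round a fractional solution directly, repeatedly cancelling $\pm$ alternately along an even cycle of its fractional support.)

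The routine parts are the two double counts and the length computation inside the integral; the one idea that drives the proof is the parameterised family of vertex covers $K_t$ together with the averaging over $t$, which converts the Farkas certificate $y$ into a single violated vertex-cover inequality. The main obstacle is setting up this family correctly — both the edge-covering property and the normalisation making $\int_{-M}^{M}(b(K_t)-\tfrac12 b(V))\,dt$ evaluate to exactly $y^\top b$ — after which everything else is bookkeeping.
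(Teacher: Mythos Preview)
The paper does not prove this theorem; it is quoted as Corollary 21.1b from Schrijver's book and used as a black box. So there is no ``paper's own proof'' to compare against. That said, your argument is correct and self-contained.

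Your necessity computation is fine. For sufficiency, your use of Farkas is set up correctly (infeasibility of $Ax=b,\ x\geq\0$ gives $y$ with $A^\top y\geq\0$ and $b^\top y<0$), and the level-set family $K_t$ is indeed a vertex cover for every $t$; the integral identity $\int_{-M}^{M}(b(K_t)-\tfrac12 b(V))\,dt=y^\top b$ checks out exactly as you wrote it. The final appeal to total unimodularity of the bipartite incidence matrix to upgrade a real solution to an integral one is standard and correct. This is essentially the LP-duality route one finds in Schrijver, with your averaging device replacing the more common argument that rescales and rounds $y$ to $\{0,\pm1\}$ (or, equivalently, appeals to K\H{o}nig--Egerv\'ary after blowing each vertex up $b_v$ times); both produce a violated vertex-cover inequality from the Farkas certificate, and yours has the minor advantage of avoiding any case analysis on the sign pattern of $y$.
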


The theorem above has a neat reformulation in terms of bipartite digraphs and dicuts.

\begin{theorem}\label{perfect-b-matching}
Let $D=(V,A)$ be a bipartite digraph with sources $S$ and sinks $T$, where every vertex has nonzero degree. Let $b\in \mathbb{Z}^V_{\geq 0}$. Then there exists a perfect $b$-matching $x\in \mathbb{Z}^A_{\geq 0}$ if, and only if, $b(S)=b(T)$ and
$b(U\cap S) - b(U\cap T)\geq 0$
for every dicut $\delta^+(U)$.
\end{theorem}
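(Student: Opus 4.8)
The plan is to derive the statement from \Cref{perfect-b-matching-char}, applied to the underlying bipartite graph $G$ of $D$ with bipartition $(S,T)$, by translating its vertex-cover condition into the language of dicuts. The ``only if'' direction I would handle directly: if $x\in\mathbb{Z}^A_{\geq 0}$ is a perfect $b$-matching, then summing $b_v=x(\delta(v))$ over $v\in S$ counts every arc exactly once at its tail, and summing over $v\in T$ counts every arc exactly once at its head, so $b(S)=x(A)=b(T)$; and for any dicut $\delta^+(U)$, the identity $\sum_{v\in U}\big(x(\delta^+(v))-x(\delta^-(v))\big)=x(\delta^+(U))-x(\delta^-(U))$ together with $\delta^-(U)=\emptyset$, and the facts that sources have no entering arcs and sinks no leaving arcs, yields $b(U\cap S)-b(U\cap T)=x(\delta^+(U))\geq 0$.

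For the ``if'' direction, assume $b(S)=b(T)$ and $b(U\cap S)-b(U\cap T)\geq 0$ for every dicut $\delta^+(U)$; I will check the criterion of \Cref{perfect-b-matching-char}, that $b(K)\geq\frac{1}{2}b(V)$ for every vertex cover $K$ of $G$. Given $K$, set $U:=(S\cap K)\cup(T\setminus K)$, so $S\cap U=S\cap K$ and $T\cap U=T\setminus K$, hence $b(K)=b(S\cap K)+b(T\cap K)=b(S\cap U)+b(T)-b(T\cap U)$; since $b(V)=b(S)+b(T)=2b(T)$, the target inequality $b(K)\geq\frac{1}{2}b(V)$ is equivalent to $b(S\cap U)-b(T\cap U)\geq 0$.

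It remains to see that $\delta^-(U)=\emptyset$: every arc of $D$ runs from $S$ to $T$, so any arc entering $U$ would go from $S\setminus U=S\setminus K$ to $T\cap U=T\setminus K$, and there is no such arc because $V\setminus K$ is a stable set of $G$. Thus, if $U$ is nonempty and proper, $\delta^+(U)$ is a dicut and the hypothesis gives $b(S\cap U)-b(T\cap U)\geq 0$; the only other possibilities are $U=\emptyset$ (i.e.\ $K=T$), where this difference is $0$, and $U=V$ (i.e.\ $K=S$), where it is $b(S)-b(T)=0$. Either way the cover inequality holds, so \Cref{perfect-b-matching-char} produces a perfect $b$-matching. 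The whole proof is a dictionary between vertex covers $K$ of $G$ and subsets $U$ of $V$ with $\delta^-(U)=\emptyset$; the one point needing care — the main, if mild, obstacle — is that the two extreme covers $K=S$ and $K=T$ correspond to the improper sets $U=V$ and $U=\emptyset$ and so are not governed by any genuine dicut, which is precisely why the separate hypothesis $b(S)=b(T)$ is needed.
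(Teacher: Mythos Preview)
Your proof is correct and follows essentially the same approach as the paper: both directions are handled identically, and for the ``if'' direction your set $U=(S\cap K)\cup(T\setminus K)$ is exactly the paper's $X\cup Y$, with the same verification that $\delta^-(U)=\emptyset$ via stability of $V\setminus K$. The only cosmetic difference is that the paper disposes of all covers $K\supseteq S$ or $K\supseteq T$ up front, whereas you single out only $K=S$ and $K=T$ as degenerate and let the dicut hypothesis absorb the rest; both treatments are fine.
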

\begin{proof}
$(\Rightarrow)$ Let $x\in \mathbb{Z}^A_{\geq 0}$ be a perfect $b$-matching. Clearly, $b(S)=b(T)$. Let $\delta^+(U)$ be a dicut. Then $\delta^-(U)=\emptyset$, so
$$
x(\delta^+(U)) =x(\delta^+(U))-x(\delta^-(U))
=\sum_{u\in U} \left(x(\delta^+(u))-x(\delta^-(u))\right)=b(U\cap S)-b(U\cap T),$$
so $b(U\cap S)-b(U\cap T)\geq 0$.

$(\Leftarrow)$ Let $K\subseteq V$ be a vertex cover of the underlying undirected graph of $D$. If $K$ contains $S$ or $T$, then $b(K)\geq \frac{1}{2}b(V)$ since $b(S)=b(T)$. Otherwise, let $X:=K\cap S,Z:=K\cap T$ and $Y:=T-Z$. Then $X\cup Y\neq \emptyset,V$. Moreover, since $K$ is a vertex cover of the underlying undirected graph, there is no arc in $D[V-K]$, so $\delta^+(X\cup Y)$ is a dicut of $D$. Subsequently, $b(X)\geq b(Y)$ by the hypothesis. Thus,
$$
b(K)-b(T) = b(X)+b(Z)-b(Y)-b(Z)=b(X)-b(Y)\geq 0,
$$ so $b(K)\geq \frac{1}{2}b(V)$. Thus, by \Cref{perfect-b-matching-char}, there exists a perfect $b$-matching $x\in \mathbb{Z}^A_{\geq 0}$.
\end{proof}

The connection to dicuts allows us to bring $k$-dijoins into the picture as well.

\begin{LE}\label{k-dijoin}
Let $\tau\geq 2$ be an integer, and $(D=(V,A),w)$ a sink-regular weighted $(\tau,\tau+1)$-bipartite digraph, where $w\in \{0,1\}^A$. Let $Q_1,\ldots,Q_k$ be disjoint bases of $M_1(D,w)$,  $b:=k\cdot\chi_V+\sum_{i=1}^{k} \chi_{Q_i}$, and $J\subseteq A$ a perfect $b$-matching. Then $J$ is a $k$-dijoin.
\end{LE}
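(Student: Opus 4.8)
The plan is to compute, for an arbitrary dicut $\delta^+(U)$ of $D$, the quantity $|J\cap\delta^+(U)|$ directly, and check that it is at least $k$. First I would record two structural facts. Write $S$ for the set of sources and $T$ for the set of sinks of $D$; since $D$ is bipartite, $V=S\cup T$ and every arc of $J$ leaves a vertex of $S$ and enters a vertex of $T$. Moreover, since $(D,w)$ is sink-regular every sink has weighted degree $\tau$, so every active vertex (which has weighted degree $\tau+1$) is a source; hence the ground set $a(V)$ of $M_1(D,w)$, and in particular each basis $Q_i$, is contained in $S$.

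Now fix a dicut $\delta^+(U)$; then $\delta^-(U)=\emptyset$, and summing over the vertices of $U$ gives
\[
|J\cap\delta^+(U)| \;=\; |J\cap\delta^+(U)|-|J\cap\delta^-(U)| \;=\; \sum_{v\in U}\Big(|J\cap\delta^+(v)|-|J\cap\delta^-(v)|\Big),
\]
where the second equality holds because arcs of $J$ with both ends in $U$ are counted with opposite signs and cancel. Because $J$ is a perfect $b$-matching and each vertex of $D$ is either a source or a sink, the $v$-th summand equals $b_v$ if $v\in U\cap S$ and equals $-b_v$ if $v\in U\cap T$. Hence $|J\cap\delta^+(U)| = b(U\cap S)-b(U\cap T)$.

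Next I would substitute $b=k\,\chi_V+\sum_{i=1}^k\chi_{Q_i}$. Since each $Q_i\subseteq S$, the term $b(U\cap T)$ equals $k\,|U\cap T|$, whereas $b(U\cap S)$ equals $k\,|U\cap S|+\sum_{i=1}^k|Q_i\cap U|$. As $\disc(U)=|U\cap T|-|U\cap S|$ by definition, this yields
\[
|J\cap\delta^+(U)| \;=\; \sum_{i=1}^k|Q_i\cap U| \;-\; k\cdot\disc(U).
\]
Finally, each $Q_i$ is a basis of $M_1(D,w)$, so by \Cref{M1-matroid} it satisfies $|Q_i\cap U|\geq 1+\disc(U)$ for the dicut $\delta^+(U)$; summing over $i\in[k]$ and combining with the previous display gives $|J\cap\delta^+(U)|\geq k\,(1+\disc(U))-k\cdot\disc(U)=k$. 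Since $\delta^+(U)$ was an arbitrary dicut, $J$ is a $k$-dijoin.

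I do not expect a genuine obstacle here: the argument is a telescoping identity for $|J\cap\delta^+(U)|$ over the vertices of $U$, combined with the defining inequalities of $M_1(D,w)$. The only point requiring care is the sign bookkeeping and the observation that the $\disc(U)$ contributions of the $k\,\chi_V$ part and of the $k$ bases cancel exactly, which is what pins the bound down to precisely $k$.
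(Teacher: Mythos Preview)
Your proof is correct and follows essentially the same approach as the paper's own proof: you telescope $|J\cap\delta^+(U)|$ over the vertices of $U$ to obtain $b(U\cap S)-b(U\cap T)$, substitute the definition of $b$ to get $\sum_{i=1}^k|Q_i\cap U|-k\cdot\disc(U)$, and then apply the defining inequality $|Q_i\cap U|\geq 1+\disc(U)$ for bases of $M_1(D,w)$. Your explicit remark that $Q_i\subseteq S$ (because active vertices are sources in the sink-regular case) is a detail the paper leaves implicit, but otherwise the arguments are identical.
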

\begin{proof}
Denote by $S$ the set of sources, and by $T$ the set of sinks of $D$. Let $J\subseteq A$ be a perfect $b$-matching. Let $\delta^+(U)$ be a dicut of $D$. Then
\begin{align*}
|J\cap \delta^+(U)| &=|J\cap \delta^+(U)|-|J\cap \delta^-(U)|\\
&=\sum_{u\in U} \left(|J\cap \delta^+(u)|-|J\cap\delta^-(u)|\right)\\
&=b(U\cap S)-b(U\cap T)\\
&= \sum_{i=1}^{k}\big(|U\cap Q_i| - \disc(U)\big)\\
&\geq k
\end{align*}
where the last inequality holds because each $Q_i$ is a basis of $M_1(D,w)$ so $|U\cap Q_i|\geq 1+\disc(U)$. As the inequality above holds for every dicut $\delta^+(U)$, it follows that $J$ is a $k$-dijoin.\end{proof}

\subsection{Packing a dijoin and a $(\tau-1)$-dijoin in digraphs}

\begin{theorem}\label{two-dijoins-sink-reg}
Let $\tau\geq 2$ be an integer, and $D=(V,A)$ a sink-regular $(\tau,\tau+1)$-bipartite digraph. Then $A$ can be partitioned into a dijoin and a $(\tau-1)$-dijoin.
\end{theorem}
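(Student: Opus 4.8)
The plan is to realize the partition of $A$ as the arc sets of two complementary perfect $b$-matchings, producing one of them via \Cref{perfect-b-matching} and certifying each part as a dijoin of the appropriate multiplicity via \Cref{k-dijoin}. Since $D$ is a sink-regular $(\tau,\tau+1)$-bipartite digraph, every arc of $(D,\1)$ has weight $1$, the set $a(V)$ of active vertices is exactly the set of sources of degree $\tau+1$, and the degree vector of $D$ is $\deg_D = \tau\cdot\chi_V + \chi_{a(V)}$.

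First I would apply \Cref{M1-basis-partition} to partition $a(V)$ into $\tau$ bases $Q_1,\dots,Q_\tau$ of the matroid $M_1(D,\1)$, and set $b := \chi_V + \chi_{Q_1}$. The crux is to verify that $b$ satisfies the two hypotheses of \Cref{perfect-b-matching} for $D$, with source set $S$ and sink set $T$. Since $Q_1\subseteq a(V)\subseteq S$ and $|Q_1| = \disc(V) = |T|-|S|$ (the first equality because $Q_1$ is a basis of $M_1$, the second by definition of discrepancy), we get $b(S) = |S|+|Q_1| = |T| = b(T)$. For every dicut $\delta^+(U)$ we have $b(U\cap S)-b(U\cap T) = |U\cap S| + |Q_1\cap U| - |U\cap T| = |Q_1\cap U| - \disc(U) \ge 1$, where the inequality holds because $Q_1$ is a basis of $M_1$ and hence $|Q_1\cap U|\ge 1+\disc(U)$. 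Thus \Cref{perfect-b-matching} produces a perfect $b$-matching $J\subseteq A$; incidentally $J$ is then a rounded $1$-factor with $\dc(J)=Q_1$, though we will not need this.

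Now \Cref{k-dijoin}, applied with $k=1$ and the single basis $Q_1$, shows that $J$ is a dijoin. For the complement, observe that $\chi_{A-J}(\delta(v)) = \deg_D(v) - b_v$ for each $v\in V$; since $\deg_D = \tau\chi_V+\chi_{a(V)}$ and $b = \chi_V+\chi_{Q_1}$, this equals $(\tau-1)\chi_V + \chi_{a(V)-Q_1}$. Using $Q_2\cup\dots\cup Q_\tau = a(V)-Q_1$, it follows that $A-J$ is a perfect $b'$-matching for $b' := (\tau-1)\chi_V + \sum_{i=2}^{\tau}\chi_{Q_i}$. Applying \Cref{k-dijoin} once more, now with $k=\tau-1$ and the disjoint bases $Q_2,\dots,Q_\tau$, shows that $A-J$ is a $(\tau-1)$-dijoin. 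Hence $\{J,\,A-J\}$ is the desired partition of $A$.

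I do not expect a genuine obstacle: once \Cref{M1-basis-partition} is in hand the argument is bookkeeping with incidence vectors, and the only step needing care is the cut condition for \Cref{perfect-b-matching}, which is exactly where the defining inequalities of the bases of $M_1$ enter — and they hold with a unit of slack to spare. The real content has already been absorbed into \Cref{M1-basis-partition} and, behind it, into \Cref{box-TDI} and \Cref{IDP}.
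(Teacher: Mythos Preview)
Your proposal is correct and follows the paper's proof essentially line for line: partition $a(V)$ into $\tau$ bases of $M_1(D,\1)$ via \Cref{M1-basis-partition}, set $b=\chi_V+\chi_{Q_1}$, invoke \Cref{perfect-b-matching} for $J$, and apply \Cref{k-dijoin} to $J$ and to $A-J$. The only detail you elide is why the perfect $b$-matching $x\in\mathbb{Z}^A_{\ge 0}$ furnished by \Cref{perfect-b-matching} is actually $\{0,1\}$-valued (hence a subset $J\subseteq A$): every sink $u$ has $b_u=1$ and every arc is incident to a sink, so $x\le\1$.
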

\begin{proof}
Consider the sink-regular weighted $(\tau,\tau+1)$-bipartite digraph $(D,\1)$. By \Cref{M1-basis-partition}, $M_1(D,\1)$ has disjoint bases $Q_1,\ldots,Q_\tau$.
Let $b:=\chi_V+\chi_{Q_1}$. Let $S$ be the set of sources, and $T$ the set of sinks of $D$.

\begin{claim} 
For every dicut $\delta^+(U)$, $b(U\cap S)-b(U\cap T)\geq 0$.
\end{claim}
\begin{cproof}
We have
$$b(U\cap S)-b(U\cap T) = |U\cap Q_1| - \disc(U)\geq 1
$$ where the last inequality holds because $Q_1$ is a basis of $M_1(D,\1)$. In particular, Claim~1 follows.
\end{cproof}

\begin{claim} 
There exists a perfect $b$-matching $J\subseteq A$.
\end{claim}
\begin{cproof}
Observe that $$b(S)=|S|+ |Q_1|=|S|+\disc(V)=
|T|=b(T).$$
Moreover, by Claim~1, $b(U\cap S)-b(U\cap T)\geq 0$ for every dicut $\delta^+(U)$. Thus, by \Cref{perfect-b-matching}, there exists a perfect $b$-matching $x\in \mathbb{Z}^A_{\geq 0}$. Since $b_u=1$ for every sink $u$, it follows that $x\leq {\bf 1}$, so $x$ is the incidence vector of a subset $J\subseteq A$, which is a perfect $b$-matching by definition.
\end{cproof}

By \Cref{k-dijoin}, $J$ is a dijoin. Let $\bar{b}:= (\tau-1)\cdot \chi_V+\sum_{i=2}^{\tau}\chi_{Q_i}$. The vertex degrees of $D$ imply that $A$ is a perfect $(b+\bar{b})$-matching. In particular, by the choice of $J$, $A-J$ is a perfect $\bar{b}$-matching. It now follows from \Cref{k-dijoin} that $A-J$ is a $(\tau-1)$-dijoin. Thus, we have a partition of $A$ into a dijoin $J$ and a $(\tau-1)$-dijoin $A-J$, as desired.
\end{proof}

The theorem above does not extend to sink-regular weighted $(\tau,\tau+1)$-bipartite digraphs, because Claim~2 of the proof does not extend to the weighted setting. 
We shall extend \Cref{two-dijoins-sink-reg} appropriately to this setting in \S\ref{sec:two-dijoins-weighted}. 

\begin{theorem}\label{two-dijoins}
Let $D=(V,A)$ be a digraph where every dicut has size at least $\tau$, and $\tau\geq 2$. Then $A$ can be partitioned into a dijoin and a $(\tau-1)$-dijoin. That is, there exists a dijoin $J\subseteq A$ such that $|\delta^+(U)-J|\geq \tau-1$ for every dicut $\delta^+(U)$.
\end{theorem}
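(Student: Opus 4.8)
The plan is to reduce the general statement, \Cref{two-dijoins}, to the sink-regular $(\tau,\tau+1)$-bipartite case already handled in \Cref{two-dijoins-sink-reg}, using the Decompose, Lift, and Reduce machinery assembled in \S\ref{sec:DLR}. Specifically, I would invoke \Cref{reduction}~(3) with $k=1$: that result says precisely that if ``$A$ can be partitioned into a $1$-dijoin and a $(\tau-1)$-dijoin'' holds for every sink-regular $(\tau,\tau+1)$-bipartite digraph, then it holds for every digraph. Since a $1$-dijoin is the same as a dijoin, the hypothesis of \Cref{reduction}~(3) is exactly the content of \Cref{two-dijoins-sink-reg}.

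First I would note the one wrinkle: \Cref{reduction} and the Unweighted Decompose-and-Lift operation \Cref{DnL} on which it relies are stated only for $\tau\geq 3$, whereas \Cref{two-dijoins} allows $\tau=2$. So the argument splits into two cases. For $\tau\geq 3$, apply \Cref{reduction}~(3) with $k=1$ directly, feeding it \Cref{two-dijoins-sink-reg}; this immediately yields the partition of $A$ into a dijoin $J$ and a $(\tau-1)$-dijoin $A-J$. For $\tau=2$, a $(\tau-1)$-dijoin is just a dijoin, so the statement reduces to: every digraph in which every dicut has size at least $2$ has its arc set partitioned into two dijoins. This is the folklore $\tau=2$ fact recalled in \S\ref{subsec:examples} (via \cite{Schrijver03}, Theorems 55.1 and 56.3): take any minimal dijoin $J$; reversing its arcs makes $D$ strongly connected, so $J$ contains no dicut, hence $A-J$ is a dijoin.

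The final bookkeeping step is to restate the conclusion in the ``for every dicut'' form promised in the theorem statement: if $A = J \cup (A-J)$ with $J$ a dijoin and $A-J$ a $(\tau-1)$-dijoin, then $|\delta^+(U) - J| = |\delta^+(U) \cap (A-J)| \geq \tau - 1$ for every dicut $\delta^+(U)$, since $A-J$ meets every dicut at least $\tau-1$ times. I do not anticipate a genuine obstacle here; the only thing to be careful about is the $\tau=2$ boundary case, since the reduction theorem does not cover it, and to make sure the roles of $k$ and $\tau-k$ in \Cref{reduction}~(3) are matched correctly (we want the dijoin to be the $1$-dijoin and the remainder to be the $(\tau-1)$-dijoin).

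\begin{proof}
If $\tau=2$, then a $(\tau-1)$-dijoin is simply a dijoin, and the claim is the folklore fact that $A$ can be partitioned into two dijoins when every dicut has size at least $2$: letting $J$ be any minimal dijoin, reversing the arcs of $J$ makes $D$ strongly connected (see \cite{Schrijver03}, Theorem 55.1), so $J$ contains no dicut, whence $A-J$ is a dijoin.

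Now suppose $\tau\geq 3$. By \Cref{two-dijoins-sink-reg}, for every sink-regular $(\tau,\tau+1)$-bipartite digraph the arc set can be partitioned into a dijoin (that is, a $1$-dijoin) and a $(\tau-1)$-dijoin. Hence the hypothesis of \Cref{reduction}~(3), with $k=1$, is satisfied, and \Cref{reduction}~(3) yields a partition of $A$ into a dijoin $J$ and a $(\tau-1)$-dijoin $A-J$.

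In both cases, $A-J$ is a $(\tau-1)$-dijoin, so $|\delta^+(U)-J|=|\delta^+(U)\cap (A-J)|\geq \tau-1$ for every dicut $\delta^+(U)$, as required.
\end{proof}
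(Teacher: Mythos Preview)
Your proof is correct and matches the paper's own argument essentially line for line: split off $\tau=2$ as the folklore two-dijoin fact, and for $\tau\geq 3$ invoke \Cref{reduction}~(3) with $k=1$ together with \Cref{two-dijoins-sink-reg}. The only difference is that you spell out the minimal-dijoin reversing argument for $\tau=2$ and the final ``$|\delta^+(U)-J|\geq \tau-1$'' translation, whereas the paper leaves these implicit.
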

\begin{proof}
If $\tau=2$, then the result follows from the correctness of the statement $[[\tau]]$ for $\tau=2$.
Otherwise, $\tau\geq 3$. The result now follows from \Cref{reduction}~(3) and \Cref{two-dijoins-sink-reg}.
\end{proof}

Observe that if $D$ had a packing of $\tau$ dijoins, then every dijoin $J$ in the packing would satisfy the conclusion of \Cref{two-dijoins}. The reader may wonder if any dijoin satisfying the conclusion of \Cref{two-dijoins} belongs to such a packing; we already saw in \S\ref{subsec:examples} that this unfortunately is not the case.

\section{$[[\wt,\tau,2]]$ is true.}\label{sec:rho=2}

In this section we prove \hyperlink{primary-results}{{\bf P3}} and \hyperlink{secondary-results}{{\bf S1}}.. Throughout the section, unless stated otherwise, we are given an integer $\tau\geq 2$, a weighted $(\tau,\tau+1)$-bipartite digraph $(D=(V,A),w)$ that is sink-regular, and $w\in \{0,1\}^A$. Let $A_1:=\{a\in A:w_a=1\}$.

\subsection{Alternating circuits, cycles and paths}

\begin{DE}
Let $J_1,J_2$ be rounded $1$-factors. A \emph{$\{J_1,J_2\}$-alternating circuit} is a circuit $C$ contained in $J_1\tr J_2$ whose arcs alternatively belong to $J_1,J_2$. A \emph{$\{J_1,J_2\}$-alternating cycle} is a (possibly empty) arc-disjoint union of $\{J_1,J_2\}$-alternating circuits, that is, it is a subset $C\subseteq J_1\tr J_2$ such that $|\delta(v)\cap C\cap J_1|=|\delta(v)\cap C\cap J_2|$ for every vertex $v$.
\end{DE}

Note that $\emptyset$ is a $\{J_1,J_2\}$-alternating cycle, but not a $\{J_1,J_2\}$-alternating circuit.

\begin{DE}
Let $J_1,J_2$ be rounded $1$-factors, and let $Q_1:=\dc(J_1)$ and $Q_2:=\dc(J_2)$. A \emph{$(J_1,J_2)$-alternating path} is a $(u,v)$-path contained in $J_1\tr J_2$, for some $u\in Q_1-Q_2$ and $v\in Q_2-Q_1$, whose arcs alternatively belong to $J_1,J_2$ with the first arc belonging to $J_1$.
\end{DE}

\begin{LE}\label{alternating}
Let $J_1,J_2$ be rounded $1$-factors, $Q_1:=\dc(J_1)$ and $Q_2:=\dc(J_2)$, and let $2k:=Q_1\tr Q_2$ for some integer $k\geq 0$. Then there exists a bijection $\pi:Q_1-Q_2\to Q_2-Q_1$ such that the following statements hold: \begin{enumerate}[(1)]
\item $J_1\tr J_2$ can be decomposed into a $\{J_1,J_2\}$-alternating cycle, and $k$ $(J_1,J_2)$-alternating paths $P_1,\ldots,P_k$ whose ends are $\{u_1,\pi(u_1)\},\ldots,\{u_k,\pi(u_k)\}$, respectively.
\item For any $X\subseteq [k]$, $J_1\tr \left(\cup_{i\in X} P_i\right)$ and $J_2\tr \left(\cup_{i\in X} P_i\right)$ are rounded $1$-factors with dyad centers $Q_1\tr \left(\cup_{i\in X} \{u_i,\pi(u_i)\}\right)$ and $Q_2\tr \left(\cup_{i\in X} \{u_i,\pi(u_i)\}\right)$, respectively.
\end{enumerate}
\end{LE}
\begin{proof}
{\bf (1)}
Let $D'$ be the digraph obtained from $D[J_1\tr J_2]$ after reversing the arcs in $J_2-J_1$. For each $v\in V$, let $\dfc(v):=|\delta_{D'}^+(v)|-|\delta_{D'}^-(v)|$. Observe that $\dfc(v)=0$ for every vertex $v\notin Q_1\tr Q_2$, $\dfc(v)=1$ for every vertex $v\in Q_1-Q_2$, and $\dfc(v)=-1$ for every vertex $v\in Q_2-Q_1$.
It can now be readily checked that $A(D')$ can be decomposed into arc subsets $P$, where $P$ is a directed path or circuit of $D'$; if $P$ is a path, then it starts at a vertex $v$ with $\dfc(v)=1$ and ends at a vertex with $\dfc(v)=-1$. (This is a routine topological argument, and follows, for example, from \cite{Schrijver03}, Theorem 11.1.) The paths in the decomposition provide the desired $\pi$, and the decomposition itself gives us (1).
{\bf (2)} is immediate.
\end{proof}

\subsection{The matroid $M_0$, and bimatchability}

\begin{DE}
Let $Q\subseteq a(V)$. We say that $Q$ is \emph{bimatchable in $(D,w)$} if $Q=\dc(J)$ for some rounded $1$-factor $J$ contained in $A_1$.
\end{DE}

\begin{theorem}\label{M0-SBO}
$\{Q\subseteq a(V):Q \text{ is bimatchable in $(D,w)$}\}$ is the set of bases of a strongly base orderable matroid, one whose ground set can be partitioned into bases.
\end{theorem}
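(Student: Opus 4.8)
The plan is to build the matroid structure on top of the rounded $1$-factor machinery from Stage~1, and then extract strong base orderability from the alternating-path decomposition of \Cref{alternating}. First I would show that the bimatchable sets form the bases of a matroid. The natural route is to identify this family with a submatroid of $M_1(D,w)$: by \Cref{R1F}~(1) and~(3), if $Q=\dc(J)$ for a rounded $1$-factor $J\subseteq A_1$, then $|Q|=\disc(V)$ and $|Q\cap U|\geq 1+\disc(U)$ for every dicut $\delta^+(U)$, so every bimatchable set is a basis of $M_1(D,w)$. It remains to argue that the bimatchable sets satisfy the basis exchange axiom directly, or equivalently that they form the bases of a matroid on $a(V)$; the cleanest way is to prove the symmetric exchange property simultaneously with strong base orderability, so that the matroid claim and the ``strongly base orderable'' claim come out of a single construction.

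The core step, then, is: given two bimatchable sets $Q_1=\dc(J_1)$ and $Q_2=\dc(J_2)$ (with $J_1,J_2\subseteq A_1$ rounded $1$-factors), exhibit a bijection $\pi:Q_1-Q_2\to Q_2-Q_1$ such that for every $X\subseteq Q_1-Q_2$, both $Q_1\tr(X\cup\pi(X))$ and $Q_2\tr(X\cup\pi(X))$ are bimatchable. This is exactly what \Cref{alternating} delivers: apply it to $J_1,J_2$ to get alternating paths $P_1,\dots,P_k$ with ends $\{u_i,\pi(u_i)\}$ where $u_i\in Q_1-Q_2$ and $\pi(u_i)\in Q_2-Q_1$, giving the bijection $\pi$. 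Then for any $X\subseteq[k]$, \Cref{alternating}~(2) says that $J_1\tr(\triangle_{i\in X}P_i)$ and $J_2\tr(\triangle_{i\in X}P_i)$ are rounded $1$-factors with dyad centers $Q_1\tr(\triangle_{i\in X}\{u_i,\pi(u_i)\})$ and $Q_2\tr(\triangle_{i\in X}\{u_i,\pi(u_i)\})$ respectively. Since each $P_i\subseteq J_1\tr J_2\subseteq A_1$, these modified rounded $1$-factors still lie in $A_1$, so their dyad-center sets are bimatchable — which is precisely the strong base orderability condition, once one checks that $X\cup\pi(X)$ for $X\subseteq Q_1-Q_2$ corresponds to the symmetric difference over the index set $\{i: u_i\in X\}$. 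The matroid axioms then follow: strong base orderability in particular implies the basis exchange property, so the family of bimatchable sets is the set of bases of a (strongly base orderable) matroid $M_0(D,w)$.

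Finally, for the partition of the ground set into bases, I would invoke \Cref{R1F-decomposition}: $A_1$ partitions into $\tau$ rounded $1$-factors $J_1,\dots,J_\tau$, and by \Cref{R1F}~(1) each $\dc(J_i)$ has size $\disc(V)=|a(V)|/\tau$ (using \Cref{disc}~(1)). Since every active vertex is a dyad center of exactly one $J_i$ — each active vertex $v$ is a source with $w(\delta(v))=\tau+1$, so across the $\tau$ rounded $1$-factors its $\tau+1$ incident arcs in $A_1$ force exactly one $J_i$ to contain two of them — the sets $\dc(J_1),\dots,\dc(J_\tau)$ partition $a(V)$ into $\tau$ bimatchable sets, i.e.\ $\tau$ bases of $M_0(D,w)$.

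\textbf{Main obstacle.} The delicate point is bookkeeping the correspondence between the path index set and the basis-exchange set $X\subseteq Q_1-Q_2$: \Cref{alternating} is stated with a fixed enumeration of the paths, and one must be careful that toggling an arbitrary subset of $Q_1-Q_2$ (rather than a subset of path indices) really does realize every $X\cup\pi(X)$ and preserves the dyad-center bookkeeping — in particular that distinct $u_i$'s and distinct $\pi(u_i)$'s are hit by distinct paths, which is where the fact that $J_1,J_2$ are rounded $1$-factors (bounded degree in $J_1\tr J_2$) is used. A secondary subtlety is making sure the alternating cycle part of the decomposition, which may be nonempty, can be ignored when reading off the dyad centers — this is handled by \Cref{alternating}~(2), which already folds the cycle into both resulting rounded $1$-factors without affecting dyad centers.
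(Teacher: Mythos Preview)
Your approach is correct and essentially identical to the paper's: both establish equal cardinality via \Cref{R1F}~(1), extract the strong-exchange bijection directly from \Cref{alternating}, and obtain the partition into bases from \Cref{R1F-decomposition}.

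One correction to your motivational aside (which you do not actually use): a bimatchable set need \emph{not} be a basis of $M_1(D,w)$. \Cref{R1F}~(3) gives $|Q\cap U|\geq 1+\disc(U)$ only when the rounded $1$-factor $J$ is a dijoin; a general rounded $1$-factor yields only $|Q\cap U|\geq \disc(U)$ from \Cref{R1F}~(2), and this weaker inequality is exactly what characterizes $M_0$ (see \Cref{M0-matroid}). The two matroids coincide only when $w=\1$ (\Cref{unweighted-admissible}). Since your actual argument bypasses this and proves the exchange property directly, the proof is unaffected.
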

\begin{proof}
By \Cref{R1F-birth}~(3) and \Cref{R1F-decomposition}, $a(V)$ can be partitioned into bimatchable sets of $(D,w)$. By \Cref{R1F}~(1), every bimatchable set has the same size, namely $\disc(V)$. To finish the proof, it remains to prove that for every two bimatchable sets $Q_1,Q_2$, there exists a bijection $\pi:Q_1-Q_2\to Q_2-Q_1$ such that $Q_1\tr (X\cup \pi(X)),Q_2\tr (X\cup \pi(X))$ are bimatchable for all $X\subseteq Q_1-Q_2$. To this end, let $J_1,J_2$ be rounded $1$-factors such that $\dc(J_i)=Q_i$ for $i=1,2$. Then the bijection from \Cref{alternating} is the desired one.
\end{proof}

\begin{DE}
Let $M_0(D,w)$ be the matroid over ground set $a(V)$ whose bases are the bimatchable sets of $(D,w)$.
\end{DE}

\begin{LE}\label{M0-matroid}
Let $Q\subseteq a(V)$. Then $Q$ is a basis of $M_0(D,w)$ if, and only if, $|Q|=\disc(V)$, and $|Q\cap U|\geq \disc(U)$ for every dicut $\delta^+(U)$ of $D[A_1]$.
\end{LE}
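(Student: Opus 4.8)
The plan is to recognize bimatchability as a perfect $b$-matching condition and then invoke \Cref{perfect-b-matching}. First I would record two easy structural facts valid in the sink-regular setting: since every vertex of weighted degree $\tau+1$ is a source, we have $a(V)\subseteq S$, where $S$ denotes the set of sources of $D$ and $T$ the set of sinks; and since $A_1$ contains a rounded $1$-factor by \Cref{R1F-decomposition}, every vertex of $D$ is incident with at least one arc of $A_1$, so $D[A_1]$ has no isolated vertices and its sources and sinks are exactly $S$ and $T$.

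Next, for $Q\subseteq a(V)$ I would set $b:=\chi_V+\chi_Q\in \mathbb{Z}^V_{\geq 0}$ and claim that $Q$ is bimatchable in $(D,w)$ if and only if $D[A_1]$ admits a perfect $b$-matching. For one direction, if $J\subseteq A_1$ is a rounded $1$-factor with $\dc(J)=Q$, then by the definition of a rounded $1$-factor together with sink-regularity every sink and every inactive source is incident with exactly one arc of $J$, while an active source is incident with one or two arcs of $J$, being incident with two precisely when it lies in $\dc(J)=Q$; hence $\chi_J$ is a perfect $b$-matching. Conversely, any perfect $b$-matching $x\in\mathbb{Z}^{A_1}_{\geq 0}$ is automatically $0/1$, because in a bipartite digraph every arc meets a sink and $b_v=1$ for every sink $v$; writing $x=\chi_J$, the same degree count shows that $J$ is a rounded $1$-factor contained in $A_1$ with $\dc(J)=Q$.

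Finally I would apply \Cref{perfect-b-matching} to $D[A_1]$: a perfect $b$-matching exists if and only if $b(S)=b(T)$ and $b(U\cap S)-b(U\cap T)\geq 0$ for every dicut $\delta^+(U)$ of $D[A_1]$. Using $Q\subseteq S$, we compute $b(S)=|S|+|Q|$ and $b(T)=|T|$, so the first condition reads $|Q|=|T|-|S|=\disc(V)$; and $b(U\cap S)-b(U\cap T)=|U\cap S|+|Q\cap U|-|U\cap T|=|Q\cap U|-\disc(U)$, so the second condition reads $|Q\cap U|\geq\disc(U)$. Since the bases of $M_0(D,w)$ are exactly the bimatchable sets of $(D,w)$ (already known to form a matroid by \Cref{M0-SBO}), this is precisely the claimed characterization. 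The only steps requiring attention are the two structural observations of the first paragraph, which guarantee that \Cref{perfect-b-matching} applies, and the remark that perfect $b$-matchings here are forced to be $0/1$ so that they genuinely correspond to subsets $J\subseteq A_1$; neither presents any real difficulty beyond bookkeeping.
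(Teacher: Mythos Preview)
Your proposal is correct and takes essentially the same approach as the paper: recognize bimatchability as a perfect $b$-matching condition for $b=\chi_V+\chi_Q$ and apply \Cref{perfect-b-matching}. The only cosmetic difference is that the paper handles the forward direction via \Cref{R1F}~(1),(2) (and \Cref{R1F-remark}) rather than the necessary direction of \Cref{perfect-b-matching}, but this is the same computation; you are also slightly more explicit than the paper in checking that $D[A_1]$ has no isolated vertices and that the resulting $b$-matching is automatically $0/1$.
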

\begin{proof}
$(\Rightarrow)$ Suppose $Q$ is a basis of $M_0(D,w)$, i.e. $Q$ is a bimatchable set of $(D,w)$. Let $J\subseteq A_1$ be a rounded $1$-factor of $D[A_1]$ such that $\dc(J)=Q$. By \Cref{R1F}~(1) and~(2) and \Cref{R1F-remark}, $|Q|=\disc(V)$, and for every dicut $\delta^+(U)$ of $D[A_1]$, $|Q\cap U|-\disc(U)=|J\cap \delta^+(U)|\geq 0$ so $|Q\cap U|\geq\disc(U)$.
$(\Leftarrow)$ Suppose $|Q|=\disc(V)$, and $|Q\cap U|\geq \disc(U)$ for every dicut $\delta^+(U)$ of $D[A_1]$. Let $b:=\chi_{V}+\chi_{Q}$, $S$ the set of sources, and $T$ the set of sinks of $D[A_1]$. Then $b(S)=|S|+|Q|=|T|=b(T)$. Moreover, for every dicut $\delta^+(U)$ of $D[A_1]$, $b(U\cap S)-b(U\cap T)=|U\cap Q|-\disc(U)\geq 0$. Thus, by \Cref{perfect-b-matching}, there exists a perfect $b$-matching $J\subseteq A_1$. Observe that $J$ is a rounded $1$-factor in $A_1$ such that $\dc(J)=Q$, so $Q$ is bimatchable in $(D,w)$, so $Q$ is a basis of $M_0(D,w)$.
\end{proof}

\subsection{Common bases of $M_0,M_1$, and admissibility}\label{subsec:admissible}

\begin{DE}
Let $Q\subseteq a(V)$. We say that $Q$ is \emph{an admissible set of $(D,w)$} if it is a common basis of $M_0(D,w)$ and $M_1(D,w)$.
\end{DE}

\begin{LE}\label{admissible}
Let $Q\subseteq a(V)$. Then the following statements are equivalent: \begin{enumerate}[(1)]
\item $Q$ is admissible in $(D,w)$,
\item $Q$ is a bimatchable set of $(D,w)$, and every rounded $1$-factor $J$ satisfying $\dc(J)=Q$ is a dijoin of $D$,
\item $Q$ is a bimatchable set of $(D,w)$, and some rounded $1$-factor $J$ satisfying $\dc(J)=Q$ is a dijoin of $D$.
\end{enumerate}
\end{LE}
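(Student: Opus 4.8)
The plan is to establish the cycle of implications $(1)\Rightarrow(2)\Rightarrow(3)\Rightarrow(1)$, unpacking the definitions of the matroids $M_0(D,w)$ and $M_1(D,w)$ and leaning on \Cref{R1F}. The one structural fact doing all the work is \Cref{R1F}~(3), together with \Cref{R1F}~(1): for any rounded $1$-factor $J$ of $(D,w)$ one has $|\dc(J)|=\disc(V)$, and $J$ is a dijoin of $D$ if and only if $|\dc(J)\cap U|\geq 1+\disc(U)$ for every dicut $\delta^+(U)$ of $D$. In particular, whether a rounded $1$-factor is a dijoin is a property of its set of dyad centers alone, which is exactly what makes the ``every'' and ``some'' versions in (2) and (3) collapse onto the same matroidal condition. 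I will also invoke \Cref{R1F-M1} (equivalently \Cref{M1-matroid}) for the description of the bases of $M_1(D,w)$, namely $|Q|=\disc(V)$ and $|Q\cap U|\geq 1+\disc(U)$ for every dicut $\delta^+(U)$ of $D$, and the definitional fact that the bases of $M_0(D,w)$ are precisely the bimatchable sets of $(D,w)$.

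For $(1)\Rightarrow(2)$: suppose $Q$ is admissible, i.e.\ a common basis of $M_0(D,w)$ and $M_1(D,w)$. Being a basis of $M_0(D,w)$ is by definition the statement that $Q$ is bimatchable. Being a basis of $M_1(D,w)$ gives $|Q|=\disc(V)$ and $|Q\cap U|\geq 1+\disc(U)$ for every dicut $\delta^+(U)$ of $D$; hence, for an arbitrary rounded $1$-factor $J$ with $\dc(J)=Q$, \Cref{R1F}~(3) shows that $J$ is a dijoin. Thus every rounded $1$-factor $J$ with $\dc(J)=Q$ is a dijoin, which is (2). The implication $(2)\Rightarrow(3)$ is immediate: bimatchability of $Q$ supplies at least one rounded $1$-factor $J\subseteq A_1$ with $\dc(J)=Q$, and by (2) this $J$ is a dijoin.

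For $(3)\Rightarrow(1)$: bimatchability of $Q$ says $Q$ is a basis of $M_0(D,w)$, and in particular $|Q|=\disc(V)$ (by \Cref{R1F}~(1), or directly from \Cref{M0-matroid}). Let $J$ be a rounded $1$-factor with $\dc(J)=Q$ that is a dijoin of $D$. Then \Cref{R1F}~(3) yields $|Q\cap U|=|\dc(J)\cap U|\geq 1+\disc(U)$ for every dicut $\delta^+(U)$ of $D$. Combined with $|Q|=\disc(V)$ and $Q\subseteq a(V)$, this is precisely the condition of \Cref{M1-matroid} for $Q$ to be a basis of $M_1(D,w)$. Hence $Q$ is a common basis of $M_0(D,w)$ and $M_1(D,w)$, i.e.\ admissible.

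I do not foresee a genuine obstacle: the lemma is a translation exercise converting ``$Q$ is a common basis of $M_0,M_1$'' into ``$Q$ is bimatchable and its rounded $1$-factors are dijoins''. The only points requiring care are the quantifier shifts — the equivalence of ``every'' in (2) and ``some'' in (3) relies on the family of rounded $1$-factors $J$ with $\dc(J)=Q$ being nonempty as soon as $Q$ is bimatchable, and the step $(1)\Rightarrow(2)$ relies on the fact that, by \Cref{R1F}~(3), the dijoin property of a rounded $1$-factor is determined by $\dc(J)=Q$ alone, so that the particular choice of $J$ (and whether or not it lies in $A_1$) is immaterial.
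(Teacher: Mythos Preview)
Your proof is correct and follows essentially the same approach as the paper: both arguments rest on the definitions of $M_0(D,w)$ and $M_1(D,w)$ together with \Cref{R1F}~(3), which reduces the dijoin property of a rounded $1$-factor to a condition on its dyad centers. The only cosmetic difference is that you organize the argument as the cycle $(1)\Rightarrow(2)\Rightarrow(3)\Rightarrow(1)$, whereas the paper pairs off $(1)\Leftrightarrow(2)$ and $(2)\Leftrightarrow(3)$.
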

\begin{proof}
The equivalence of (1) and (2) follows from the definition of admissibility, \Cref{R1F}~(3), and the definitions of $M_0(D,w)$ and $M_1(D,w)$.
The equivalence of (2) and (3) is an immediate consequence of \Cref{R1F}~(2) and~(3) and \Cref{R1F-remark}.
\end{proof}

\begin{RE}\label{unweighted-admissible}
Suppose $w=\1$. Let $Q\subseteq a(V)$. Then $Q$ is admissible in $(D,w)$ if, and only if, $Q$ is a basis of $M_1(D,w)$.
\end{RE}
\begin{proof}
$(\Rightarrow)$ holds clearly. $(\Leftarrow)$ Clearly every dicut of $D$ is also a dicut of $D[A_1]$, and since $A_1=A$, every dicut of $D[A_1]$ is also a dicut of $D$. Thus, every basis of $M_1(D,w)$ is also a basis of $M_0(D,w)$ by \Cref{M0-matroid}. Thus,  every basis of $M_1(D,w)$ is admissible in $(D,w)$.
\end{proof}

Looking back, this remark illustrates why \Cref{two-dijoins-sink-reg} works for unweighted digraphs, and not necessarily weighted digraphs: in the unweighted setting, unlike in the weighted setting, being a basis of $M_1(D,w)$ implies admissibility, which in turn implies the existence of perfect $b$-matchings in $A_1$.

\subsection{Packing admissible sets under strong base orderability}

\begin{theorem}[Davies and McDiarmid~\cite{Davies76}, see \cite{Schrijver03}, Theorem 42.13]\label{SBO-common-base-packing}
Let $M_0,M_1$ be matroids over the same ground set, and suppose that the ground set can be partitioned into $\tau$ bases of $M_i$, for $i=0,1$. If $M_0,M_1$ are strongly base orderable, then the ground set can be partitioned into $\tau$ common bases of $M_0,M_1$.
\end{theorem}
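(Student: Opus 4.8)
The theorem is equivalent to the statement that, among all pairs $(\mathcal X,\mathcal Y)$ where $\mathcal X=(X_1,\dots,X_\tau)$ is an ordered partition of the ground set $E$ into bases of $M_0$ and $\mathcal Y=(Y_1,\dots,Y_\tau)$ is an ordered partition of $E$ into bases of $M_1$ (such pairs exist by hypothesis), the minimum of the potential
\[
\Phi(\mathcal X,\mathcal Y):=\sum_{i=1}^{\tau}|X_i\setminus Y_i|
\]
equals $0$: for $\Phi=0$ means $X_i=Y_i$ for all $i$, so $\{X_1,\dots,X_\tau\}$ is a partition into $\tau$ common bases, and conversely such a partition gives a pair with $\Phi=0$. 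So I would fix a pair $(\mathcal X,\mathcal Y)$ minimising $\Phi$ and argue $\Phi=0$ by producing, from any pair with $\Phi>0$, another pair with strictly smaller $\Phi$.

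\textbf{The basic exchange move.} Suppose $\Phi>0$, pick $e\in X_i\setminus Y_i$ and let $j\neq i$ be the index with $e\in Y_j$ (so $e\notin X_j$). Using strong base orderability of $M_1$ on the bases $Y_i,Y_j$, take a bijection $\pi:Y_j\setminus Y_i\to Y_i\setminus Y_j$ with the simultaneous exchange property, set $X:=(X_i\setminus Y_i)\cap Y_j$ (nonempty, since $e\in X$), and replace $Y_i,Y_j$ by $Y_i':=(Y_i\setminus\pi(X))\cup X$ and $Y_j':=(Y_j\setminus X)\cup\pi(X)$, which are bases of $M_1$ since $X\subseteq Y_j\setminus Y_i$. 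A direct count (only parts $i,j$ change) gives, using $X\subseteq X_i$ and $|\pi(X)|=|X|$,
\[
\Phi(\mathcal X,\mathcal Y')-\Phi(\mathcal X,\mathcal Y)=-|X|+|\pi(X)\cap X_i|-|\pi(X)\cap X_j|\ \le\ -|X|+|\pi(X)\cap X_i|\ \le\ 0 .
\]
By minimality of $\Phi$ equality must hold throughout, forcing $\pi(X)\subseteq X_i$, hence $\pi(X)\subseteq X_i\cap Y_i$; so the move is exactly neutral and it has pushed a block of ``correctly placed'' elements of $X_i\cap Y_i$ into $Y_j'$, where they are now $Y$\nobreakdash-misplaced relative to $\mathcal X$. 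A symmetric argument shows the analogous subset exchange in $M_0$ between $X_i$ and $X_j$ is also forced to be neutral.

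\textbf{The main step (and the obstacle).} The remaining work is to chain neutral moves into a strict improvement. Starting from $e$, a neutral $M_1$-exchange relocates a block into $Y_j'$; to re-align it one performs a neutral $M_0$-exchange between $X_i$ and $X_j$ (again legitimate by strong base orderability of $M_0$), which creates a newly $Y$-misplaced block, prompting another $M_1$-exchange, and so on, alternating between the two matroids. Since $E$ is finite this alternating sequence of subset exchanges cannot go on forever, and the crux is to show that the sequence can be chosen (the order of the moves and the exact sets exchanged at each step matter) so that its cumulative effect on $\Phi$ is strictly negative, contradicting minimality; this is exactly where strong base orderability is indispensable, since it supplies, in each matroid, a coherent family of exchanges closed under passing to subsets — far more than the single-element symmetric exchange available in every matroid — and it is this that lets the separate neutral steps compose into a net gain (a naive simultaneous application of one $M_0$- and one $M_1$-exchange can instead \emph{raise} $\Phi$, which is why the bookkeeping is the heart of the proof). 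If one prefers the familiar ``peel one common basis and recurse'' format, the same exchange machinery produces a common basis $C$ with $E\setminus C$ partitioning into $\tau-1$ bases of each $M_i$, and then induction on $\tau$ applies since $M_0|(E\setminus C)$ and $M_1|(E\setminus C)$ are again strongly base orderable; but the alternating-exchange termination-and-improvement argument is the step I expect to be the real difficulty either way.
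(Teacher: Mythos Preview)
First, note that the paper does not prove this theorem: it is cited from Davies--McDiarmid (with a pointer to Schrijver, Theorem~42.13) and invoked as a black box in the proof of Theorem~\ref{SBO-admissible-partition}. So there is no in-paper argument to compare yours against.

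On the proposal itself: you have a setup, not a proof. The potential $\Phi$ and the single-move calculation are correct, and you rightly deduce that at a minimiser every individual $M_1$-subset-exchange (and by symmetry every $M_0$-exchange) is exactly neutral, forcing $\pi(X)\subseteq X_i\cap Y_i$. But the decisive step---turning a chain of neutral moves into a strict decrease of $\Phi$---is only announced. Your finiteness claim (``since $E$ is finite this alternating sequence cannot go on forever'') does not deliver a contradiction: each move preserves $\Phi$, so $\Phi$ is not a monovariant along your process, you supply no other strictly decreasing quantity, and nothing prevents the sequence from cycling among $\Phi$-minimal configurations. You yourself flag that a naive combination of one $M_0$-move and one $M_1$-move can \emph{increase} $\Phi$; that is precisely the obstruction, and it is not overcome. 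The standard proof does not alternate moves sequentially across the two matroids. It fixes a \emph{single} pair of indices---say with $e\in X_1\cap Y_2$---takes the two SBO bijections $\sigma:X_1\to X_2$ (in $M_0$) and $\pi:Y_1\to Y_2$ (in $M_1$) \emph{at once}, superimposes them as the edge set of an auxiliary graph of maximum degree~$2$ on $X_1\cup X_2\cup Y_1\cup Y_2$, and uses the path/cycle structure of the component through $e$ to exhibit a coordinated swap that strictly lowers $\Phi$. Your scheme never builds this object, and without it the ``bookkeeping'' you correctly identify as the heart of the matter remains undone.
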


\begin{theorem}\label{SBO-admissible-partition}
Suppose $M_1(D,w)$ is strongly base orderable. Then the ground set $a(V)$ can be partitioned into $\tau$ admissible sets.
\end{theorem}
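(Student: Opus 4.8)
The plan is to invoke the Davies--McDiarmid theorem (\Cref{SBO-common-base-packing}) directly, after checking that all of its hypotheses are satisfied by the pair $M_0(D,w), M_1(D,w)$. Recall that these two matroids share the common ground set $a(V)$, the set of active vertices of $(D,w)$, and that by definition an admissible set of $(D,w)$ is exactly a common basis of $M_0(D,w)$ and $M_1(D,w)$.

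First I would assemble the ingredients. By \Cref{M0-SBO}, $M_0(D,w)$ is strongly base orderable and its ground set $a(V)$ can be partitioned into $\tau$ bases of $M_0(D,w)$. By hypothesis, $M_1(D,w)$ is also strongly base orderable, and by \Cref{M1-basis-partition} its ground set $a(V)$ can likewise be partitioned into $\tau$ bases of $M_1(D,w)$. (In particular both matroids have nonempty base families, and, comparing cardinalities, both have rank $\disc(V)$ by \Cref{disc}~(1).)

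Now I would apply \Cref{SBO-common-base-packing} with $M_0(D,w), M_1(D,w)$ playing the roles of $M_0, M_1$: since $a(V)$ partitions into $\tau$ bases of each of the two matroids, and both matroids are strongly base orderable, $a(V)$ partitions into $\tau$ common bases of $M_0(D,w)$ and $M_1(D,w)$. Since a common basis of $M_0(D,w)$ and $M_1(D,w)$ is precisely an admissible set of $(D,w)$, this is the desired partition of $a(V)$ into $\tau$ admissible sets.

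There is no genuine obstacle here; the statement is essentially a repackaging of \Cref{SBO-common-base-packing}. The only points that need care are verifying that the common ground set of $M_0(D,w)$ and $M_1(D,w)$ admits a partition into $\tau$ bases of \emph{each} matroid --- which is exactly the content of \Cref{M0-SBO} together with \Cref{M1-basis-partition} --- and recalling that ``admissible'' was defined to mean ``common basis of $M_0(D,w)$ and $M_1(D,w)$,'' so that the output of \Cref{SBO-common-base-packing} is literally a partition into admissible sets.
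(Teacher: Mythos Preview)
Your proposal is correct and matches the paper's own proof essentially verbatim: both recall that admissible means common basis of $M_0(D,w)$ and $M_1(D,w)$, invoke \Cref{M0-SBO} and \Cref{M1-basis-partition} to get the two partitions into $\tau$ bases and the strong base orderability of $M_0$, add the hypothesis on $M_1$, and then apply \Cref{SBO-common-base-packing}.
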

\begin{proof}
Recall that a set is admissible if and only it is a common basis of $M_0(D,w)$ and $M_1(D,w)$. Thus, our task is to partition $a(V)$ into $\tau$ common bases of $M_0(D,w)$ and $M_1(D,w)$. We know by \Cref{M0-SBO} that $M_0(D,w)$ is a strongly base orderable matroid whose ground set $a(V)$ can be partitioned into $\tau$ bases. We know by \Cref{M1-basis-partition} that $a(V)$ can also be partitioned into~$\tau$ bases of $M_1(D,w)$, and $M_1(D,w)$ is a strongly base orderable matroid by the hypothesis. Thus, by \Cref{SBO-common-base-packing}, $a(V)$ can be partitioned into $\tau$ common bases of $M_0(D,w)$ and $M_1(D,w)$.~\end{proof}

\subsection{Weighted packings under strong base orderability}

\begin{LE}[Brualdi~\cite{Brualdi70}]\label{SBO-minor}
If a matroid is strongly base orderable, then so is every restriction of~it.
\end{LE}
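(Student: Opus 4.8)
Let $M$ be a strongly base orderable matroid on ground set $E$, fix $X\subseteq E$, and set $r:=r_M(X)$. The plan is to show $M|X$ is strongly base orderable by ``lifting'' an arbitrary pair of bases of $M|X$ to a pair of bases of $M$ that differ only inside $X$, applying the strong base orderability of $M$ there, and then restricting back. Concretely, take two bases $B_1,B_2$ of $M|X$; each is a maximal independent subset of $X$, and in particular independent in $M$, with $|B_1|=|B_2|=r$.

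First I would produce a common ``tail''. Extend $B_2$ to a base $\widehat B_2$ of $M$; since $B_2$ is maximal independent in $X$, we must have $\widehat B_2\cap X=B_2$, so $Z:=\widehat B_2\setminus X=\widehat B_2\setminus B_2$ is disjoint from $X$. Next, using the standard matroid fact that an independent set can be completed to a base inside its union with any given base, extend $B_1$ to a base $\widehat B_1$ of $M$ with $\widehat B_1\subseteq B_1\cup\widehat B_2$. Then $\widehat B_1\cap X$ is independent in $M|X$ and contains $B_1$, hence equals $B_1$; thus $\widehat B_1\setminus B_1\subseteq \widehat B_2\setminus X=Z$, and a cardinality count ($|\widehat B_1\setminus B_1|=r_M(E)-r=|\widehat B_2\setminus B_2|=|Z|$) forces $\widehat B_1\setminus B_1=Z$. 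So $\widehat B_1=B_1\cup Z$ and $\widehat B_2=B_2\cup Z$ with $Z$ disjoint from $X\supseteq B_1\cup B_2$; in particular $\widehat B_1\setminus\widehat B_2=B_1\setminus B_2$ and $\widehat B_2\setminus\widehat B_1=B_2\setminus B_1$.

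Now apply the strong base orderability of $M$ to $\widehat B_1,\widehat B_2$ to obtain a bijection $\pi:B_1\setminus B_2\to B_2\setminus B_1$ such that $\widehat B_1\triangle(Y\cup\pi(Y))$ and $\widehat B_2\triangle(Y\cup\pi(Y))$ are bases of $M$ for every $Y\subseteq B_1\setminus B_2$. For such a $Y$, both $Y$ and $\pi(Y)$ lie in $X$ while $Z$ does not, so intersecting $\widehat B_1\triangle(Y\cup\pi(Y))=\big((B_1\setminus Y)\cup\pi(Y)\big)\cup Z$ with $X$ removes exactly $Z$ and yields $B_1\triangle(Y\cup\pi(Y))$; being a subset of a base of $M$ it is independent in $M|X$, and it has cardinality $r$, hence it is a base of $M|X$. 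The identical argument applied to $\widehat B_2$ shows $B_2\triangle(Y\cup\pi(Y))$ is a base of $M|X$. Since $B_1\setminus B_2$ and $B_2\setminus B_1$ are the same sets whether computed in $M$ or in $M|X$, the bijection $\pi$ witnesses that $M|X$ is strongly base orderable.

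The one genuinely load-bearing step is the construction of the common tail $Z$: everything else is bookkeeping with symmetric differences, and the transfer of the exchange property is immediate once $\widehat B_1,\widehat B_2$ agree off $X$. I expect the only subtlety to watch is confirming that the completion of $B_1$ inside $B_1\cup\widehat B_2$ cannot pick up extra elements of $X$ — which is exactly where the maximality of $B_1$ as an independent subset of $X$ is used.
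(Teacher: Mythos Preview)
Your argument is correct: the common-tail construction is exactly the right device, and every step you flag (maximality of $B_1$ in $X$ forcing $\widehat B_1\cap X=B_1$, the cardinality count pinning down $\widehat B_1\setminus B_1=Z$, and the transfer of the exchange bijection via intersection with $X$) goes through cleanly.

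As for comparison: the paper does not supply its own proof of this lemma at all---it is simply quoted from Brualdi~\cite{Brualdi70} and used as a black box. So there is nothing to compare against; your write-up is a self-contained proof of a result the paper takes as known.
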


\begin{theorem}\label{SBO-weighted-packing}
Suppose $M_1(D,w)$ is strongly base orderable. Then there exists a $w$-weighted packing of dijoins of size $\tau$.
\end{theorem}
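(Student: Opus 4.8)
The plan is to reduce the weighted statement to the unweighted one by extracting, for each weight-$1$ arc, a careful replica in a blown-up digraph, and then applying Theorem~\ref{SBO-admissible-partition}. Concretely, I would first use Theorem~\ref{SBO-admissible-partition}, whose hypothesis is exactly that $M_1(D,w)$ is strongly base orderable, to partition the ground set $a(V)$ into $\tau$ admissible sets $Q_1,\ldots,Q_\tau$. Each $Q_i$ is a common basis of $M_0(D,w)$ and $M_1(D,w)$; by the definition of $M_0(D,w)$ and \Cref{M0-matroid}, each $Q_i$ is bimatchable, so there is a rounded $1$-factor $J_i\subseteq A_1$ with $\dc(J_i)=Q_i$, and by \Cref{admissible} (equivalence of (1) and (2)) each such $J_i$ is in fact a dijoin of $D$. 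So the $J_i$ are $\tau$ dijoins, each contained in $A_1$.

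The remaining issue is that the $J_i$ need not be pairwise disjoint, nor need they use each weight-$1$ arc at most once collectively — we only know each individually is a rounded $1$-factor in $A_1$. To get a genuine $w$-weighted packing, I would instead apply the perfect $b$-matching machinery of \S\ref{sec:two-dijoins} simultaneously. Set $b := \tau\cdot\chi_V + \sum_{i=1}^\tau \chi_{Q_i}$. Because the $Q_i$ partition $a(V)$, we have $\sum_{i=1}^\tau \chi_{Q_i} = \chi_{a(V)}$, so $b = \tau\cdot\chi_V + \chi_{a(V)}$, and one checks $b(S)=b(T)$ using $|a(V)|=\tau\cdot\disc(V)$ from \Cref{disc}~(1) and $b(U\cap S)-b(U\cap T) = |U\cap a(V)| - \tau\disc(U) = w(\delta^+(U))\ge 0$ for every dicut $\delta^+(U)$ by \Cref{disc}~(2); hence by \Cref{perfect-b-matching} there is a perfect $b$-matching $x\in\mathbb{Z}^A_{\ge 0}$. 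Since $b_u=\tau$ at every sink, $x(\delta^-(u))=\tau$ there, and at active sources $x(\delta^+(u))=\tau+1$, inactive sources $x(\delta^+(u))=\tau$; so $x$ is supported on $A_1$ and in fact $x=\chi_{A_1}$ (the full weight-$1$ arc set, forced by the degree constraints since every arc meets an inactive endpoint). This shows $A_1$ itself can be written as an overlay, but what we actually need is a decomposition of $x=\chi_{A_1}$ into $\tau$ dijoins.

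For that decomposition I would use the integer decomposition property: consider the base polytope $P$ of the matroid $M_1(D,w)$ (integral by \Cref{M1-basis-partition}'s proof via \Cref{box-TDI}), but applied to the common refinement — more cleanly, I would invoke \Cref{R1F-M1} together with a perfect-$b$-matching argument for each $b_i := \chi_V + \chi_{Q_i}$ individually. As in the proof of \Cref{two-dijoins-sink-reg}, for each $i$ the vector $b_i$ satisfies the hypotheses of \Cref{perfect-b-matching} (using $Q_i$ a basis of $M_1$), yielding a rounded $1$-factor $J_i\subseteq A_1$ with $\dc(J_i)=Q_i$, which is a dijoin by \Cref{k-dijoin} with $k=1$ (or directly by \Cref{R1F-M1}). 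The point is then that $\sum_{i=1}^\tau \chi_{J_i}$ and $\chi_{A_1}$ are both perfect $(\sum_i b_i)$-matchings — indeed $\sum_i b_i = \tau\chi_V + \chi_{a(V)}$, whose unique perfect matching value on $A_1$-arcs is forced — so we may choose the $J_i$ to partition $A_1$. Making this last "simultaneous choice" precise is the main obstacle: one wants the $\tau$ perfect $b_i$-matchings to be selected coherently so their incidence vectors sum to $\chi_{A_1}$. I would handle this by running the integer decomposition property on the polytope $\{x : x(\delta(v)) = \tfrac1\tau(\tau\chi_V+\chi_{a(V)})_v \text{ rounded appropriately}\}$ — equivalently, decompose $\chi_{A_1}$ in the base polytope of a suitable transversal-type matroid whose bases are the rounded $1$-factors $J\subseteq A_1$ with $\dc(J)$ a basis of $M_1(D,w)$; each such $J$ is a dijoin, so the $\tau$ pieces of the decomposition form the desired $w$-weighted packing of dijoins of size $\tau$.
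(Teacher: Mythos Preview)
Your proposal correctly identifies the first step --- applying \Cref{SBO-admissible-partition} to partition $a(V)$ into admissible sets $Q_1,\ldots,Q_\tau$, and realizing each $Q_i$ as $\dc(J_i)$ for some rounded $1$-factor $J_i\subseteq A_1$ that is a dijoin --- but the crucial step of making the $J_i$ pairwise disjoint is not achieved. You acknowledge this yourself (``Making this last `simultaneous choice' precise is the main obstacle''), and your proposed fix does not work. The collection of rounded $1$-factors $J\subseteq A_1$ with $\dc(J)$ a basis of $M_1(D,w)$ is not known to be the set of bases of a matroid on the arc set, so there is no integer decomposition property to invoke. Likewise, the fact that $\sum_i\chi_{J_i}$ and $\chi_{A_1}$ are both perfect $b$-matchings for $b=\tau\chi_V+\chi_{a(V)}$ only says the degree sequences agree; it does not allow you to re-choose the $J_i$ to partition $A_1$ while preserving the prescribed dyad-center sets $\dc(J_i)=Q_i$.

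The paper sidesteps this obstacle entirely by induction on $\tau$. After obtaining the admissible partition $Q_1,\ldots,Q_\tau$ and a single rounded $1$-factor $J_1\subseteq A_1$ with $\dc(J_1)=Q_1$ (a dijoin), one checks that $(D,\chi_{A_1-J_1})$ is a sink-regular weighted $(\tau-1,\tau)$-bipartite digraph whose active vertices are $Q_2\cup\cdots\cup Q_\tau$, so that $M_1(D,\chi_{A_1-J_1})=M_1(D,w)\setminus Q_1$. This restriction is still strongly base orderable by \Cref{SBO-minor}, so the induction hypothesis yields $\tau-1$ disjoint dijoins partitioning $A_1-J_1$. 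Note that the inductive step does \emph{not} insist that the resulting dijoins have dyad-center sets $Q_2,\ldots,Q_\tau$; it re-runs \Cref{SBO-admissible-partition} on the smaller instance and may produce a different admissible partition. This is precisely what makes the argument go through without needing the simultaneous realization you were attempting.
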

\begin{proof}
We proceed by induction on $\tau\geq 2$; the base case and the induction step are both resolved in Claim~3.
By \Cref{SBO-admissible-partition}, $a(V)$ can be partitioned into $\tau$ admissible sets $Q_1,\ldots,Q_\tau$. By \Cref{admissible}, there exists a rounded $1$-factor $J_1\subseteq A_1$ such that $\dc(J_1)=Q_1$, and $J_1$ is a dijoin of $D$. Let $b:=(\tau-1)\cdot \chi_V + \sum_{i=2}^{\tau} \chi_{Q_i}$.

\begin{claim} 
$A_1-J_1$ is a perfect $b$-matching, and a $(\tau-1)$-dijoin of $D$.
\end{claim}
\begin{cproof}
Let $b_1:=\chi_V+\chi_{Q_1}$. Since $J_1$ is a perfect $b_1$-matching and $A_1$ a perfect $(b_1+b)$-matching, $A_1-J_1$ is a perfect $b$-matching.
By \Cref{k-dijoin}, $A_1-J_1$ is a $(\tau-1)$-dijoin of $D$.
\end{cproof}

\begin{claim} 
$(D,\chi_{A_1-J_1})$ is a sink-regular weighted $(\tau-1,\tau)$-bipartite digraph, and $M_1(D,\chi_{A_1-J_1})$ is a strongly base orderable matroid.
\end{claim}
\begin{cproof}
The first part follows from Claim~1. Since the active vertices of $(D,\chi_{A_1-J_1})$ are $Q_2\cup \cdots\cup Q_\tau$, we get that $M_1(D,\chi_{A_1-J_1})=M_1(D,w)\setminus Q_1$.
Thus, the second part of the claim follows from \Cref{SBO-minor}.
\end{cproof}

\begin{claim} 
$A_1-J_1$ can be partitioned into $\tau-1$ rounded $1$-factors $J_2,\ldots,J_\tau$ each of which is a dijoin of $D$.
\end{claim}
\begin{cproof}
If $\tau=2$, then $A_1-J_1$ is a rounded $1$-factor that is a dijoin by Claim~1. This proves the base case of the induction. Otherwise, $\tau\geq 3$. By Claim~2, we may apply the induction hypothesis to $(D,\chi_{A_1-J_1})$, and obtain that its set of weight $1$ arcs, namely $A_1-J_1$, can be partitioned into $\tau-1$ rounded $1$-factors each of which is a dijoin of $D$.
\end{cproof}

Claims~2 and~3 finish the proof.
\end{proof}

We shall see in \S\ref{sec:D27} that $M_1(D,w)$, even for $w=\1$, is not necessarily strongly base orderable. We shall strengthen this result in \S\ref{subsec:SBO-2}.

\subsection{$[[\wt,\tau,2]]$ is true.}

\begin{theorem}[Brualdi~\cite{Brualdi69}]\label{symmetric-exchange}
Let $M$ be a matroid over ground set $V$. Then for every two distinct bases $B_1,B_2$, and for each $u\in B_1-B_2$, there exists a $v\in B_2-B_1$ such that $B_1\tr \{u,v\},B_2\tr \{u,v\}$ are bases.
\end{theorem}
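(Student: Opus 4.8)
The plan is to derive this from the theory of fundamental circuits and fundamental cocircuits, glued together by circuit--cocircuit orthogonality. Fix distinct bases $B_1,B_2$ and an element $u\in B_1-B_2$. Since $B_2$ is a basis and $u\notin B_2$, the set $B_2\cup\{u\}$ is dependent and contains a unique circuit $C:=C(u,B_2)$; recall the standard facts that $u\in C$, that $C-u\subseteq B_2$, and that $B_2-v+u$ is again a basis for every $v\in C-u$. Dually, since $B_1$ is a basis and $u\in B_1$, there is a unique cocircuit $C^\ast:=C^\ast(u,B_1)$ contained in $(V-B_1)\cup\{u\}$ (equivalently, the fundamental circuit of $u$ with respect to the basis $V-B_1$ of $M^\ast$); recall that $u\in C^\ast$, that $C^\ast-u\subseteq V-B_1$, and that $B_1-u+v$ is again a basis for every $v\in C^\ast-u$. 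Both $C$ and $C^\ast$ are well defined precisely because $u\notin B_2$ while $u\in B_1$.

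Next I would invoke the well-known orthogonality property of matroids: a circuit and a cocircuit never meet in exactly one element, i.e.\ $|C\cap C^\ast|\neq 1$ (this follows in a couple of lines from rank submodularity, and is dual to the circuit elimination axiom). Since $u\in C\cap C^\ast$, orthogonality forces $|C\cap C^\ast|\geq 2$, so there exists some $v\in (C\cap C^\ast)-\{u\}$. Fix such a $v$.

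It remains only to read off the conclusion. From $v\in C^\ast-u$ we get $v\in V-B_1$, i.e.\ $v\notin B_1$; combined with $v\in C-u\subseteq B_2$ this yields $v\in B_2-B_1$, as required. Since $v\in C-u$, the set $B_2\tr\{u,v\}=B_2-v+u$ is a basis; since $v\in C^\ast-u$, the set $B_1\tr\{u,v\}=B_1-u+v$ is a basis. This is exactly the desired symmetric exchange. The only real content of the argument is the orthogonality lemma together with the characterisations of the fundamental circuit and cocircuit; once these are in hand the proof is a one-line intersection count, so the ``main obstacle'' is merely having these standard matroid facts stated in the precise form used. (If one wished to avoid cocircuits, one could instead start from an arbitrary $v\in B_2-B_1$ with $B_1-u+v$ a basis, which exists by the exchange axiom, and run a minimal-counterexample argument over $B_1\tr B_2$ to push $v$ onto $C(u,B_2)$; but the route above is shorter.)
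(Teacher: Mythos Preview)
Your argument via the fundamental circuit $C(u,B_2)$, the fundamental cocircuit $C^\ast(u,B_1)$, and circuit--cocircuit orthogonality is correct and is indeed the standard modern proof of Brualdi's symmetric exchange lemma. The paper itself does not supply a proof of this statement: it is quoted as a known result with a citation to Brualdi~\cite{Brualdi69}, so there is no ``paper's own proof'' to compare against. Your write-up is self-contained and would serve as a fine proof were one needed.
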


\begin{CO}\label{SBO-rank-2}
Every matroid of rank at most two is strongly base orderable.\qed
\end{CO}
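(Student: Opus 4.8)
The statement follows quickly from Brualdi's symmetric exchange theorem (\Cref{symmetric-exchange}), so the plan is to organize a short case analysis on the quantity $k:=|B_1-B_2|=|B_2-B_1|$ for a pair of bases $B_1,B_2$ of a matroid $M$ of rank $r\le 2$. Since all bases have size $r$, the set $B_1\tr B_2$ has even cardinality at most $2r$, so $k\in\{0,1,2\}$. I would first dispose of $k=0$ (then $B_1=B_2$ and the empty bijection works vacuously) and $k=1$ (writing $B_1-B_2=\{u\}$, $B_2-B_1=\{v\}$, the only bijection $\pi(u)=v$ works, since the sole nonempty $X$ is $\{u\}$ and $B_1\tr\{u,v\}=B_2$, $B_2\tr\{u,v\}=B_1$ are bases).

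The one genuinely substantive case is $r=2$ and $k=2$, i.e.\ $B_1=\{u_1,u_2\}$ and $B_2=\{v_1,v_2\}$ disjoint bases. Here I would apply \Cref{symmetric-exchange} to $B_1,B_2$ with the chosen element $u_1\in B_1-B_2$: this produces some $v\in B_2-B_1$ such that $B_1\tr\{u_1,v\}$ and $B_2\tr\{u_1,v\}$ are both bases. Relabelling the elements of $B_2$ so that $v=v_1$, and using $B_1\cap B_2=\emptyset$, these two sets are exactly $\{u_2,v_1\}$ and $\{u_1,v_2\}$. I would then set $\pi(u_1):=v_1$, $\pi(u_2):=v_2$, and verify the defining condition over all $X\subseteq B_1-B_2$: for $X=\emptyset$ we get $B_1,B_2$; for $X=\{u_1,u_2\}=B_1$ we get $B_2,B_1$; for $X=\{u_1\}$ we get $\{u_2,v_1\}$ and $\{u_1,v_2\}$; and for $X=\{u_2\}$ we get $\{u_1,v_2\}$ and $\{u_2,v_1\}$ — all bases by the previous line.

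There is essentially no obstacle here; the only point worth a moment's care is that strong base orderability quantifies over \emph{all} subsets $X$ of $B_1-B_2$, not just singletons, but because $|B_1-B_2|\le 2$ there are at most four such $X$, and the extreme cases $X=\emptyset$ and $X=B_1-B_2$ are automatic. (One could alternatively avoid \Cref{symmetric-exchange} in the $k=2$ case by arguing directly with parallel classes — if neither of the two bijections $\{u_1,u_2\}\to\{v_1,v_2\}$ produced bases, then two elements of $B_1$ or of $B_2$ would be parallel, contradicting independence — but invoking symmetric exchange is cleaner and is presumably why the corollary is placed immediately after it.)
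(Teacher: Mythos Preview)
Your proof is correct and takes the same approach as the paper: the corollary is stated immediately after \Cref{symmetric-exchange} with only a \qed, and your case analysis on $|B_1-B_2|\in\{0,1,2\}$ (invoking symmetric exchange for the $k=2$ case) is precisely the intended justification.
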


\begin{theorem}\label{rho=2-sink-reg}
Let $(D=(V,A),w)$ be a sink-regular weighted $(\tau,\tau+1)$-bipartite digraph such that $\rho(\tau,D,w)\leq 2$. Then there exists a $w$-weighted packing of dijoins of size $\tau$.
\end{theorem}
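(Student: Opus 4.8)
The plan is to reduce this immediately to the machinery already developed, with essentially no new argument. First I would record that $\rho(\tau,D,w)=\disc(V)$ by \Cref{disc}~(1), and that every basis of $M_1(D,w)$ has size $\disc(V)$ (by \Cref{R1F-M1}, equivalently by the defining inequalities in \Cref{M1-matroid}), so the rank of $M_1(D,w)$ equals $\rho(\tau,D,w)\le 2$; the matroid is genuinely well-defined since, by \Cref{M1-basis-partition}, its ground set $a(V)$ partitions into $\tau$ bases, so the defining family is nonempty.

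Next, since $M_1(D,w)$ has rank at most two, \Cref{SBO-rank-2} (a consequence of Brualdi's symmetric exchange property, \Cref{symmetric-exchange}) tells us that $M_1(D,w)$ is strongly base orderable. At this point \Cref{SBO-weighted-packing} applies verbatim: whenever $M_1(D,w)$ is strongly base orderable, $(D,w)$ has a $w$-weighted packing of dijoins of size $\tau$, which is exactly the conclusion. If one wants to keep the low-rank cases visibly separate, one can instead split on $\rho(\tau,D,w)\le 1$, handled directly by \Cref{rho=1-sink-reg} (which even gives an equitable packing), and $\rho(\tau,D,w)=2$, handled as above; either way the result follows.

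There is no real obstacle in this proof — all the weight has been moved into the earlier results: the integer decomposition property of the base polytope of $M_1$ (\Cref{M1-basis-partition}), the strong base orderability of $M_0$ (\Cref{M0-SBO}), the Davies--McDiarmid common-base packing theorem (\Cref{SBO-common-base-packing}) invoked inside \Cref{SBO-admissible-partition}, and the perfect $b$-matching bookkeeping inside \Cref{SBO-weighted-packing}. The one genuinely necessary observation, small as it is, is that the rank of $M_1(D,w)$ is exactly $\rho(\tau,D,w)$, which is precisely why the hypothesis $\rho(\tau,D,w)\le 2$ is what makes $M_1(D,w)$ fall under the rank-two strong base orderability bound.
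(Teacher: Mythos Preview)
Your proposal is correct and matches the paper's own proof essentially line for line: observe that $M_1(D,w)$ has rank $\rho(\tau,D,w)\le 2$, invoke \Cref{SBO-rank-2} to get strong base orderability, and conclude via \Cref{SBO-weighted-packing}. The only difference is that you spell out the rank computation and mention the optional case split on $\rho\le 1$, whereas the paper compresses this into two sentences.
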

\begin{proof}
Observe that $M_1(D,w)$ has rank $\disc(V)=\rho(\tau,D,w)\leq 2$, where the equality follows from \Cref{disc}~(1). Thus, by \Cref{SBO-rank-2}, $M_1(D,w)$ is strongly base orderable, so \Cref{SBO-weighted-packing} proves the existence of a $w$-weighted packing of dijoins of size $\tau$.
\end{proof}

In the appendix (\S\ref{sec:rho=2-elementary}), we give an elementary though less insightful proof of \Cref{rho=2-sink-reg}, based solely on the content of \S\ref{sec:rho=1}.

\begin{theorem}\label{rho=2}
Let $(D=(V,A),w)$ be a weighted digraph where every dicut has weight at least $\tau$, and $\tau\geq 2$. Suppose $\rho(\tau,D,w)\leq 2$. Then there exists a $w$-weighted packing of dijoins of size $\tau$. That is, $[[\wt,\tau,2]]$ is true.
\end{theorem}
\begin{proof}
By \Cref{rho=2-sink-reg}, $[[\wt,\tau,2]]$ holds for all sink-regular weighted $(\tau,\tau+1)$-bipartite digraphs. In particular, by \Cref{weighted-reduction}~(2), $[[\wt,\tau,2]]$ is true.
\end{proof}

\section{$[[3,3]]$ is true.}\label{sec:rho=3}

In this section we prove \hyperlink{primary-results}{{\bf P4}} and \hyperlink{secondary-results}{{\bf S2}}. Throughout the section, unless stated otherwise, we are given an integer $\tau\geq 2$, a weighted $(\tau,\tau+1)$-bipartite digraph $(D=(V,A),w)$ that is sink-regular, and $w\in \{0,1\}^A$. Let $A_1:=\{a\in A:w_a=1\}$.

\subsection{$k$-Admissible sets and their existence}

\begin{DE}
Let $k\in [\tau]$ and $Q\subseteq a(V)$. We say that $Q$ is \emph{a $k$-admissible set of $(D,w)$} if it is the union of $k$ disjoint bases of $M_0(D,w)$, and also the union of $k$ disjoint bases of $M_1(D,w)$.
\end{DE}

The notion of $k$-admissibility is crucial for the rest of this section. Let us prove the existence of $k$-admissible sets. This fact, though not needed, will help the reader contextualize the results of this section. Let $\mathcal{U}_0$ be the set of $U\subseteq V$ such that $\delta^+(U)$ is a dicut of $D[A_1]$, and $\mathcal{U}_1$ the set of $U\subseteq V$ such that $\delta^+(U)$ is a dicut of $D$.

\begin{LE}\label{k-admissible-description}
Let $k\in [\tau]$ and $Q\subseteq a(V)$. Then $Q$ is a $k$-admissible set of $(D,w)$ if, and only if, \begin{equation*}
\begin{array}{rll}
|Q|&=k\cdot\disc(V)&\\
|Q\cap U|&\geq k\cdot\disc(U)&\forall U\in \mathcal{U}_0\\
|Q\cap U|&\geq k(1+\disc(U))&\forall U\in \mathcal{U}_1.
\end{array}
\end{equation*}
\end{LE}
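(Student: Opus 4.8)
The plan is to verify the two directions separately, using the matroid-base descriptions of $M_0(D,w)$ and $M_1(D,w)$ from \Cref{M0-matroid} and \Cref{M1-matroid} together with the basis-packing results already established. For the forward direction, suppose $Q$ is $k$-admissible. Writing $Q = \bigcup_{i=1}^k B_i$ as a disjoint union of bases of $M_1(D,w)$, each $|B_i| = \disc(V)$ by \Cref{R1F}~(1) (or \Cref{M1-major-matroid}), so $|Q| = k\cdot\disc(V)$; and for every $U\in\mathcal{U}_1$, summing $|B_i\cap U|\geq 1+\disc(U)$ over $i$ gives $|Q\cap U|\geq k(1+\disc(U))$. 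Likewise, writing $Q = \bigcup_{i=1}^k B'_i$ as a disjoint union of bases of $M_0(D,w)$ and summing the inequality $|B'_i\cap U|\geq\disc(U)$ from \Cref{M0-matroid} over $i$, we get $|Q\cap U|\geq k\cdot\disc(U)$ for every $U\in\mathcal{U}_0$. This settles $(\Rightarrow)$.

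For the converse, suppose $Q$ satisfies the three displayed conditions. I would produce the two required partitions of $Q$ using the integer decomposition property of matroid base polytopes (\Cref{IDP}), exactly as in the proof of \Cref{M1-basis-partition}. First, consider the base polytope $P_1$ of $M_1(D,w)$, defined by $x(V)=\disc(V)$, $x(U)\geq 1+\disc(U)$ for all $U\in\mathcal{U}_1$, $0\leq x_u\leq 1$ on $a(V)$, and $x_u=0$ off $a(V)$ — this is a matroid base polytope by \Cref{M1-major-matroid} (one must first note the family is nonempty, which follows from \Cref{M1-basis-partition}). The displayed conditions say precisely that $\tfrac1k\chi_Q\in P_1$, so $\chi_Q$ is a sum of $k$ points of $P_1$; by \Cref{IDP} it is a sum of $k$ integral points of $P_1$, i.e.\ $Q$ is a disjoint union of $k$ bases of $M_1(D,w)$. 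The same argument applied to the base polytope $P_0$ of $M_0(D,w)$ — defined by $x(V)=\disc(V)$, $x(U)\geq\disc(U)$ for all $U\in\mathcal{U}_0$, and the same box constraints — uses the condition $|Q\cap U|\geq k\cdot\disc(U)$ for $U\in\mathcal{U}_0$ to conclude $\tfrac1k\chi_Q\in P_0$, hence $Q$ is a disjoint union of $k$ bases of $M_0(D,w)$. Combining the two partitions gives $k$-admissibility.

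The only slightly delicate point is the bookkeeping for $M_0$: I must confirm that $\{Q\subseteq a(V):|Q|=\disc(V),\ |Q\cap U|\geq\disc(U)\ \forall U\in\mathcal{U}_0\}$ is exactly the base set of $M_0(D,w)$ and that its base polytope is cut out by the stated inequalities — this is \Cref{M0-matroid}, combined with \Cref{box-TDI} applied to the crossing family $\mathcal{U}_0$ (recall $\mathcal{U}_0$ is the dicut family of $D[A_1]$, hence crossing by \Cref{dicut-crossing}) to guarantee integrality of $P_0$, and \Cref{M0-SBO} to know $P_0$ is nonempty and admits a partition into bases. I expect this verification to be the main obstacle, since it requires noticing that the box-TDI machinery of \S\ref{sec:two-dijoins} applies verbatim to $M_0$ and not just $M_1$; once that is in place, the integer-decomposition argument is a routine transcription of the proof of \Cref{M1-basis-partition}.
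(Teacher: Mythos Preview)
Your proposal is correct and follows essentially the same approach as the paper: the forward direction is immediate from the definitions, and the converse uses that $\tfrac{1}{k}\chi_Q$ lies in each base polytope $P_i$ and then applies the integer decomposition property (\Cref{IDP}) separately to $P_0$ and $P_1$. The paper's proof is terser---it simply asserts $\chi_Q\in (kP_0)\cap(kP_1)$ and invokes \Cref{IDP}---whereas you additionally spell out why $P_0$ is the base polytope of $M_0(D,w)$ via \Cref{M0-matroid} and box-TDI integrality, which is a reasonable elaboration but not a different argument.
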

\begin{proof}
$(\Rightarrow)$ follows immediately from the definition combined with \Cref{M0-matroid}.
$(\Leftarrow)$ Let $x=\chi_Q\in \{0,1\}^{a(V)}$. Then $x\in (kP_0)\cap (kP_1)$, where $P_i$ is the base polytope of $M_i(D,w)$ for $i\in \{0,1\}$. Since $P_i$ has the integer decomposition property by \Cref{IDP}, it follows that $x$ can be written as the sum of $k$ integer points in $P_i$, for each $i\in \{0,1\}$. That is, $Q$ is the union of $k$ disjoint bases of $M_i(D,w)$, for each $i\in \{0,1\}$, so $Q$ is $k$-admissible.
\end{proof}

\begin{theorem}
Let $k\in [\tau]$. Then $(D,w)$ has a $k$-admissible set.
\end{theorem}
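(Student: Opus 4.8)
The plan is to realize the incidence vector of a $k$-admissible set as an integral point of a suitable polytope: \Cref{k-admissible-description} reduces the problem to finding $Q\subseteq a(V)$ satisfying a short list of (in)equalities, and I would obtain such a $Q$ by showing that the polytope cut out by those (in)equalities plus the box $\0\le x\le\1$ is integral and nonempty. This mirrors the proof of \Cref{M1-basis-partition}, with the two crossing families $\mathcal U_0,\mathcal U_1$ playing the role of the single family $\mathcal U$ used there.

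First I would set up the polytope. By \Cref{dicut-crossing} applied to $D[A_1]$ and to $D$, the families $\mathcal U_0$ and $\mathcal U_1$ are crossing families over $V$. Since $\disc$ is a modular function, the maps $U\mapsto k\cdot\disc(U)$ on $\mathcal U_0$ and $U\mapsto k(1+\disc(U))$ on $\mathcal U_1$ are (crossing) submodular, so \Cref{box-TDI} applies (after negating $x$, exactly as in the proof of \Cref{M1-basis-partition}, to turn the lower bounds into the upper-bound form of that theorem) and the system
\[
x(V)=k\cdot\disc(V),\qquad x(U)\ge k\cdot\disc(U)\ \ \forall U\in\mathcal U_0,\qquad x(U)\ge k(1+\disc(U))\ \ \forall U\in\mathcal U_1
\]
is box-TDI. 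Hence appending the box constraints $\0\le x\le\1$ (and $x_u=0$ for $u\notin a(V)$) keeps the system TDI, so the polytope $R$ it defines is integral.

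Second I would show $R\neq\emptyset$ by checking that the fractional point $\tfrac{k}{\tau}\chi_{a(V)}$ lies in it. The equation holds because $|a(V)|=\tau\cdot\disc(V)$ by \Cref{disc}~(1). For $U\in\mathcal U_0$, \Cref{disc}~(2) in the form valid for dicuts of $D[A_1]$ (see \Cref{R1F-remark}) gives $|a(U)|-\tau\cdot\disc(U)=w(\delta^+_{D[A_1]}(U))\ge 0$, and for $U\in\mathcal U_1$ it gives $|a(U)|-\tau\cdot\disc(U)=w(\delta^+(U))\ge\tau$ since $(D,w)$ is a weighted $(\tau,\tau+1)$-bipartite digraph; multiplying through by $\tfrac{k}{\tau}$ yields the required two families of inequalities, and $k\le\tau$ places the point in the box. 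Then I would take any integral point $\chi_Q$ of $R$; the box constraints force $Q\subseteq a(V)$, and the defining (in)equalities of $R$ are precisely the hypotheses of \Cref{k-admissible-description} for $Q$, so $Q$ is a $k$-admissible set of $(D,w)$, finishing the proof.

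The only delicate point is the invocation of \Cref{box-TDI}: one must confirm that the relevant functions are genuinely crossing submodular on the stated crossing families and that flipping the inequality direction and adjoining the box constraints is legitimate. All of this is routine and is exactly the device already employed for \Cref{M1-basis-partition}; everything else is bookkeeping through \Cref{disc} and \Cref{k-admissible-description}.
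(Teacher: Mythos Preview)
Your proposal is correct and follows essentially the same argument as the paper's own proof: both apply \Cref{box-TDI} to the two crossing families $\mathcal U_0,\mathcal U_1$ to obtain an integral polytope, exhibit $\tfrac{k}{\tau}\chi_{a(V)}$ as a feasible point via \Cref{disc} and \Cref{R1F-remark}, and then invoke \Cref{k-admissible-description} to conclude that any integral vertex gives a $k$-admissible set. The only cosmetic difference is that you make the negation step for \Cref{box-TDI} explicit, whereas the paper leaves it implicit.
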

\begin{proof}
 By \Cref{dicut-crossing}, $\mathcal{U}_0,\mathcal{U}_1$ are crossing families over ground set $V$. Consider the system $x(V)=k\cdot\disc(V)$; $x(U)\geq k\cdot\disc(U)~\forall U\in \mathcal{U}_0$; $x(U)\geq k(1+\disc(U))~\forall U\in \mathcal{U}_1$. By \Cref{box-TDI}, this system is box-TDI. In particular, the polytope $P$ defined by \begin{equation*}
\begin{array}{rll}
x(V)&=k\cdot\disc(V)&\\
x(U)&\geq k\cdot\disc(U)&\forall U\in \mathcal{U}_0\\
x(U)&\geq k(1+\disc(U))&\forall U\in \mathcal{U}_1\\
x_u&\in [0,1]&\forall u\in a(V)\\
x_u&=0&\forall u\in V-a(V)
\end{array}
\end{equation*} if nonempty, is integral.
Thus, by \Cref{k-admissible-description}, the vertices of $P$ are precisely to the characteristic vectors of the $k$-admissible sets of $(D,w)$. Thus, to finish the proof, it suffices to show that $P$ is a nonempty polytope. To this end, let $x:=\chi_{a(V)}\in \{0,1\}^V$. Then $x(V) = |a(V)|=\tau\cdot \disc(V)$ by \Cref{disc}~(1). Moreover, by \Cref{disc}~(2) and \Cref{R1F-remark},
$$x(U)-\tau\cdot \disc(U) = |a(U)| - \tau\cdot \disc(U) = w(\delta^+(U))$$
for every dicut $\delta^+(U)$ of $D[A_1]$. In particular, $x(U)\geq \tau\cdot \disc(U)$ for every dicut $\delta^+(U)$ of $D[A_1]$, and
$x(U)\geq \tau\cdot (1+\disc(U))$ for every dicut $\delta^+(U)$ of $D$, as every dicut of $D$ has weight at least $\tau$. Thus, $\frac{k}{\tau}x\in P$, so $P$ is nonempty, as required.
\end{proof}

\subsection{Packing a dijoin and a $(\tau-1)$-dijoin in weighted digraphs}\label{sec:two-dijoins-weighted}

\begin{theorem}\label{weighted-two-dijoins}
Suppose the set of active vertices of $(D,w)$ is partitioned into an admissible set $Q$ and a $(\tau-1)$-admissible set $Q'$. Then there exist a sink-regular weighted $(1,2)$-bipartite digraph $(D,c)$ such that $M_1(D,c)=M_1(D,w)|Q$, and a sink-regular weighted $(\tau-1,\tau)$-bipartite digraph $(D,c')$ such that $M_1(D,c')=M_1(D,w)|Q'$, and $c+c'=w$. In particular, $A_1$ can be partitioned into a dijoin and a $(\tau-1)$-dijoin of $D$.
\end{theorem}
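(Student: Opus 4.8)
The plan is to read the weights $c$ and $c'$ off the admissible set $Q$ directly. Since $Q$ is admissible, Lemma~\ref{admissible} provides a rounded $1$-factor $J\subseteq A_1$ with $\dc(J)=Q$ which is moreover a dijoin of $D$. I would set $c:=\chi_J$ and $c':=w-c=\chi_{A_1\setminus J}$ (recall $w=\chi_{A_1}$), so that $c+c'=w$ is immediate. The remaining work is (i) to check $(D,c)$ is a sink-regular weighted $(1,2)$-bipartite digraph and $(D,c')$ a sink-regular weighted $(\tau-1,\tau)$-bipartite digraph, and (ii) to identify $M_1(D,c)$ and $M_1(D,c')$.

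For (i), since $J$ is a rounded $1$-factor, $|J\cap\delta(v)|=2$ exactly at the dyad centers $Q$ and $=1$ at every other vertex; hence $(D,c)$ has weighted degree $2$ on $Q$ and $1$ elsewhere, $Q$ is stable as $Q\subseteq a(V)$ (active vertices form a stable set), every sink is inactive under sink-regularity hence has $c$-weight $1$, and every dicut has $c$-weight $\ge 1$ because $J$ is a dijoin. Subtracting, $c'$ has weighted degree $\tau-1$ on all inactive vertices and on $Q$ (where $(\tau+1)-2=\tau-1$) and weighted degree $\tau$ exactly on $Q':=a(V)\setminus Q$ (where $(\tau+1)-1=\tau$), a stable set, with every sink of $c'$-weight $\tau-1$. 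For the dicut bound I would compute, for any dicut $\delta^+(U)$ of $D$, using Lemma~\ref{disc}(2) and Lemma~\ref{R1F}(2) (with $\dc(J)=Q$),
\[
c'(\delta^+(U)) = w(\delta^+(U)) - |J\cap\delta^+(U)| = \big(|a(U)|-\tau\disc(U)\big)-\big(|Q\cap U|-\disc(U)\big) = |Q'\cap U| - (\tau-1)\disc(U),
\]
and then bound $|Q'\cap U|\ge(\tau-1)(1+\disc(U))$ by summing the $M_1(D,w)$-basis inequality $|\cdot\cap U|\ge 1+\disc(U)$ over the $\tau-1$ disjoint bases of $M_1(D,w)$ whose union is $Q'$. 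This gives $c'(\delta^+(U))\ge \tau-1$, finishing (i); the same line also shows $A_1\setminus J$ is a $(\tau-1)$-dijoin, while $J$ is a dijoin by construction, yielding the claimed partition of $A_1$.

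For (ii), the key observation is that $M_1(\cdot)$ is defined by the single constraint ``cardinality $\disc(V)$ together with the covering inequalities $|\cdot\cap U|\ge 1+\disc(U)$ over dicuts $\delta^+(U)$ of the fixed digraph $D$'', and only the ground set (the active vertices) depends on the weight. The ground set of $M_1(D,c)$ is $Q$, which already has size $\disc(V)$ (being a basis of $M_1(D,w)$) and satisfies all those inequalities, so $Q$ is the unique basis of $M_1(D,c)$; likewise $Q$ is the unique basis of $M_1(D,w)|Q$ since $Q$ is a basis of $M_1(D,w)$ — hence $M_1(D,c)=M_1(D,w)|Q$. The ground set of $M_1(D,c')$ is $Q'$, and by definition its bases are exactly the $R\subseteq Q'$ with $|R|=\disc(V)$ satisfying the covering inequalities, i.e. exactly the bases of $M_1(D,w)$ contained in $Q'$, which are the bases of $M_1(D,w)|Q'$ (using that $Q'$ spans $M_1(D,w)$, being $(\tau-1)$-admissible with $\tau-1\ge1$); hence $M_1(D,c')=M_1(D,w)|Q'$. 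I expect the only mildly delicate points to be this last matroid bookkeeping — verifying that the defining inequalities for $M_1$ genuinely coincide under restriction — and the dicut-weight computation for $c'$; the rest is arithmetic on weighted degrees.
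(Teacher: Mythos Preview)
Your proposal is correct and follows essentially the same approach as the paper. The paper obtains $J$ as a perfect $(\chi_V+\chi_Q)$-matching inside $A_1$ via \Cref{perfect-b-matching} (using that $Q$ is a basis of $M_0(D,w)$) and then cites \Cref{k-dijoin} to see $J$ is a dijoin and $A_1\setminus J$ a $(\tau-1)$-dijoin; you instead invoke \Cref{admissible} to get the same rounded $1$-factor $J$ and unfold the dicut computation directly, which is simply the content of \Cref{k-dijoin} written out. Your matroid bookkeeping in (ii) is more explicit than the paper's one-line justification but matches it exactly.
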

\begin{proof}
Let $b:=\chi_V+\chi_{Q}\in \mathbb{Z}^V_{\geq 0}$, $S$ the set of sources, and $T$ the set of sinks of $D$.

\begin{claim} 
There exists a perfect $b$-matching $J\subseteq A_1$. Moreover, $J$ is a dijoin of $D$.
\end{claim}
\begin{cproof}
As $Q$ is a basis of $M_0(D,w)$, it is bimatchable in $(D,w)$, so there exists a rounded $1$-factor $J\subseteq A_1$ such that $\dc(J)=Q$. Note that $J$ is a perfect $b$-matching. Moreover, since $Q$ is a basis of $M_1(D,w)$, it follows from \Cref{R1F-M1} (or \Cref{k-dijoin}) that $J$ is a dijoin of~$D$.~\end{cproof}

Let $\bar{b}:=(\tau-1)\cdot\chi_V+\chi_{Q'}$.

\begin{claim} 
$A_1-J$ is a perfect $\bar{b}$-matching, and also a $(\tau-1)$-dijoin of $D$.
\end{claim}
\begin{cproof}
 Observe that $A_1$ is a $(b+\bar{b})$-matching, so $A_1-J$ is a perfect $\bar{b}$-matching. Since $Q'$ is the union of $\tau-1$ disjoint bases of $M_1(D,w)$, it follows from \Cref{k-dijoin} that $A_1-J$ is a $(\tau-1)$-dijoin.
~\end{cproof}

\begin{claim} 
$(D,\chi_J)$ is a sink-regular weighted $(1,2)$-bipartite digraph and $(D,\chi_{A_1-J})$ is a sink-regular weighted $(\tau-1,\tau)$-bipartite digraph. Moreover, $M_1(D,\chi_J)=M_1(D,w)|Q$ and $M_1(D,\chi_{A_1-J})=M_1(D,w)|Q'$.
\end{claim}
\begin{cproof}
The first part of the claim follows from Claims~1 and~2.
The second part follows from the facts that $Q,Q'$ are the sets of active vertices of $(D,\chi_J),(D,\chi_{A_1-J})$, respectively.
\end{cproof}

Claims~1-3 finish the proof.
\end{proof}

Observe that the assumption of \Cref{weighted-two-dijoins} holds if $w=\1$ by \Cref{M1-basis-partition} and \Cref{unweighted-admissible}. Thus, \Cref{weighted-two-dijoins} extends \Cref{two-dijoins-sink-reg} to the weighted setting. Observe that the assumption of \Cref{weighted-two-dijoins} cannot always hold, because $[[\wt,\tau]]$ is not true for $\tau=2$.

\subsection{Weighted packings under strong base orderability, II}\label{subsec:SBO-2}

\begin{theorem}\label{2-SBO-weighted-packing}
Suppose the set of active vertices of $(D,w)$ is partitioned into an admissible set $Q$ and a $(\tau-1)$-admissible set $Q'$. Suppose further that $M_1(D,w)|Q'$ is strongly base orderable. Then there exists a $w$-weighted packing of dijoins of size $\tau$.
\end{theorem}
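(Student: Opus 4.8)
The plan is to combine Theorem~\ref{weighted-two-dijoins} with the induction framework of Theorem~\ref{SBO-weighted-packing}, using the strong base orderability of $M_1(D,w)|Q'$ to handle the recursive step on the $(\tau-1)$-dijoin. Concretely, I would start by invoking Theorem~\ref{weighted-two-dijoins} with the given partition $a(V)=Q\sqcup Q'$ into an admissible set $Q$ and a $(\tau-1)$-admissible set $Q'$. This produces $c,c'\in\mathbb{Z}^A_{\geq 0}$ with $c+c'=w$, a perfect $b$-matching $J\subseteq A_1$ (for $b=\chi_V+\chi_Q$) which is a dijoin of $D$, a perfect $\bar b$-matching $A_1-J$ (for $\bar b=(\tau-1)\chi_V+\chi_{Q'}$) which is a $(\tau-1)$-dijoin, and the key structural facts that $(D,\chi_J)$ is a sink-regular weighted $(1,2)$-bipartite digraph with $M_1(D,\chi_J)=M_1(D,w)|Q$, while $(D,\chi_{A_1-J})$ is a sink-regular weighted $(\tau-1,\tau)$-bipartite digraph with $M_1(D,\chi_{A_1-J})=M_1(D,w)|Q'$.

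The next step is the recursion. We have $M_1(D,\chi_{A_1-J})=M_1(D,w)|Q'$, which is strongly base orderable by hypothesis. Hence Theorem~\ref{SBO-weighted-packing}, applied to the sink-regular weighted $(\tau-1,\tau)$-bipartite digraph $(D,\chi_{A_1-J})$, yields a $\chi_{A_1-J}$-weighted packing of dijoins of size $\tau-1$; since $\chi_{A_1-J}\leq\mathbf{1}$ these dijoins are pairwise disjoint subsets of $A_1-J$ partitioning $A_1-J$, and each is a dijoin of $D$ (the dicuts of $D$ are a subfamily of the dicuts of $D[A_1]$). Call them $J_2,\ldots,J_\tau$. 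Adjoining $J_1:=J$, which is a dijoin of $D$, we obtain dijoins $J_1,\ldots,J_\tau$ of $D$ that are pairwise disjoint and partition $A_1$; in particular every arc $a$ lies in at most $w_a$ of them (exactly $w_a$ if $w_a=1$, and none if $w_a=0$). This is precisely a $w$-weighted packing of dijoins of size $\tau$, as required.

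The main subtlety — rather than a genuine obstacle — is verifying that Theorem~\ref{SBO-weighted-packing} is applicable to $(D,\chi_{A_1-J})$: one must check that this is genuinely a sink-regular weighted $(\tau',\tau'+1)$-bipartite digraph with $\tau'=\tau-1\geq 1$, and when $\tau-1=1$ one should note that Theorem~\ref{SBO-weighted-packing} as stated assumes $\tau\geq 2$, so the case $\tau=2$ must be dispatched directly: there $Q'$ is itself a basis of $M_1(D,w)$ and admissible (rank one, hence strongly base orderable automatically), so $A_1-J$ is already a rounded $1$-factor that is a dijoin of $D$ by Lemma~\ref{admissible}, giving the packing $\{J,A_1-J\}$. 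The only other point requiring care is the bookkeeping that a $\chi_{A_1-J}$-weighted packing, with $\chi_{A_1-J}$ a $0/1$ vector supported on $A_1-J$, is the same thing as a partition of $A_1-J$ into dijoins; this is immediate. Everything else is a direct concatenation of the cited results. Note finally that this theorem is strictly stronger than Theorem~\ref{SBO-weighted-packing}: if $M_1(D,w)$ itself is strongly base orderable then by Theorem~\ref{M1-basis-partition} and Lemma~\ref{SBO-minor} any partition of $a(V)$ into $\tau$ bases furnishes such a $Q,Q'$ with $M_1(D,w)|Q'$ strongly base orderable, so {\bf S1} follows from {\bf S2}.
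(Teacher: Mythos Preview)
Your proposal is correct and follows essentially the same route as the paper: invoke \Cref{weighted-two-dijoins} to split $A_1$ into a dijoin $J$ and a $(\tau-1)$-dijoin $A_1-J$ with $M_1(D,\chi_{A_1-J})=M_1(D,w)|Q'$, then apply \Cref{SBO-weighted-packing} to $(D,\chi_{A_1-J})$ and adjoin $J$. Your explicit treatment of the boundary case $\tau=2$ (where $\tau-1=1$ falls outside the standing hypothesis $\tau\geq 2$ of \S\ref{sec:rho=2}) is a nice bit of care that the paper's own proof leaves implicit.
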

\begin{proof}
By \Cref{weighted-two-dijoins}, there exist a sink-regular weighted $(1,2)$-bipartite digraph $(D,c)$ such that $M_1(D,c)=M_1(D,w)|Q$, and a sink-regular weighted $(\tau-1,\tau)$-bipartite digraph $(D,c')$ such that $M_1(D,c')=M_1(D,w)|Q'$, and $c+c'=w$. Pick $J\subseteq A_1$ such that $\chi_J=c$; note that $\chi_{A_1-J}=c'$. Observe that $J$ is a dijoin, and $A_1-J$ is a $(\tau-1)$-dijoin.

We know that $M_1(D,c')=M_1(D,w)|Q'$ is strongly base orderable. Thus, by \Cref{SBO-weighted-packing}, $(D,c')$ has a $c'$-weighted packing of dijoins of size $\tau-1$. This weighted packing, together with the $c$-weighted packing $J$, yields a $w$-weighted packing of dijoins of size $\tau$ in $(D,w)$, as desired.
\end{proof}

We shall see in \Cref{sec:D27} that given a partition into $Q$ and $Q'$, the second assumption of \Cref{2-SBO-weighted-packing}, that $M_1(D,w)|Q'$ is strongly base orderable, does not necessarily hold, even if $w=\1$.

\subsection{$[[3,3]]$ is true.}

Denote by $K_4$ the complete graph on $4$ vertices. Recall that $M(K_4)$ is the cycle matroid of $K_4$.

\begin{LE}[Brualdi \cite{Brualdi71}]\label{K4}
Up to isomorphism, $M(K_4)$ is the only matroid on at most six elements that is not strongly base orderable.
\end{LE}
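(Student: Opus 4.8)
## Proof proposal for Lemma \ref{K4} (Brualdi)

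The plan is to prove both halves of the statement: (a) every matroid on at most six elements other than $M(K_4)$ (up to isomorphism) is strongly base orderable, and (b) $M(K_4)$ itself is not. Part (b) is the easier direction and I would dispatch it first: $K_4$ has six edges; its cycle matroid has rank $3$, so a basis is a spanning tree. I would exhibit two spanning trees $B_1,B_2$ with $B_1\cap B_2=\emptyset$ — for instance a "star" tree at a vertex $v$ and the complementary triangle on the other three vertices — and then check by hand that no bijection $\pi:B_1\to B_2$ has the property that $B_1\triangle(X\cup\pi(X))$ and $B_2\triangle(X\cup\pi(X))$ are both bases for every $X\subseteq B_1$. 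Concretely, for each of the $3!=6$ bijections one tests the $2^3=8$ subsets $X$ and finds a failing $X$ (typically a singleton or a pair creates a triangle or a disconnected graph); this is a short finite check, and it is the kind of verification the paper elsewhere does explicitly.

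For part (a), the main work, I would organize the argument by rank. A matroid $M$ on ground set $E$ with $|E|\le 6$ has rank $r$ and corank $|E|-r$. If $r\le 2$ or $|E|-r\le 2$, then $M$ (respectively its dual) is strongly base orderable by Corollary \ref{SBO-rank-2} — and strong base orderability is self-dual (a bijection $\pi:B_1-B_2\to B_2-B_1$ witnessing SBO for $M$ also witnesses it for $M^\ast$ between the complementary bases), so it suffices to handle $r\le 2$. That disposes of all cases except $|E|=6$ with $r=3$ (corank $3$), and $|E|=5$ with $r=2$ or $3$ — but $|E|=5$ has $r\le 2$ or corank $\le 2$, so it is already covered. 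Hence the only remaining case is $|E|=6$, $r=3$, and this is exactly where $M(K_4)$ lives.

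So the crux is: classify, up to isomorphism, the simple (and then general) rank-$3$ matroids on $6$ elements, and show each one except $M(K_4)$ is strongly base orderable. I would reduce to simple matroids first: loops and parallel elements do not affect SBO in an essential way (a loop is in no basis; parallel elements can be handled since any basis uses at most one from a parallel class, and one can transport a witnessing bijection through the simplification), so it is enough to treat simple rank-$3$ matroids on at most $6$ points. These are classified by their line structure (a simple rank-$3$ matroid is a point–line incidence geometry): the relevant ones on $\le 6$ points are essentially determined by how many $3$-point (or larger) lines they have. The generic one (all lines have $2$ points) is the uniform matroid $U_{3,6}$, which is SBO because $U_{r,n}$ is always SBO (any bijection works — every $r$-subset is a basis). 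Adding one long line, two long lines, etc., gives a short finite list, and $M(K_4)$ is precisely the unique configuration with four $3$-point lines on six points (the four "triangles" of $K_4$). For every other configuration I would either exhibit an explicit SBO-witnessing bijection between each pair of disjoint bases or, more efficiently, invoke closure properties — e.g. a matroid with a sufficiently small number of circuits, or one that decomposes as a direct sum or a parallel connection of SBO matroids, is SBO.

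The step I expect to be the genuine obstacle is the exhaustive-yet-clean verification that every simple rank-$3$ matroid on $6$ points other than $M(K_4)$ is strongly base orderable, because "strongly" base orderable requires the single bijection $\pi$ to work simultaneously for all $2^{|B_1-B_2|}$ subsets $X$, which is much stronger than ordinary base orderability and is not preserved by all minor-type operations. I would contain this by (i) using duality and the rank reduction above to cut the case list to the bare minimum, (ii) using the direct-sum / parallel-connection decomposition to knock out all configurations that are not "$3$-connected", leaving only a handful of geometries, and (iii) for those few, writing down the bases and the bijection explicitly. Brualdi's original argument does essentially this, and I would cite Lemma \ref{SBO-minor} and Corollary \ref{SBO-rank-2} to compress the routine parts, reserving the hand computation for $M(K_4)$ itself and its nearest neighbours in the classification.
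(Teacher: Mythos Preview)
The paper does not give its own proof of this lemma: it is stated with the attribution ``Brualdi~\cite{Brualdi71}'' and used as a black box. So there is no argument in the paper to compare your proposal against; the authors simply import the result from the literature.

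That said, your outline is a reasonable sketch of how one would prove Brualdi's result, and the overall architecture (reduce by rank and corank to the case $|E|=6$, $r=3$; reduce to simple matroids; classify simple rank-$3$ geometries on six points by their line structure; check each) is correct. Two points deserve care if you actually carry it out. First, the claim that strong base orderability is self-dual is true, but it is not entirely obvious and deserves a one-line justification (the bijection $\pi$ for bases $E\setminus B_1,\,E\setminus B_2$ of $M$ has domain $(E\setminus B_1)\setminus(E\setminus B_2)=B_2\setminus B_1$, and taking its inverse gives the required bijection for $B_1,B_2$ in $M^\ast$, with the basis-swap property transferring via complementation). Second, the reduction to simple matroids---specifically, that adding a parallel element to an SBO matroid preserves SBO---is believable but not automatic from \Cref{SBO-minor} (which only gives closure under restriction, the wrong direction here); you would need a short direct argument extending a witnessing bijection across a parallel class. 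Neither gap is fatal, but each is a genuine lemma you would have to write down, not merely gesture at.
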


\begin{LE}[proved in \S\ref{sec:K4-lemma}]\label{9-elements}
Let $M$ be a matroid over $9$ elements whose ground set can be partitioned into bases $Q_1,Q_2,Q_3$. Then we may choose $Q_1,Q_2,Q_3$ such that $M|(Q_i\cup Q_j)\not\cong M(K_4)$ for some distinct $i,j\in [3]$.
\end{LE}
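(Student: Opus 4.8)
The plan is to argue by contradiction: suppose every way of partitioning the $9$-element ground set of $M$ into bases $Q_1,Q_2,Q_3$ has the property that $M|(Q_i\cup Q_j)\cong M(K_4)$ for \emph{all} three pairs $\{i,j\}$. Each $Q_i$ is a basis of a rank-$3$ matroid, so $|Q_i|=3$, and $Q_i\cup Q_j$ is a $6$-element set whose restriction is, by assumption, isomorphic to $M(K_4)$. Recall that $M(K_4)$ is a rank-$3$ matroid on $6$ elements in which the circuits are the four triangles and the three ``perfect matchings'' of $K_4$; in particular each element lies in exactly two triangles, and the two bases $Q_i,Q_j$ sitting inside it must be two of the bases of $M(K_4)$ — i.e. two of the spanning trees of $K_4$. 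The first step is to record the rigid combinatorial structure this forces: if $M|(Q_1\cup Q_2)\cong M(K_4)$ and we think of $Q_1\cup Q_2$ as $E(K_4)$, then $Q_1$ and $Q_2$ are two edge-disjoint spanning trees of $K_4$, hence each is a path on $3$ edges, and the remaining structure is tightly constrained.

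The main step is then to use a local exchange to escape the hypothesis. Fix a partition $Q_1,Q_2,Q_3$; by \Cref{symmetric-exchange} (Brualdi's symmetric exchange), for any $u\in Q_1-Q_2$ there is $v\in Q_2-Q_1$ with $Q_1\triangle\{u,v\}$ and $Q_2\triangle\{u,v\}$ both bases. Swapping $u$ and $v$ gives a new partition $Q_1',Q_2',Q_3$ (with $Q_3$ untouched) which, by our standing assumption, must \emph{again} have all three pairwise restrictions isomorphic to $M(K_4)$. The idea is that inside $M(K_4)$, swapping one element between two of its spanning trees almost always breaks the $M(K_4)$ structure on the $6$-set $Q_1'\cup Q_2' = Q_1\cup Q_2$ — wait, that set is unchanged, so I instead look at the pair $\{Q_1',Q_3\}$: the $6$-set $Q_1'\cup Q_3$ differs from $Q_1\cup Q_3$ by replacing $u$ with $v$. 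Since $M|(Q_1\cup Q_3)\cong M(K_4)$ and $M|(Q_1'\cup Q_3)\cong M(K_4)$, and these two $6$-sets share the $5$ elements $(Q_1-\{u\})\cup Q_3$, I get two copies of $M(K_4)$ agreeing on a $5$-element restriction. A $5$-element restriction of $M(K_4)$ is obtained by deleting one edge of $K_4$, which is $M(K_4)\setminus e$; I would check that this rank-$3$ matroid on $5$ elements has a unique single-element extension to an $M(K_4)$ (namely, add back the deleted edge). Consequently $v$ plays the same role in $M|(Q_1'\cup Q_3)$ that $u$ played in $M|(Q_1\cup Q_3)$, and iterating these swaps around a suitably chosen circuit will eventually produce a partition violating the assumption on some pair, or force a global contradiction with $\rank(M)=3$ and $|E(M)|=9$ (for instance, forcing $M$ itself to contain too many triangles, or to be disconnected in a way incompatible with partitioning into three bases each meeting every $M(K_4)$ the same way).

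Concretely, the endgame I expect to work is a counting/parity argument: label the triangles (rank-$2$ flats with $3$ elements) of $M$. If $M|(Q_i\cup Q_j)\cong M(K_4)$ for all $i<j$, then within each $Q_i\cup Q_j$ there are exactly four triangles, each using elements from both $Q_i$ and $Q_j$; summing over the three pairs and bounding how many triangles of $M$ can arise (each triangle of $M$ lies in at most... pairs $Q_i\cup Q_j$ that contain it) should overcount the triangles of a rank-$3$ matroid on $9$ elements, giving a contradiction unless some $Q_i\cup Q_j$ is $M(K_4)$-free. The main obstacle, and where the real work lies, is \textbf{making the ``unique extension'' and exchange bookkeeping precise} — i.e. showing that one cannot have all three pairwise restrictions be $M(K_4)$ simultaneously. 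I would first handle the case where $M$ is simple and has rank $3$ (so it is a restriction of a projective plane / a point configuration), where one can argue directly about collinearity: three disjoint bases means three ``triangles'' (non-degenerate triples) covering $9$ points, and $M(K_4)$ appearing on $Q_i\cup Q_j$ forces a very specific incidence pattern that, pushed to all three pairs, is realizable only if $M$ has a structure we can then perturb via \Cref{symmetric-exchange}. Non-simple cases (parallel elements, loops) are easier and handled separately, since loops can't be in a basis of a rank-$3$ matroid and parallel classes give extra freedom in choosing the $Q_i$. I expect the final write-up to proceed: (1) reduce to $M$ simple; (2) set up the three-bases picture and the assumption; (3) apply symmetric exchange to get a second partition; (4) derive the unique-extension rigidity of $M(K_4)\setminus e$; (5) iterate to reach a contradiction.
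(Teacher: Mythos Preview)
Your high-level strategy---reduce to simple $M$, fix a basis partition, and use Brualdi's symmetric exchange to generate nearby partitions---is the right one and matches the paper's. The gap is in your ``unique extension'' step: it is \emph{not} true that $M(K_4)\setminus e$ has a unique simple single-element extension isomorphic to $M(K_4)$. With the paper's labeling (deleting $e=i$), besides reinstating a point on the lines through $\{j,k\}$ and $\{\pi(j),\pi(k)\}$, one can instead add a point $x$ on the lines through $\{j,\pi(j)\}$ and $\{k,\pi(k)\}$; the resulting $6$-element matroid is again $M(K_4)$. These two extensions are related by an automorphism of $M(K_4)\setminus e$, but as \emph{labeled} extensions inside the ambient $M$ they place the new element on different pairs of rank-$2$ flats. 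So from $M|(Q_1\cup Q_3)\cong M(K_4)$ and $M|(Q_1'\cup Q_3)\cong M(K_4)$ sharing five elements you cannot conclude that $u$ and $v$ occupy the same closure-theoretic position in $M$, and the ``iterate around a circuit'' plan has no mechanism. Your fallback counting argument is also inconclusive: twelve three-point lines in a simple rank-$3$ matroid on nine elements is not a contradiction (think $AG(2,3)$).

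What the paper does instead is turn the non-uniqueness into a classification. If the three partitions $Q_1,Q_2,Q_3$ and $Q_1,Q_2\triangle\{i,s\},Q_3\triangle\{i,s\}$ and $Q_1,Q_2\triangle\{i',t\},Q_3\triangle\{i',t\}$ (with $s,t\in Q_3$) all have $M|(Q_1\cup\text{second basis})\cong M(K_4)$, then $M_8:=M|(Q_1\cup Q_2\cup\{s,t\})$ is an $8$-element matroid with three distinct single-element deletions each isomorphic to $M(K_4)$. \Cref{7-elements} pins down the $7$-element possibilities as $F_7$ or $F_7^-$, and \Cref{8-elements} then forces $M_8$ to contain a four-point line, i.e.\ a $U_{2,4}$ restriction on $\{r,\pi(r),s,t\}$ for some $r\in\{j,k\}$. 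The paper then repartitions $Q_1\cup Q_2$ into spanning trees $Q_1',Q_2'$ of $K_4$ with the perfect matching $\{r,\pi(r)\}\subseteq Q_1'$; now $\{r,\pi(r),s,t\}\subseteq Q_1'\cup Q_3$, so $M|(Q_1'\cup Q_3)$ has a $U_{2,4}$ restriction and cannot be $M(K_4)$. The idea you are missing is that one should not aim for a contradiction at all, but rather \emph{construct} a good partition, and the lever for doing so is the $U_{2,4}$ that the $8$-element classification guarantees.
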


\begin{theorem}\label{rho=3-basis-partition}
Let $\tau\geq 3$ be an integer, and $D=(V,A)$ a sink-regular weighted $(\tau,\tau+1)$-bipartite digraph such that $\rho(\tau,D,w)=3$. There exist disjoint bases $Q_1,\ldots,Q_\tau$ of $M_1(D,w)$ such that $M_1(D,w)|(Q_1\cup Q_2)$ is strongly base orderable.
\end{theorem}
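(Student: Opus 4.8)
The plan is to reduce the statement to a counting/structural fact about a rank-$3$ matroid on $3\tau$ elements (the ground set $a(V)$ of $M_1(D,\1)$, which by \Cref{disc}~(1) has size $\tau\rho(\tau,D)=3\tau$), and to push the obstruction ``$M(K_4)$'' down to a $9$-element submatroid so that \Cref{9-elements} can be invoked. First I would recall, via \Cref{M1-basis-partition}, that $a(V)$ partitions into $\tau$ bases of $M:=M_1(D,\1)$; call such a partition a \emph{good partition}. By \Cref{SBO-minor}, the restriction $M|(Q_i\cup Q_j)$ of a good partition depends only on the $6$-element pair, and by \Cref{K4} the only way $M|(Q_i\cup Q_j)$ fails to be strongly base orderable is for it to be isomorphic to $M(K_4)$. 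So the goal is: starting from \emph{any} good partition, produce a good partition in which at least one pair $Q_i\cup Q_j$ does not induce $M(K_4)$.

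The heart of the argument is to select three of the $\tau$ bases, say $Q_1,Q_2,Q_3$, whose union $E_0:=Q_1\cup Q_2\cup Q_3$ has exactly $9$ elements and forms a rank-$3$ restriction of $M$ that itself partitions into the three bases $Q_1,Q_2,Q_3$ — then apply \Cref{9-elements} to the $9$-element matroid $M|E_0$ to re-choose $Q_1,Q_2,Q_3$ (keeping the same union $E_0$, hence not disturbing $Q_4,\dots,Q_\tau$) so that $M|(Q_i\cup Q_j)=(M|E_0)|(Q_i\cup Q_j)\not\cong M(K_4)$ for some $i,j$. After relabelling we may take $\{i,j\}=\{1,2\}$. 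To see that such an $E_0$ exists, note each $Q_\ell$ is a basis of the rank-$3$ matroid $M$, so $|Q_\ell|=3$ and $|E_0|\le 9$; we need $|E_0|=9$, i.e. $Q_1,Q_2,Q_3$ pairwise disjoint, which is automatic since a good partition consists of pairwise disjoint bases. Moreover $M|E_0$ has rank $3$ (it contains a basis of $M$) and partitions into $Q_1,Q_2,Q_3$, exactly the hypothesis of \Cref{9-elements}. (If $\tau=3$ this is immediate with $E_0=a(V)$; if $\tau>3$ just take any three parts of the good partition.)

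Finally I would assemble the pieces: take a good partition $Q_1,\dots,Q_\tau$ of $a(V)$ into bases of $M$ (\Cref{M1-basis-partition}); set $E_0:=Q_1\cup Q_2\cup Q_3$, a $9$-element rank-$3$ restriction partitioned by $Q_1,Q_2,Q_3$; apply \Cref{9-elements} to $M|E_0$ to obtain bases $Q_1',Q_2',Q_3'$ of $M|E_0$ (hence of $M$, since they are $3$-element independent sets spanning $M|E_0$, which has full rank $3$) partitioning $E_0$ with $(M|E_0)|(Q_i'\cup Q_j')\not\cong M(K_4)$ for some distinct $i,j$; relabel so $\{i,j\}=\{1,2\}$; and output $Q_1',Q_2',Q_3',Q_4,\dots,Q_\tau$. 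Since $M|(Q_1'\cup Q_2')=(M|E_0)|(Q_1'\cup Q_2')$ is a matroid on $6$ elements not isomorphic to $M(K_4)$, \Cref{K4} gives that it is strongly base orderable, as required.

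The main obstacle is really \Cref{9-elements} itself — the $9$-element case analysis of which triples of bases can be re-chosen to avoid an $M(K_4)$ on some pair — but that is deferred to \S\ref{sec:K4-lemma}; granting it, the deduction above is routine. The one point needing a little care is the legitimacy of ``re-choosing'' $Q_1,Q_2,Q_3$ without touching the other parts: this works precisely because \Cref{9-elements} returns a new \emph{partition of the same ground set} $E_0$ into bases, so the global partition of $a(V)$ is preserved outside $E_0$, and each new part, being a maximal independent set of the full-rank restriction $M|E_0$, is a basis of $M$.
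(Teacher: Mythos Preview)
Your proposal is correct and follows essentially the same approach as the paper: start from a basis partition given by \Cref{M1-basis-partition}, restrict to the $9$-element matroid $M_1(D,\1)|(Q_1\cup Q_2\cup Q_3)$, apply \Cref{9-elements} to rechoose $Q_1,Q_2,Q_3$ so that some pair avoids $M(K_4)$, and then invoke \Cref{K4}. Your write-up is somewhat more explicit about why the rechosen triples remain bases of the ambient matroid and why the other parts are undisturbed, but the argument is the same.
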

\begin{proof}
By \Cref{M1-basis-partition}, there exist disjoint bases $Q_1,\ldots,Q_\tau$ of $M_1(D,w)$. For each $i\in [\tau]$, $|Q_i|=\disc(V)=\rho(\tau,D,w)$ where the last equality follows from \Cref{disc}~(1), so $|Q_i|=3$.
Consider the matroid $M:=M_1(D,w)|(Q_1\cup Q_2\cup Q_3)$, which has $9$ elements and its ground set is partitioned into bases $Q_1,Q_2,Q_3$. By \Cref{9-elements}, we may choose $Q_1,Q_2,Q_3$ such that $M|(Q_1\cup Q_2)\not\cong M(K_4)$, so by \Cref{K4}, $M|(Q_1\cup Q_2)$ is strongly base orderable. Since $M_1(D,w)|(Q_1\cup Q_2)=M|(Q_1\cup Q_2)$, the disjoint bases $Q_1,Q_2,Q_3,Q_4,\ldots,Q_\tau$ prove the theorem.
\end{proof}

\begin{theorem}\label{tau=3-rho=3-sink-reg}
Let $D=(V,A)$ a sink-regular $(3,4)$-bipartite digraph such that $\rho(3,D)\leq 3$. Then $A$ can be partitioned into three disjoint dijoins.
\end{theorem}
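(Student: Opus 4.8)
The plan is to split on the value of $\rho(3,D)$; since $\rho(3,D)$ is a nonnegative integer and the hypothesis gives $\rho(3,D)\le 3$, it suffices to treat the two cases $\rho(3,D)\le 2$ and $\rho(3,D)=3$. In both cases I aim to produce a $\1$-weighted packing of dijoins of $D$ of size $3$, and then note that, because $D$ is sink-regular and $w=\1$ forces $A=A_1$, such a packing is automatically a partition of $A$: it consists of three pairwise disjoint dijoins $J_1,J_2,J_3\subseteq A_1$, so by \Cref{R1F-birth}~(2) they partition $A$. (Concretely, every arc is incident with a sink $v$, and $\delta(v)$ is a dicut of weight $\deg(v)=3$, so the three dijoins must cover all of $\delta(v)$.) The case $\rho(3,D)\le 2$ is then immediate: applying \Cref{rho=2-sink-reg} to the sink-regular weighted $(3,4)$-bipartite digraph $(D,\1)$ gives the desired size-$3$ packing.

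For the case $\rho(3,D)=3$, the idea is to feed the structural output of \Cref{rho=3-basis-partition} into \Cref{2-SBO-weighted-packing}. Applying \Cref{rho=3-basis-partition} with $\tau=3$ yields disjoint bases $Q_1,Q_2,Q_3$ of $M_1(D,\1)$ such that $M_1(D,\1)|(Q_1\cup Q_2)$ is strongly base orderable. By \Cref{disc}~(1), $M_1(D,\1)$ has rank $\disc(V)=\rho(3,D)=3$ over the ground set $a(V)$, which has size $|a(V)|=3\cdot\disc(V)=9$, so $Q_1,Q_2,Q_3$ partition $a(V)$. I would then set $Q:=Q_3$ and $Q':=Q_1\cup Q_2$. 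By \Cref{unweighted-admissible}, every basis of $M_1(D,\1)$ is an admissible set of $(D,\1)$, hence in particular a basis of $M_0(D,\1)$; therefore $Q$ is admissible, $Q'$ is the union of $\tau-1=2$ disjoint bases of $M_0(D,\1)$ and also of $\tau-1=2$ disjoint bases of $M_1(D,\1)$ (so $Q'$ is $(\tau-1)$-admissible), and $a(V)=Q\cup Q'$ is a partition. Since $M_1(D,\1)|Q'=M_1(D,\1)|(Q_1\cup Q_2)$ is strongly base orderable, \Cref{2-SBO-weighted-packing} produces a $\1$-weighted packing of dijoins of $D$ of size $3$, which by the previous paragraph partitions $A$ into three disjoint dijoins.

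I do not expect a genuine obstacle in this argument. The substantive difficulty — choosing the basis partition so that the restriction $M_1(D,\1)|(Q_1\cup Q_2)$ avoids $M(K_4)$ and is therefore strongly base orderable — has already been handled in \Cref{9-elements} and \Cref{rho=3-basis-partition}. The remaining work here is bookkeeping: identifying $w=\1$ with $A=A_1$, recognizing $Q_1\cup Q_2$ as $(\tau-1)$-admissible via \Cref{unweighted-admissible}, invoking \Cref{2-SBO-weighted-packing}, and using sink-regularity (every arc meets a sink, and the cut at a sink is a dicut of weight exactly $3$) to upgrade a size-$3$ weighted packing of dijoins to a partition of the arc set.
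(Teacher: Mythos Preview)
Your proposal is correct and follows essentially the same approach as the paper: split on $\rho(3,D)\le 2$ versus $\rho(3,D)=3$, invoke \Cref{rho=2-sink-reg} in the first case, and in the second case combine \Cref{rho=3-basis-partition} with \Cref{2-SBO-weighted-packing}. You spell out a few details the paper leaves implicit (verifying via \Cref{unweighted-admissible} that $Q_3$ and $Q_1\cup Q_2$ are admissible and $2$-admissible, and invoking \Cref{R1F-birth}~(2) to upgrade the size-$3$ packing to a partition of $A$), but the route is the same.
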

\begin{proof}
If $\rho(3,D)\leq 2$, then the result follows from \Cref{rho=2-sink-reg}. Otherwise, $\rho(3,D)=3$. By \Cref{rho=3-basis-partition}, there exist disjoint bases $Q_1,Q_2,Q_3$ of $M_1(D,\1)$ such that $M_1(D,\1)|(Q_1\cup Q_2)$ is strongly base orderable. Since $w=\1$, it follows from \Cref{unweighted-admissible} that the sets $Q_1,Q_2,Q_3$ are admissible in $(D,\1)$. Thus, we have a partition of the active vertices into an admissible set $Q_3$ and a $2$-admissible set $Q_1\cup Q_2$ such that $M_1(D,\1)|(Q_1\cup Q_2)$ is strongly base orderable. Thus, by \Cref{2-SBO-weighted-packing}, $D$ contains $3$ disjoint dijoins.
\end{proof}

\begin{theorem}\label{tau=3-rho=3}
Let $D=(V,A)$ be a digraph where every dicut has size at least $3$. Suppose $\rho(3,D)\leq 3$. Then there exist $3$ disjoint dijoins. That is, $[[3,3]]$ is true.
\end{theorem}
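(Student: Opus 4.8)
The plan is to deduce $[3,3]$ from its already-established validity on sink-regular $(3,4)$-bipartite digraphs via the Decompose, Lift, and Reduce procedure. The symbolized statement $[3,3]$ fixes $\tau=3$ and $\rho=3$, omits the key $\wt$, and omits $\pl$ and $\eqt$; since $\tau=3\geq 3$, the unweighted Decompose-and-Lift operation (\Cref{DnL}) and hence \Cref{reduction}~(2) are available. First I would apply \Cref{reduction}~(2): it asserts that $[3,3]$ holds for every digraph if and only if it holds for every sink-regular $(3,4)$-bipartite digraph. The crucial point already baked into \Cref{DnL}~(ii) is that the sink-regular pieces $D'_i$ produced by the procedure satisfy $\rho(3,D'_i)\leq \rho(3,D)\leq 3$, so the hypothesis on $\rho$ is preserved when passing to the pieces, and the statement for each piece is exactly an instance of $[3,3]$ restricted to sink-regular $(3,4)$-bipartite digraphs.

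Second, I would invoke \Cref{tau=3-rho=3-sink-reg}, which says precisely that a sink-regular $(3,4)$-bipartite digraph $D$ with $\rho(3,D)\leq 3$ has $A$ partitioned into three disjoint dijoins. Combining the two steps yields $[3,3]$. (Note that the $\rho(3,D)\leq 2$ subcase is subsumed here, since \Cref{tau=3-rho=3-sink-reg} already dispatches it through \Cref{rho=2-sink-reg}.)

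There is essentially no remaining obstacle in this final step: all the substance lies upstream, in \Cref{tau=3-rho=3-sink-reg} (which in turn rests on \Cref{rho=3-basis-partition}, the strong base orderability input \Cref{2-SBO-weighted-packing}, the combinatorial fact \Cref{9-elements} about $9$-element matroids, and Brualdi's characterization \Cref{K4} of $M(K_4)$ as the smallest non-strongly-base-orderable matroid) and in the reduction machinery of \S\ref{sec:DLR}. The one thing I would take care to verify is that the Decompose-and-Lift reduction is genuinely applicable for $\tau=3$ in the unweighted regime --- this is why \S\ref{sec:DLR} requires $\tau\geq 3$ there --- and that no planarity or equitability clause is being claimed, so that clause (ii) of \Cref{DnL} (sink-regular and $\rho$-preserving) is the right one to use rather than clause (i) (balanced).
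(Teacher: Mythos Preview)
Your proposal is correct and matches the paper's own proof, which simply cites \Cref{tau=3-rho=3-sink-reg} together with \Cref{reduction}~(2). Your additional remarks about why $\tau=3\geq 3$ enables the unweighted reduction and why clause~(ii) (sink-regular, $\rho$-preserving) is the relevant one are accurate and helpful elaborations of exactly that two-line argument.
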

\begin{proof}
This follows from \Cref{tau=3-rho=3-sink-reg} and \Cref{reduction}~(2).
\end{proof}

\subsection{$M(K_4)$-restrictions in matroids of rank three}\label{sec:K4-lemma}

It remains to prove \Cref{9-elements}, which requires a fair bit of Matroid Theory~\cite{Oxley11}. Let $M$ be a matroid of rank $r$ over a finite ground set $E$. A \emph{flat} is a subset $F\subseteq E$ that is closed in $M$, i.e.\ $|F\cap C|\neq |C|-1$ for any circuit $C$ of $M$. A \emph{hyperplane} is a subset $X\subseteq E$ satisfying any of the following equivalent conditions: (i) $X$ is a flat of rank $r-1$, (ii) $X$ is a maximal non-spanning set, (iii) $E-X$ is a cocircuit.

\begin{LE}[\cite{Oxley11}, Proposition 1.5.6]\label{geometric-rep}
Let $E$ be a finite set, and $\Lambda$ a family of subsets of $E$ each of size at least $3$ such that every two distinct members of $\Lambda$ meet in at most $1$ element. Let $\mathcal{I}$ be the set of subsets $X$ of $E$ of size at most $3$ such that no member of $\Lambda$ contains $3$ elements of $X$. Then $\mathcal{I}$ is the family of independent sets of a simple matroid of rank at most $3$ whose rank-$1$ flats are the $1$-element subsets of $E$, and whose rank-$2$ flats are the members of $\Lambda$ together with all $2$-element subsets $Y$ of $E$ for which no member of $\Lambda$ contains $Y$. Moreover, every simple matroid of rank at most $3$ arises in this way.
\end{LE}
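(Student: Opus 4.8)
The plan is to verify directly that $\mathcal{I}$ satisfies the matroid independence axioms, then compute the closure operator to read off the rank-$1$ and rank-$2$ flats, and finally check that the construction is onto the class of simple matroids of rank at most $3$.

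The axioms $\emptyset\in\mathcal{I}$ and closure under taking subsets are immediate: removing elements from a set cannot create a member of $\Lambda$ containing three of its elements, and a member of $\Lambda$ has at least three elements so cannot sit inside a set of size at most $2$. The substance is the augmentation axiom: let $X,Y\in\mathcal{I}$ with $|X|<|Y|$; we must find $e\in Y\setminus X$ with $X\cup\{e\}\in\mathcal{I}$. If $|X|\le 1$ this is trivial since every set of size at most $2$ lies in $\mathcal{I}$. So suppose $|X|=2$ and $|Y|=3$, write $X=\{x_1,x_2\}$, and assume for contradiction that for every $e\in Y\setminus X$ there is a member $L_e\in\Lambda$ with $\{x_1,x_2,e\}\subseteq L_e$. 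If $|Y\setminus X|=1$ then $X\cup\{e\}\subseteq Y\in\mathcal{I}$, a contradiction. If $|Y\setminus X|\ge 2$, pick distinct $e,e'\in Y\setminus X$; then $L_e\cap L_{e'}\supseteq\{x_1,x_2\}$ has two elements, so by the hypothesis on $\Lambda$ we get $L_e=L_{e'}$. Hence all the $L_e$ with $e\in Y\setminus X$ coincide with one fixed $L\in\Lambda$. Since $X\cap Y\subseteq\{x_1,x_2\}\subseteq L$ and $Y\setminus X\subseteq L$, we get $Y=(X\cap Y)\cup(Y\setminus X)\subseteq L$, contradicting $Y\in\mathcal{I}$. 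This proves augmentation, so $\mathcal{I}$ is the family of independent sets of a matroid $M$; as $M$ has no loops or parallel pairs (singletons and pairs lie in $\mathcal{I}$) and every independent set has size $\le 3$, $M$ is simple of rank at most $3$.

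Next, the flats. Since $M$ is simple, $\operatorname{cl}(\{e\})=\{e\}$, so the rank-$1$ flats are the singletons. The rank-$2$ flats are the closures of independent pairs, and by the rank formula $\operatorname{cl}(\{x,y\})=\{x,y\}\cup\{e:\{x,y,e\}\notin\mathcal{I}\}$, i.e.\ $\{x,y\}$ together with all $e$ lying in some member of $\Lambda$ that contains $\{x,y\}$. If no member of $\Lambda$ contains $\{x,y\}$, this closure is $\{x,y\}$ itself; if some $L\in\Lambda$ contains $\{x,y\}$ then $L$ is the unique member with that property (two members sharing two points coincide) and $\operatorname{cl}(\{x,y\})=L$. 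Conversely every $L\in\Lambda$ contains an independent pair but no independent triple, so it has rank $2$, and it is closed since it equals the closure of any of its pairs; hence the members of $\Lambda$ are precisely the rank-$2$ flats of size $\ge 3$, and the remaining rank-$2$ flats are exactly the $2$-element subsets contained in no member of $\Lambda$. For surjectivity, given a simple matroid $N$ of rank at most $3$ on $E$, let $\Lambda$ be the set of its rank-$2$ flats of size at least $3$; two distinct rank-$2$ flats meet in a flat of rank at most $1$, hence (by simplicity) in at most one element, so $(E,\Lambda)$ satisfies the hypotheses. The matroid $M$ built from it has the same independent sets as $N$: sets of size $\le 2$ are independent in both, and a triple $\{a,b,c\}$ is dependent in $N$ iff it has rank $2$ iff it lies in a rank-$2$ flat iff (that flat having $\ge 3$ points) it lies in a member of $\Lambda$ iff it is dependent in $M$. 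Thus $M=N$.

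The main obstacle is the augmentation axiom, specifically the case $|X|=2$, $|Y|=3$: one must handle all possibilities for $|X\cap Y|$ and crucially use that distinct members of $\Lambda$ share at most one element to collapse the candidate lines $L_e$ to a single $L$ and then derive $Y\subseteq L$. Everything else—the other two axioms, simplicity, the closure computations, and the surjectivity check—is routine once the closure formula $\operatorname{cl}(S)=S\cup\{e:\text{some member of }\Lambda\text{ contains three elements of }S\cup\{e\}\}$ is written down.
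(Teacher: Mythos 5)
Your proof is correct. Note that the paper does not prove this lemma at all --- it is quoted verbatim from Oxley (Proposition 1.5.6) and used as a black box --- so there is no in-paper argument to compare against. Your direct verification (the only nontrivial axiom being augmentation for $|X|=2$, $|Y|=3$, where the ``two members of $\Lambda$ sharing two points coincide'' hypothesis collapses the candidate lines $L_e$ to a single $L\supseteq Y$), the closure computation identifying the rank-$2$ flats, and the surjectivity check via the rank-$2$ flats of size at least $3$ are all sound and constitute the standard proof of this fact.
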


We will be working with simple matroids of rank three. In this case, by \Cref{geometric-rep}, we may represent $M$ geometrically via a set of \emph{points} and \emph{lines}, as follows: the points correspond to the elements of $E$, and the set of lines $\Lambda$ correspond to a \emph{subset} of the set of the hyperplanes. More precisely, the lines in $\Lambda$ correspond to the hyperplanes with at least $3$ elements; the other hyperplanes are precisely the $2$-element subsets $Y$ for which no line in $\Lambda$ contains $Y$. In particular, every two distinct points belong to exactly one hyperplane, and therefore at most one line. Observe that three collinear points correspond to a circuit of size three. Note that the lines in $\Lambda$ displayed may not be straight, and may even be circles.

For example, consider $M(K_4)$. Observe that the hyperplanes are the three perfect matchings $\{i,\pi(i)\},\{j,\pi(j)\},\{k,\pi(k)\}$ and the four triangles including, say, $\{i,j,k\}$. Throughout this subsection, we assume that $M(K_4)$ follows this labeling. We can then represent the hyperplanes corresponding to the triangles as the four lines in Figure~\ref{fig:hype-rep}~(a).

\begin{figure}[h]
\centering
\includegraphics[scale=.3]{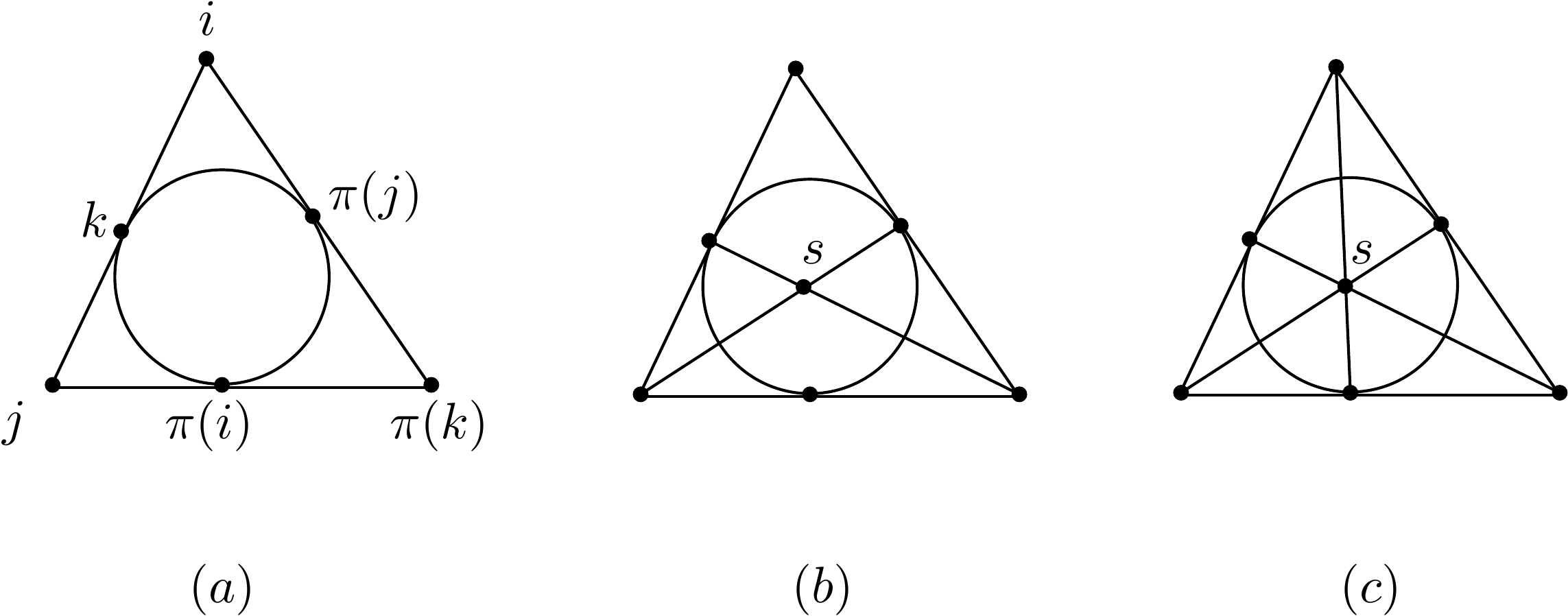}
\caption{(a) The geometric representation $M(K_4)$. (b)-(c) The geometric representations of the two matroids of \Cref{7-elements}, where (b) is the non-Fano matroid $F_7^-$, and $(c)$ is the Fano matroid $F_7$. The unlabeled points follow the labeling of (a).}
\label{fig:hype-rep}
\end{figure}

\begin{RE}\label{hyperplane-deletion}
Consider a matroid $M$ and a deletion minor $M\setminus e$ of the same rank. 
Then for every hyperplane $X$ of $M$ such that $e\notin X$, $X$ is a hyperplane of $M\setminus e$. Moreover, for every hyperplane $X'$ of $M\setminus e$, either $X'$ or $X'\cup \{e\}$ is a hyperplane of $M$ (but not both).\qed
\end{RE}

Denote by $F_7$ the \emph{Fano matroid}, and by $F_7^-$ the \emph{non-Fano matroid}, represented in Figure~\ref{fig:hype-rep}.

\begin{LE}\label{7-elements}
Let $M_7$ be a simple matroid of rank $3$ over ground set $\{i,\pi(i),j,\pi(j),k,\pi(k),s\}$ such that $M_7\setminus s=M(K_4)$ and $M_7\setminus i\cong M(K_4)$. Then $M_7\cong F_7^-$ or $F_7$ with the geometric representation provided in Figure~\ref{fig:hype-rep}~(b) or~(c), respectively.
\end{LE}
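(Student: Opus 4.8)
The plan is to use the two hypotheses in tandem: $M_7\setminus s=M(K_4)$ fixes six of the seven points together with all their lines, and $M_7\setminus i\cong M(K_4)$ then restricts severely how the seventh point $s$ may sit relative to those lines. Fix the standard labeling of $M(K_4)=M_7\setminus s$ from Figure~\ref{fig:hype-rep}~(a): its rank-$2$ flats are the four $3$-point ``lines'' $T_1=\{i,j,k\}$, $T_2=\{i,\pi(j),\pi(k)\}$, $T_3=\{\pi(i),\pi(j),k\}$, $T_4=\{\pi(i),j,\pi(k)\}$, together with the three $2$-point hyperplanes (``matchings'') $\{i,\pi(i)\}$, $\{j,\pi(j)\}$, $\{k,\pi(k)\}$; here any two $T_a,T_b$ meet in exactly one point, every point lies on exactly two of the $T_a$, and the only point not sharing a $T_a$ with a given point $p$ is $\pi(p)$.

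The first step is the structural claim that the lines of $M_7$ through $s$ partition $\{i,\pi(i),j,\pi(j),k,\pi(k)\}$ and that each part is a singleton, a matching $\{p,\pi(p)\}$, or a triangle $T_a$. Since the lines through $s$ visibly partition the other six points, it suffices to identify, for a line $L\ni s$, the possibilities for $L-\{s\}$. Deletion preserves ranks of subsets avoiding $s$, so $L-\{s\}$ has rank $\le 2$ in $M(K_4)$; hence $|L-\{s\}|\le 3$, and if $|L-\{s\}|=3$ then $L-\{s\}$ is a $3$-element rank-$2$ set of $M(K_4)$, i.e.\ some $T_a$, so $L=T_a\cup\{s\}$ is a $4$-point line. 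If $L-\{s\}=\{p,q\}$ were contained in a triangle $T_a=\{p,q,r\}$, then $T_a$ would still be dependent (rank $2$) in $M_7$ while $\{s,p,q\}$ has rank $2$, so the line of $M_7$ through $p,q$ would contain all of $s,p,q,r$, contradicting $|L|=3$; therefore $\{p,q\}$ lies in no $T_a$, which by the last sentence of the previous paragraph forces $\{p,q\}=\{p,\pi(p)\}$. (\Cref{hyperplane-deletion}, applied to $M_7$ and $M_7\setminus s$, gives the complementary bookkeeping: each $T_a$ is a line of $M_7$ unless it is ``absorbed'' into the $4$-point line $T_a\cup\{s\}$, and each matching $P$ is a $2$-point hyperplane of $M_7$ unless it is absorbed into the $3$-point line $P\cup\{s\}$.)

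Next I would run the case analysis. Since any two $T_a$ intersect, at most one part is a triangle; and if $T_a$ is a triangle-part then every matching meets $T_a$, so the other three points are all singletons. Writing $t\in\{0,1\}$ for the number of triangle-parts and $m$ for the number of matching-parts, the configurations are: $t=1$ (a triangle together with three singletons), or $t=0$ with $m\in\{0,1,2,3\}$. In each configuration the lines of $M_7$ of size $\ge 3$ are exactly the $T_a$ that are not absorbed, one $3$-point line $P\cup\{s\}$ per matching-part $P$, and, when $t=1$, one $4$-point line $T_a\cup\{s\}$. I would then delete $i$ and count the lines of size $\ge 3$ in $M_7\setminus i$: the two $T_a$ through $i$ shrink to $2$-point sets, while the remaining lines survive, losing $i$ only if it lay on them. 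The outcomes are: $t=1$ yields either a $4$-point line or only three $3$-point lines; $t=0$ with $m\le 1$ yields at most three $3$-point lines; $t=0$ with $m=2$ yields four $3$-point lines precisely when the two absorbed matchings avoid $i$, that is are $\{j,\pi(j)\}$ and $\{k,\pi(k)\}$ (otherwise only three); and $t=0$ with $m=3$ always yields four (as $M_7$ then has seven $3$-point lines). Since $M(K_4)$ has exactly four lines, all of size $3$, the hypothesis $M_7\setminus i\cong M(K_4)$ leaves only the last two configurations, whose line-sets are exactly those of $F_7$ and of $F_7^-$; matching them against Figure~\ref{fig:hype-rep} gives representation~(c) in the first case and~(b) in the second.

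The main obstacle is the very last inference: one must verify that having exactly four $3$-point lines, no larger line, rank $3$ and $6$ simple points really does force the matroid to be $M(K_4)$. This needs the small extra observation that no point can lie on three of the four lines (that would force a seventh point), so every point lies on exactly two of them, which recovers the complete-quadrilateral incidence pattern characterising $M(K_4)$; with this in hand the remainder of the argument is routine bookkeeping.
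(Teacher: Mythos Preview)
Your proof is correct and complete. The route you take, however, differs from the paper's in a meaningful organisational way, so a brief comparison is worthwhile.

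The paper argues \emph{forward from the second hypothesis}: since $M_7\setminus i\cong M(K_4)$ has exactly four $3$-point lines, two of them must be $\ell_j=\{j,\pi(i),\pi(k)\}$ and $\ell_k=\{k,\pi(i),\pi(j)\}$ (the only triangle lines of $M_7\setminus s$ avoiding $i$, and these cannot have absorbed $s$ by a size count); the remaining two lines of $M_7\setminus i$ are then identified directly, and a short two-case check (one case contradictory) pins down the six lines of $M_7$ up to the optional seventh line $\{i,\pi(i),s\}$. This avoids any global enumeration.

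You instead \emph{classify all single-point simple extensions of $M(K_4)$} by the partition of the six old points induced by the lines through $s$ (triangle/matching/singleton parts), and then filter by counting surviving $\ge 3$-point lines in $M_7\setminus i$. This is slightly longer but has the virtue of being entirely mechanical and of making transparent exactly why only the $F_7$ and $F_7^-$ extensions survive; it also yields, for free, the full list of rank-$3$ simple one-point extensions of $M(K_4)$. Your final verification---that a simple rank-$3$ matroid on six points with exactly four $3$-point lines and no longer line must be $M(K_4)$---is indeed needed in your approach (the paper sidesteps it by never losing track of which specific lines are present), and your incidence-count argument for it is sound.
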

\begin{proof}
Consider the geometric representation of $M_7\setminus s$ in Figure~\ref{fig:M7-s}. \begin{figure}[ht]
\centering
\includegraphics[scale=.3]{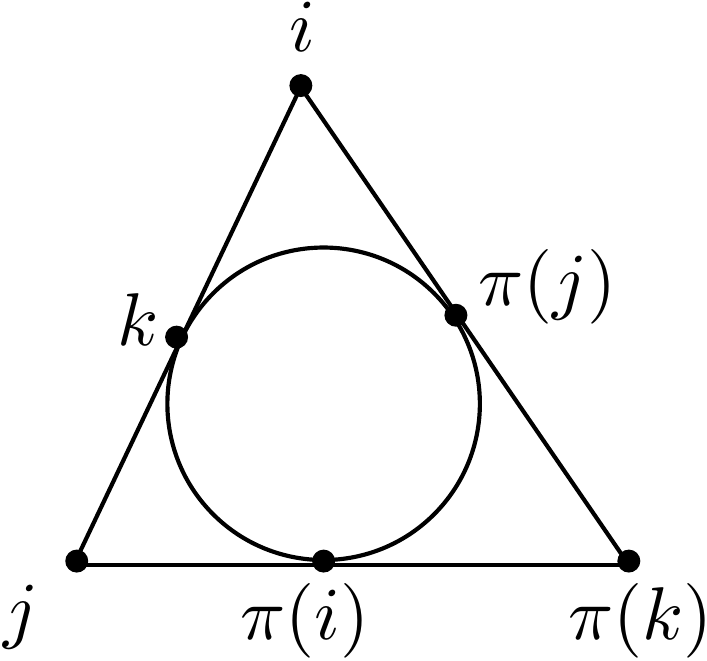}
\caption{Geometric representation of $M_7\setminus s$ (needed in the proof of \Cref{7-elements}).}
\label{fig:M7-s}
\end{figure}
By \Cref{hyperplane-deletion}, for every hyperplane $X'$ of $M_7\setminus s$, either $X'$ or $X'\cup \{s\}$ is a hyperplane of $M_7$. In particular, $M_7$ contains four lines $\ell_0,\ell_i,\ell_j,\ell_k$ where $\ell_0=\{i,j,k\}$ or $\{i,j,k,s\}$, $\ell_i= \{i,\pi(j),\pi(k)\}$ or $\{i,\pi(j),\pi(k),s\}$, $\ell_j= \{j,\pi(i),\pi(k)\}$ or $\{j,\pi(i),\pi(k),s\}$, and $\ell_k=\{k,\pi(i),\pi(j)\}$ or $\{k,\pi(i),\pi(j),s\}$.

Consider now the matroid $M_7\setminus i$, which is isomorphic to $M(K_4)$. In particular, in the geometric representation of $M_7\setminus i$, there are precisely $4$ lines. Observe that by \Cref{hyperplane-deletion}, the two lines $\ell_j,\ell_k$ of $M_7$ excluding $i$ are among the four lines of $M_7\setminus i$. In particular, $\ell_j,\ell_k$ have size $3$, so $\ell_j= \{j,\pi(i),\pi(k)\}$ and $\ell_k=\{k,\pi(i),\pi(j)\}$. The other two lines of $M_7\setminus i$ lead by \Cref{hyperplane-deletion} to two lines $\ell_s,\ell'_s$ of~$M_7$. The other two lines of $M_7\setminus i$ are either
$\{j,k,s\},\{\pi(j),\pi(k),s\}$ or $\{j,\pi(j),s\},\{k,\pi(k),s\}$, so there are two cases.
\begin{enumerate}
\item[Case 1:] $\ell_s-\{i\}=\{j,k,s\},\ell'_s-\{i\}=\{\pi(j),\pi(k),s\}$ are lines of $M_7\setminus i$. In this case, $|\ell_0\cap \ell_s|\geq 2$, so $\ell_0=\ell_s=\{i,j,k,s\}$. Similarly, $|\ell_i\cap \ell'_s|\geq 2$, so $\ell_i=\ell'_s=\{i,\pi(j),\pi(k),s\}$. But then $|\ell_0\cap \ell_i|\geq 2$, a contradiction as $\ell_0\neq \ell_i$.
\item[Case 2:] $\ell_s-\{i\}=\{j,\pi(j),s\},\ell'_s-\{i\}=\{k,\pi(k),s\}$ are lines of $M_7\setminus i$. In this case, $\ell_0$ and $\ell_s$ are distinct lines of $M_7$, so $|\ell_0\cap \ell_s|\leq 1$, so $s\notin \ell_0$ and $i\notin \ell_s$, so $\ell_0=\{i,j,k\}$ and $\ell_s=\{j,\pi(j),s\}$. Similarly, $\ell_i$ and $\ell'_s$ are distinct lines of $M_7$, so $|\ell_i\cap \ell'_s|\leq 1$, so $s\notin \ell_i$ and $i\notin \ell'_s$, so $\ell_i=\{i,\pi(j),\pi(k)\}$ and $\ell'_s=\{k,\pi(k),s\}$.
\end{enumerate}
Picking up where Case 2 left off, we have the partial geometric representation of $M_7$ in Figure~\ref{fig:M7-partial-gr}. \begin{figure}[h]
\centering
\includegraphics[scale=.3]{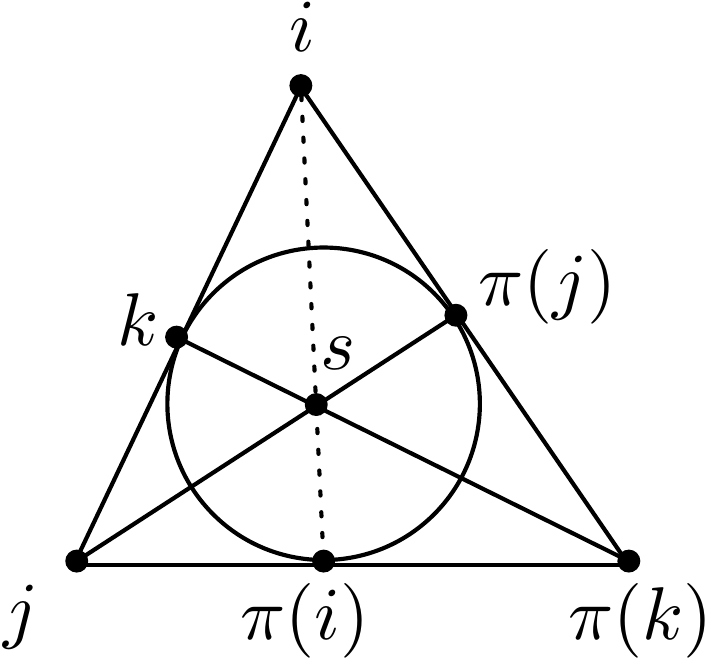}
\caption{Partial geometric representation of $M_7$ where the lines $\ell_0,\ell_i,\ell_j,\ell_k,\ell_s,\ell'_s$ have been drawn with solid curves (needed in the proof of \Cref{7-elements}).}
\label{fig:M7-partial-gr}
\end{figure} Since every two elements of $M_7$ belong to exactly one hyperplane, it can be readily checked that there is at most one additional line, namely $\{i,\pi(i),s\}$, thereby finishing the proof.
\end{proof}

Denote by $U_{2,4}$ the uniform matroid over ground set $[4]$ of rank $2$, i.e. it is the matroid whose circuits are $\{1,2,3\},\{2,3,4\},\{1,3,4\},\{1,2,4\}$, so $U_{2,4}$ shows up in any line with at least four points.

\begin{RE}\label{4-point-line}
Let $M$ be a simple matroid of rank $3$ whose geometric representation has a line containing distinct points $a,b,c,d$. Then $M|\{a,b,c,d\}\cong U_{2,4}$.
\end{RE}

\begin{figure}[h]
\centering
\includegraphics[scale=.3]{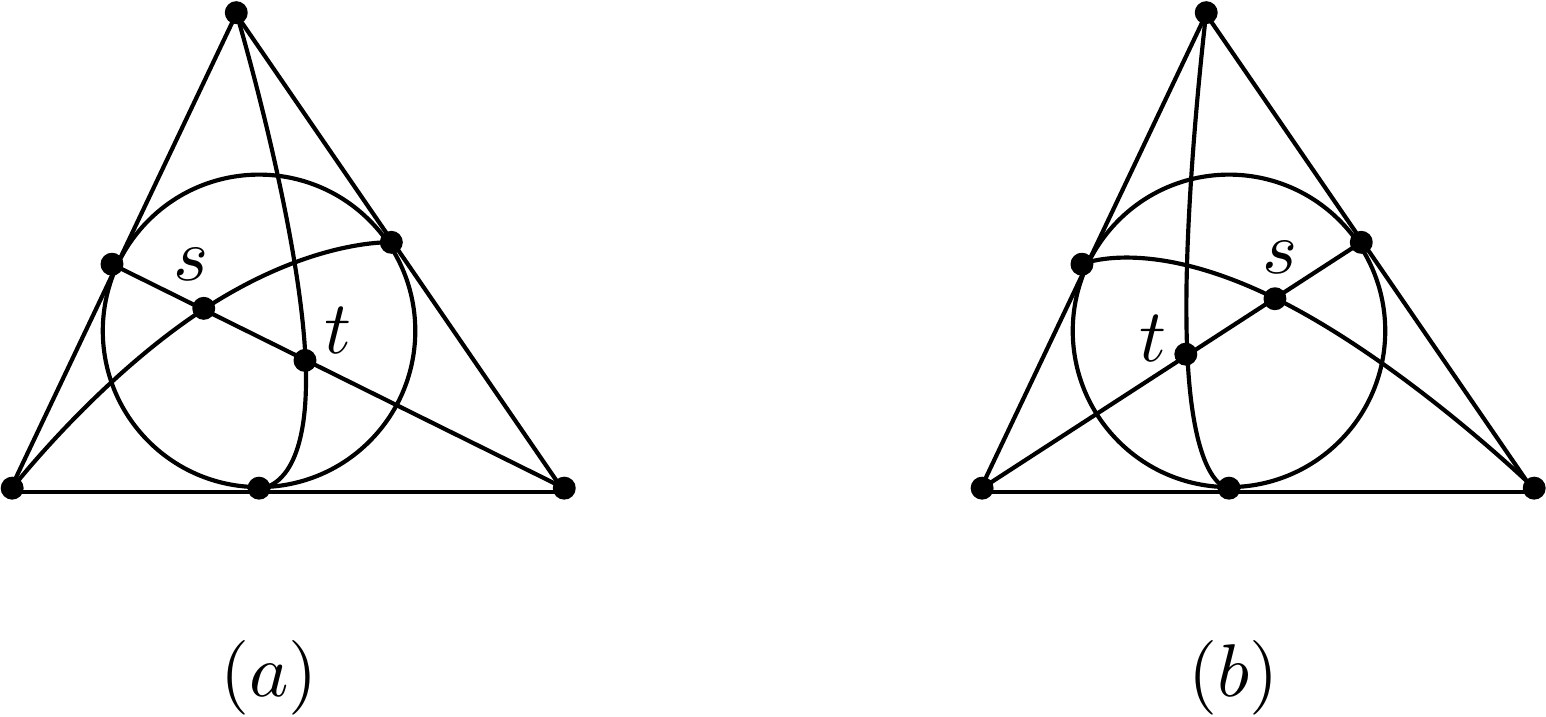}
\caption{The geometric representations of the two matroids of \Cref{8-elements}. The unlabeled points follow the labeling of Figure~\ref{fig:hype-rep}~(a).}
\label{fig:hype-rep-4}
\end{figure}

\begin{LE}\label{8-elements}
Let $M_8$ be a simple matroid of rank $3$ over ground set $\{i,\pi(i),j,\pi(j),k,\pi(k),s,t\}$ such that $M_8\setminus \{s,t\}=M(K_4)$, $M_8\setminus \{i,t\}\cong M(K_4)$, and $M_8\setminus \{i',s\}\cong M(K_4)$, for some $i'\in \{i,\pi(i),j,\pi(j),$ $k,\pi(k)\}$. Then either $i'\in \{j,\pi(j)\}$ and $M_8$ is the matroid with geometric representation in Figure~\ref{fig:hype-rep-4}~(a), or $i'\in \{k,\pi(k)\}$ and $M_8$ is the matroid with geometric representation in Figure~\ref{fig:hype-rep-4}~(b). In particular, $M_8|\{k,\pi(k),s,t\}\cong U_{2,4}$ or $M_8|\{j,\pi(j),s,t\}\cong U_{2,4}$.
\end{LE}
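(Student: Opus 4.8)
The plan is to proceed exactly as in the proof of \Cref{7-elements}, but now tracking two extra points $s,t$ instead of one. First I would fix the geometric representation of $M_8\setminus\{s,t\}=M(K_4)$ with its four triangle-lines $\ell_0=\{i,j,k\}$, $\ell_i=\{i,\pi(j),\pi(k)\}$, $\ell_j=\{j,\pi(i),\pi(k)\}$, $\ell_k=\{k,\pi(i),\pi(j)\}$. Applying \Cref{hyperplane-deletion} twice (first deleting $t$, then deleting $s$, or vice versa), each of these four lines of $M_8\setminus\{s,t\}$ extends in $M_8$ to a line obtained by possibly adding $s$, possibly adding $t$, or both; so $M_8$ contains lines $\ell_0',\ell_i',\ell_j',\ell_k'$ with $\ell_0'\cap\{i,j,k\}=\{i,j,k\}$, etc.

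Next I would exploit the hypothesis $M_8\setminus\{i,t\}\cong M(K_4)$. Exactly as in \Cref{7-elements}, the lines of $M_8\setminus t$ not containing $i$ are among the four lines of $M_8\setminus\{i,t\}\cong M(K_4)$, which forces $\ell_j$ and $\ell_k$ to keep size $3$ (they cannot absorb $t$), i.e. $\ell_j'\cap(E-\{s\})=\{j,\pi(i),\pi(k)\}$ and $\ell_k'\cap(E-\{s\})=\{k,\pi(i),\pi(j)\}$, and moreover $M_8\setminus t$ must be (isomorphic to) one of the two rank-$3$ matroids $F_7^-$ or $F_7$ of \Cref{7-elements}, with its unique extra line through $i$ being $\{i,\pi(i),t\}$. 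Symmetrically, $M_8\setminus\{i',s\}\cong M(K_4)$ with $i'\in\{j,\pi(j),k,\pi(k)\}$ (the cases $i'\in\{i,\pi(i)\}$ are quickly ruled out, e.g. $i'=i$ would by \Cref{7-elements} put $t$ on the line $\{i,\pi(i),s\}$ and lead to the same contradiction as Case 1 there) tells us $M_8\setminus s$ is also $F_7^-$ or $F_7$, with its extra line through $i'$ being $\{i',\overline{i'},s\}$ where $\overline{i'}$ denotes the partner of $i'$ under the matching structure. Combining: $\{i,\pi(i),t\}$ is a line of $M_8$ and $\{i',\overline{i'},s\}$ is a line of $M_8$.

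Then I would do the short case analysis on $i'$. If $i'\in\{j,\pi(j)\}$, then $\{j,\pi(j),s\}$ is a line; chasing the incidences (every pair of elements lies on exactly one line, $\ell_0',\ell_i',\ell_j',\ell_k'$, $\{i,\pi(i),t\}$, $\{j,\pi(j),s\}$ must be pairwise-intersecting in $\leq 1$ point when distinct) forces $s$ onto the line $\ell'$ through $j,\pi(j)$ and $t$ onto $\ell_0'$ or $\ell_i'$; carrying this out pins down $M_8$ to the representation of Figure~\ref{fig:hype-rep-4}~(a), and reading off the line through $k,\pi(k)$ one sees it contains both $s$ and $t$, so $M_8|\{k,\pi(k),s,t\}\cong U_{2,4}$ by \Cref{4-point-line}. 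If instead $i'\in\{k,\pi(k)\}$, the entirely symmetric argument (swap the roles of $j$ and $k$ throughout) gives Figure~\ref{fig:hype-rep-4}~(b) and $M_8|\{j,\pi(j),s,t\}\cong U_{2,4}$. The main obstacle I anticipate is the bookkeeping in this last step: one must be careful that the line-extension choices from the two applications of \Cref{hyperplane-deletion} are mutually consistent (a given line of $M_8$ restricts correctly in both $M_8\setminus s$ and $M_8\setminus t$), and that no spurious additional line can appear; the cleanest way to close it is to invoke \Cref{7-elements} as a black box for the structure of $M_8\setminus t$ and of $M_8\setminus s$ separately, and then check the two descriptions glue to a unique simple rank-$3$ matroid on $8$ points, which is routine once the two $F_7^{(-)}$-pictures are overlaid along the common $M(K_4)$.
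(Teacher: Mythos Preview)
Your overall strategy---apply \Cref{7-elements} to each of $M_8\setminus t$ and $M_8\setminus s$ and then glue the two pictures along the common $M(K_4)$---is exactly what the paper does. The execution, however, contains errors that would derail the argument as written. First, you repeatedly swap $s$ and $t$: in $M_8\setminus t$ the extra point is $s$, so by \Cref{7-elements} the mandatory lines through it are $\{j,\pi(j),s\},\{k,\pi(k),s\}$ and the optional one is $\{i,\pi(i),s\}$, not $\{i,\pi(i),t\}$. Second, your ruling out of $i'\in\{i,\pi(i)\}$ is misidentified: the actual contradiction is that in that case both $\{j,\pi(j),s\},\{k,\pi(k),s\}$ are lines of $M_8\setminus t$ and both $\{j,\pi(j),t\},\{k,\pi(k),t\}$ are lines of $M_8\setminus s$; lifting to $M_8$ forces the distinct $4$-point lines $\{j,\pi(j),s,t\}$ and $\{k,\pi(k),s,t\}$, which meet in two points.

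Third, and most damaging, the claim that incidence-chasing ``forces \ldots\ $t$ onto $\ell_0'$ or $\ell_i'$'' is false and contradicts your own final conclusion. The paper shows all four triangle-lines $\ell_0,\ell_i,\ell_j,\ell_k$ stay size~$3$ in $M_8$: each excludes $s$ already in $M_8\setminus t$, so its lift excludes $s$ and is therefore a line of $M_8\setminus s$, and comparing with the line-list of $M_8\setminus s$ from \Cref{7-elements} forces it to be the bare triangle-line (no $t$ either). The organizing idea you are missing is the paper's Claims~1--3: whenever both the $s$-line $\{r,\pi(r),s\}$ (from $M_8\setminus t$) and the $t$-line $\{r,\pi(r),t\}$ (from $M_8\setminus s$) are present for the same $r\in\{i,j,k\}$, they must lift to the single $4$-point line $\{r,\pi(r),s,t\}$ of $M_8$; since two such $4$-point lines would share $\{s,t\}$, this happens for at most one $r$, and a count against the mandatory lines shows it happens for exactly one---$r=k$ when $i'\in\{j,\pi(j)\}$ (Figure~\ref{fig:hype-rep-4}(a)) and $r=j$ when $i'\in\{k,\pi(k)\}$ (Figure~\ref{fig:hype-rep-4}(b)). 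That single $4$-point line is the $U_{2,4}$ restriction.
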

\begin{proof}
By \Cref{7-elements}, $M_8\setminus t,M_8\setminus s$ are the matroids with the partial geometric representations displayed in Figure~\ref{fig:M8-t-s-1}.
\begin{figure}[h]
\centering
\includegraphics[scale=.3]{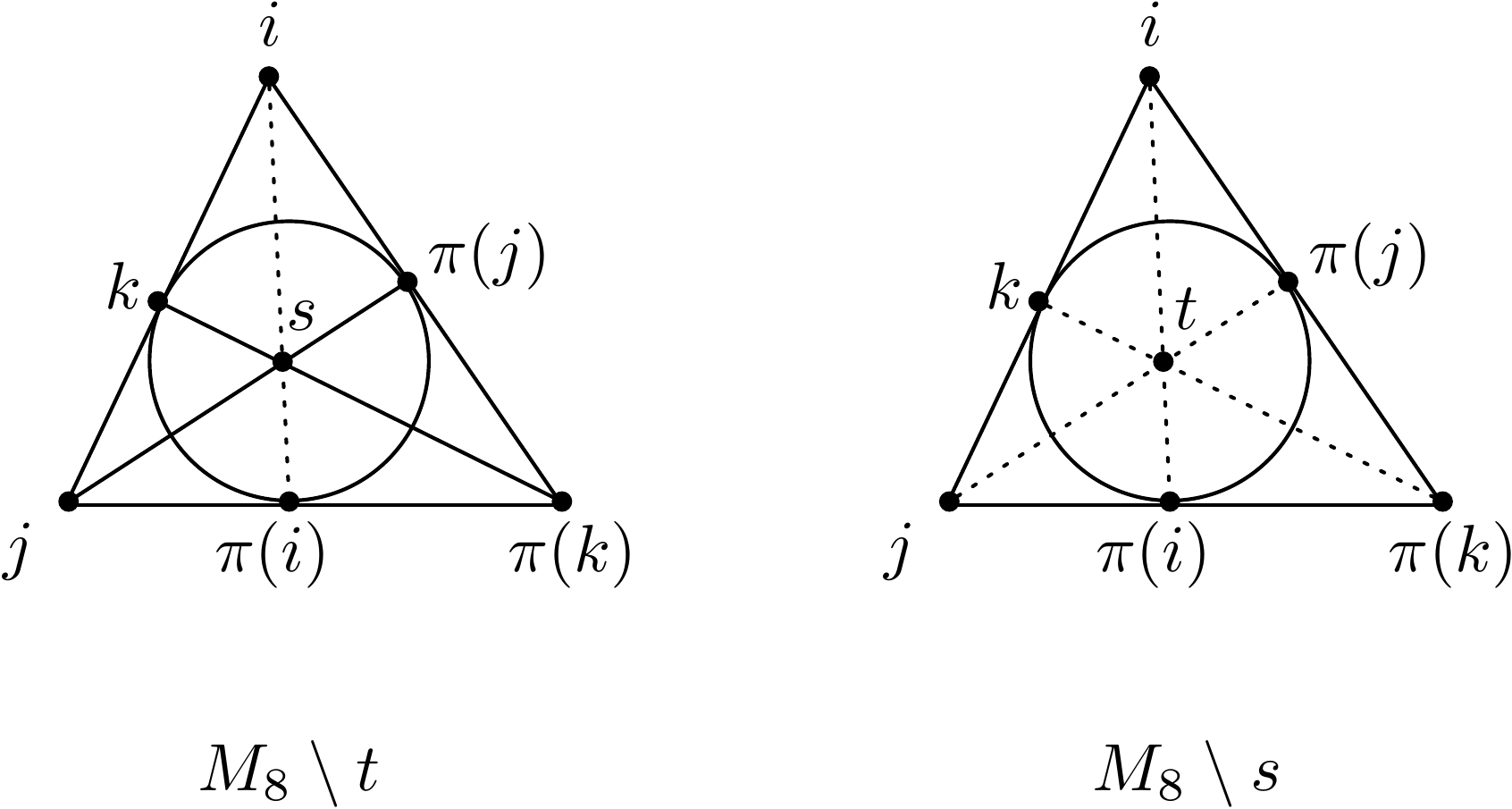}
\caption{Partial geometric representations of $M_8\setminus t$ and $M_8\setminus s$ (from the proof of \Cref{8-elements}). For $M_8\setminus t$ the dashed line may or may not be present, and for $M_8\setminus s$ at least two of the three dashed lines must be present.}
\label{fig:M8-t-s-1}
\end{figure}
For each hyperplane $X'$ of $M_8\setminus t$, either $X'$ or $X'\cup \{t\}$ is a hyperplane of $M_8$ by \Cref{hyperplane-deletion}. Thus, $M_8$ contains the four lines $\ell_0,\ell_i,\ell_j,\ell_k$ such that $\ell_0-\{t\}=\{i,j,k\},\ell_i-\{t\}=\{i,\pi(j),\pi(k)\},\ell_j-\{t\}=\{j,\pi(i),\pi(k)\},\ell_k-\{t\}=\{k,\pi(i),\pi(j)\}$. These lines exclude $s$, so they must be lines of $M_8\setminus s$ by \Cref{hyperplane-deletion}, so $\ell_0=\{i,j,k\},\ell_i=\{i,\pi(j),\pi(k)\},\ell_j=\{j,\pi(i),\pi(k)\},\ell_k=\{k,\pi(i),\pi(j)\}$.

Consider the matroid $M_8\setminus t$. For each $r\in \{i,j,k\}$, if the line through $s,r$ is present, then let $\ell_{sr}$ denote the line of $M_8$ such that $\ell_{sr}-\{t\}=\{r,\pi(r),s\}$. Observe that $\ell_{sj},\ell_{sk}$ exist, and $\ell_{si}$ may or may not exist.

Similarly, consider the matroid $M_8\setminus s$. For each $r\in \{i,j,k\}$, if the line through $t,r$ is present, then let $\ell_{tr}$ denote the line of $M_8$ such that $\ell_{tr}-\{s\}=\{r,\pi(r),t\}$. Observe that at least two of $\ell_{ti},\ell_{tj},\ell_{tk}$ exist.

\begin{claim} 
Suppose both $\ell_{sr},\ell_{tr}$ exist for some $r\in \{i,j,k\}$. Then $\ell_{sr}=\ell_{tr}=\{r,\pi(r),s,t\}$.
\end{claim}
\begin{cproof}
Observe that $\{r,\pi(r)\}\subseteq \ell_{sr}\cap \ell_{tr}$, so $|\ell_{tj}\cap \ell_{sj}|\geq 2$, implying in turn that $\ell_{sr}=\ell_{tr}=\{r,\pi(r),s,t\}$.
\end{cproof}

\begin{claim} 
$\{r\in \{i,j,k\}:\ell_{sr} \text{ exists}\}$ and $\{r\in \{i,j,k\}:\ell_{tr} \text{ exists}\}$ have at most one index in common.
\end{claim}
\begin{cproof}
For if not, then by Claim~1, $\{r,\pi(r),s,t\},\{r',\pi(r'),s,t\}$ are lines of $M_8$ for distinct $r,r'\in \{i,j,k\}$, a contradiction as the two lines are distinct and meet in $2$ points.
\end{cproof}

\begin{claim} 
$\ell_{si}$ does not exist, $\ell_{ti}$ exists, and exactly one of $\ell_{tj},\ell_{tk}$ exists.
\end{claim}
\begin{cproof}
We know that at least two of $\ell_{ti},\ell_{tj},\ell_{tk}$ exist. We also know that $\ell_{sj},\ell_{sk}$ exist. Thus, the claim follows from Claim~2.
\end{cproof}

Thus, in $M_8\setminus t$ there is no line through $s,i$, and in $M_8\setminus s$ there is a line through $t,i$, and either there is a line through $t,k$, or a line through $t,j$, but not both. In the former case we must have $i'\in \{j,\pi(j)\}$, and in the latter $i'\in \{k,\pi(k)\}$.
\begin{enumerate}
\item[Case 1:] $i'\in \{j,\pi(j)\}$. In this case, we have the geometric representations for $M_8\setminus t,M_8\setminus s$ displayed in Figure~\ref{fig:M8-t-s-2}. \begin{figure}[h]
\centering
\includegraphics[scale=.3]{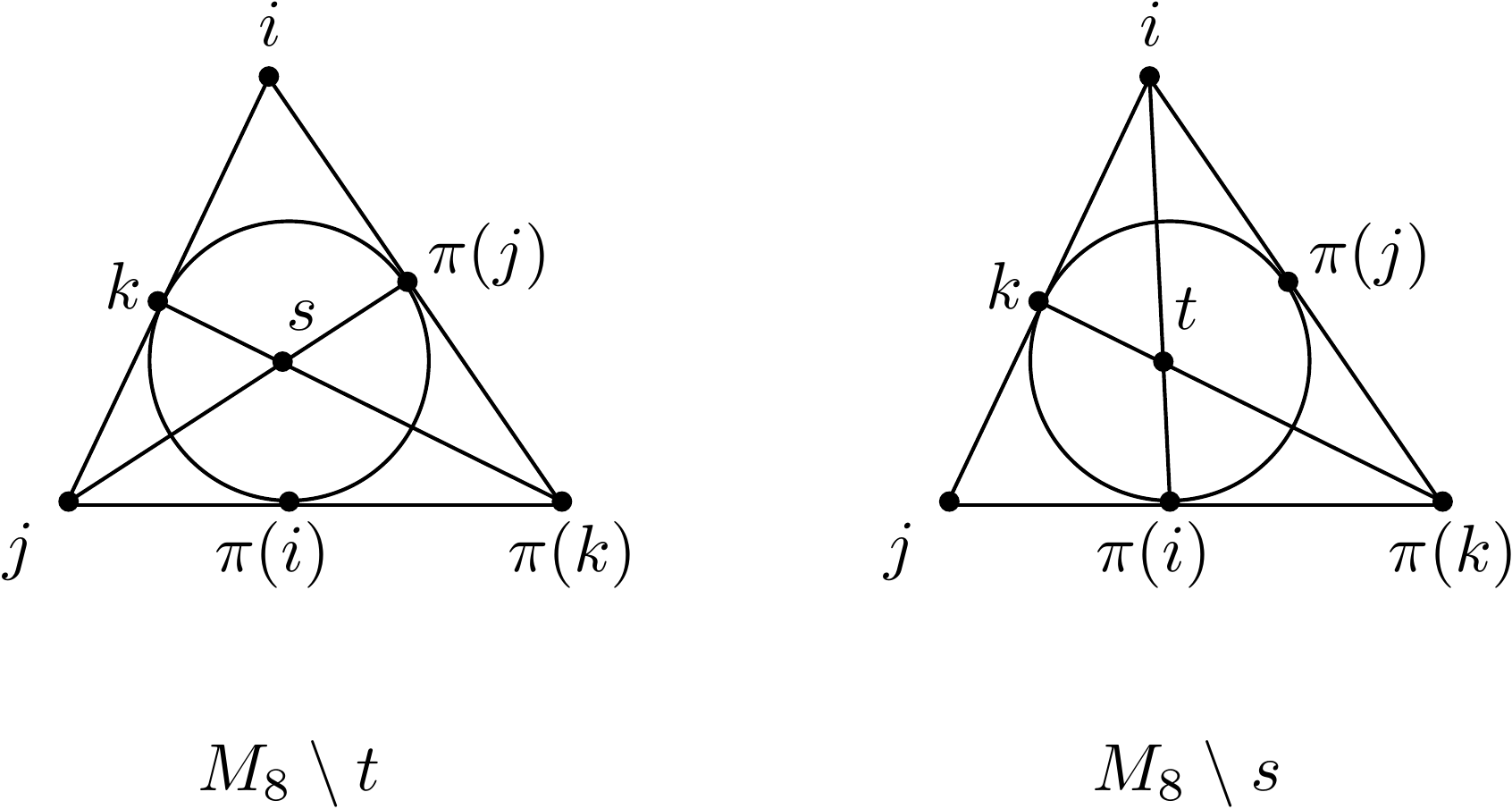}
\caption{Geometric representations of $M_8\setminus t,M_8\setminus s$ (from Case 1 of the proof of \Cref{8-elements}).}
\label{fig:M8-t-s-2}
\end{figure} We know that $M_8$ contains the lines $\ell_0,\ell_i,\ell_j,\ell_k,\ell_{sj},\ell_{sk},\ell_{ti},\ell_{tk}$. We have already obtained the descriptions for $\ell_0,\ell_i,\ell_j,\ell_k$. By Claim~1, we have $\ell_{sk}=\ell_{tk}=\{k,\pi(k),s,t\}$. Since $\ell_{sj},\ell_{sk}$ are distinct lines, and $s\in \ell_{sj}\cap \ell_{sk}$, we get that $t\notin \ell_{sj}$, so $\ell_{sj}=\{j,\pi(j),s\}$. Similarly, since $\ell_{ti},\ell_{tk}$ are distinct lines, and $t\in \ell_{ti}\cap \ell_{tk}$, we get that $s\notin \ell_{ti}$, so $\ell_{ti}=\{i,\pi(i),t\}$. Thus, we have the partial geometric representation for $M_8$ displayed in Figure~\ref{fig:M8-partial-gr}. \begin{figure}[h]
\centering
\includegraphics[scale=.3]{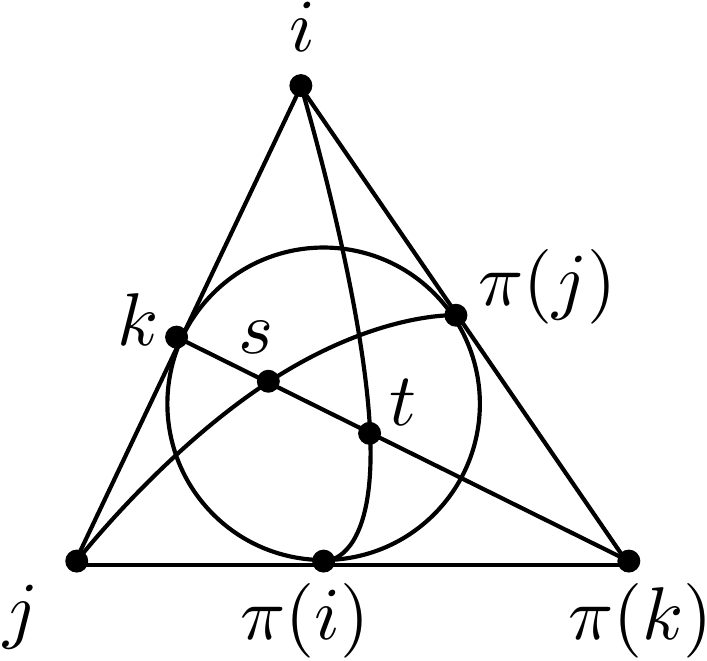}
\caption{Partial geometric representations of $M_8$, where the $7$ lines $\ell_0,\ell_i,\ell_j,\ell_k,\ell_{sj},\ell_{sk},\ell_{ti}$ have been drawn (from Case 1 of the proof of \Cref{8-elements}).}
\label{fig:M8-partial-gr}
\end{figure} In this partial representation, the only pairs of points not contained in a line are $\{s,i\},\{s,\pi(i)\},\{t,j\},\{t,\pi(j)\}$. It can be readily checked now that no additional line can exist, so the partial representation in Figure~\ref{fig:M8-partial-gr} is in fact complete, thereby leading to the outcome in Figure~\ref{fig:hype-rep-4}~(a).
\item[Case 2:] $i'\in \{k,\pi(k)\}$. Similarly, we get that $M_8$ is the matroid represented in Figure~\ref{fig:hype-rep-4}~(b).
\end{enumerate} Observe that by \Cref{4-point-line}, in Case 1, $\ell_{sk}$ is a $4$-point line so $M_8|\{k,\pi(k),s,t\}\cong U_{2,4}$, and in Case 2, $\ell_{sj}$ is a $4$-point line so $M_8|\{j,\pi(j),s,t\}\cong U_{2,4}$.
\end{proof}

We are now ready to prove \Cref{9-elements}, whose statement is repeated here for convenience:\begin{quote}
\emph{Let $M$ be a matroid over $9$ elements whose ground set can be partitioned into bases $Q_1,Q_2,Q_3$. Then we may choose $Q_1,Q_2,Q_3$ such that $M|(Q_i\cup Q_j)\not\cong M(K_4)$ for some distinct $i,j\in [3]$.}
\end{quote}

\begin{proof}[Proof of \Cref{9-elements}]
Observe that $M$ is a loopless matroid of rank $3$. If $M$ is not simple, then it contains parallel elements $e,f$ which inevitably belong to distinct $Q_i,Q_j$, so $M|(Q_i\cup Q_j)\not\cong M(K_4)$. Otherwise, $M$ is simple. Let $s,t$ be distinct elements of $Q_3$. By \Cref{symmetric-exchange}, there exists $i\in Q_2$ such that $Q_2\tr \{i,s\},Q_3\tr\{i,s\}$ are bases, and there exists $i'\in Q_2$ such that $Q_2\tr \{i',t\},Q_3\tr\{i',t\}$ are bases of $M$. Observe that $Q_1,Q_2,Q_3$, and $Q_1,Q_2\tr \{i,s\},Q_3\tr\{i,s\}$, and $Q_1,Q_2\tr \{i',t\},Q_3\tr\{i',t\}$ each form a partition of the ground set into $3$ bases. Thus, we may assume that $M|(Q_1\cup Q_2)=M(K_4)$, $M|(Q_1\cup (Q_2\tr \{i,s\}))\cong M(K_4)$, and $M|(Q_1\cup (Q_2\tr \{i',t\}))\cong M(K_4)$. Let $M_8:=M|(Q_1\cup Q_2\cup \{s,t\})$, a simple matroid over ground set $\{i,\pi(i),j,\pi(j),k,\pi(k),s,t\}$ such that $M_8\setminus \{s,t\}=M(K_4)$, $M_8\setminus \{i,t\}=M|(Q_1\cup (Q_2\tr \{i,s\}))\cong M(K_4)$, and $M_8\setminus \{i',s\}=M|(Q_1\cup (Q_2\tr \{i',t\}))\cong M(K_4)$. It therefore follows from \Cref{8-elements} that $M_8|\{r,\pi(r),s,t\}\cong U_{2,4}$ for some $r\in \{j,k\}$.

Next let $Q'_1,Q'_2$ be a partition of $E(K_4)$ into two spanning trees of $K_4$, where $Q'_1$ contains the perfect matching $\{r,\pi(r)\}$. Then $Q'_1,Q'_2$ is a partition of $Q_1\cup Q_2$ into two bases of $M|(Q_1\cup Q_2)$, and therefore of $M$. Thus, $Q'_1,Q'_2,Q_3$ is a partition of the ground set of $M$ into $3$ bases. Since $\{r,\pi(r),s,t\}\subseteq Q'_1\cup Q_3$, it follows that $M|(Q'_1\cup Q_3)$ has a $U_{2,4}$ restriction, so $M|(Q'_1\cup Q_3)\not\cong M(K_4)$, so $Q'_1,Q'_2,Q_3$ is the desired partition.
\end{proof}

\section{Analyzing an example}\label{sec:D27}
\begin{figure}[h]
\centering\includegraphics[scale=.3]{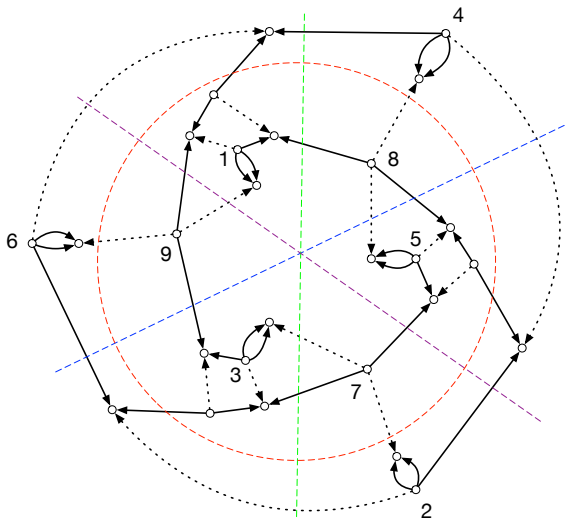}
\caption{The sink-regular $(3,4)$-bipartite digraph $D_{27}$ and the sink-regular weighted $(2,3)$-bipartite digraph $(D_{27},w_{27})$.
Solid arcs have weight $1$, and dashed arcs have weight $0$. 
$Q_1:=\{1,2,3\}, Q_2:=\{4,5,6\}$, and $Q_3:=\{7,8,9\}$ partition the active vertices into bases of $M_1(D_{27},\1)$, such that $M_1(D_{27},\1)|(Q_1\cup Q_2)$ is isomorphic to $M(K_4)$, and is therefore not strongly base orderable.
}
\label{fig:D27}
\end{figure}

Figure~\ref{fig:D27} displays the sink-regular $(3,4)$-bipartite digraph $D_{27}=(V,A)$ on $27$ vertices, introduced in \S\ref{subsec:examples}, with $\rho(3,D_{27})=3$ and active vertices $[9]$. Moreover, with the solid arcs having weight $1$ and the dashed arcs having weight $0$, we get a sink-regular weighted $(2,3)$-bipartite digraph $(D_{27},w_{27})$ with $\rho(2,D_{27},w_{27})=3$ and active vertices $[6]$. Together, $D_{27}$ and $(D_{27},w_{27})$ address several questions raised in the previous two sections. Let us elaborate.

Let $Q_1:=\{1,2,3\}, Q_2:=\{4,5,6\}$, and $Q_3:=\{7,8,9\}$. It can be readily checked that each $Q_i,i=1,2,3$ is a basis for $M_1(D_{27},\1)$, and thus for $M_0(D_{27},\1)$. Moreover, since $M_1(D_{27},w_{27})=M_1(D_{27},\1)|(Q_1\cup Q_2)$, it follows that $Q_1,Q_2$ are bases for $M_1(D_{27},w_{27})$. Moreover, it can be readily checked that in $M_1(D_{27},w_{27})$, the symmetric basis exchanges between $Q_1,Q_2$ are the pairs $\{4,1\},\{4,2\},\{4,3\},\{5,3\},\{6,3\}$, which do not include a perfect pairing between $Q_1,Q_2$. Consequently, $Q_1,Q_2$ prove that $M_1(D_{27},w_{27})$, and therefore $M_1(D_{27},\1)$, is not strongly base orderable. (Observe that $M_1(D_{27},w_{27})\cong M(K_4)$.) This shows that the assumption of \Cref{SBO-weighted-packing} does not hold for $(D,w)=(D_{27},\1)$. The example also shows that the second assumption of \Cref{2-SBO-weighted-packing} does not hold for $(D,w)=(D_{27},\1)$, $Q=Q_3$, and $Q'=Q_1\cup Q_2$.

\section{Directions for further research}\label{sec:future}

Let us present several directions for future research.

\subsection{$M_1(D,\1)$ and strongly base orderability}

\begin{QU}\label{SBO-questions}
Let $\tau\geq 3$ be an integer, and $D=(V,A)$ a sink-regular $(\tau,\tau+1)$-bipartite digraph. Are there disjoint bases $Q_1,\ldots,Q_\tau$ of $M_1(D,\1)$ such that $M_1(D,\1)|(Q_1\cup \cdots\cup Q_{\tau-1})$ is strongly base orderable?
\end{QU}

Observe that if the answer to \Cref{SBO-questions} is affirmative, which is the case for $\rho(\tau,D)=3$ by \Cref{rho=3-basis-partition}, then by \Cref{2-SBO-weighted-packing}, $A$ can be partitioned into $\tau$ dijoins. One can then apply the Decompose, Lift, and Reduce procedure for (unweighted) digraphs to obtain similar conclusions for all digraphs.

\subsection{Disjoint rounded $1$-factor witnesses}

Let $D=(V,A)$ be a sink-regular $(\tau,\tau+1)$-bipartite digraph. A natural approach for finding $\tau$ disjoint dijoins is to first partition the ground set of $M_1(D,\1)$ into $\tau$ disjoint bases $Q_1,\ldots,Q_\tau$, and then look for disjoint rounded $1$-factors $J_1,\ldots,J_\tau$ such that $\dc(J_i)=Q_i$ for $i\in [\tau]$. (Note that our proof of $[[3,3]]$ did not quite follow this approach; the basis partition had to be changed.) As a first step in this direction, we would need to address the following question. (For this question, it is not clear whether being a basis of $M_1(D,\1)$ rather than $M_0(D,\1)$ is helpful.)

\begin{QU}
Let $\tau\geq 3$ be an integer, and $D=(V,A)$ a sink-regular $(\tau,\tau+1)$-bipartite digraph. Let $Q_1,\ldots,Q_\tau$ be disjoint bases of $M_0(D,\1)$. 
When are there disjoint rounded $1$-factors $J_1,\ldots,J_\tau$ such that $\dc(J_i)=Q_i$ for $i\in [\tau]$?
\end{QU}

\subsection{Finding three disjoint dijoins in planar digraphs, and Barnette's Conjecture}

We mentioned the following result in the introduction.

\begin{theorem}[\cite{Chudnovsky16}]\label{Chudnovsky}
Let $(D=(V,A),w)$ be a weighted digraph, where every dicut has weight at least two. Suppose $D$ is planar, and $D[\{a\in A:w_a\neq 0\}]$ is a spanning subdigraph of $D$ that is connected as an undirected graph. Then there exists a $w$-weighted packing of dijoins of size two.
\end{theorem}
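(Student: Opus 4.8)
The plan is to pass to the planar dual and reduce to a case that is already known. First, replace every arc $a$ with $w_a\geq 1$ by $w_a$ parallel copies of weight $1$ and leave the weight-$0$ arcs untouched; this preserves planarity, the hypothesis that the positive-weight arcs form a spanning connected subgraph, and both parameters in \Cref{main-question}, so we may assume $w\in\{0,1\}^A$. The goal then becomes: find two \emph{disjoint} dijoins $J_1,J_2\subseteq\{a\in A:w_a=1\}$, each meeting every dicut of $D$. Since the weight-$1$ arcs span and connect $D$, in particular $D$ is connected, so the plane dual $D^\ast$ is well defined. Under the standard bijection between a plane digraph and its dual, dicuts of $D$ correspond to directed circuits of $D^\ast$ (with the induced dual orientation), dijoins of $D$ correspond to feedback arc sets of $D^\ast$, and $\tau(D,w)$ equals the minimum weight of a directed circuit of $D^\ast$; moreover, since the weight-$0$ arcs of $D$ contain no bond of $D$, their duals contain no cycle of $D^\ast$, i.e. the weight-$0$ arcs $Z$ of $D^\ast$ form a forest. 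Thus it suffices to prove: \emph{a connected plane digraph $H$ whose weight-$0$ arcs $Z$ form a forest and in which every directed circuit has weight at least $2$ has two disjoint feedback arc sets disjoint from $Z$.}

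Second, contract $Z$. As $Z$ is a forest, contraction preserves planarity; and since $Z$ is acyclic every directed circuit of $H$ uses at least one arc outside $Z$, so the directed circuits of $H/Z$ capture the non-$Z$ part of every directed circuit of $H$. Hence two disjoint feedback arc sets of $H/Z$ pull back to two disjoint feedback arc sets of $H$ avoiding $Z$. Now $H/Z$ has all arcs of weight $1$; provided it is loopless, its directed girth is at least $2$, so its dual $(H/Z)^\ast$ is a digraph whose minimum dicut has size at least $2$. The folklore $\tau=2$ case of Woodall's Conjecture, i.e. the statement $[\tau]$ for $\tau=2$ (cf.\ \cite{Schrijver03}, Theorem~56.3), then yields two disjoint dijoins of $(H/Z)^\ast$, which dualize back to the two disjoint feedback arc sets we need, completing the reduction.

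The main obstacle is exactly the ``provided it is loopless'' caveat: a weight-$1$ arc of $H$ whose two endpoints lie in the same tree of $Z$ becomes a loop of $H/Z$, and such arcs (in pairs) correspond back in $D$ to weight-$2$ dicuts of a restricted shape. These loops must be dealt with separately. One clean way is to observe that the obstructions they create live entirely inside the individual trees of $Z$, so the sub-instance induced on each such tree is again of the same kind but with the forest now connected; one can then induct on the number of weight-$0$ arcs. An alternative is to first delete from $D$ every weight-$1$ arc lying in no minimal dicut (such arcs are never needed in any dijoin), and re-run the argument after checking that enough of the support-connectivity survives. Getting this bookkeeping airtight — so that no spurious short directed circuit is introduced by contraction — is the crux of the proof.

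There is also a route staying entirely inside this paper's framework, which I would pursue in parallel: apply the planarity-preserving Decompose, Lift, and Reduce procedure (\Cref{weighted-DnL}) to reduce to sink-regular \emph{plane} weighted $(2,3)$-bipartite digraphs $(D',w')$, and then attempt to show that for such $D'$ the matroid $M_1(D',w')$ is strongly base orderable — equivalently, by \Cref{K4}, that no union of two of its bases carries an $M(K_4)$-restriction — exploiting planarity; \Cref{SBO-weighted-packing} would then conclude. The difficulty here migrates to verifying that the $\tau=2$ lifting step can be arranged so that the weight-$0$ arcs it introduces do not destroy the support-connectivity hypothesis, which is precisely what separates this (true) planar statement from the (false) general statement $[\wt,2]$.
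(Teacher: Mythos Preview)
First, note that \Cref{Chudnovsky} is not proved in this paper at all; it is quoted from \cite{Chudnovsky16} and used only as a black box in the proof of \Cref{3-dijoins-planar}. So there is no in-paper argument to compare your proposal against.

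Your duality set-up is correct: the weight-$0$ arcs dualize to a forest $Z$ in $H=D^\ast$, and two disjoint feedback arc sets of $H/Z$ do lift to two disjoint feedback arc sets of $H$ avoiding $Z$. The gap you flag, however, is genuine and is not closed by either of your suggested fixes. For the rule ``delete from $D$ every weight-$1$ arc lying in no minimal dicut'': take $H$ on vertices $\{1,2,3\}$ with one weight-$0$ arc $2\to 1$ and weight-$1$ arcs $2\to 1$, $1\to 3$, $3\to 2$. Every directed circuit of $H$ has weight at least $2$ and $Z=\{2\to1\}$ is a forest, so the primal $D=H^\ast$ satisfies all hypotheses of the theorem. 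The weight-$1$ arc $2\to 1$ lies on the directed circuit $2\to 1\to 3\to 2$, so your deletion rule retains it; yet contracting $Z$ turns it into a loop, and $H/Z$ then admits no pair of disjoint feedback arc sets, even though $H$ itself does (namely $\{1\to 3\}$ and $\{3\to 2\}$). Thus the reduction to $H/Z$ genuinely loses information, and the deletion rule does not prevent this. Your other suggestion, to ``induct on the number of weight-$0$ arcs'' by localizing to individual $Z$-trees, is not specified enough to evaluate: you do not say what smaller instance you pass to, nor why it again satisfies the hypotheses.

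The alternative route through this paper's machinery is likewise only a programme. The Decompose-and-Lift step for $\tau=2$ necessarily introduces weight-$0$ arcs (see \Cref{weighted-lifting}~(1)), so support-connectivity is not automatically preserved; and nothing in the paper suggests that planarity forces $M_1$ to be strongly base orderable. As it stands, the proposal is an outline with an identified but unresolved obstruction, not a proof.
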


\begin{CN}\label{Barnette-esq}
Let $D=(V,A)$ be a sink-regular $(3,4)$-bipartite digraph that is planar, and let $Q_1,Q_2,Q_3$ be disjoint bases of $M_1(D,\1)$. Then there exists a rounded $1$-factor $J$ such that $\dc(J)=Q_1$ and $D\setminus J$ is connected.
\end{CN}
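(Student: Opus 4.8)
First note the statement is not vacuous: since the weight is $\1$, \Cref{unweighted-admissible} gives that $Q_1$ is also a basis of $M_0(D,\1)$, i.e., $Q_1$ is bimatchable, so the family $\mathcal{J}:=\{J\subseteq A: J\text{ a rounded }1\text{-factor},\ \dc(J)=Q_1\}$ is nonempty, and by \Cref{R1F-M1} every $J\in\mathcal{J}$ is already a dijoin of $D$. The task is to pick $J\in\mathcal{J}$ so that $D\setminus J$ is connected as an undirected graph. Writing $G$ for the underlying undirected graph, ``$D\setminus J$ connected'' is equivalent to ``$A\setminus J$ contains no edge cut of $G$''; moreover each $J\in\mathcal{J}$ is a perfect $b$-matching for $b:=\chi_V+\chi_{Q_1}$, so $A\setminus J$ is a perfect $\bar b$-matching with $\bar b_v=2$ for $v\notin Q_2\cup Q_3$ and $\bar b_v=3$ for $v\in Q_2\cup Q_3$; in particular $A\setminus J$ always spans $V$, so only its connectivity is at issue. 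It is harmless to assume $D$ is connected (otherwise the conclusion fails outright), and one expects to reduce further to $D$ being $2$-connected as an undirected graph by splitting at pseudo-cut-vertices as in \S\ref{sec:decompose} (the matroids $M_0,M_1$ and the partition $Q_1,Q_2,Q_3$ split along the split, after a small patching at the cut vertex) and then gluing solutions.

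\textbf{An exchange argument.} The family $\mathcal{J}$ is connected under \emph{alternating-circuit flips}: if $J,J'\in\mathcal{J}$ then $\dc(J)\triangle\dc(J')=\emptyset$, so \Cref{alternating} applied with $k=0$ shows $J\triangle J'$ is an arc-disjoint union of $\{J,J'\}$-alternating circuits, and flipping any single circuit $C$ (replacing $J$ by $J\triangle C$, where $C$ alternates between $J$ and $A\setminus J$ at each of its vertices) leaves every vertex degree unchanged and hence stays in $\mathcal{J}$; conversely any circuit alternating between $J$ and $A\setminus J$ is such a move. I would therefore choose $J\in\mathcal{J}$ minimizing the number of connected components of $D\setminus J$, and argue this number equals $1$: if it is at least $2$, produce a circuit $C$ alternating between $J$ and $A\setminus J$ that strictly decreases the component count. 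Such a $C$ must use at least one arc of $J$ joining two distinct components of $D\setminus J$ (so that the flip merges them), while the arcs of $C$ it deletes from $A\setminus J$ must not create new components.

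\textbf{Where planarity enters, and the main obstacle.} Fixing a plane embedding of $D$, the plan is to find the merging circuit ``locally'': take a face $F$ of $D\setminus J$ whose boundary walk meets two distinct components, pick an arc $e\in J$ drawn inside $F$ between them, and close $e$ into an alternating circuit by following part of the boundary of $F$; planarity should ensure the resulting flip merges the two components without disconnecting any other. Turning this into a proof is a genuine Hamiltonicity-type problem, and it is the reason the statement is only conjectured. Indeed, in the special case $\rho(3,D)=0$ we have $Q_1=\emptyset$, $J$ is a perfect matching, $D\setminus J$ is $2$-regular, and ``$D\setminus J$ connected'' says exactly that $A\setminus J$ is a Hamilton cycle -- so the conjecture restricted to $\rho=0$ is, via the standard source/sink orientation, essentially \emph{Barnette's Conjecture}. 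Hence no short argument is to be expected; the entire difficulty is concentrated in the step ``a connectivity-increasing flip always exists''.

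\textbf{Realistic partial goals and downstream payoff.} Given this obstacle, I would pursue: (i) the case $\rho(3,D)$ large, where $A\setminus J$ carries $\rho+1$ edges in surplus of a spanning tree and one can try to build a connected $\bar b$-factor directly using Lov\'asz-type connected-factor theory; (ii) the case where $D$ is source--sink connected, exploiting the Schrijver / Feofiloff--Younger packing of three dijoins, which already partitions $A$ into three rounded $1$-factors whose dyad-centre sets are bases of $M_1(D,\1)$, so that what remains is to realize the prescribed partition $Q_1,Q_2,Q_3$ and to argue connectivity of the union of two of the dijoins; and (iii) reducing the general conjecture to Barnette's Conjecture by contracting or splitting off the $3\rho$ active vertices to pass to a $3$-connected cubic planar bipartite graph. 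Finally, once available, the conjecture combines with \Cref{Chudnovsky}: applying that theorem to $D\setminus J$ with all-ones weights -- after the short verification that every dicut of $D\setminus J$ has weight at least $2$ (one checks the identity $|\delta^+_{D\setminus J}(U)|=|Q_2\cap U|+|Q_3\cap U|-2\disc(U)$ for dicut-defining $U$ and invokes that $Q_2,Q_3$ are bases of $M_1(D,\1)$) -- yields two further dijoins disjoint from $J$ and from each other, hence a partition of $A$ into three dijoins in every planar sink-regular $(3,4)$-bipartite digraph, and therefore, via \Cref{DnL}, the planar case $[3;\pl]$ of Woodall's Conjecture for $\tau=3$.
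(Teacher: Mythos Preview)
The statement you were asked to prove is labeled \texttt{CN} in the paper --- it is a \emph{conjecture}, not a theorem, and the paper provides no proof of it. So there is nothing to compare your attempt against; the paper treats it as open.

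You correctly recognise this: your write-up is not a proof but an outline of approaches, and you explicitly identify the core obstacle (``the entire difficulty is concentrated in the step `a connectivity-increasing flip always exists'\,''). Your two main observations line up with the paper's own commentary. First, the reduction to Barnette's Conjecture in the $\rho=0$ case is exactly the ``resemblance'' the paper points out immediately after stating the conjecture. Second, your ``downstream payoff'' paragraph --- that the conjecture plus \Cref{Chudnovsky} yields $[3;\pl]$ --- is precisely the content of \Cref{3-dijoins-planar}; your verification that $A\setminus J$ is a $2$-dijoin via the identity $|\delta^+(U)\cap(A\setminus J)| = |Q_2\cap U|+|Q_3\cap U|-2\disc(U)$ is the same computation the paper does there via \Cref{k-dijoin}.

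The exchange/flip framework you set up (alternating-circuit moves within $\mathcal{J}$, minimise the number of components of $D\setminus J$) is a natural line of attack and is consistent with the tools in \S\ref{sec:rho=1}, but as you say it does not close: producing a component-merging alternating circuit that does not simultaneously disconnect elsewhere is exactly the Hamiltonicity-type difficulty. The paper offers no further progress on this step either, so your assessment of where the problem stands is accurate.
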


\begin{theorem}\label{3-dijoins-planar}
Suppose \Cref{Barnette-esq} is true. Then $[[3;\pl]]$ is true. That is, every planar digraph where every dicut has size at least three, has three disjoint dijoins.
\end{theorem}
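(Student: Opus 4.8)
The plan is to pass to sink-regular planar $(3,4)$-bipartite digraphs, use \Cref{Barnette-esq} to split off one dijoin, and then invoke \Cref{Chudnovsky} to produce the remaining two.

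\emph{Step 1 (reduction).} Given an arbitrary planar digraph $D=(V,A)$ in which every dicut has size at least three, I would apply \Cref{DnL} (legitimate since $\tau=3\ge 3$) with the sink-regular option to obtain sink-regular planar $(3,4)$-bipartite digraphs $D'_i,\ i\in I$, with $\mathcal C(D)=\prod_{i\in I}\mathcal C_i$ and each $\mathcal C_i$ a contraction minor of $\mathcal C(D'_i)$. By \Cref{contraction-packings} and \Cref{product-packings}, if every $\mathcal C(D'_i)$ has a packing of size three then so does $\mathcal C(D)$; thus it suffices to prove that every sink-regular planar $(3,4)$-bipartite digraph has three pairwise disjoint dijoins. (Equivalently, this is \Cref{reduction}~(2) applied to the statements $[3,\rho;\pl]$ for all $\rho\ge 0$, whose conjunction is $[3;\pl]$.)

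\emph{Step 2 (peeling off the first dijoin).} Let $D=(V,A)$ now be a sink-regular planar $(3,4)$-bipartite digraph. By \Cref{M1-basis-partition} the ground set $a(V)$ of $M_1(D,\1)$ splits into three bases $Q_1,Q_2,Q_3$. Apply \Cref{Barnette-esq} to this partition: there is a rounded $1$-factor $J_1$ with $\dc(J_1)=Q_1$ and $D\setminus J_1$ (equivalently $D[A-J_1]$) connected. Since $Q_1$ is a basis of $M_1(D,\1)$, \Cref{R1F-M1} gives that $J_1$ is a dijoin. Furthermore $J_1$ is a perfect $(\chi_V+\chi_{Q_1})$-matching while $A$ is a perfect $(3\chi_V+\chi_{Q_1}+\chi_{Q_2}+\chi_{Q_3})$-matching (as $a(V)=Q_1\cup Q_2\cup Q_3$), so $A-J_1$ is a perfect $(2\chi_V+\chi_{Q_2}+\chi_{Q_3})$-matching, whence $A-J_1$ is a $2$-dijoin of $D$ by \Cref{k-dijoin}.

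\emph{Step 3 (the last two dijoins).} Consider the weighted digraph $(D,\chi_{A-J_1})$. It is planar; every dicut $\delta^+(U)$ has weight $|\delta^+(U)\cap(A-J_1)|\ge 2$ because $A-J_1$ is a $2$-dijoin; and $D[\{a:(\chi_{A-J_1})_a\ne 0\}]=D\setminus J_1$ is a spanning subdigraph of $D$ (the rounded $1$-factor $J_1$ saturates every vertex, so no vertex becomes isolated) that is connected as an undirected graph, which is exactly the conclusion of \Cref{Barnette-esq}. Hence \Cref{Chudnovsky} applies and yields a $\chi_{A-J_1}$-weighted packing of dijoins of size two, say $J_2,J_3$; since $(\chi_{A-J_1})_a$ is $0$ on $J_1$ and $1$ elsewhere, $J_2,J_3$ are disjoint subsets of $A-J_1$. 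Therefore $J_1,J_2,J_3$ are three pairwise disjoint dijoins of $D$, as required.

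\emph{Main obstacle.} Conceptually there is no real obstacle here, since the hard inputs are the assumed \Cref{Barnette-esq} and the known \Cref{Chudnovsky}. The one place needing genuine care is matching the hypotheses of \Cref{Chudnovsky} for $(D,\chi_{A-J_1})$: the ``every dicut has weight $\ge 2$'' condition rests on $A-J_1$ being a $2$-dijoin (via \Cref{k-dijoin}, using that $Q_2,Q_3$ are bases of $M_1(D,\1)$), and the ``spanning connected subdigraph'' condition is delivered verbatim by \Cref{Barnette-esq}; both are straightforward once the bookkeeping of perfect $b$-matchings in Step~2 is in place.
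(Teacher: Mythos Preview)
Your proof is correct and follows essentially the same approach as the paper: reduce to sink-regular planar $(3,4)$-bipartite digraphs via Decompose-and-Lift, use \Cref{M1-basis-partition} and \Cref{Barnette-esq} to peel off a dijoin $J_1$ whose removal leaves a connected $2$-dijoin, and then apply \Cref{Chudnovsky} to split $A-J_1$ into two further dijoins. One cosmetic quibble: your parenthetical justification that $D\setminus J_1$ is spanning (``$J_1$ saturates every vertex, so no vertex becomes isolated'') has the logic backwards---what you need is that every vertex retains an arc in $A-J_1$, which holds since each vertex has degree $\ge 3$ while $J_1$ uses at most two arcs there---but this is harmless since connectivity of $D\setminus J_1$ is delivered directly by \Cref{Barnette-esq}.
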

\begin{proof}
By \Cref{reduction}~(1), it suffices to prove $[3;\pl]$ for sink-regular $(3,4)$-bipartite digraphs. To this end, let $D=(V,A)$ be a sink-regular $(3,4)$-bipartite digraph that is planar. By \Cref{M1-basis-partition}, there exist disjoint bases $Q_1,Q_2,Q_3$ of $M_1(D,\1)$. As \Cref{Barnette-esq} is assumed true, there exists a rounded $1$-factor $J_1$ such that $\dc(J_1)=Q_1$ and $D\setminus J_1$ is connected. Let $b:=2\chi_V+\chi_{Q_2}+\chi_{Q_3}$. Since $\dc(J_1)=Q_1$, $A-J_1$ is a perfect $b$-matching.
It follows from \Cref{k-dijoin} that $A-J_1$ is a $2$-dijoin. In particular, in the weighted digraph $(D,w)$ where $w=\chi_{A-J_1}$, every dicut has weight at least two. As $D\setminus J_1$ is connected, it follows that $D[\{a\in A:w_a\neq 0\}]$ is a spanning subdigraph of $D$ that is connected as an undirected graph. Thus, by \Cref{Chudnovsky}, $(D,w)$ has a $w$-weighted packing of dijoins of size two, that is, $A-J_1$ can be partitioned into two dijoins, say $J_2,J_3$. Thus we have three disjoint dijoins $J_1,J_2,J_3$ in $D$.
\end{proof}

Note the resemblance between \Cref{Barnette-esq} and Barnette's Conjecture, which states that for every $3$-connected cubic bipartite graph $G$ that is planar, there exists a perfect matching $M$ such that $G\setminus M$ is connected, i.e., $G$ has a Hamilton circuit~\cite{Barnette69}.

\subsection{Fractional weighted packing of dijoins}

Let $\mathcal{C}$ be a clutter over ground set $A$. Let $w\in \mathbb{Z}^A_{\geq 0}$. A \emph{fractional $w$-weighted packing of $(\mathcal{C},w)$ with value $\nu$} consists of a fractional assignment $\lambda_C\geq 0$ to every $C\in \mathcal{C}$ such that $\1^\top \lambda=\nu$, and $\sum(\lambda_C:a\in C\in \mathcal{C})\leq w_a$ for every $a\in A$. Let $\nu^\star(\mathcal{C},w)$ be the maximum value of a  fractional $w$-weighted packing of $(\mathcal{C},w)$. Observe that $\nu^\star(\mathcal{C},w)\geq \nu(\mathcal{C},w)$. By Weak LP Duality, $\tau(\mathcal{C},w)\geq \nu^\star(\mathcal{C},w)$. $\mathcal{C}$ is \emph{ideal} if for all $w\in \mathbb{Z}^A_{\geq 0}$, $\tau(\mathcal{C},w)= \nu^\star(\mathcal{C},w)$~\cite{Cornuejols94}. It is known that a clutter is ideal if, and only if, the blocker is ideal~\cite{Lehman79,Fulkerson71}.

The theorem of Lucchesi and Younger on weighted packings of dicuts implies that the clutter of minimal dicuts of a digraph is ideal~\cite{Lucchesi78}, implying by \Cref{dicut-dijoin} that the clutter of minimal dijoins of a digraph is also ideal. In other words,

\begin{theorem}[see \cite{Cornuejols01}, \S1.3.4]\label{optimal-fractional-packing}
Let $(D,w)$ be a weighted digraph where the minimum weight of a dicut is $\tau$. Then there exists a fractional $w$-weighted packing $\lambda$ of dijoins of value $\tau$.
\end{theorem}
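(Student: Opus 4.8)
The plan is to obtain the statement directly from the idealness of the clutter of minimal dijoins, which in turn follows by combining the Lucchesi--Younger theorem with the fact that idealness is preserved under taking blockers; this is, essentially, an unpacking of the remark preceding the theorem.

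First I would recall, via \Cref{dicut-dijoin}, that the clutter $\mathcal{C}(D)$ of minimal dijoins of $D$ and the clutter of minimal dicuts of $D$ form a blocking pair. Next I would invoke the theorem of Lucchesi and Younger on weighted packings of dicuts, which implies that the clutter of minimal dicuts of $D$ is ideal. Then, using the classical fact that a clutter is ideal if and only if its blocker is ideal \cite{Lehman79,Fulkerson71}, I conclude that $\mathcal{C}(D)$ is itself ideal. Applying the definition of idealness to the weighting $w$ yields $\tau(\mathcal{C}(D),w)=\nu^\star(\mathcal{C}(D),w)$.

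It then remains to identify the two sides of this equality. The covering number $\tau(\mathcal{C}(D),w)$ is the minimum $w$-weight of an arc set meeting every minimal dijoin of $D$; since $w\geq\0$, this minimum is attained at an inclusionwise minimal cover, and by \Cref{dicut-dijoin} the minimal covers of $\mathcal{C}(D)$ are precisely the minimal dicuts of $D$. Hence $\tau(\mathcal{C}(D),w)=\min\{w(\delta^+(U)):\delta^+(U)\text{ a dicut of }D\}=\tau$ by hypothesis. Therefore $\nu^\star(\mathcal{C}(D),w)=\tau$, i.e. there is a fractional $w$-weighted packing of minimal dijoins --- and a fortiori of dijoins --- of value $\tau$, as claimed.

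Since the proof is a short chain of results already available in the excerpt, I do not anticipate a genuine obstacle. The only point that needs a little care is matching the informal phrase ``fractional $w$-weighted packing of dijoins'' in the statement with the formal definition of $\nu^\star$, which is indexed by the members of a clutter: one reads it as a packing over $\mathcal{C}(D)$, whose members are minimal dijoins and hence dijoins, so no loss of value or feasibility (the constraints $\sum(\lambda_C:a\in C)\leq w_a$) occurs in passing between the two phrasings.
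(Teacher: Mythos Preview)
Your proposal is correct and follows exactly the approach the paper indicates in the sentence preceding the theorem: Lucchesi--Younger gives idealness of the clutter of minimal dicuts, blocker duality transfers idealness to the clutter of minimal dijoins, and then the definition of idealness applied to $w$ yields a fractional packing of value $\tau(\mathcal{C}(D),w)=\tau$. The paper does not supply a separate proof beyond that remark, so your unpacking is precisely what is intended.
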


\begin{QU}\label{question:dyadic}
Can we choose $\lambda$ such that \begin{enumerate}[(1)]
\item $\|\lambda\|_{\infty}\geq \frac12$?
\item $\lambda$ is $\frac12$-integral?
\item $\lambda$ is \emph{dyadic}, i.e. $\frac{1}{2^k}$-integral for some integer $k\geq 0$?
\end{enumerate} What if $w=\1$?
\end{QU}

The following consequence of our results relates to \Cref{question:dyadic}~(1).

\begin{theorem}
Let $D=(V,A)$ be a digraph where the minimum size of a dicut is $\tau$. Then there exists a fractional packing $\lambda$ of dijoins of value $\tau$ such that $\|\lambda\|_{\infty}\geq 1$.
\end{theorem}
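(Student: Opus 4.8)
The plan is to derive the statement as a short corollary of \textbf{P1} (\Cref{two-dijoins}) and the idealness of the dijoin clutter (\Cref{optimal-fractional-packing}). First I would apply \Cref{two-dijoins} to obtain a dijoin $J\subseteq A$ with $A-J$ a $(\tau-1)$-dijoin of $D$, and then replace $J$ by a minimal dijoin $J'\subseteq J$; since $A-J'\supseteq A-J$, the set $A-J'$ is again a $(\tau-1)$-dijoin. I would next pass to the weighting $w:=\chi_{A-J'}\in\{0,1\}^A$ and check that the minimum weight of a dicut of $(D,w)$ equals exactly $\tau-1$: the lower bound $w(\delta^+(U))=|\delta^+(U)\cap(A-J')|\geq \tau-1$ is precisely the $(\tau-1)$-dijoin property of $A-J'$, while evaluating on a minimum dicut $\delta^+(U)$ of size $\tau$ gives $w(\delta^+(U))=\tau-|\delta^+(U)\cap J'|\leq \tau-1$ because $J'$, being a dijoin, meets $\delta^+(U)$.

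Then I would invoke \Cref{optimal-fractional-packing} on $(D,w)$ to get a fractional $w$-weighted packing $\lambda'$ of minimal dijoins of value $\tau-1$. The single observation that makes the gluing work is that, since $w_a=0$ for every $a\in J'$, feasibility of $\lambda'$ forces $\sum(\lambda'_C:a\in C)=0$ for each $a\in J'$, i.e. no dijoin in the support of $\lambda'$ meets $J'$. I would then form $\lambda$ from $\lambda'$ by adding $1$ to the coefficient of the minimal dijoin $J'$, leaving all other coefficients unchanged. The value becomes $(\tau-1)+1=\tau$, and feasibility with respect to the all-ones weighting is immediate: arcs of $J'$ now carry total weight $1+0=1$, and each arc outside $J'$ carries the same total weight $\sum(\lambda'_C:a\in C)\leq w_a\leq 1$ as under $\lambda'$. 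Since $\lambda_{J'}\geq 1$, this yields $\|\lambda\|_\infty\geq 1$, as desired.

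I do not expect a genuine obstacle here: all of the substance is already packaged in \Cref{two-dijoins} (which itself rests on the Decompose, Lift, and Reduce procedure) and in the Lucchesi--Younger theorem underlying \Cref{optimal-fractional-packing}; the argument above is just the bookkeeping that fuses them. The only two points that need a bit of care are (i) passing to a minimal dijoin $J'$ inside $J$ so that $\lambda$ is a legitimate assignment on the clutter of minimal dijoins of $D$, and (ii) noticing that reserving weight exactly $1$ for $J'$ costs nothing precisely because the minimum dicut weight drops from $\tau$ to $\tau-1$ once $J'$ is zeroed out. This result is the natural strengthening of \Cref{question:dyadic}~(1) in the unweighted case.
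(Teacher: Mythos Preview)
Your proof is correct and follows exactly the paper's route: split off a dijoin via \Cref{two-dijoins}, apply \Cref{optimal-fractional-packing} to the residual weighting $\chi_{A-J'}$, and reinstate the dijoin with coefficient~$1$. You are in fact more careful than the paper in two places---passing to a \emph{minimal} dijoin $J'\subseteq J$ so that $\lambda_{J'}$ is a legitimate coordinate of the clutter-indexed vector, and verifying that the residual minimum dicut weight is \emph{exactly} $\tau-1$---though you should add a clause for the trivial case $\tau=1$, which the paper dispatches in one line and which lies outside the hypothesis $\tau\geq 2$ of \Cref{two-dijoins}.
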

\begin{proof}
If $\tau=1$, then this holds trivially. Otherwise, $\tau\geq 2$. By \Cref{two-dijoins}, there exists a dijoin $J$ such that for every dicut $\delta^+(U)$, $|\delta^+(U)-J|\geq \tau-1$. Let $w:=\chi_{A-J}$. Then the inequality implies that the minimum weight of a dijoin of $(D,w)$ is $\tau-1$. Thus, by \Cref{optimal-fractional-packing}, $(D,w)$ has a fractional $w$-weighted packing $\lambda$ of dijoins of value $\tau-1$. Update $\lambda$ by setting $\lambda_{J}:=1$. The updated $\lambda$ is the desired fractional packing.
\end{proof}

Given an ideal clutter $\mathcal{C}$ with covering number $\tau$, the value of the optimization problem $\lambda(\mathcal{C}):=\max\{\|\lambda\|_\infty:\lambda \text{ is a fractional packing of value $\tau$}\}$ was studied recently by Ferchiou in \cite{Ferchiou2021}, where he proved a beautiful min-max theorem involving $\lambda(\mathcal{C})$ (see Theorem 34).

It is shown in \cite{Shepherd05} that given a weighted digraph $(D,w)$ where the minimum weight of a dicut is~$\tau$, there exists a $\frac12$-integral $w$-weighted packing of dijoins of value $\frac{\tau}{2}$, giving some hope for a positive answer to \Cref{question:dyadic}~(2).

Let us now provide some rationale for \Cref{question:dyadic}~(3).

\begin{theorem}[\cite{Abdi-dyadic}]\label{tau=2-dyadic}
Let $\mathcal{C}$ be an ideal clutter with covering number $\tau$, where $\tau=1,2$. Then there exists a dyadic fractional packing of value $\tau$.
\end{theorem}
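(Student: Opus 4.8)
First I would reduce to the unweighted case $w=\1$. If $\mathcal C$ is a general ideal clutter with $\tau(\mathcal C,w)=\tau\le 2$, replace each element $e$ by $w_e$ parallel copies; this \emph{replication} preserves idealness and the value of the covering number, and any dyadic fractional packing of the replicated clutter pulls back (by summing the weights of members through copies of $e$) to a dyadic fractional packing of $(\mathcal C,w)$ of the same value. Since $\mathcal C$ is ideal, $\nu^\star(\mathcal C,\1)=\tau(\mathcal C,\1)=\tau$, so a fractional packing of value $\tau$ exists; the content is to make it dyadic. The case $\tau=0$ is the empty packing, and the case $\tau=1$ is immediate: by idealness (or directly) there is a member $C\in\mathcal C$, and $\lambda:=\chi_{\{C\}}$ is an \emph{integral}, hence dyadic, packing of value $1$. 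So the whole problem is the case $\tau=2$, which I would attack by induction on $|A|$.

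For $\tau=2$, pick a minimal cover $\{a,b\}$ (which exists, being a minimum-size member of the ideal blocker $b(\mathcal C)$). Minimality forces every member avoiding $b$ to contain $a$ and every member avoiding $a$ to contain $b$; hence $\mathcal C\setminus a$ and $\mathcal C\setminus b$ are ideal with covering number $1$. Several reductions dispose of the easy situations. If some element lies in no member of $\mathcal C$, delete it. If $\mathcal C$ has a singleton member $\{a\}$, then $\{a\}$ together with an integral value-$1$ packing of the ideal, covering-number-$1$ clutter $\mathcal C\setminus a$ is a disjoint pair, giving an integral value-$2$ packing. If some element $e$ lies in no size-$2$ minimal cover — equivalently $\tau(\mathcal C\setminus e)=2$ — recurse on the smaller ideal clutter $\mathcal C\setminus e$: a dyadic value-$2$ packing of $\mathcal C\setminus e$ is, verbatim, a dyadic value-$2$ packing of $\mathcal C$ (members of $\mathcal C\setminus e$ are precisely the members of $\mathcal C$ avoiding $e$).

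This leaves the \emph{core case}: every element of $A$ lies in a size-$2$ minimal cover, and $\mathcal C$ has no singleton member. Here I would work with the two ideal minors obtained from the size-$2$ cover $\{a,b\}$, say $\mathcal C_a:=(\mathcal C\setminus b)/a$ and $\mathcal C_b:=(\mathcal C\setminus a)/b$, both on ground set $A-\{a,b\}$, and (after checking their covering numbers are at most $2$) apply the inductive hypothesis to get dyadic packings $\lambda^a,\lambda^b$ of values $\tau(\mathcal C_a),\tau(\mathcal C_b)$. Each member of $\mathcal C_a$ lifts to a member of $\mathcal C$ by re-inserting $a$ when necessary; this inflates the load on $a$ only — to at most the value of $\lambda^a$ — and never touches $b$, and symmetrically for $\lambda^b$. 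When $\tau(\mathcal C_a)=\tau(\mathcal C_b)=2$, the combination $\tfrac12\,\mathrm{lift}(\lambda^a)+\tfrac12\,\mathrm{lift}(\lambda^b)$ is dyadic, has value $2$, and is feasible, since the load on $a$ is $\tfrac12\cdot 2+\tfrac12\cdot 0=1$, the load on $b$ is $\tfrac12\cdot 0+\tfrac12\cdot 2=1$, and every other element carries load at most $\tfrac12\cdot 1+\tfrac12\cdot 1=1$. When one of these minors has covering number $1$, its integral value-$1$ packing lifts to a single member of $\mathcal C$ through $a$ but not $b$ (resp. through $b$ not $a$), and one fills the remaining capacity either by pairing these two members directly or by a further appeal to idealness on an appropriate minor while staying dyadic.

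The main obstacle is precisely the bookkeeping in the core case: verifying that the minors $\mathcal C_a,\mathcal C_b$ always have covering number at most $2$ so that the induction is applicable (which may require a better choice of minor or a finer case split on whether some $2$-element cover is contained in a member), and, when one minor's covering number drops to $1$, assembling \emph{exactly} value $2$ with all denominators powers of two and without overloading any element — this forces a careful choice among the several members of $\mathcal C$ projecting onto a given member of $\mathcal C_a$, and a compatible reconciliation of the "through-$a$" and "through-$b$" halves. Everything else — the reduction to $w=\1$, the trivial $\tau\le 1$ cases, and the element-deletion reductions — is routine.
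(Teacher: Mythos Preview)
The paper does not prove this theorem. It is quoted from \cite{Abdi-dyadic} and used as a black box in the proof of the subsequent result on digraphs; there is no in-paper argument to compare your proposal against.

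A few remarks on the proposal itself. The opening reduction to $w=\1$ is unnecessary: the statement is already about the (unweighted) covering number $\tau(\mathcal C,\1)$. The case $\tau\le 1$ is indeed trivial. For $\tau=2$, your averaging idea $\tfrac12\,\mathrm{lift}(\lambda^a)+\tfrac12\,\mathrm{lift}(\lambda^b)$ is natural and is exactly the kind of move that produces dyadic denominators, but the gap you yourself flag is real and not merely bookkeeping. There is no reason the minors $\mathcal C_a=(\mathcal C\setminus b)/a$ and $\mathcal C_b=(\mathcal C\setminus a)/b$ have covering number at most~$2$: by blocker duality $b(\mathcal C_a)=(b(\mathcal C)/b)\setminus a$, so $\tau(\mathcal C_a)$ is the minimum of $|B|-[b\in B]$ over minimal covers $B$ of $\mathcal C$ with $a\notin B$, and nothing in your preliminary reductions (no singleton members, every element in some $2$-cover) forces the existence of a $2$-cover avoiding $a$. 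If every minimum cover of $\mathcal C$ contains $a$, then $\tau(\mathcal C_a)\ge 3$ and your induction does not apply. The argument in \cite{Abdi-dyadic} proceeds differently; as written, your plan does not close.
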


\begin{theorem}
Let $D=(V,A)$ be a digraph where the minimum size of a dicut is $\tau$, and $\tau\leq 3$. Then there exists a fractional packing of dijoins of value $\tau$ that is dyadic.
\end{theorem}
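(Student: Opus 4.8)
The plan is to dispatch $\tau\in\{1,2\}$ directly and to reduce the case $\tau=3$ to the case $\tau=2$ by peeling off a single integral dijoin. Let $\mathcal{C}:=\mathcal{C}(D)$ be the clutter of minimal dijoins of $D$; by the theorem of Lucchesi and Younger together with \Cref{dicut-dijoin} and the fact that a clutter is ideal if and only if its blocker is, $\mathcal{C}$ is ideal, and its (unweighted) covering number equals $\tau$, the minimum size of a dicut. If $\tau\in\{1,2\}$ this is precisely \Cref{tau=2-dyadic} applied to $\mathcal{C}$. So assume $\tau=3$.

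First I would apply \Cref{two-dijoins} to obtain a dijoin $J\subseteq A$ with $|\delta^+(U)-J|\geq\tau-1=2$ for every dicut $\delta^+(U)$; replacing $J$ by a minimal dijoin contained in it only enlarges $A-J$, so we may assume $J$ is a minimal dijoin. The crucial point is that this bound is \emph{tight} on minimum dicuts: if $|\delta^+(U)|=3$ then $|\delta^+(U)\cap J|\geq 1$ because $J$ is a dijoin, forcing $|\delta^+(U)-J|=2$. Now consider the deletion minor $\mathcal{C}':=\mathcal{C}\setminus J$, which is ideal since deletion minors of ideal clutters are ideal. Because deletion in $\mathcal{C}$ is contraction in $b(\mathcal{C})$, the minimal covers of $\mathcal{C}'$ are the minimal sets among $\{B-J:B\text{ a minimal dicut of }D\}$, so the covering number of $\mathcal{C}'$ equals $\min_{\delta^+(U)}|\delta^+(U)-J|$; this is $\geq 2$ by the choice of $J$, and equals $2$ since $D$ has a dicut of size $3$. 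Note also that $A-J$, being a $2$-dijoin, contains a minimal dijoin disjoint from $J$, so $\mathcal{C}'$ is nonempty.

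Next I would apply \Cref{tau=2-dyadic} to $\mathcal{C}'$ to obtain a dyadic fractional packing $(\lambda_C)_{C\in\mathcal{C}'}$ of value $2$; each $C$ in its support is a minimal dijoin of $D$ disjoint from $J$. Extend this to an assignment on $\mathcal{C}$ by additionally setting $\lambda_J:=1$. The total value is $2+1=3=\tau$; this is a fractional packing of dijoins of $D$ under the all-ones weight, since an arc in $J$ is used only by the term $\lambda_J$ and hence exactly once, while an arc outside $J$ is used at most once by the $\mathcal{C}'$-part and not at all by $\lambda_J$; and it is dyadic because $(\lambda_C)$ is dyadic and $1=\frac{1}{2^{0}}$. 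I do not anticipate a genuine obstacle here: the only delicate point is arranging that deleting $J$ drops the covering number to \emph{exactly} $2$ rather than merely to at least $2$, and this is exactly what the pairing ``$J$ a dijoin, $A-J$ a $(\tau-1)$-dijoin'' from \Cref{two-dijoins} provides.
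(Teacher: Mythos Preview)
Your proof is correct and follows essentially the same approach as the paper: peel off one dijoin $J$ using \Cref{two-dijoins} so that the remaining weighted instance has minimum dicut weight exactly $2$, then apply \Cref{tau=2-dyadic} to the residual and add $\lambda_J=1$. The only cosmetic differences are that you phrase the residual as the deletion minor $\mathcal{C}\setminus J$ rather than the weighted digraph $(D,\chi_{A-J})$, and for $\tau\leq 2$ you invoke \Cref{tau=2-dyadic} directly whereas the paper notes the stronger fact that an integral packing exists.
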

\begin{proof}
If $\tau\leq 2$, then there exists a packing of dijoins of size $\tau$, so we are done. Otherwise, $\tau= 3$.
It follows from \Cref{two-dijoins} that $A$ can be partitioned into a dijoin $J_1$ and a $(\tau-1)$-dijoin $J_2$. Let $\lambda_1$ be the fractional $\chi_{J_1}$-weighted packing of $(D,\chi_{J_1})$ that assigns a value of $1$ to $J_1$, and a value of $0$ to every other dijoin.
Consider the weighted digraph $(D,\chi_{J_2})$; the minimum weight of a dicut is $\tau-1=2$. Thus, by \Cref{tau=2-dyadic}, $(D,\chi_{J_2})$ has a fractional $\chi_{J_2}$-weighted packing $\lambda_2$ of dijoins of value $2$ that is dyadic. Observe that $\lambda_1+\lambda_2$ is a fractional packing of dijoins of $D$ value $\tau$ that is dyadic, as required.
\end{proof}

\subsection{The $\tau=2$ Conjecture, and fixing the refuted Edmonds-Giles Conjecture}

A clutter $\mathcal{C}$ over ground set $A$ has the \emph{max-flow min-cut (MFMC) property} if $\tau(\mathcal{C},w)=\nu(\mathcal{C},w)$ for all $w\in \mathbb{Z}^A_{\geq 0}$~\cite{Seymour77}, and has the \emph{packing property} if $\tau(\mathcal{C},w)=\nu(\mathcal{C},w)$ for all $w\in \{0,1,\infty\}^A$~\cite{Cornuejols00}. Observe that the MFMC property implies idealness. The \emph{Replication Conjecture} predicts that the packing property also implies the MFMC property~\cite{Conforti93}. Lehman's seminal theorem on \emph{minimally non-ideal} clutters~\cite{Lehman90} implies that the packing property implies idealness~\cite{Cornuejols00}, providing some evidence for the conjecture.

There is a conjecture that implies the Replication Conjecture. A clutter $\mathcal{C}$ is \emph{minimally non-packing} if it does not have the packing property but every proper minor does. The \emph{$\tau=2$ Conjecture} predicts that every ideal minimally non-packing clutter has a cover of size two~\cite{Cornuejols00}. The conjecture is known to imply the Replication Conjecture~\cite{Cornuejols00}. For the clutter of minimal dijoins of a weighted digraph, the conjecture reduces to the following.

\begin{CN}\label{tau=2-CON}
Let $(D,w)$ be a weighted digraph such that $\mathcal{C}(D,w)$ is minimally non-packing. Then $(D,w)$ has a dicut of weight two.
\end{CN}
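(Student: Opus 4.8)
\medskip
\noindent\textbf{Proof proposal.} The plan is to argue by contradiction, exploiting that $\mathcal{C}(D,w)$ is ideal and that, being minimally non-packing, every one of its \emph{proper} minors packs.

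First, some reductions. By the Lucchesi--Younger theorem and \Cref{dicut-dijoin}, the clutter of minimal dijoins of a digraph is ideal, and since idealness is inherited by minors and by replication, $\mathcal{C}(D,w)$ is ideal; in particular $\tau(D,w)$ equals the optimal fractional packing value by \Cref{optimal-fractional-packing}. Since a minimally non-packing clutter has a (nonempty) member, $(D,w)$ has no dicut of weight $0$; moreover such a clutter has the all-ones vector as a witness of non-packing, so $\tau(D,w)=\tau(\mathcal{C}(D,w),\1)>\nu(\mathcal{C}(D,w),\1)=\nu(D,w)$, and since $\tau=\nu$ whenever $\tau\le 1$ this forces $\tau(D,w)\ge 2$. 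Finally, if $(D,w)$ had a pseudo-cut-vertex then by \Cref{pseudo-cut-vertex} the clutter $\mathcal{C}(D,w)$ would be a nontrivial product $\mathcal{C}(D_1,w_1)\times\mathcal{C}(D_2,w_2)$; as each factor is a proper minor of the product and the packing property is closed under products, this would contradict minimal non-packing, so we may assume $(D,w)$ is irreducible. Assume, for contradiction, $\tau:=\tau(D,w)\ge 3$.

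Now squeeze the discrepancy. By \Cref{rho=1} and \Cref{rho=2}, if $\rho(\tau,D,w)\le 2$ then $(D,w)$ has a $w$-weighted packing of dijoins of size $\tau$, contradicting $\nu(D,w)<\tau$; applying the same results to the arc-reversal of $(D,w)$ rules out $\bar{\rho}(\tau,D,w)\le 2$ as well. Thus $\rho(\tau,D,w)\ge 3$ and $\bar{\rho}(\tau,D,w)\ge 3$, and when $\tau=3$ and $w=\1$, \Cref{tau=3-rho=3} further forces $\rho(3,D)\ge 4$ and $\bar{\rho}(3,D)\ge 4$. It remains to derive a contradiction in this ``doubly large discrepancy'' regime. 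The raw material: contracting the ends of an arc $a$ with $w_a\ge 1$ only destroys dicuts, so $\tau(D/a,w)\ge\tau\ge 3$ and $\mathcal{C}(D/a,w)$ is a proper contraction minor, which by minimality packs into $\tau$ dijoins, each becoming a dijoin of $D$ once $a$ is re-added; dually, zeroing an arc lying on no minimum-weight dicut keeps $\tau(D,w)$ at least $\tau$ and yields a deletion minor that packs. The programme would be to pick such operations---guided by a fixed minimum-weight dicut $\delta^+(U_0)$ and by the structure forced by $\rho,\bar{\rho}\ge 3$---and then \emph{reassemble} the packings of these minors into a $w$-weighted packing of $\tau$ dijoins of $(D,w)$ itself, interpolating with the fractional packing of \Cref{optimal-fractional-packing} and, for small $\tau$, invoking the dyadic rounding of \Cref{tau=2-dyadic} and the secondary results {\bf S1} and {\bf S2}. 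A complementary route is to establish a Lehman-type structure theorem for ideal minimally non-packing clutters with $\tau\ge 3$, in the spirit of the $M(K_4)$ analysis of \S\ref{sec:K4-lemma}, and then show that no such structure can be realised by a dijoin clutter.

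The main obstacle is precisely this last regime $\rho(\tau,D,w)\ge 3$ and $\bar{\rho}(\tau,D,w)\ge 3$: it is exactly where none of the principal results {\bf P1}--{\bf P4} apply, and forcing $\nu(D,w)=\tau$ there is---informally---of the same order of difficulty as Woodall's Conjecture. The Decompose--Lift--Reduce machinery does not obviously circumvent this, because \Cref{weighted-lifting} only realises $\mathcal{C}(D,w)$ as a \emph{minor} of the dijoin clutter of a weighted $(\tau,\tau+1)$-bipartite digraph, and that class is not minor-closed, so the minimally-non-packing hypothesis is not transported to the bipartite models where our strongest tools live. Supplying a minimality-preserving reduction, or else analysing ideal minimally non-packing dijoin clutters directly, is the crux.
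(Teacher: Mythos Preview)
The statement you are trying to prove is not a theorem in the paper but a \emph{conjecture} (the environment is \texttt{CN}, and the paper explicitly says ``This conjecture would follow from the following conjecture\ldots''). There is no proof in the paper to compare your attempt against; the paper merely records \Cref{tau=2-CON} as an open problem and observes that it would be implied by \Cref{decomposition-question}.

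Your write-up is honest about this: after some valid preliminary reductions (irreducibility via \Cref{pseudo-cut-vertex}, $\tau\ge 2$, and ruling out $\rho\le 2$ via \Cref{rho=1} and \Cref{rho=2}), you arrive at the regime $\rho(\tau,D,w)\ge 3$ and $\bar\rho(\tau,D,w)\ge 3$ and correctly note that nothing in the paper handles it. The ``programme'' you sketch---reassembling packings of proper minors, dyadic rounding, a Lehman-type structure theorem---is a wish list, not an argument: you give no mechanism for combining packings of $\mathcal{C}(D,w)/a$ or $\mathcal{C}(D,w)\setminus a$ into a packing of $\mathcal{C}(D,w)$, and none is known. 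Your invocation of \Cref{tau=3-rho=3} is also misplaced, since that result is for $w=\1$ only and a minimally non-packing $\mathcal{C}(D,w)$ will typically have $w\in\{0,1\}^A$ with zeros present. In short, the proposal does not prove the conjecture, and the paper does not claim to either.
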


This conjecture would follow from the following conjecture, which we propose as a fix to the refuted Edmonds-Giles Conjecture.

\begin{CN}\label{decomposition-question}
Let $(D,w)$ be a weighted digraph where the minimum weight of a dicut is~$\tau$, where $\tau\geq 3$. Then there exist weighted digraphs $(D,c),(D,c')$, where $w=c+c'$, the minimum weight of a dicut in $(D,c)$ is $1$, and the minimum weight of a dicut in $(D,c')$ is $\tau-1$.
\end{CN}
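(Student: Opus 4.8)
The plan is to reduce \Cref{decomposition-question} to the bipartite setting and then to a single polyhedral statement. First note that the conclusion is automatic as soon as one finds $c,c'\in\mathbb Z_{\ge 0}^A$ with $c+c'=w$, $\tau(D,c)\ge 1$ and $\tau(D,c')\ge\tau-1$: taking a dicut $\delta^+(U_0)$ with $w(\delta^+(U_0))=\tau$, the inequalities $c(\delta^+(U_0))\ge 1$, $c'(\delta^+(U_0))\ge\tau-1$ and $c(\delta^+(U_0))+c'(\delta^+(U_0))=\tau$ force $\tau(D,c)=1$ and $\tau(D,c')=\tau-1$. So the statement is the weighted analogue of ``$A$ can be partitioned into a dijoin and a $(\tau-1)$-dijoin'', and the first step is to establish the weighted version of \Cref{reduction}~(3): following that proof but invoking \Cref{weighted-DnL} in place of \Cref{DnL}, together with the fact that minimal dicuts of a clutter product / contraction minor lift to minimal dicuts of the pieces, it suffices to treat sink-regular weighted $(\tau,\tau+1)$-bipartite digraphs. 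Replacing each arc of weight $k$ by $k$ parallel weight-$1$ arcs preserves sink-regularity, the property of being a weighted $(\tau,\tau+1)$-bipartite digraph, and all dicut weights, so we may assume $w\in\{0,1\}^A$; since $0\le c\le w$ then forces $c=\chi_J$ for some $J\subseteq A_1:=\{a\in A:w_a=1\}$, the goal becomes to partition $A_1$ into a dijoin $J$ and a $(\tau-1)$-dijoin $A_1-J$ of $D$.

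\paragraph{Translation into matroid language.} Let $J\subseteq A_1$ be a rounded $1$-factor of $D[A_1]$ and $B:=\dc(J)$. By \Cref{R1F}~(2)--(3) and \Cref{R1F-remark}, $J$ is a dijoin of $D$ iff $|B\cap U|\ge 1+\disc(U)$ for every dicut $\delta^+(U)$ of $D$, i.e.\ iff $B$ is a basis of $M_1(D,w)$; combining \Cref{disc}~(2) with \Cref{R1F}~(2) gives
\[
|(A_1-J)\cap\delta^+(U)|=|(a(V)-B)\cap U|-(\tau-1)\disc(U),
\]
so $A_1-J$ is a $(\tau-1)$-dijoin iff $|B\cap U|\le f(U):=|a(U)|-(\tau-1)\bigl(1+\disc(U)\bigr)$ for every dicut $\delta^+(U)$ of $D$; and, by \Cref{M0-matroid} and \Cref{perfect-b-matching}, a rounded $1$-factor $J\subseteq A_1$ with $\dc(J)=B$ exists iff $B$ is a basis of $M_0(D,w)$. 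Hence the reduced statement is equivalent to: \emph{$M_0(D,w)$ and $M_1(D,w)$ have a common basis $B$ with $|B\cap U|\le f(U)$ for every dicut $\delta^+(U)$ of $D$.} Indeed, given such a $B$, any rounded $1$-factor $J\subseteq A_1$ with $\dc(J)=B$ is a dijoin, while $A_1-J$ is a perfect $\bigl((\tau-1)\chi_V+\chi_{a(V)-B}\bigr)$-matching; since $|B\cap U|\le f(U)$ makes $a(V)-B$ a union of $\tau-1$ bases of $M_1(D,w)$ (by \Cref{IDP}), \Cref{k-dijoin} shows $A_1-J$ is a $(\tau-1)$-dijoin, and $c:=\chi_J$, $c':=w-\chi_J$ settle the bipartite case.

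\paragraph{The polytope, and the main obstacle.} Equivalently one seeks an integral point of
\[
P^\star=\Bigl\{x\in[0,1]^{a(V)}\ :\ x(V)=\disc(V);\ x(U)\ge\disc(U)\ \forall U\in\mathcal U_0;\ 1+\disc(U)\le x(U)\le f(U)\ \forall U\in\mathcal U_1\Bigr\}
\]
(with $x$ extended by $0$ off $a(V)$, and $\mathcal U_0,\mathcal U_1$ as in \S\ref{sec:rho=3}). This polytope is nonempty: $x=\tfrac1\tau\chi_{a(V)}$ lies in it, since $x(V)=\tfrac1\tau|a(V)|=\disc(V)$ by \Cref{disc}~(1), since $\tfrac1\tau|a(U)|\ge\disc(U)$ for $U\in\mathcal U_0$ because $|a(U)|-\tau\disc(U)=w(\delta^+_{D[A_1]}(U))\ge 0$ by \Cref{disc}~(2) and \Cref{R1F-remark}, and since $w(\delta^+(U))\ge\tau$ gives $|a(U)|\ge\tau(1+\disc(U))$, whence both $\tfrac1\tau|a(U)|\ge 1+\disc(U)$ and $\tfrac1\tau|a(U)|\le f(U)$ for $U\in\mathcal U_1$. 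What remains — and this is the crux — is to show that $P^\star$ is an \emph{integral} polytope. Rewriting the two lower-bound families as upper-bound families on the complementary crossing families (legitimate since $\disc$ and $|a(\cdot)|$ are modular), $P^\star$ is defined, apart from the box and $x(V)=\disc(V)$, by \emph{three} crossing-submodular families of inequalities: upper bounds on $\mathcal U_1$, on its complement, and on the complement of $\mathcal U_0$. Fujishige's box-TDI theorem (\Cref{box-TDI}) handles two such families; equivalently, via \Cref{crossing-matroid}, $P^\star$ is an intersection of three matroid base polytopes, so Edmonds' matroid-intersection theorem applied to $M_0(D,w),M_1(D,w)$ alone does not suffice. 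Establishing the integrality of $P^\star$ — i.e.\ extracting a common basis of $M_0,M_1$ that is additionally ``spread out'' in the sense $|B\cap U|\le f(U)$ — is where the real difficulty lies, and is why the statement is phrased only as a conjecture. The two routes I would try are: (i) a three-crossing-family box-TDI argument exploiting the nesting $\mathcal U_1\subseteq\mathcal U_0$ and the fact that dicuts of $D[A_1]$ differ from those of $D$ only through weight-$0$ arcs; or (ii) using the strong base orderability of $M_0(D,w)$ (\Cref{M0-SBO}) together with a refinement of the Davies--McDiarmid technique (\Cref{SBO-common-base-packing}) that produces a single ``good'' common basis rather than a full common-basis partition.

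\paragraph{Remark on the difficulty.} When $w>\0$ (in particular $w=\1$) the entire argument is unnecessary, since \Cref{two-dijoins} — indeed \Cref{weighted-two-dijoins} — already provides the required partition; thus all the difficulty resides in the arcs of weight $0$, which is precisely what makes \Cref{decomposition-question} so strong, as via the analysis of the $\tau=2$ case it would imply \Cref{tau=2-CON}.
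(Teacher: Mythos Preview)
The statement is a \emph{conjecture}; the paper offers no proof, and you correctly recognise this, writing a plan of attack rather than a proof and explicitly flagging where the argument stalls. Your reduction and reformulation are sound and essentially recover the paper's own viewpoint: immediately after stating the conjecture, the paper observes that, via the Decompose--Lift--Reduce procedure and \Cref{weighted-two-dijoins}, it would follow from an affirmative answer to \Cref{common-base-packing-CON}, namely that in a sink-regular weighted $(\tau,\tau+1)$-bipartite digraph $a(V)$ can be partitioned into an admissible set and a $(\tau-1)$-admissible set. Your polytope $P^\star$ encodes precisely the search for such a partition, and your observation that $\tfrac{1}{\tau}\chi_{a(V)}\in P^\star$ together with the obstruction (three crossing-submodular families rather than the two handled by \Cref{box-TDI}) is exactly the barrier the paper leaves open.

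Two small remarks. First, your formulation is in fact a slight \emph{weakening} of \Cref{common-base-packing-CON}: you require $B$ to be a common basis of $M_0,M_1$ and $a(V)-B$ to satisfy only the $M_1$-side of $(\tau-1)$-admissibility (the bound $|B\cap U|\le f(U)$ for $U\in\mathcal U_1$), dropping the $M_0$-side on the complement. This is legitimate---your direct computation of $|(A_1-J)\cap\delta^+(U)|$ already shows $A_1-J$ is a $(\tau-1)$-dijoin without any appeal to \Cref{IDP} or \Cref{k-dijoin}, so that detour in your write-up is unnecessary. Second, the weighted analogue of \Cref{reduction}~(3) that you invoke does go through, but it deserves a line of justification: for the lifting step one uses that every dicut of $D$ lifts verbatim to a dicut of $D'$ (gadget arcs are internal to the blown-up vertices), and for the decomposition step one uses that every dicut of $D$ has positive $w$-weight (since $\tau\ge 3$), so its support contains a minimal dicut of some piece $(D_i,w_i)$; restricting $c,c'$ to the original arc set then preserves the required lower bounds.
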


By applying the Decompose, Lift, and Reduce procedure, and \Cref{weighted-two-dijoins}, this conjecture would follow if the answer to the following question were affirmative.

\begin{QU}\label{common-base-packing-CON}
Let $\tau\geq 3$ be an integer, and $(D=(V,A),w)$ a sink-regular weighted $(\tau,\tau+1)$-bipartite digraph. Can the set of active vertices be partitioned into an admissible set and a $(\tau-1)$-admissible set?
\end{QU}

\section*{Acknowledgements}

Ahmad Abdi and G\'{e}rard Cornu\'{e}jols would like to thank the organizers of the 2021 HIM program Discrete Optimization during which part of this work was developed. The authors would like to thank Andr\'{a}s Frank, Bertrand Guenin, Bruce Shepherd, Levent Tun\c{c}el, L\'{a}szlo V\'{e}gh, and Giacomo Zambelli for fruitful discussions about this work. Special thanks goes to the dedicated reviewers who identified some errors in an earlier draft, and whose comments vastly improved the presentation of the current manuscript.

{\small \bibliographystyle{abbrv}\bibliography{references}}

\newpage

\appendix

\section{Decompose, Lift, and Reduce Procedure: the proof}\label{sec:DLRproof}

 In \S\ref{sec:decompose}, we see how every weighted digraph without a ``pseudo-cut-vertex" can be \emph{decomposed} into ``irreducible" weighted digraphs. Then, after introducing a gadget in \S\ref{sec:gadget}, we see in \S\ref{sec:lifting} how every irreducible weighted digraph where every dicut has weight at least $\tau$, can be \emph{lifted} to a weighted $(\tau,\tau+1)$-bipartite digraph. Putting the two together, we obtain the \emph{Decompose-and-Lift operation} in \S\ref{sec:DnL}.

\subsection{Decomposing}\label{sec:decompose}

Given a weighted digraph $(D,w)$, and a vertex $v$, denote by $(D,w)\setminus v$ the weighted digraph obtained after deleting $v$ and all the arcs incident with it, and dropping the corresponding weights from $w$.

\begin{DE}
Let $(D,w)$ be a weighted digraph with no dicut of weight $0$. A \emph{pseudo-cut-vertex} is a vertex $v$ such that $(D,w)\setminus v$ has a dicut of weight $0$.
\end{DE}

Moving forward, we shall need the following ``triangle inequality".

\begin{RE}\label{triangle-inequality}
Let $\tau\geq 2$ be an integer. Then $(a+b)\mod{\tau}$ is either $(a\mod{\tau})+(b\mod{\tau})$ or $(a\mod{\tau})+(b\mod{\tau})-\tau$. Consequently, for any finite set $S$ of integers, $\big(\sum_{a\in S} a\big)\mod{\tau}\leq \sum_{a\in S} (a\mod{\tau})$.
\end{RE}

For a vector $w\in \mathbb{Z}^A$, and a subset $A'\subseteq A$, denote by $w|_{A'}$ the subvector of $w$ restricted to the entries in $A'$.

\begin{LE}\label{pseudo-cut-vertex}
Let $(D=(V,A),w)$ be a weighted digraph that has no dicut of weight $0$, and has a pseudo-cut-vertex. Then there exist weighted digraphs $(D_1,w_1),(D_2,w_2)$ without dicuts of weight $0$ such that the following statements hold: \begin{enumerate}[(1)]
\item $|V(D_1)|,|V(D_2)|\leq |V|-1$, $A(D_1)\cup A(D_2)=A$, every arc of $(D,w)$ of nonzero weight belongs to exactly one of $A(D_1),A(D_2)$, and $w_1=w|_{A(D_1)},w_2=w|_{A(D_2)}$,
\item if $D$ is planar, then so is each $D_i,i\in [2]$
\item $\rho(\tau,D_i,w_i)\leq \rho(\tau,D,w)$ for every integer $\tau\geq 2$ and $i\in[2]$,
and
\item $\mathcal{C}(D,w)=\mathcal{C}(D_1,w_1)\times\mathcal{C}(D_2,w_2)$.
\end{enumerate}
\end{LE}
\begin{proof}
After replacing every arc $a$ of nonzero weight with $w_a$ arcs of weight $1$ with the same head and tail, if necessary, we may assume that $w\in \{0,1\}^A$.
Let $u$ be a pseudo-cut-vertex.
Let $\delta^+(U'_1)$ be a dicut of $(D,w)\setminus u$ of weight $0$; let $U'_2:=V- u- U'_1$. Let $U_1:=U'_1\cup \{u\}\subseteq V$ and $U_2:=U'_2\cup \{u\}\subseteq V$. Let $(D_1,w_1)$ be obtained from $(D,w)$ by replacing $U_2$ with a single vertex $u_1$, where all the arcs of $D$ with both ends in $U_2$ are removed, all the arcs with exactly one end in $U_2$ are now attached to $u_1$ and have the same weight, and all the other arcs remain intact with the same weight. Similarly, let $(D_2,w_2)$ be obtained from $(D,w)$ by replacing $U_1$ with a single vertex $u_2$, where all the arcs of $D$ with both ends in $U_1$ are removed, all the arcs with exactly one end in $U_1$ are now attached to $u_2$ and have the same weight, and all the other arcs remain intact with the same weight. Observe that $A(D_1)\cup A(D_2)=A$, and $A(D_1)\cap A(D_2)$ is equal to the set of arcs from $U'_1$ to $U'_2$ all of which have weight $0$ in $(D,w)$. Subsequently, {\bf (1)} holds.

\begin{claim} 
$D[U_i],i=1,2$ is connected as an undirected graph.
\end{claim}
\begin{cproof}
Suppose for a contradiction $D[U_i]$ is disconnected as an undirected graph, and let $W\subseteq U_i$ be a connected component of $D[U_i]$ that excludes $u$, i.e. $W\subseteq U'_i$. Then $\delta_D(W)$ contains only arcs that go between $U'_1$ and $U'_2$, so $\delta_D(W)$ yields a dicut of $(D,w)$ of weight $0$, a contradiction as every dicut of $(D,w)$ has nonzero weight.
\end{cproof}

\begin{claim} 
If $D$ is planar, then so is $D_i,i=1,2$. That is, {\bf (2)} holds.
\end{claim}
\begin{cproof}
By definition, $D_i$ is obtained from $D$ by shrinking $U_{3-i}$. By Claim~1, $D[U_{3-i}]$ is connected as an undirected graph, so $D_i$ can be viewed as a contraction minor of $D$, thereby proving the claim as contraction preserves planarity.
\end{cproof}

\begin{claim} 
$\rho(\tau,D_i,w_i)\leq \rho(\tau,D,w)$ for every integer $\tau\geq 2$ and $i=1,2$. That is, {\bf (3)} holds.
\end{claim}
\begin{cproof}
For every vertex $v$ of $D_i$ other than $u_i$, $w(\delta^+_{D_i}(v))-w(\delta^-_{D_i}(v))=w(\delta^+_{D}(v))-w(\delta^-_{D}(v))$. Moreover,
$w(\delta^+_{D_i}(u_i))-w(\delta^-_{D_i}(u_i)) = w(\delta^+_{D}(U_{3-i}))-w(\delta^-_{D}(U_{3-i}))$. Subsequently, \begin{align*}
&\tau\cdot\rho(\tau,D_i,w_i)\\
&= \sum_{v\in V(D_i)} (w(\delta_{D_i}^+(v))-w(\delta_{D_i}^-(v))\mod{\tau})\\
&=\sum_{v\in U'_i} (w(\delta_D^+(v))-w(\delta_D^-(v))\mod{\tau}) + (w(\delta_{D_i}^+(u_i))-w(\delta_{D_i}^-(u_i))\mod{\tau})\\
&=\sum_{v\in U'_i} (w(\delta_D^+(v))-w(\delta_D^-(v))\mod{\tau}) + (w(\delta_{D}^+(U_{3-i}))-w(\delta_{D}^-(U_{3-i})) \mod{\tau})\\
&=\sum_{v\in U'_i} (w(\delta_D^+(v))-w(\delta_D^-(v))\mod{\tau}) \\
&\qquad+ \left(\left(
\sum_{v\in U_{3-i}} (w(\delta_{D}^+(v))-w(\delta_{D}^-(v)))\right) \mod{\tau}\right)\\
&\leq \sum_{v\in U'_i} (w(\delta_D^+(v))-w(\delta_D^-(v))\mod{\tau}) \\
&\qquad+
\sum_{v\in U_{3-i}} (w(\delta_{D}^+(v))-w(\delta_{D}^-(v)) \mod{\tau}) \quad \text{ by \Cref{triangle-inequality}}\\
&=\tau\cdot\rho(\tau,D,w),
\end{align*} so $\rho(\tau,D_i,w_i)\leq \rho(\tau,D,w)$.
\end{cproof}

\begin{claim} 
$\mathcal{C}(D,w)=\mathcal{C}(D_1,w_1)\times\mathcal{C}(D_2,w_2)$, that is, {\bf (4)} holds.
\end{claim}
\begin{cproof}
By \Cref{product-coproduct}, it suffices to prove $b(\mathcal{C}(D,w)) = b(\mathcal{C}(D_1,w_1)) \otimes b(\mathcal{C}(D_2,w_2))$.
Since $(D_i,w_i)$ is obtained from $(D,w)$ by identifying vertices, every dicut of $(D_i,w_i)$ is also a dicut of $(D,w)$. Thus, every set in $b(\mathcal{C}(D_1,w_1))\otimes b(\mathcal{C}(D_2,w_2))$ contains a set of $b(\mathcal{C}(D,w))$. Conversely, let $\delta_D^+(W)$ be a dicut of $(D,w)$. Define $W'$ as follows:
 \begin{enumerate}
\item[Case 1:] $u\in W$ and $(V-W)\cap U'_2\neq \emptyset$. In this case, let $W':=W\cup U'_1$.
\item[Case 2:] $u\in W$ and $(V-W)\cap U'_2= \emptyset$. In this case, let $W':=W$.
\item[Case 3:] $u\notin W$ and $W\cap U'_1\neq \emptyset$. In this case, let $W':=W\cap U'_1$.
\item[Case 4:] $u\notin W$ and $W\cap U'_1= \emptyset$. In this case, let $W':=W$.
\end{enumerate}
We know that every arc of $D$ between $U'_1,U'_2$ goes from $U'_1$ to $U'_2$ and has weight $0$. Thus, $\delta_{D}^+(W')$ remains a dicut of $D$ whose set of weight-$1$ arcs is contained in the set of weight-$1$ arcs of $\delta_D^+(W)$. Moreover, in cases 2 and 3, $\delta_D^+(W')$ is also a dicut of $D_1$, while in cases 1 and 4, $\delta_D^+(W')$ is also a dicut of $D_2$. In both cases, we proved that $\delta_D^+(W)\cap \{a\in A:w_a=1\}$ contains the set of weight-$1$ arcs of a dicut of some $(D_i,w_i)$. Thus, every set in $b(\mathcal{C}(D,w))$ contains a set of $b(\mathcal{C}(D_1,w_1))\otimes b(\mathcal{C}(D_2,w_2))$, as required.
\end{cproof}

We have proved (1)-(4). Observe that (4) implies that $(D_i,w_i)$ has no dicut of weight $0$, thereby finishing the proof.
\end{proof}

\begin{DE}
A weighted digraph is \emph{irreducible} if it has no dicut of weight $0$, and no pseudo-cut-vertex.
\end{DE}

\begin{theorem}[Decomposing]\label{decomposition-irreducible}
Let $(D=(V,A),w)$ be a weighted digraph that has no dicut of weight $0$. Then there exist irreducible weighted digraphs $\{(D_i,w_i):i\in I\}$ for a finite index set $I$, such that the following statements hold: \begin{enumerate}[(1)]
\item $\bigcup_{i\in I}A(D_i)=A$, every arc $(D,w)$ of nonzero weight belongs to exactly one of $(D_i,w_i),i\in I$, and $w_i=w|_{A(D_i)}$ for each $i\in I$,
\item if $D$ is planar, then so is each $D_i,i\in I$,
\item $\rho(\tau,D_i,w_i)\leq \rho(\tau,D,w)$ for every integer $\tau\geq 2$ and $i\in I$, and
\item $\mathcal{C}(D,w)=\prod_{i\in I}\mathcal{C}(D_i,w_i)$.
\end{enumerate}
\end{theorem}
\begin{proof}
This decomposition is obtained by repeatedly applying \Cref{pseudo-cut-vertex}, a process that is bound to terminate since at every iteration, the number of vertices of each ``piece" strictly decreases.
\end{proof}

The notion of a pseudo-cut-vertex in weighted digraphs can be viewed as an extension of the notion of a cut-vertex in graphs. In this vein, irreducibility in weighted digraphs is an extension of $2$-connectivity in graphs.

\begin{theorem}[Unweighted Decomposing]\label{unweighted-decomposition-irreducible}
Let $D=(V,A)$ be a digraph that is connected as undirected graph. Then there exist digraphs $\{D_i:i\in I\}$ for a finite index set $I$, each of which is $2$-connected as an undirected graph, such that the following statements hold: \begin{enumerate}[(1)]
\item $\{A(D_i):i\in I\}$ partition $A$,
\item if $D$ is planar, then so is each $D_i,i\in I$,
\item $\rho(\tau,D_i)\leq \rho(\tau,D)$ for every integer $\tau\geq 2$ and $i\in I$, and
\item $\mathcal{C}(D)=\prod_{i\in I}\mathcal{C}(D_i)$.
\end{enumerate}
\end{theorem}
\begin{proof}
This follows immediately from applying \Cref{decomposition-irreducible} to the weighted digraph $(D,\1)$.
\end{proof}

\subsection{A gadget needed for lifting}\label{sec:gadget}

Having described decomposing, we move on to \emph{lifting} wherein an irreducible weighted digraph is lifted to a weighted $(\tau,\tau+1)$-bipartite digraph, for some integer $\tau\geq 2$ that is a lower bound on the weight of every dicut of the original weighted digraph. By a routine argument, we shall assume that every original arc has weight $0$ or $1$. We then replace each original vertex $v$ that is neither a source nor a sink of weighted degree $\tau$, with a certain \emph{gadget}, and the original arcs attached to $v$ are then joined to vertices of the gadget, constructed in \Cref{gadget} below.

The gadget is a weighted digraph $(D,w)$, where $D$ is plane, with certain weighted degree conditions, and the vertices of attachment appear in some clockwise ordering on the boundary of the embedding; see Figure~\ref{fig:gadget}. In the case that the underlying digraph of the original weighted digraph is plane, the clockwise ordering agrees with the clockwise ordering of the original arcs attached to $v$ in the plane embedding. There are $4$ types of original arcs that can be attached to $v$, leading to $4$ types of vertices of attachment, depending on whether the original arc leaves or enters $v$, denoted by $+$ or $-$, and whether the original arc has weight $0$ or $1$. We shall represent the clockwise ordering and the $4$ types with a sequence $\mathfrak{s}$ with entries in $\{(+,0),(+,1),(-,0),$ $(-,1)\}$.

\begin{figure}[ht]
\centering
\includegraphics[scale=0.3]{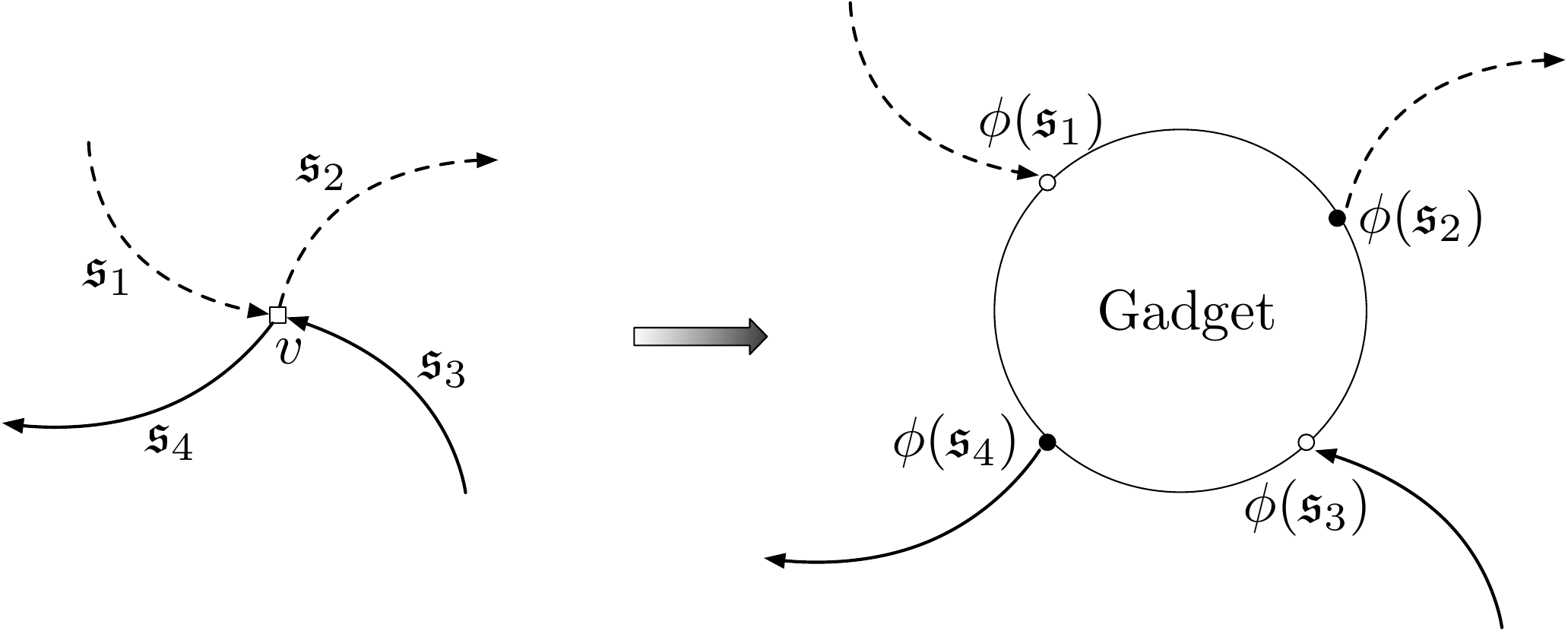}
\caption{An illustration of a gadget of \Cref{gadget} replacing original vertex $v$. Solid arcs have weight $1$, and dashed arcs have weight $0$. The sequence entries are equal to $\mathfrak{s}_1=(-,0)$, $\mathfrak{s}_2=(+,0)$, $\mathfrak{s}_3=(-,1)$, and $\mathfrak{s}_4=(+,1)$.}
\label{fig:gadget}
\end{figure}

\begin{LE}\label{gadget}
Let $\tau\geq 2$ be an integer, and $\mathfrak{s}$ a finite sequence with entries in $\{(+,0),(+,1),(-,0),$ $(-,1)\}$. Let $\mathfrak{s}(i,j)$ be the number of entries of $\mathfrak{s}$ equal to $(i,j)$. Take integers $\ell^+,\ell^-\geq 0$ such that $\ell^+-\ell^-\equiv \mathfrak{s}(+,1)-\mathfrak{s}(-,1) \pmod{\tau}$. Then there exists a weighted digraph $(D,w):=(D(\tau,\mathfrak{s},\ell^+,\ell^-), w(\tau,\mathfrak{s},\ell^+,\ell^-))$ such that the following statements hold: \begin{enumerate}[(1)]
\item There are no two opposite arcs, and every arc has weight $1,2,\lfloor\frac{\tau-1}{2}\rfloor,\lceil\frac{\tau-1}{2}\rceil$ or $\lceil\frac{\tau}{2}\rceil$. In particular, if $\tau\geq 3$, then every arc has nonzero weight.
\item $D$ is a plane bipartite digraph, and every vertex of $(D,w)$ has weighted degree $\tau-1,\tau$ or $\tau+1$.
\item Every dicut of $(D,w)$ has weight at least $\tau-1$.
\item $(D,w)$ has exactly $\ell^+ + \ell^-$ vertices of weighted degree $\tau+1$, $\ell^+$ of which are sources and so $\ell^-$ of which are sinks, and no two of which are adjacent in $D$.
\item $(D,w)$ has exactly $\mathfrak{s}(+,1)+\mathfrak{s}(-,1)$ vertices of weighted degree $\tau-1$.
\item There is an injection $\phi:\mathfrak{s}\to V(D)$, where $\phi(\mathfrak{s}_1),\phi(\mathfrak{s}_2),\ldots,\phi(\mathfrak{s}_{|\mathfrak{s}|})$ appear in clockwise ordering on the boundary of a plane drawing of $D$, and $\phi(\mathfrak{s}_i)$ is a source of weighted degree $\tau-1$ if $\mathfrak{s}_i=(+,1)$, it is a source of weighted degree $\tau$ if $\mathfrak{s}_i=(+,0)$, it is a sink of weighted degree $\tau-1$ if $\mathfrak{s}_i=(-,1)$, and it is a sink of weighted degree $\tau$ if $\mathfrak{s}_i=(-,0)$.
\end{enumerate}
\end{LE}
\begin{proof}
We construct $(D,w)$ in four steps. (For a worked out construction, see \Cref{EG:gadget} appearing after the proof.)
Let $k':=\mathfrak{s}(-,0) + \mathfrak{s}(-,1) + \tau \mathfrak{s}(+,0) + (\tau-1)\mathfrak{s}(+,1)$, and $k$ an integer sufficiently large \hypertarget{gadget-3-conditions}{such that} \begin{enumerate}
\item[i.] $k-k'+\ell^-\equiv 0 \pmod{\tau}$, and so $k-\mathfrak{s}(-,0)+\ell^+ \equiv 0\pmod{\tau}$ because $\ell^+-\ell^- \equiv \mathfrak{s}(+,1)-\mathfrak{s}(-,1) \pmod{\tau}$,
\item[ii.] there exist integers $n_1,\ldots,n_{\frac{k-k'+\ell^-}{\tau}}$ such that $\frac{\tau}{2}\leq n_i\leq \tau$ and $\sum_{i} n_i = k-k'$, and so $0\leq \tau-n_i\leq n_i$ and $\sum_i (\tau-n_i)=\ell^-$,
\item[iii.] there exist integers $m_1,\ldots,m_{\frac{k-\mathfrak{s}(-,0) +\ell^+}{\tau}}$ such that $\frac{\tau}{2}\leq m_j\leq \tau$ and $\sum_{j} m_j = k-\mathfrak{s}(-,0)$, and so $0\leq \tau-m_j\leq m_j$ and $\sum_j (\tau-m_j)=\ell^+$.
\end{enumerate} 
Let $I(+,j):=\{i:\mathfrak{s}_i=(+,j)\}$ and $I(-,j):=\{i:\mathfrak{s}_i=(-,j)\}$.

\paragraph{Step 1: The weighted rectangle.} Let us start with a bipartite digraph with \begin{itemize}
\item sources: $a_0,a_1,\ldots,a_k,a'_0, a'_1,\ldots,a'_k$,
\item sinks: $b_0,b_1,\ldots,b_k,b'_0, b'_1,\ldots,b'_k$,
\item undirected circuit: $(a_0,b_0,a_1,b_1,\ldots,a_k,b_k,a'_k,b'_k,\ldots,a'_1,b'_1,a'_0,b'_0)$,
\item additional arcs: $a_1b'_1,a_2b'_2,\ldots,a_kb'_k$.
\end{itemize} We shall work with the plane embedding of the digraph displayed in Figure~\ref{fig:rectangle}. We assign the following arc weights as displayed in Figure~\ref{fig:rectangle}:\begin{itemize}
\item (dashed, single stroke) arcs of weight $\lceil\frac{\tau-1}{2}\rceil$: $a_ib_i,a'_ib'_i$ for $i=0,1,\ldots,k$.
\item (dashed, double stroke) arcs of weight $\lfloor\frac{\tau-1}{2}\rfloor$: $a_ib_{i-1},a'_{i-1}b'_i$ for $i=1,\ldots,k$. Note that
if $\tau=2$, then these arcs become weight-$0$ arcs.
\item (solid, double stroke) arcs of weight $\lceil\frac{\tau}{2}\rceil$: $a_0b'_0,a'_kb_k$.
\item (solid, single stroke) arcs of weight $1$: $a_ib'_i$ for $i=1,\ldots,k$.
\end{itemize} We call this weighted digraph the \emph{weighted rectangle}. Observe that if $\tau\geq 3$, then every arc has nonzero weight.

We claim that every dicut of the weighted rectangle has weight at least $\tau-1$. To this end, note that every cut of the weighted rectangle has weight at least $\tau-2$, as it contains a Hamilton circuit where every arc has weight $\geq \lfloor \frac{\tau-1}{2}\rfloor$. Moreover, every cut of weight $\tau-2$, if any, must separate $\{a_i,b'_i,b_i,a'_i\}$ from $\{a_{i+1},b'_{i+1},b_{i+1},a'_{i+1}\}$ for some $0\leq i\leq k-1$, implying in turn that it is not a dicut. Thus, every dicut of the weighted rectangle has weight at least $\tau-1$.

\begin{figure}[ht]
\centering
\includegraphics[scale=0.3]{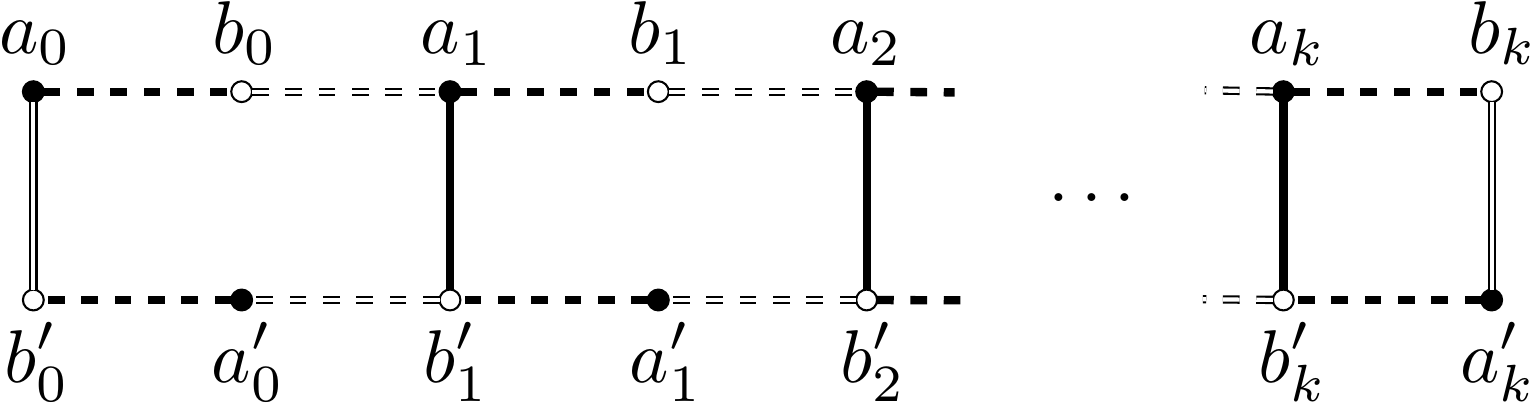}
\caption{The weighted rectangle, where $b'_0$ is placed at coordinates $(0,0)$, $b_k$ at $(1+2k,1)$, and all the other vertices are evenly spaced in the plane. Filled-in vertices correspond to sources, and the other vertices to sinks. The dashed single stroke, dashed double stroke, solid double stroke, and solid single stroke arcs have weight $\lceil\frac{\tau-1}{2}\rceil, \lfloor\frac{\tau-1}{2}\rfloor, \lceil\frac{\tau}{2}\rceil$, and $1$, respectively. The dashed double stroke arcs have weight $0$ iff $\tau=2$.}
\label{fig:rectangle}
\end{figure}

Moving forward, for each $i\in \{1,2,\ldots,|\mathfrak{s}|\}$, let \begin{align*}
f(i)&:=|\{j:\mathfrak{s}_j=(-,0) \text{ or } (-,1), j<i\}|\\
 &\quad + (\tau-1)\cdot |\{j:\mathfrak{s}_j=(+,1), j<i\}|\\
 &\quad +\tau\cdot |\{j:\mathfrak{s}_j=(+,0), j<i\}|.\end{align*}

\paragraph{Step 2: Adding rungs.}  For each $i\in I(-,0)$, add an arc $a'_{f(i)}b_{f(i)}$ of weight $1$; note that $0\leq f(i)\leq k'-1$.

\paragraph{Step 3: Adding sources.} In this step, we add sources to the rectangle whose neighbors belong to the top long side of the rectangle.

For each $i\in I(+,j),j=0,1$, introduce a new source $s_i$ to the weighted rectangle, incident only with weight-$1$ arcs, whose neighbors are the sinks $\{b_{f(i)+r}:r=0,1,\ldots,\tau-1-j\}$.
Observe that the neighbors of $s_i,i\in I(+,0)\cup I(+,1)$ form disjoint subintervals of $(b_0,b_1,\ldots,b_{k'-1})$. Note that each $s_i,i\in I(+,j)$ has weighted degree $\tau-j$.

Then add new sources $s'_1,\ldots,s'_{\frac{k-k'+\ell^-}{\tau}}$, incident only with weight-$1$ arcs, whose neighbors form a partition of the sequence of sinks $(b_{k'},\ldots,b_{k-1})$ into subintervals of sizes $n_1, \ldots, n_{\frac{k-k'+\ell^-}{\tau}}$, respectively. (\hyperlink{gadget-3-conditions}{See i and ii.}) Now, for each $s'_i$, double $\tau-n_i$ distinct arcs incident with $s'_i$ (or, increase their weight by $1$). Then each $s'_i$ has weighted degree $\tau$. Observe that the total number of arcs doubled is~$\ell^-$.

\paragraph{Step 4: Adding sinks.} In this step, we add sinks to the rectangle whose neighbors belong to the bottom long side of the rectangle.

Introduce new sinks $t_1,\ldots,t_{\frac{k-\mathfrak{s}(-,0)+\ell^+}{\tau}}$, incident only with weight-$1$ arcs, whose neighbors form a partition of the sequence of sources $(a'_i:0\leq i\leq k-1)\setminus (a'_{f(i)}: i\in I(-,0))$ into subintervals of sizes $m_1,\ldots,m_{\frac{k-\mathfrak{s}(-,0)+\ell^+}{\tau}}$, respectively. (\hyperlink{gadget-3-conditions}{See i and iii.}) Now, for each $t_j$, double $\tau-m_j$ distinct arcs incident with it (or, increase their weight by $1$). Then each $t_j$ has weighted degree $\tau$. Observe that the total number of arcs doubled is $\ell^+$.

\paragraph{The weighted digraph and its plane embedding.} After performing Steps 1-4, we obtain a weighted digraph $(D,w)$. We claim this is the desired weighted digraph. By construction, $D$ is a weighted bipartite digraph, and if $\tau\geq 3$ then every arc has nonzero weight. $D$ is planar with the following appropriate straight line plane embedding: Given the embedding of the rectangle in Figure~\ref{fig:rectangle}, place \begin{itemize}
\item the vertices $s_i,i\in I(+,0)\cup I(+,1)$ at coordinates $(1+2f(i),2)$,
\item $s'_j,j\in \left[\frac{k-k'+\ell^-}{\tau}\right]$ at $(1+2k'+2\sum_{i=1}^{j-1} n_i,2)$,
\item $t_j,j\in \left[\frac{k-\mathfrak{s}(-,0)+\ell^+}{\tau}\right]$ at $(1+2\sum_{i=1}^{j-1} m_i,-1)$.
\end{itemize}
For each $\mathfrak{s}_i$, define $\phi(\mathfrak{s}_i)$ as follows: if $i\in I(+,0)\cup I(+,1)$ let $\phi(\mathfrak{s}_i)=s_i$, and if $i\in I(-,0)\cup I(-,1)$ let $\phi(\mathfrak{s}_i)=b_{f(i)}$. It can be readily checked that $\phi$ satisfies (6) for the embedding above.

\paragraph{Weighted degrees:} \begin{itemize}
\item $t_1,\ldots,t_{\frac{k+\ell^+}{\tau}}$ have weighted degree $\tau$.
\item $s_i,i\in I(+,j)$ has weighted degree $\tau-j$, and $s'_i,i\in \left[\frac{k-k'+\ell^-}{\tau}\right]$ has weighted degree $\tau$.
\item $a_0,b_k,b'_0,a'_k$ have weighted degree $\lceil\frac{\tau-1}{2}\rceil+\lceil\frac{\tau}{2}\rceil=\tau$.
\item $a_1,a_2,\ldots,a_k,b'_1,b'_2,\ldots,b'_k$ have weighted degree $\lceil\frac{\tau-1}{2}\rceil + \lfloor\frac{\tau-1}{2}\rfloor +1=\tau$.
\item $b_0,b_1,\ldots,b_{k'-1}$ have weighted degree $\lceil\frac{\tau-1}{2}\rceil + \lfloor\frac{\tau-1}{2}\rfloor=\tau-1$ or $\lceil\frac{\tau-1}{2}\rceil + \lfloor\frac{\tau-1}{2}\rfloor+1=\tau$. More specifically, $b_j,j=0,1,\ldots,k'-1$ has weighted degree $\tau-1$ if, and only if, $b_j\in \{b_{f(i)}:i\in I(-,1)\}$.
\item $b_{k'},b_{k'+1},\ldots,b_{k-1}$ have weighted degree $\lceil\frac{\tau-1}{2}\rceil + \lfloor\frac{\tau-1}{2}\rfloor+1=\tau$ or $\lceil\frac{\tau-1}{2}\rceil + \lfloor\frac{\tau-1}{2}\rfloor+2=\tau+1$. More specifically, for each $j=k',k'+1,\ldots,k-1$, $b_j$ has weighted degree $\tau+1$ if, and only if, $b_j$ is incident to double arcs in Step 3. Since the number of such arcs is $\ell^-$, we get that of $b_{k'},b_{k'+1},\ldots,b_{k-1}$, exactly $\ell^-$ have weighted degree $\tau+1$, and the rest have weighted degree $\tau$.
\item $a'_0,a'_1,\ldots,a'_{k-1}$ have weighted degree $\lceil\frac{\tau-1}{2}\rceil + \lfloor\frac{\tau-1}{2}\rfloor+1=\tau$ or $\lceil\frac{\tau-1}{2}\rceil + \lfloor\frac{\tau-1}{2}\rfloor+2=\tau+1$. More specifically, $a'_j,j=0,1,\ldots,k-1$ has weighted degree $\tau+1$ if, and only if, $a'_j$ is incident to double arcs in Step 4. Since the number of such arcs is $\ell^+$, we get that of $a'_0,a'_1,\ldots,a'_{k-1}$, exactly $\ell^+$ have weighted degree $\tau+1$, and the rest have weighted degree $\tau$.
\end{itemize} In summary, every vertex has weighted degree $\tau-1,\tau,\tau+1$, with \begin{itemize}
\item exactly $\ell^+ + \ell^-$ vertices of weighted degree $\tau+1$, of which $\ell^+$ are sources and so $\ell^-$ are sinks, and no two of which are adjacent (because for $0\leq i,j<k$, $b_i$ and $a'_j$ are not adjacent),
\item exactly $\mathfrak{s}(+,1)+\mathfrak{s}(-,1)$ vertices of weighted degree $\tau-1$. \end{itemize}

\paragraph{Lower bound on dicut weights.}
We already showed in Step 1 that every dicut of the weighted rectangle has weight at least $\tau-1$. The lower bound is maintained after adding the rungs in Step 2. When adding the sources and sinks in Steps 3-4, we ensured each new vertex has weighted degree at least $\tau-1$ and only neighbors on the weighted rectangle, preserving the lower bound on the weight of every dicut.
\end{proof}

\begin{EG}\label{EG:gadget}
We are given a vertex $v$ where $\mathfrak{s}_1=(-,0),\mathfrak{s}_2=(-,1),\mathfrak{s}_3=(+,1),\mathfrak{s}_4=(-,1)$. Suppose $\tau=3,\ell^+=2$ and $\ell^-=0$.
Then $k'=5$, so we may choose $k=5$ and $m_1=m_2=2$ before Step 1 of the proof of \Cref{gadget}.
Consider Figure~\ref{fig:gadget-eg}. The four detached arcs were incident with the vertex $v$ being replaced by the gadget. In the figure, the dashed arc has weight $0$, solid single stroke arcs have weight $1$, and solid double stroke arcs have weight $2$.
In Step 1, we choose the weighted rectangle. The arc $a'_0b_0$ is added in Step 2, the vertex $s_3$ in Step 3, and the vertices $t_1,t_2$ in Step 4.
\begin{figure}[h]
\centering
\includegraphics[scale=.3]{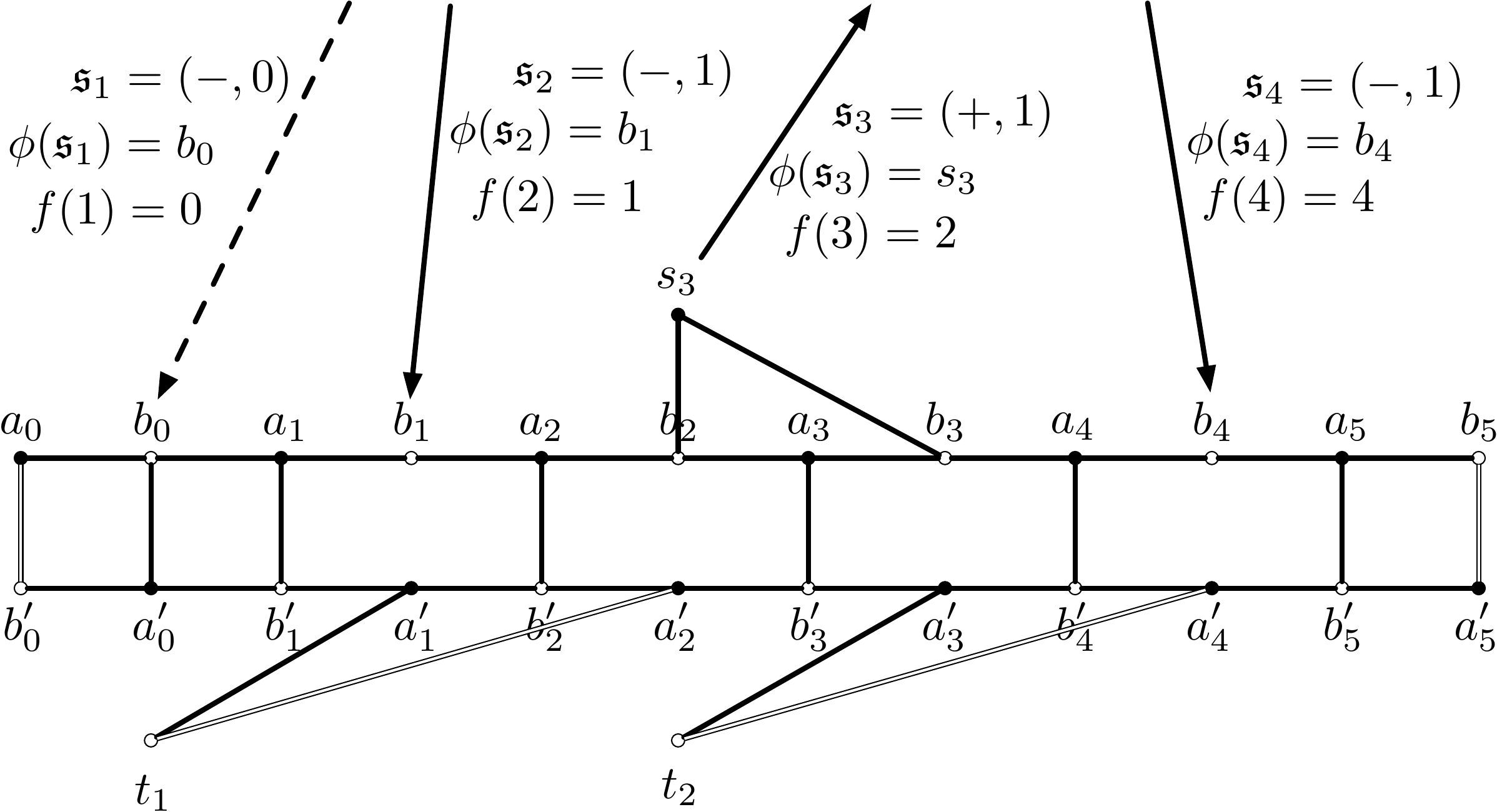}
\caption{An illustration of Steps 1-4 of the proof of \Cref{gadget}. 
}
\label{fig:gadget-eg}
\end{figure}
\end{EG}

\begin{RE}\label{tau=3-simple-gadget}
When $\tau=3$, we can turn the gadget of \Cref{gadget} to a simple (unweighted) digraph. To this end, observe that the gadget has no opposite arcs. Since $\tau=3$, every arc has weight $1$ or $2$. For every arc $a=(u,v)$ of weight $2$, replace it with the unweighted digraph displayed in Figure~\ref{fig:simple-gadget}.
\begin{figure}[h]
\centering
\includegraphics[scale=.3]{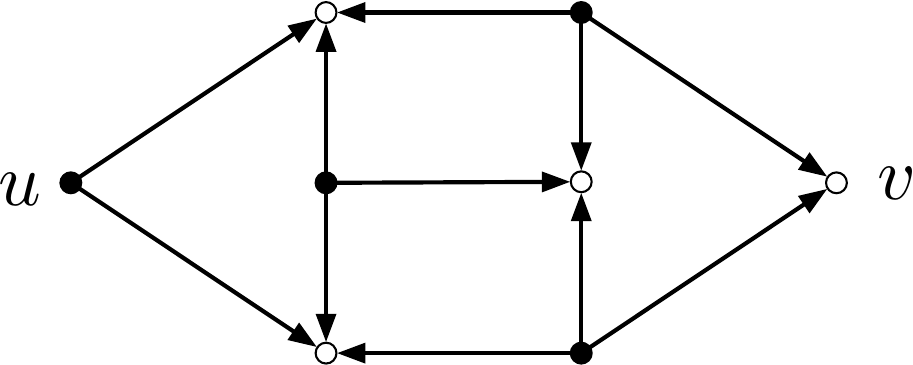}
\caption{Simplifying gadget of \Cref{tau=3-simple-gadget}.}
\label{fig:simple-gadget}
\end{figure} It can be readily checked that the revised gadget, with the all-ones weights, satisfies (1)-(6).
\end{RE}

The remark above does not extend to $\tau\geq 4$, in that there is no simple planar bipartite graph with only vertices of degree $\tau$ except for exactly two vertices of degree $\tau-1$; this follows as a fairly immediate consequence of \emph{Euler's formula}.

\subsection{Lifting}\label{sec:lifting}

\begin{theorem}[Lifting]\label{weighted-lifting}
Let $(D=(V,A),w)$ be an irreducible weighted digraph, where every dicut has weight at least $\tau$, and $\tau\geq 2$. For each $v\in V$, choose integers $\ell^+(v),\ell^-(v)\geq 0$ such that $\ell^+(v)-\ell^-(v)\equiv w(\delta^+(v))-w(\delta^-(v)) \pmod{\tau}$. Then there exists a weighted $(\tau,\tau+1)$-bipartite digraph $(D'=(V',A'),w')$ such that \begin{enumerate}[(1)]
\item $A\subseteq A'$, every arc in $A$ has the same weight in both $(D,w)$ and $(D',w')$, every arc in $A'-A$ has nonzero weight in $(D',w')$ if $\tau\geq 3$, and $D=D'/(A'-A)$,
\item the number of sources of $(D',w')$ of weighted degree $\tau+1$ is $\sum_{v\in V}\ell^+(v)$, and the number of sinks of $(D',w')$ of weighted degree $\tau+1$ is $\sum_{v\in V}\ell^-(v)$,
\item $\mathcal{C}(D,w)$ is a contraction minor of $\mathcal{C}(D',w')$, and
\item if $D$ is planar, then so is $D'$.
\end{enumerate}
\end{theorem}
\begin{proof}
By replacing every arc $a$ of weight $w_a\geq 1$ with $w_a$ arcs of weight $1$ with the same head and tail, if necessary, we may assume that $w\in \{0,1\}^A$.
For each $v\in V$, let $\mathfrak{s}^v$ be a sequence with a distinct $(-,j),j=0,1$ entry for every weight-$j$ arc entering $v$, and a distinct $(+,j),j=0,1$ entry for every weight-$j$ arc leaving $v$, and no other entries. For now, the ordering of the entries in $\mathfrak{s}^v$ is not relevant; this will be relevant for Claim~4 below. Let $D_v:=D(\tau,\mathfrak{s}^v,\ell^+(v),\ell^-(v))$ and $w_v:=w(\tau,\mathfrak{s}^v,\ell^+(v),\ell^-(v))$ as given by \Cref{gadget}. Let $(D'=(V',A'),w')$ be obtained from $(D,w)$ by replacing each $v$ with the gadget $(D_v,w_v)$, where the arc in $D$ incident with $v$ corresponding to $\mathfrak{s}_i$ is now incident with $\phi(\mathfrak{s}_i)$ in the gadget; moreover, every new arc introduced has the same weight as in $(D_v,w_v)$, while all the old arcs have the same weight as in $(D,w)$.

\begin{claim} 
Every dicut of $(D',w')$ has weight at least $\tau$.
\end{claim}
\begin{cproof}
Let $\delta^+_{D'}(U)$ be a dicut of $(D',w')$. If $U$ does not separate any $V(D_v),v\in V$, then $\delta^+_{D'}(U)\subseteq A$ is in fact a dicut of $(D,w)$, so it has weight at least $\tau$. Otherwise, $\delta^+_{D'}(U)$ separates some $V(D_u),u\in V$. Since $\delta^+_{D'}(U)$ yields a dicut of $(D_u,w_u)$, and every dicut of $(D_u,w_u)$ has weight at least $\tau-1$, it follows that $\delta^+_{D'}(U)\cap A(D_u)$ has weight at least $\tau-1$. Since $(D,w)$ has no pseudo-cut-vertex, $u$ is not a pseudo-cut-vertex of $(D,w)$, so the arcs of $\delta^+_{D'}(U)$ of nonzero weight cannot all be contained in $A(D_u)$, implying in turn that $\delta^+_{D'}(U)$ has weight at least $\tau-1+1=\tau$. Thus, in both cases, $\delta^+_{D'}(U)$ has weight at least $\tau$, as required.
\end{cproof}

\begin{claim} 
$(D',w')$ is a weighted $(\tau,\tau+1)$-bipartite digraph satisfying {\bf (1)} and {\bf (2)}.
\end{claim}
\begin{cproof}
It can be readily checked that $(D',w')$ is a weighted bipartite digraph where every vertex has weighted degree $\tau$ or $\tau+1$, and no two vertices of weighted degree $\tau+1$ are adjacent (to see the latter, note that every two vertices of weighted degree $\tau+1$ from different gadgets are clearly not adjacent, and every two vertices of weighted degree $\tau+1$ from the same gadget are not adjacent by construction). By Claim~1, every dicut has weight at least $\tau$. Thus, $(D',w')$ is a weighted $(\tau,\tau+1)$-bipartite digraph. (1)-(2) are satisfied by construction.
\end{cproof}

\begin{claim} 
Every dicut of $(D,w)$ is also a dicut of $(D',w')$. Conversely,
every dicut of $(D',w')$ is either a dicut of $(D,w)$, or it contains a weight-$1$ arc of $(D',w')$ outside of $(D,w)$. That is, $b(\mathcal{C}(D,w))$ is a deletion minor of $b(\mathcal{C}(D',w'))$, and so {\bf (3)} holds.
\end{claim}
\begin{cproof}
Clearly every dicut of $(D,w)$ is also a dicut of $(D',w')$. Conversely, let $\delta^+_{D'}(U)$ be a dicut of $(D',w')$. If $U$ does not separate any $V(D_v),v\in V$, then $\delta^+_{D'}(U)\subseteq A$ is a dicut of $(D,w)$. Otherwise, $\delta^+_{D'}(U)$ separates some $V(D_u),u\in V$. Since $\delta^+_{D'}(U)$ yields a dicut of $(D_u,w_u)$, and every dicut of $(D_u,w_u)$ has weight at least $\tau-1$, it follows that $\delta^+_{D'}(U)\cap A(D_u)$ has weight at least $\tau-1\geq 1$, so $\delta^+_{D'}(U)$ contains a weight-$1$ arc of $(D_u,w_u)$, which is a weight-$1$ arc of $(D',w')$ outside of $(D,w)$, as required.
\end{cproof}

\begin{claim} 
If $D$ is planar, then one can ensure that $D'$ is also planar.  That is, {\bf (4)} holds.
\end{claim}
\begin{cproof}
Suppose $D$ is planar, and fix a plane drawing of it. Recall that $D'$ is obtained from $D$ by replacing each $v$ by a plane gadget $D_v$, and rewiring the arcs attached to $v$ to distinct vertices of $D_v$. Thus, to ensure that $D'$ remains planar, it suffices to ensure that when rewiring the arcs attached to $v$ to the vertices of $D_v$, the arcs are attached to vertices on the boundary of the gadget, and the clockwise ordering of the arcs given in the plane drawing of $D$ has been respected in the rewiring stage. This can be guaranteed by ensuring the ordering of $\mathfrak{s}^v$ follows a clockwise ordering of the arcs incident with $v$ in the plane drawing of $D$.
\end{cproof}

Claims~2-4 finish the proof.
\end{proof}

\begin{theorem}[Unweighted Lifting]\label{lifting}
Let $D=(V,A)$ be a digraph that is $2$-connected as an undirected graph, where every dicut has size at least $\tau$, and $\tau\geq 3$. For each $v\in V$, choose integers $\ell^+(v),\ell^-(v)\geq 0$ such that $\ell^+(v)-\ell^-(v)\equiv \deg^+(v)-\deg^-(v) \pmod{\tau}$. Then there exists a $(\tau,\tau+1)$-bipartite digraph $D'=(V',A')$ such that \begin{enumerate}[(1)]
\item $A\subseteq A'$ and $D=D'/(A'-A)$,
\item the number of sources of $D'$ of degree $\tau+1$ is $\sum_{v\in V}\ell^+(v)$, and the number of sinks of $D'$ of degree $\tau+1$ is $\sum_{v\in V}\ell^-(v)$,
\item $\mathcal{C}(D)$ is a contraction minor of $\mathcal{C}(D')$, and
\item if $D$ is planar, then so is $D'$.
\end{enumerate}
\end{theorem}
\begin{proof}
We apply \Cref{weighted-lifting} to $(D,{\bf 1})$ and $\tau\geq 3$ to obtain a weighted $(\tau,\tau+1)$-bipartite digraph $(D'=(V',A'),w')$ where every arc of $A'$ has nonzero weight. After replacing every arc $a$ of weight $w'_a$ by $w'_a$ arcs of weight $1$ with the same head and tail, we obtain the desired $(\tau,\tau+1)$-bipartite digraph.
\end{proof}

\subsection{Proofs of Theorems~\ref{weighted-DnL} and \ref{DnL}}\label{sec:DnLproof}

\begin{proof}[Proof of \Cref{weighted-DnL}]
We proceed in two stages.

\paragraph{Stage 1:} Decompose $(D,w)$ into irreducible weighted digraphs. Since $(D,w)$ has no dicut of weight $0$, we may apply \Cref{decomposition-irreducible} to get irreducible weighted digraphs $(D_i,w_i),i\in I$ satisfying \Cref{decomposition-irreducible}~(1)-(4). By (1), $w_i=w|_{A(D_i)}$ for each $i\in I$. By (2), if $D$ is planar, then so is each $D_i,i\in I$. By (3), $\rho(\tau,D_i,w_i)\leq \rho(\tau,D,w)$ for each $i\in I$. By (4), $\mathcal{C}(D,w)=\prod_{i\in I}\mathcal{C}(D_i,w_i)$.

\paragraph{Stage 2:} Lift each $(D_i,w_i),i\in I$. Since $(D_i,w_i)$ is irreducible, and every dicut has weight at least $\tau$, we may apply \Cref{weighted-lifting}. For each vertex $v\in V(D_i)$, choose $\ell^+_i(v),\ell^-_i(v)\geq 0$ such that $\ell^+_i(v) -\ell^-_i(v)\equiv w_i(\delta_{D_i}^+(v))-w_i(\delta_{D_i}^-(v)) \pmod{\tau}$. Then by \Cref{weighted-lifting}, there exists a weighted $(\tau,\tau+1)$-bipartite digraph $(D'_i,w'_i)$ satisfying \Cref{weighted-lifting}~(1)-(4).
By (1), if $w_i>\0$ and $\tau\geq 3$, then $w'_i>\0$.
By (2), the number of sources of $(D'_i,w'_i)$ of weighted degree $\tau+1$ is $\sum_{v\in V(D_i)}\ell^+_i(v)$, and the number of sinks of $(D'_i,w'_i)$ of weighted degree $\tau+1$ is $\sum_{v\in V(D_i)}\ell^-_i(v)$.
By (3), $\mathcal{C}(D_i,w_i)$ is a contraction minor of $\mathcal{C}(D'_i,w'_i)$.
By (4), if $D_i$ is planar, then so is $D'_i$.
It remains to fix the choices of $\ell^+_i(v)$ and $\ell^-_i(v)$ for $v\in V(D_i)$, which we do according to one of the following two criteria:
\begin{enumerate}[i.]
\item For each $v\in V(D_i)$, let $\ell^+_i(v):=w_i(\delta_{D_i}^+(v))$ and $\ell^-_i(v):=w_i(\delta_{D_i}^-(v))$. Then $\sum_{v\in V(D_i)}\ell^+_i(v)=\sum_{v\in V(D_i)}\ell^-_i(v)$, so $(D'_i,w'_i)$ is balanced.
\item For each $v\in V(D_i)$, define $\ell^+_i(v):=w_i(\delta_{D_i}^+(v))-w_i(\delta_{D_i}^-(v)) \mod{\tau}$ and $\ell^-_i(v):=0$. Then $(D'_i,w'_i)$ is sink-regular with exactly $\tau\rho(\tau,D_i,w_i)$ sources of degree $\tau+1$. In particular, $\rho(\tau,D'_i,w'_i)=\frac{1}{\tau}(\tau\rho(\tau,D_i,w_i))=\rho(\tau,D_i,w_i)$. Since we have $\rho(\tau,D_i,w_i)\leq \rho(\tau,D,w)$ from Stage 1, we get that $\rho(\tau,D'_i,w'_i)\leq \rho(\tau,D,w)$.
\end{enumerate}

\paragraph{Summary} We claim $(D'_i,w'_i),i\in I$ are the desired weighted digraphs. {\bf (1)} Suppose $w>\0$ and $\tau\geq 3$. Stage 1 guarantees $w_i>\0,i\in I$, and so Stage 2 guarantees $w'_i>\0,i\in I$. {\bf (2)} If $D$ is planar, then each $D_i,i\in I$ is planar, so each $D'_i,i\in I$ is planar. {\bf (3)} holds for $\mathcal{C}_i:=\mathcal{C}(D_i,w_i),i\in I$. {\bf (i)-(ii)} Moreover, we can choose each $(D'_i,w'_i),i\in I$ to satisfy either (i) or (ii). \end{proof}

\begin{proof}[Proof of \Cref{DnL}]
We apply \Cref{weighted-DnL} to $(D,{\bf 1})$ and $\tau\geq 3$ to obtain weighted $(\tau,\tau+1)$-bipartite digraphs $(D'_i,w'_i),i\in I$ for a finite index set $I$, where every arc of $(D'_i,w'_i)$ has nonzero weight by \Cref{weighted-DnL}~(1). For each $i\in I$, replace every arc $a$ of $(D'_i,w'_i)$ of weight $w'_{i,a}$ by $w'_{i,a}$ arcs of weight $1$ with the same head and tail, we obtain the desired $(\tau,\tau+1)$-bipartite digraph.
\end{proof}

\section{An elementary proof of $[[\wt,\tau,2]]$}\label{sec:rho=2-elementary}

The following remark shows that in a digraph connected as an undirected graph, a dicut is uniquely determined by its \emph{shore}.

\begin{RE}\label{dicut-shore}
Let $D=(V,A)$ be a digraph that is connected as an undirected graph. Let $\delta^+(U),\delta^+(W)$ be dicuts such that $\delta^+(U)=\delta^+(W)$. Then $U=W$.
\end{RE}
\begin{proof}
Since $\delta^+(U)=\delta^+(W)$ and $\delta^-(U)=\delta^-(W)=\emptyset$, we have $\delta(U)=\delta(W)$, so $\delta(U\tr W)=\delta(U)\tr \delta(W)=\emptyset$. Since $D$ is a connected graph, either $U\tr W=\emptyset$ or $U\tr W=V$. However, the latter cannot occur as it would imply that $U,W$ are complementary, which is not possible as both $\delta^+(U),\delta^+(W)$ are nonempty dicuts. Thus, $U\tr W=\emptyset$, so that $U=W$, as required.
\end{proof}

\begin{DE} Given a digraph $D=(V,A)$ connected as an undirected graph, a dicut $\delta^+(U)$ is \emph{trivial} if $|U|=1,|V|-1$, and is \emph{non-trivial} otherwise. \end{DE}

We are now ready to prove \Cref{rho=2-sink-reg}, whose statement we repeat for convenience: \begin{quote}
\emph{Let $(D=(V,A),w)$ be a sink-regular weighted $(\tau,\tau+1)$-bipartite digraph such that $\rho(\tau,D,w)\leq 2$. Then there exists a $w$-weighted packing of dijoins of size $\tau$.}
\end{quote}
\begin{proof}[Alternative proof of \Cref{rho=2-sink-reg}]
 After replacing every arc $a$ of nonzero weight with $w_a$ arcs of weight $1$ with the same head and tail, if necessary, we may assume that $w\in \{0,1\}^A$. Let $A_i:=\{a\in A:w_a=i\}$ for $i=0,1$. We proceed by induction on the number $n\geq 0$ of non-trivial minimum weight dicuts (of weight $\tau$).

\paragraph{Induction step}
We postpone the base case $n=0$ to later. For now, let us first prove the induction step. Assume that $n\geq 1$. Let $\delta^+(U)$ be a minimum weight dicut that is non-trivial. Let $(D_1,w_1)$ be obtained from $(D,w)$ by replacing $V-U$ with a single vertex $u_1$, where all the arcs of $D$ with both ends in $V-U$ are removed, all the arcs with exactly one end in $V-U$ are now attached to $u_1$ and have the same weight, and all the other arcs remain intact with the same weight. Similarly, let $(D_2,w_2)$ be obtained from $(D,w)$ by replacing $U$ with a single vertex $u_2$, where all the arcs of $D$ with both ends in $U$ are removed, all the arcs with exactly one end in $U$ are now attached to $u_2$ and have the same weight, and all the other arcs remain intact with the same weight. Note that the weight-$1$ arcs of $(D_i,w_i),i=1,2$ share the weight-$1$ arcs in $\delta^+(U)$; let us label $\delta_D^+(U)\cap A_1=\{a_1,\ldots,a_\tau\}$.

Observe that every (non-trivial) dicut of $(D_i,w_i)$ is also a (non-trivial) dicut of $(D,w)$ of the same weight. Thus, every dicut of $(D_i,w_i)$ has weight at least $\tau$. Moreover, $u_1$ is a sink of $(D_1,w_1)$ of weighted degree $\tau$, and $u_2$ is a source of $(D_2,w_2)$ of weighted degree $\tau$. Subsequently, $\rho(\tau,D,w)=\rho(\tau,D_1,w_1)+\rho(\tau,D_2,w_2)$, and $(D_i,w_i)$ is a sink-regular weighted $(\tau,\tau+1)$-bipartite digraph satisfying $\rho(\tau,D_i,w_i)\leq \rho(\tau,D,w)\leq 2$, and the number of non-trivial minimum weight dicuts of $(D_i,w_i)$ is at most $n-1$.

We may therefore apply the induction hypothesis to conclude that $(D_i,w_i)$ has a $w_i$-weighted packing of dijoins of size $\tau$, consisting of $J^i_1,\ldots,J^i_\tau$, labeled so that $J^i_j\cap \delta_D^+(U)=\{a_j\}$ for $j\in [\tau]$. We claim that $(J^1_j\cup J^2_j:j\in [\tau])$ is a $w$-weighted packing of dijoins of $(D,w)$ of size $\tau$. To prove this, it suffices to prove that each $J^1_j\cup J^2_j$ is a dijoin of $D$. Fix $j\in [\tau]$ and $J:=J^1_j\cup J^2_j$. Pick a dicut $\delta_D^+(W)$ of $D$. \begin{enumerate}
\item[Case 1:] $U\cap W=\emptyset$: In this case, $\delta_D^+(W)$ is also a dicut of $D_2$, so $\delta_D^+(W)\cap J^2_j\neq \emptyset$.
\item[Case 2:] $U\cup W=V$: In this case, $\delta_D^+(W)$ is also a dicut of $D_1$, so $\delta_D^+(W)\cap J^1_j\neq \emptyset$.
\item[Case 3:] $W\cap U\neq \emptyset$ and $W\cup U\neq V$: In this case, $\delta_D^+(U\cap W)$ is a dicut of $D$ and $\delta_D^+(U\cup W)$ is a dicut of $D$. By submodularity of dicuts, $$
|J\cap \delta_D^+(U)|+|J\cap \delta_D^+(W)| \geq
|J\cap \delta_D^+(U\cap W)|+|J\cap \delta_D^+(U\cup W)|
$$ We know that $J\cap \delta_D^+(U)=\{a_j\}$. Moreover, $\delta_D^+(U\cap W)$ is also a dicut of $D_1$ so $\delta_D^+(U\cap W)\cap J^1_j\neq \emptyset$, and $\delta_D^+(U\cup W)$ is also a dicut of $D_2$ so $\delta_D^+(U\cup W)\cap J^2_j\neq \emptyset$. Thus, $$
|J\cap \delta_D^+(W)| \geq
|J\cap \delta_D^+(U\cap W)|+|J\cap \delta_D^+(U\cup W)|-|J\cap \delta_D^+(U)|\geq 1+1-1=1,
$$ so $J\cap \delta_D^+(W)\neq \emptyset$.
\end{enumerate} In all cases, we showed $J\cap \delta_D^+(W)\neq \emptyset$. Since this holds for every dicut of $D$, it follows that $J$ is a dijoin, as claimed. This completes the induction step.

\paragraph{Base case}
It remains to prove the case $n=0$. That is, every minimum weight dicut is trivial. If $\rho(\tau,D,w)\leq 1$, then by \Cref{rho=1-sink-reg}, there exists an (equitable) $w$-weighted packing of dijoins of size $\tau$, so we are done. Otherwise, $\rho(\tau,D,w)=2$, so by \Cref{disc}~(1), $\disc(V)=2$.

\begin{claim} 
Let $\delta^+(U)$ be a dicut of $D$. Then $\disc(U)\leq 1$. Moreover, equality holds if, and only if, $U=V-\{v\}$ for a sink $v$.
\end{claim}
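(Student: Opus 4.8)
The plan is to derive everything from \Cref{disc}. First I would observe that the hypothesis $\rho(\tau,D,w)=2$ combined with \Cref{disc}~(1) gives $\disc(V)=2$. Applying \Cref{disc}~(3) to the given dicut $\delta^+(U)$ then immediately yields $\disc(U)\le \disc(V)-1=1$, settling the first assertion.

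For the equality case, I would argue as follows. Suppose $\disc(U)=1$. The ``moreover'' clause of \Cref{disc}~(3) tells us that equality forces $w(\delta^+(U))=\tau$, i.e.\ $\delta^+(U)$ is a \emph{minimum weight} dicut of $(D,w)$. Now I invoke the standing hypothesis of the base case ($n=0$): every minimum weight dicut is trivial, so $|U|\in\{1,|V|-1\}$. The case $|U|=1$ is ruled out by a sign check: if $U=\{u\}$ then $\delta^-(u)=\delta^-(U)=\emptyset$, so $u$ is a source and $\disc(U)=\disc(\{u\})=-1\ne 1$, a contradiction. Hence $U=V-\{v\}$ for some vertex $v$; since $\delta^+(U)=\delta^-(v)$ and $\delta^-(U)=\delta^+(v)=\emptyset$ (because $\delta^+(U)$ is a dicut), $v$ must be a sink. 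This gives the forward direction.

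For the converse, if $\delta^+(U)$ is a dicut with $U=V-\{v\}$ for a sink $v$, then modularity of $\disc$ (it is the vertex sum of $\disc(u)$) gives $\disc(U)=\disc(V)-\disc(\{v\})=2-1=1$, so equality indeed holds.

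I do not expect a genuine obstacle in this step. The only things requiring care are the sign conventions for $\disc$ (a sink contributes $+1$, a source $-1$) and matching up ``dicut with a singleton shore'' with ``source'' and ``$U=V-\{v\}$ a dicut shore'' with ``$v$ a sink''. The one substantive ingredient — that equality in the discrepancy bound forces a minimum weight dicut — is already packaged in \Cref{disc}~(3), and the triviality of all minimum weight dicuts is exactly the running assumption of the base case, so nothing deeper is needed here.
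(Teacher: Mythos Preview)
Your proposal is correct and follows essentially the same route as the paper: apply \Cref{disc}~(3) to get $\disc(U)\le\disc(V)-1=1$, use the equality clause of \Cref{disc}~(3) together with the base-case hypothesis $n=0$ to force triviality, rule out the singleton-source case by the sign of $\disc$, and verify the converse by modularity. The paper's proof is a condensed version of exactly this argument.
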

\begin{cproof}
By \Cref{disc}~(3), $\disc(U)\leq \disc(V)-1=1$. Moreover, if $\disc(U)=1$, then $\delta^+(U)$ is a minimum weight dicut, so since $n=0$, $\delta^+(U)$ is a trivial dicut, implying in turn that $U=V-\{v\}$ for a sink $v$ (note that $\disc(\{u\})=-1$ for every source $u$). Conversely, if $U=V-\{v\}$ for a sink $v$, then $\delta^+(U)$ is a dicut of $D$ with $\disc(U)=\disc(V)-\disc(v)=2-1=1$.
\end{cproof}

Let $\mathcal{U}$ be the set of $U\subseteq V$ such that $\disc(U)=0$ and $\delta^+(U)$ is a dicut of $D$.
Let $\mathcal{U}_{\min}$ be the set of minimal sets in $\mathcal{U}$.

\begin{claim} 
For all distinct $U,W\in \mathcal{U}_{\min}$, $U\cup W$ contains all the sources of $D$.
\end{claim}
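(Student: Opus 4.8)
We are working with a sink-regular weighted $(\tau,\tau+1)$-bipartite digraph $(D=(V,A),w)$ with $\rho(\tau,D,w)=2$, i.e. $\disc(V)=2$, and we are in the base case $n=0$ where every minimum weight dicut is trivial. By Claim~1 just proved, every dicut $\delta^+(U)$ of $D$ has $\disc(U)\le 1$, with equality iff $U=V-\{v\}$ for a sink $v$. The sets $U\in\mathcal U$ are exactly the dicut-shores with $\disc(U)=0$, and $\mathcal U_{\min}$ are the minimal such shores. I want to show that for distinct $U,W\in\mathcal U_{\min}$, $U\cup W$ contains all sources of $D$.

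**The plan.** The natural route is a submodularity/minimality argument on the crossing family of dicut shores (cf. \Cref{dicut-crossing}), using that $\disc$ is modular. Suppose $U,W\in\mathcal U_{\min}$ are distinct and some source $s$ lies outside $U\cup W$. First I would dispose of the non-crossing cases: if $U\cap W=\emptyset$ then $\delta^+(U\cup W)$ is still a dicut (no arc can enter $U\cup W$ since $\delta^-(U)=\delta^-(W)=\emptyset$ and there is no arc from $W$ to $U$ or vice versa — here one needs that between the two disjoint shores of dicuts there are no arcs at all, which follows because an arc from $U$ to $W$ would be fine but an arc from $W$ to $U$ would violate $\delta^-(U)=\emptyset$; actually both directions must be checked, and disjoint dicut shores in fact have no arcs between them), and then $\disc(U\cup W)=\disc(U)+\disc(W)=0$, so $U\cup W\in\mathcal U$, contradicting minimality of $U$ unless $U\cup W$ contains neither properly — but it contains $U$ properly since $W\neq\emptyset$. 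Similarly if $U\cup W=V$ then, since $s\notin U\cup W$, this case is vacuous. So we may assume $U,W$ cross: $U\cap W\neq\emptyset$ and $U\cup W\neq V$. Then both $\delta^+(U\cap W)$ and $\delta^+(U\cup W)$ are dicuts of $D$ (by \Cref{dicut-crossing}), and modularity of $\disc$ gives $\disc(U\cap W)+\disc(U\cup W)=\disc(U)+\disc(W)=0$. By Claim~1, $\disc(U\cap W)\le 1$ and $\disc(U\cup W)\le 1$, so each equals $0$ or they are $1,-1$ in some order.

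**Finishing.** If $\disc(U\cap W)=\disc(U\cup W)=0$, then $U\cap W\in\mathcal U$; but $U\cap W\subseteq U$ and $U\cap W\neq U$ (else $U\subseteq W$, and then both being minimal forces $U=W$, contradiction), contradicting minimality of $U$. So the remaining case is $\{\disc(U\cap W),\disc(U\cup W)\}=\{1,-1\}$. A shore with $\disc=-1$ would be a dicut $\delta^+(\cdot)$ whose complement has discrepancy $\disc(V)-(-1)=3>1$, which is impossible by Claim~1 applied to the complement — wait, more carefully: if $\disc(U\cup W)=-1$ then $\delta^+(U\cup W)$ has weight $|a(U\cup W)|-\tau\disc(U\cup W)=|a(U\cup W)|+\tau$ by \Cref{disc}~(2), which is fine, so I cannot rule it out by weight; instead I use Claim~1 on $U\cap W$: if $\disc(U\cap W)=1$ then by Claim~1, $U\cap W=V-\{v\}$ for a sink $v$, forcing $|U\cap W|=|V|-1$, hence $U=W=V-\{v\}$ or one of them equals $V$, contradicting that they are distinct proper shores — actually $U\supseteq U\cap W=V-\{v\}$ forces $U=V-\{v\}$ or $U=V$; the latter is excluded, so $U=V-\{v\}=U\cap W\subseteq W$, again forcing $U=W$. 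The symmetric subcase $\disc(U\cup W)=1$: then $U\cup W=V-\{v\}$ for a sink $v$, so the only vertex outside $U\cup W$ is the sink $v$, hence $U\cup W$ contains every source, contradicting $s\notin U\cup W$. In every case we reach a contradiction, so $U\cup W$ contains all sources of $D$. The one point that needs care — and which I expect to be the main technical obstacle — is the handling of arcs between shores in the disjoint and near-disjoint cases (verifying that the relevant unions/intersections really are dicut shores, not just cuts), which is exactly where \Cref{dicut-crossing} and the definition of dicut (empty in-cut) must be invoked precisely; everything else is bookkeeping with the modular function $\disc$ and Claim~1.
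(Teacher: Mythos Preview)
Your crossing-case analysis is correct and essentially matches the paper's argument. The gap is in your handling of the case $U\cap W=\emptyset$. You write that $U\cup W\in\mathcal U$ ``contradict[s] minimality of $U$'' because ``it contains $U$ properly''; but minimality of $U\in\mathcal U_{\min}$ means no \emph{proper subset} of $U$ lies in $\mathcal U$. A proper \emph{superset} of $U$ lying in $\mathcal U$ yields no contradiction whatsoever, so this case is left open.

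The paper closes this gap by a different route: it never case-splits on whether $U\cap W=\emptyset$, but instead rules out disjointness directly via an active-vertex count. Since $U,W\in\mathcal U_{\min}$ have $\disc(U)=\disc(W)=0$, the dicuts $\delta^+(U),\delta^+(W)$ are non-trivial, and as $n=0$ every non-trivial dicut has weight at least $\tau+1$. By \Cref{disc}~(2), $|a(U)|=w(\delta^+(U))+\tau\cdot\disc(U)\ge\tau+1$, and similarly $|a(W)|\ge\tau+1$. But $|a(V)|=\tau\cdot\disc(V)=2\tau$ by \Cref{disc}~(1), so $a(U)\cap a(W)\neq\emptyset$, hence $U\cap W\neq\emptyset$. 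From there your crossing argument (or the paper's, which is essentially the same) finishes the proof. This counting step is the missing idea; without it, the disjoint case is not handled.
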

\begin{cproof}
Pick distinct $U,W\in \mathcal{U}_{\min}$.
Note that $\delta^+(U),\delta^+(W)$ are non-trivial dicuts (since the shore of every trivial dicut has discrepancy $\pm1$). Since $n=0$, we get that $\delta^+(U),\delta^+(W)$ are dicuts of weight at least $\tau+1$. Thus, by \Cref{disc}~(2), \begin{align*}
|a(U)|&=w(\delta^+(U))+\tau\cdot \disc(U)= w(\delta^+(U))\geq \tau+1\\
|a(W)|&=w(\delta^+(W))+\tau\cdot \disc(W)= w(\delta^+(W))\geq \tau+1.
\end{align*} By \Cref{disc}~(1), $a(V)=\tau\cdot\disc(V)=2\tau$, so the inequalities above imply that $a(U)\cap a(W)\neq \emptyset$, so $U\cap W\neq \emptyset$. If $U\cup W=V$, then we are clearly done. Otherwise, $U\cup W\neq V$. Thus, $\delta^+(U\cap W)$ and $\delta^+(U\cup W)$ are dicuts of $D$.
By Claim~1, $\disc(U\cap W)\leq 1$, and since $\delta^+(U),\delta^+(W)$ are non-trivial dicuts, equality does not hold. Moreover, since $U,W\in \mathcal{U}_{\min}$ and are distinct, $U\cap W$ is a proper subset of $U,W$, and $U\cap W\notin \mathcal{U}$, so $\disc(U\cap W)\neq 0$. Thus, $\disc(U\cap W)\leq -1$.
By Claim~1, we also have $\disc(U\cup W)\leq 1$. Moreover, by modularity of discrepancy, $\disc(U\cap W)+\disc(U\cup W)=\disc(U)+\disc(W)=0$. Thus, $\disc(U\cap W)=-1$ and $\disc(U\cup W)=1$, and by Claim~1, $U\cup W=V-\{v\}$ for a sink $v$, so the claim follows.
\end{cproof}

By \Cref{R1F-decomposition}, we can partition $A_1$ into $\tau$ rounded $1$-factors. Amongst all such partitions, pick one $F_1,\ldots,F_\tau$ that maximizes $$
p(F_1,\ldots,F_\tau):=\sum_{U\in \mathcal{U}_{\min}} |\{i\in [\tau]:F_i \text{ has a dyad center in } U\}|\leq \tau |\mathcal{U}_{\min}|.
$$

\begin{claim} 
$p(F_1,\ldots,F_\tau)=\tau |\mathcal{U}_{\min}|$.
\end{claim}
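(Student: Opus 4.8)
The plan is to argue by contradiction, using the maximality of $p$ together with a single alternating-path swap that Claim~2 renders harmless. Suppose $p(F_1,\ldots,F_\tau)<\tau|\mathcal{U}_{\min}|$. Then some $U^\star\in\mathcal{U}_{\min}$ and some index $i^\star\in[\tau]$ satisfy $\dc(F_{i^\star})\cap U^\star=\emptyset$, i.e.\ $F_{i^\star}$ has no dyad center in $U^\star$. Since $\disc(U^\star)=0$, \Cref{R1F}~(2) gives $|F_i\cap\delta^+(U^\star)|=|\dc(F_i)\cap U^\star|$ for every $i$, and since every rounded $1$-factor has exactly $\disc(V)=2$ dyad centers by \Cref{R1F}~(1), each of these numbers is at most $2$. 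The dicut $\delta^+(U^\star)$ is non-trivial (the shore of a trivial dicut has discrepancy $\pm1$), hence not of minimum weight as $n=0$, so $w(\delta^+(U^\star))\ge\tau+1$; summing the displayed equalities over $i$ and using $|\dc(F_{i^\star})\cap U^\star|=0$, a pigeonhole count over the remaining $\tau-1$ factors produces some $j\ne i^\star$ with $|\dc(F_j)\cap U^\star|=2$, i.e.\ both dyad centers of $F_j$ lie in $U^\star$.

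Next I would apply \Cref{alternating} to $F_{i^\star}$ and $F_j$. With $Q_1:=\dc(F_{i^\star})=\{u_1,u_2\}\subseteq V-U^\star$ and $Q_2:=\dc(F_j)\subseteq U^\star$ disjoint of size two, we have $k=2$, so \Cref{alternating} gives a bijection $\pi:Q_1\to Q_2$ and $(F_{i^\star},F_j)$-alternating paths $P_1,P_2$ with end pairs $\{u_1,\pi(u_1)\}$ and $\{u_2,\pi(u_2)\}$. I would swap only $P_1$, replacing $F_{i^\star},F_j$ by $F'_{i^\star}:=F_{i^\star}\tr P_1$ and $F'_j:=F_j\tr P_1$ and leaving the other factors unchanged. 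By \Cref{alternating}~(2) this is again a partition of $A_1$ into $\tau$ rounded $1$-factors, now with $\dc(F'_{i^\star})=\{u_2,\pi(u_1)\}$ and $\dc(F'_j)=\{u_1,\pi(u_2)\}$.

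The crux — and the step I expect to need the most care — is checking that this swap strictly increases $p$, which contradicts the choice of $F_1,\ldots,F_\tau$ and finishes the proof. Because $(D,w)$ is sink-regular, dyad centers are sources, so $u_1,u_2$ are sources lying outside $U^\star$; Claim~2 then forces $u_1,u_2\in W$ for every $W\in\mathcal{U}_{\min}$ with $W\ne U^\star$. For the $U^\star$-term of $p$: before the swap $F_{i^\star}$ does not contribute and $F_j$ contributes $1$, while after the swap $\pi(u_1)\in U^\star\cap\dc(F'_{i^\star})$ and $\pi(u_2)\in U^\star\cap\dc(F'_j)$, so both contribute, a net gain of $1$. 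For every other $W\in\mathcal{U}_{\min}$: since $\{u_1,u_2\}\subseteq W$, $F_{i^\star}$ already contributed to the $W$-term and $u_2\in\dc(F'_{i^\star})$ keeps $F'_{i^\star}$ contributing, while $u_1\in\dc(F'_j)$ makes $F'_j$ contribute, so that term cannot decrease. Summing, $p$ increases by at least $1$, the desired contradiction; the whole point is that Claim~2 pins down where $u_1,u_2$ must sit and thereby kills the only configurations in which the swap could lower a term. Hence $p(F_1,\ldots,F_\tau)=\tau|\mathcal{U}_{\min}|$.
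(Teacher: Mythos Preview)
Your proof is correct and follows essentially the same approach as the paper: a contradiction argument that finds a factor with no dyad center in some $U^\star\in\mathcal{U}_{\min}$, pairs it via pigeonhole with a factor whose two dyad centers both lie in $U^\star$, performs a single alternating-path swap from \Cref{alternating}, and uses Claim~2 to verify that $p$ strictly increases. The only cosmetic differences are that the paper derives $|a(U^\star)|\ge\tau+1$ directly from \Cref{disc}~(2) rather than summing \Cref{R1F}~(2), and it applies \Cref{alternating} with the roles of the two factors reversed; neither affects the argument.
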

\begin{cproof}
Suppose otherwise. After a possible relabeling, we may assume that $F_1$ has no dyad center in some $U\in \mathcal{U}_{\min}$. Consequently, the two dyad centers of $F_1$ belong to $V- U$, and so by Claim~2 belong to every $U'\in \mathcal{U}_{\min}-\{U\}$.

Since $|a(U)|\geq \tau+1$ (by following the argument in the proof of Claim~2) and every active vertex is a dyad center of exactly one of $F_1,\ldots,F_\tau$, one of $F_2,\ldots,F_\tau$, say $F_2$, has both dyad centers in $U$. Note that $\dc(F_2)\cap \dc(F_1)=\emptyset$. Thus, by \Cref{alternating}~(1), there exists an $(F_2,F_1)$-alternating path $P$.
Assume that $P$ is a $(u,w)$-path, where $\dc(F_2)=\{u,v\}\subseteq U$ and $\dc(F_1)=\{w,t\}\subseteq V-U$. Let $F'_1:=F_1\tr P$ and $F'_2:=F_2\tr P$.
Then by \Cref{alternating}~(2), $F'_1$ is a rounded $1$-factor such that $\dc(F'_1)=\{u,t\}$, and $F'_2$ is a rounded $1$-factor such that $\dc(F'_2)=\{w,v\}$. Observe that $\{u,t\}$ and $\{w,v\}$ intersect $U$, as well as every set in $\mathcal{U}_{\min}-\{U\}$ because it contains $\{w,t\}$. As a result, $$
p(F_1,F_2,F_3,\ldots,F_\tau)<p(F'_1,F'_2,F_3,\ldots,F_\tau),
$$ thereby contradicting the maximal choice of $F_1,\ldots,F_\tau$.
\end{cproof}

\begin{claim} 
Each $F_i,i\in [\tau]$ is a dijoin.
\end{claim}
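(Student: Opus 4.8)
The plan is to show that the maximal partition $F_1,\ldots,F_\tau$ obtained above yields $\tau$ dijoins by using the machinery of \Cref{R1F}, specifically the fact (from \Cref{R1F}~(3) and \Cref{R1F-remark}) that a rounded $1$-factor $F_i$ is a dijoin if and only if $|\dc(F_i)\cap U|\geq 1+\disc(U)$ for every dicut $\delta^+(U)$. So I would fix $i\in[\tau]$ and a dicut $\delta^+(U)$, and argue that $F_i$ meets it.

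First, split into cases by the value of $\disc(U)$. By Claim~1, $\disc(U)\in\{-\infty,\ldots,1\}$ with $\disc(U)=1$ forcing $U=V-\{v\}$ for a sink $v$; in that case every rounded $1$-factor has both its dyad centers among the $2\tau\geq \tau+1$ active vertices (all of which lie in $U$ since $v$ is a sink, not active), so $|\dc(F_i)\cap U|=2\geq 1+1=1+\disc(U)$ and we are done. If $\disc(U)\leq -1$, then $1+\disc(U)\leq 0$ and there is nothing to prove. The remaining and only substantive case is $\disc(U)=0$, i.e. $U\in\mathcal{U}$; here we need $|\dc(F_i)\cap U|\geq 1$, i.e. $F_i$ has a dyad center in $U$. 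Since $U$ contains some minimal $U_0\in\mathcal{U}_{\min}$ with $\disc(U_0)=0$, it suffices to show $F_i$ has a dyad center in every $U_0\in\mathcal{U}_{\min}$ — but that is exactly the content of Claim~3: $p(F_1,\ldots,F_\tau)=\tau|\mathcal{U}_{\min}|$ says that for every $U_0\in\mathcal{U}_{\min}$ and every $i\in[\tau]$, $F_i$ has a dyad center in $U_0$.

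Thus the proof of this claim is short: reduce via \Cref{R1F}~(3) to checking dyad centers, dispatch the cases $\disc(U)\geq 1$ and $\disc(U)\leq -1$ trivially, and for $\disc(U)=0$ invoke Claim~3 together with monotonicity ($\dc(F_i)\cap U_0\subseteq \dc(F_i)\cap U$ whenever $U_0\subseteq U$). The one point needing a line of care is the reduction itself: since $J=F_i\subseteq A$ need not be contained in $A_1$ here (we replaced weight-$w_a$ arcs by parallel copies, so $w\in\{0,1\}^A$ and $F_i$ consists of actual arcs of $D$, but $D$ itself is already the blown-up digraph), I should simply apply \Cref{R1F}~(3) to the ambient $(D,w)$ directly — every dicut of $D$ is a dicut of $D$, so no passage to $D[A_1]$ is needed and \Cref{R1F-remark} is not even required.

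Once each $F_i$ is a dijoin, $F_1,\ldots,F_\tau$ is a $\1$-weighted packing of dijoins of size $\tau$ in the blown-up digraph, hence a $w$-weighted packing of dijoins of size $\tau$ in the original $(D,w)$, completing the base case and therefore the induction. I do not anticipate a genuine obstacle here; the real work was front-loaded into Claims~1--3, and the only thing to be careful about is not conflating "$F_i$ meets $\delta^+(U)$" with "$F_i$ has a dyad center in $U$" — these coincide only through the identity of \Cref{R1F}~(2), $|F_i\cap\delta^+(U)|=|\dc(F_i)\cap U|-\disc(U)$, which is why the case analysis on $\disc(U)$ is the natural organizing principle.

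\begin{cproof}
By \Cref{R1F}~(2) and~(3) applied to the rounded $1$-factor $F_i$ and the weighted digraph $(D,w)$, it suffices to show that $|\dc(F_i)\cap U|\geq 1+\disc(U)$ for every dicut $\delta^+(U)$ of $D$. Fix such a dicut.

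If $\disc(U)\leq -1$, then $1+\disc(U)\leq 0\leq |\dc(F_i)\cap U|$, so there is nothing to prove. If $\disc(U)=1$, then by Claim~1, $U=V-\{v\}$ for a sink $v$. Since $\disc(V)=2$, \Cref{disc}~(1) gives $|a(V)|=2\tau\geq \tau+1\geq 2$, and every active vertex is a source, hence lies in $U=V-\{v\}$. As $F_i$ is a rounded $1$-factor and $\disc(V)=2$, \Cref{R1F}~(1) gives $|\dc(F_i)|=\disc(V)=2$, and every dyad center is active, hence in $U$. Thus $|\dc(F_i)\cap U|=2\geq 1+1=1+\disc(U)$.

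It remains to treat the case $\disc(U)=0$, i.e. $U\in \mathcal{U}$; here we must show $\dc(F_i)\cap U\neq \emptyset$. Since $U\in \mathcal{U}$, there exists $U_0\in \mathcal{U}_{\min}$ with $U_0\subseteq U$. By Claim~3, $p(F_1,\ldots,F_\tau)=\tau|\mathcal{U}_{\min}|$, which means that for every set in $\mathcal{U}_{\min}$, and in particular for $U_0$, every $F_j$ has a dyad center in it. Hence $\dc(F_i)\cap U_0\neq \emptyset$, and since $U_0\subseteq U$ we conclude $\dc(F_i)\cap U\neq \emptyset$, as required.

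Therefore each $F_i,i\in [\tau]$ is a dijoin of $D$, so $F_1,\ldots,F_\tau$ is a $w$-weighted packing of dijoins of size $\tau$. This settles the base case $n=0$, completing the induction and the proof.
\end{cproof}
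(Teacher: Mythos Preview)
Your proof is correct and follows essentially the same approach as the paper's: reduce via \Cref{R1F}~(3) to the inequality $|\dc(F_i)\cap U|\geq 1+\disc(U)$, dispatch $\disc(U)\leq -1$ trivially, handle $\disc(U)=1$ using Claim~1, and for $\disc(U)=0$ pass to a minimal $U_0\in\mathcal{U}_{\min}$ and invoke Claim~3. Your treatment is slightly more explicit (you spell out why all dyad centers lie in $V-\{v\}$ in the $\disc(U)=1$ case, whereas the paper just says ``clearly''), but the logic is identical.
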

\begin{cproof}
By \Cref{R1F}~(3), we need to show that for every dicut $\delta^+(U)$,
$$
|\dc(F_i)\cap U|\geq 1+\disc(U).
$$ If $\disc(U)< 0$ then we are done. Otherwise, $\disc(U)\geq 0$. By Claim~1, $\disc(U)\leq 1$ with equality holding if, and only if, $U=V-\{v\}$ for a sink $v$. The inequality above clearly holds if $U=V-\{v\}$ for a sink $v$, so let us focus on the case $\disc(U)=0$. Then $U\in \mathcal{U}$. Clearly, we may assume that $U\in \mathcal{U}_{\min}$, in which case the inequality above holds by Claim~3, as required.
\end{cproof}

Claim~4 finishes the proof for the base case $n=0$.
\end{proof}

\end{document}